\theoremstyle{plain}
\newtheorem{theorem}{Theorem}
\newtheorem{corollary}[theorem]{Corollary}
\newtheorem{lemma}[theorem]{Lemma}
\newtheorem{proposition}[theorem]{Proposition}
\newtheorem{conjecture}[theorem]{Conjecture}
\theoremstyle{definition}
\newtheorem{example}[theorem]{Example}
\newtheorem{question}[theorem]{Question}
\newcommand{\exampleqed}{\hfill$\ocircle$}
\newcommand{\xSx}[1]{\mathscr{#1}}		
\newcommand{\xSp}[1]{\mathcal{#1}}		
\newcommand{\family}[1]{\mathfrak{#1}}	
\newcommand{\event}[1]{\mathcal{#1}}		
\newcommand{\xv}[1]{\mathbf{#1}}		
\newcommand{\xZ}{\mathbb{Z}}			
\newcommand{\xN}{\mathbb{N}}			
\newcommand{\xR}{\mathbb{R}}			
\newcommand{\xL}{\mathbb{L}}			
\newcommand{\xT}{\mathbb{T}}			
\newcommand{\xO}{\circ}				
\newcommand{\xCmpl}[1]{
	#1^\textsf{c}%
}
\newcommand{\IsDef}{\triangleq}			
\newcommand{\abs}[1]{
	\left\lvert#1\right\rvert%
}
\newcommand{\norm}[1]{
	\left\lVert#1\right\rVert%
}
\newcommand{\xRest}[2]{
	\left.#1\right\rvert_{#2}%
}
\newcommand{\supp}{
	\operatorname{\mathrm{supp}}%
}
\newcommand{\diff}{
	\operatorname{\mathrm{diff}}%
}
\newcommand{\xd}{\mathrm{d}}			
\newcommand{\xe}{\mathrm{e}}			
\newcommand{\smallo}{o}					
\newcommand{\successor}{
	\operatorname{\mathrm{succ}}%
}
\newcommand{\xvar}{
	\operatorname{\mathrm{var}}%
}
\newcommand{\xQuiet}{\diamondsuit}		
\newcommand{\xVer}{\mathsf{V}}			
\newcommand{\xHor}{\mathsf{H}}			
\newcommand{\OverLabel}[2]{
	\newdimen\height
	\setbox0=\hbox{#1}
	\height=\ht0 \advance\height by \dp0
	\raisebox{0pt}[\dp0][0pt]{$\overbracket[0pt][3pt]{#1}^{#2}$}
}
\DeclareMathAlphabet{\mathpzc}{OT1}{pzc}{m}{it}
\newcommand{\symb}[1]{\mathtt{#1}}	
\newcommand{\tsymb}[1]{\scriptstyle{\mathtt{#1}}}	
\newcommand{\csymb}[1]{\makebox[2ex]{$\symb{#1}$}}		
\newlength{\contourbaseline}
\newlength{\configlinesep}
\newcommand{\ContourEmpty}{
	\begin{tikzpicture}[baseline=\contourbaseline, very thin]
		\clip (-1ex,-1ex) rectangle (1ex,1ex);%
		\draw (-1ex,-1ex) rectangle (1ex,1ex);%
	\end{tikzpicture}%
}
\newcommand{\ContourV}{
	\begin{tikzpicture}[baseline=\contourbaseline, very thin]
		\clip (-1ex,-1ex) rectangle (1ex,1ex);%
		\draw (-1ex,-1ex) rectangle (1ex,1ex);%
		\draw[thick] (0,1ex) -- (0,-1ex);%
	\end{tikzpicture}%
}
\newcommand{\ContourH}{
	\begin{tikzpicture}[baseline=\contourbaseline, very thin]
		\clip (-1ex,-1ex) rectangle (1ex,1ex);%
		\draw (-1ex,-1ex) rectangle (1ex,1ex);%
		\draw[thick] (-1ex,0) -- (1ex,0);%
	\end{tikzpicture}%
}
\newcommand{\ContourLD}{
	\begin{tikzpicture}[baseline=\contourbaseline, very thin]
		\clip (-1ex,-1ex) rectangle (1ex,1ex);%
		\draw (-1ex,-1ex) rectangle (1ex,1ex);%
		\draw[thick] (-1ex,0) .. controls(0,0) .. (0,-1ex);%
	\end{tikzpicture}%
}
\newcommand{\ContourRD}{
	\begin{tikzpicture}[baseline=\contourbaseline, very thin]
		\clip (-1ex,-1ex) rectangle (1ex,1ex);%
		\draw (-1ex,-1ex) rectangle (1ex,1ex);%
		\draw[thick] (1ex,0) .. controls(0,0) .. (0,-1ex);%
	\end{tikzpicture}%
}
\newcommand{\ContourLU}{
	\begin{tikzpicture}[baseline=\contourbaseline, very thin]
		\clip (-1ex,-1ex) rectangle (1ex,1ex);%
		\draw (-1ex,-1ex) rectangle (1ex,1ex);%
		\draw[thick] (-1ex,0) .. controls(0,0) .. (0,1ex);%
	\end{tikzpicture}%
}
\newcommand{\ContourRU}{
	\begin{tikzpicture}[baseline=\contourbaseline, very thin]
		\clip (-1ex,-1ex) rectangle (1ex,1ex);%
		\draw (-1ex,-1ex) rectangle (1ex,1ex);%
		\draw[thick] (1ex,0) .. controls(0,0) .. (0,1ex);%
	\end{tikzpicture}%
}
\newcommand{\ContourX}{
	\begin{tikzpicture}[baseline=\contourbaseline, very thin]
		\clip (-1ex,-1ex) rectangle (1ex,1ex);%
		\draw (-1ex,-1ex) rectangle (1ex,1ex);%
		\draw[thick] (-1ex,0) .. controls(0,0) .. (0,-1ex);%
		\draw[thick] (1ex,0) .. controls(0,0) .. (0,1ex);%
	\end{tikzpicture}%
}
\newcommand{\CellBlock}[4]{%
	\makebox[4ex]{%
		$\begin{array}{c@{}c}
			\csymb{#1} & \csymb{#2} \\ [\configlinesep]
			\csymb{#3} & \csymb{#4}
		\end{array}$%
	}
}
\newcommand{\SymbCellBlock}[4]{%
	\CellBlock{\symb{#1}}{\symb{#2}}{\symb{#3}}{\symb{#4}}%
}
\renewcommand\maketag@@@[1]{%
	\,\rlap{\hspace{\marginparsep}\m@th\normalfont\footnotesize#1}%
}
\renewcommand{\@seccntformat}[1]{%
	\llap{\csname the#1\endcsname\hspace{\marginparsep}}%
}
\renewcommand{\@biblabel}[1]{%
	\llap{[#1]\hspace{\marginparsep}}\!\!\!%
}
\begin{document}

\title{%
	\huge
	Statistical Mechanics\\
	of Surjective Cellular Automata%
	\footnotetext{Last update:~\today}
}

\author{%
	Jarkko Kari\\
	{\normalsize Department of Mathematics, University of Turku, Finland}\bigskip\\
	Siamak Taati\\
	{\normalsize Mathematical Institute, Utrecht University, The Netherlands}
}

\date{}

\maketitle

\begin{abstract}
	Reversible cellular automata are seen as microscopic physical models,
	and their states of macroscopic equilibrium are described using invariant probability measures.
	We establish a connection between the invariance of
	Gibbs measures
	and the conservation of additive quantities
	in surjective cellular automata.
	Namely, we show that the simplex of shift-invariant Gibbs measures
	associated to a Hamiltonian is invariant under a surjective cellular automaton
	if and only if the cellular automaton conserves the Hamiltonian.
	A special case is the (well-known) invariance of the uniform Bernoulli measure
	under surjective cellular automata, which corresponds to the conservation of the trivial Hamiltonian.
	As an application, we obtain results indicating the lack of (non-trivial) Gibbs or Markov invariant measures
	for ``sufficiently chaotic'' cellular automata.
	We discuss the relevance of the randomization property of algebraic cellular automata
	to the problem of approach to macroscopic equilibrium,
	and pose several open questions.
	
	As an aside, a shift-invariant pre-image of a Gibbs measure
	under a pre-injective factor map between shifts of finite type
	turns out to be always a Gibbs measure.
	We provide a sufficient condition under which the image of a Gibbs measure
	under a pre-injective factor map is not a Gibbs measure.
	We point out a potential application of pre-injective factor maps as a tool
	in the study of phase transitions in statistical mechanical models.
\renewcommand{\contentsname}{\vspace{-2em}}
{\footnotesize
\tableofcontents
}
\end{abstract}

\section{Introduction}
\label{sec:intro}

Reversible cellular automata are deterministic, spatially extended,
microscopically reversible dynamical systems.
They provide a suitable framework --- an alternative to Hamiltonian dynamics ---
to examine the dynamical foundations of
statistical mechanics with simple caricature models.
The intuitive structure of cellular automata makes them attractive to mathematicians,
and their combinatorial nature makes them amenable to perfect simulations
and computational study.

Some reversible cellular automata have long been observed, in simulations,
to exhibit ``thermodynamic behavior'': starting from a random configuration,
they undergo a transient dynamics until they reach a state of macroscopic (statistical) equilibrium.
Which of the equilibrium states the system is going to settle in
could often be guessed on the basis of few statistics of the initial configuration.

One such example is the \emph{Q2R} cellular automaton~\cite{Vic84},
which is a deterministic dynamics on top of the Ising model.
Like the standard Ising model, a configuration of the Q2R model
consists of an infinite array of symbols
$\symb{+}$ (representing an upward magnetic \emph{spin}) and
$\symb{-}$ (a downward spin)
arranged on the two-dimensional square lattice.
The symbols are updated in iterated succession of
two updating stages: at the first stage, the symbols on the even sites
(the black cells of the chess board) are updated,
and at the second stage, the symbols on the odd sites.
The updating of a symbol is performed according to a simple rule:
a spin is flipped if and only if among the four neighboring spins,
there are equal numbers of upward and downward spins.
The dynamics is clearly \emph{reversible} (changing the order of
the two stages, we could traverse backward in time).
It also conserves the Ising energy (i.e., the number of pairs of adjacent spins
that are anti-aligned).

Few snapshots from a simulation are shown in Figure~\ref{fig:ising:simulation}.
Starting with a random configuration in which the direction of each spin
is determined by a biased coin flip, the Q2R cellular automaton evolves
towards a state of apparent equilibrium that resembles
a sample from the Ising model at the corresponding temperature.\footnote{%
	The Q2R model has no temperature parameter.  A correspondence can however
	be made with the temperature at which the Ising model
	has the same expected energy density.
}
More sophisticated variants of the Q2R model show numerical agreement
with the phase diagram of the Ising model, at least away from the critical point~\cite{Cre86}.
See~\cite{TofMar87}, Chapter 17, for further simulations and an interesting discussion.
\begin{figure}
	\begin{center}
		\begin{tabular}{ccc}
			\begin{minipage}[c]{0.3\textwidth}
				\centering
				\includegraphics[width=\textwidth]{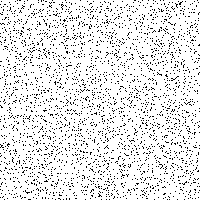}
			\end{minipage} &
			\begin{minipage}[c]{0.3\textwidth}
				\centering
				\includegraphics[width=\textwidth]{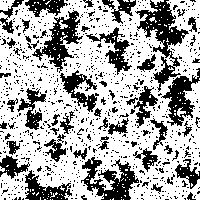}
			\end{minipage} &
			\begin{minipage}[c]{0.3\textwidth}
				\centering
				\includegraphics[width=\textwidth]{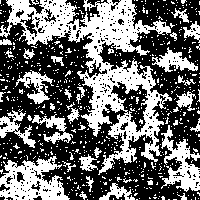}
			\end{minipage} \medskip\\
			$t=0$ & $t=1000$ & $t=10000$
		\end{tabular}
	\end{center}
	\caption{%
		Simulation of the Q2R cellular automaton on a $200\times 200$ toroidal lattice.
		Black represents an upward spin.
		The initial configuration is chosen according to a Bernoulli distribution
		with $1{\,:\,}9$ bias.
		The ``macroscopic appearance'' of the configuration does not vary
		significantly after time~$10000$.
	}
	\label{fig:ising:simulation}
\end{figure}

Wolfram was first to study cellular automata from the point of view of statistical mechanics~\cite{Wol83,Wol84}
(see also~\cite{Wol02}).
He made a detailed heuristic analysis of the so-called elementary cellular automata
(those with two states per site and local rule depending on three neighboring sites
in one dimension) using computer simulations.
One of Wolfram's observations
(the randomizing property of the XOR cellular automaton)
was mathematically confirmed by Lind~\cite{Lin84},
although the same result had also been obtained independently by Miyamoto~\cite{Miy79}.
Motivated by the problem of foundations of statistical mechanics,
Takesue made a similar study of elementary \emph{reversible} cellular automata
and investigated their ergodic properties and
thermodynamic behavior~\cite{Tak87,Tak89,Tak90}.
Recognizing the role of conservation laws in presence or absence of thermodynamic behavior,
he also started a systematic study of additive conserved quantities in cellular automata~\cite{HatTak91,Tak95}.

This article concerns the ``states of macroscopic equilibrium''
and their connection with conservation laws
in a class of cellular automata including the reversible ones.

As in statistical mechanics, we identify the ``macroscopic states'' of
lattice configurations with probability measures on the space of all such configurations.
The justification and proper interpretation of this formulation is
beyond the scope of this article.
We content ourselves with recalling two distinct points of view:
the \emph{subjective} interpretation (probability measures are meant to
describe the partial states of knowledge of an observer; see~\cite{Jay57})
and the \emph{frequentist} interpretation (a probability measure represents
a class of configurations sharing the same statistics).
See~\cite{Skl93} for comparison and discussion.
If we call tail-measurable observables ``macroscopic'', a probability measure
that is trivial on the tail events would give a full description of a macroscopic state
(see Paragraph~(7.8) of~\cite{Geo88}).
On the other hand,
restricting ``macroscopic'' observables to statistical averages
(i.e., averages of local observables over the lattice), one could identify
the macroscopic states with probability measures that are shift-invariant and ergodic.
The configurations in the ergodic set of a shift-ergodic probability measure
(i.e., the generic points in its support; see~\cite{Oxt52}) may then be considered
as ``typical'' microscopic states for the identified macroscopic state.

The interpretation of ``equilibrium'' is another unsettling issue
that we leave open.
Equilibrium statistical mechanics postulates
that the equilibrium states (of a lattice model described by interaction energies)
are suitably described by Gibbs measures (associated with the interaction energies)~\cite{Rue04,Isr79,Geo88}.
One justification (within the subjective interpretation)
is the variational principle that characterizes the shift-invariant Gibbs measures
as measures that maximize entropy under a fixed expected energy density constraint.
Within a dynamical framework, on the other hand,
the system is considered
to be in macroscopic equilibrium if its internal fluctuations
are not detected by macroscopic observables.
One is therefore tempted to identify the equilibrium states of
a cellular automaton with (tail-trivial or shift-ergodic) probability measures
that are time-invariant.
Unfortunately, there are usually an infinity of invariant measures that
do not seem to be of physical relevance.
For instance, in any cellular automaton,
the uniform distribution on the shift and time orbit of
a jointly periodic configuration is time-invariant and shift-ergodic,
but may hardly be considered a macroscopic equilibrium state.
Other conditions such as ``smoothness'' or ``attractiveness'' therefore might be needed.

Rather than reversible cellular automata (i.e., those whose trajectories
can be traced backward by another cellular automaton),
we work with the broader class of surjective cellular automata
(i.e., those that act surjectively on the configuration space).
Every reversible cellular automaton is surjective, but there are many
surjective cellular automata that are not reversible.
Surjective cellular automata are nevertheless ``almost injective'' in that
the average amount of information per site they erase in each time step is vanishing.
They are precisely those cellular automata that
preserve the uniform Bernoulli measure
(cf.\ Liouville's theorem for Hamiltonian systems).
Even if not necessarily physically relevant,
they provide a richer source of interesting examples, which could be used in case studies.
For instance, most of the known examples of the randomization phenomenon
(which, we shall argue, could provide an explanation of approach to equilibrium)
are in non-reversible surjective cellular automata.

The invariance of Gibbs measures under surjective cellular automata
turns out to be associated with their conservation laws.
More precisely, if an additive energy-like quantity,
formalized by a Hamiltonian, is conserved by a surjective cellular automaton,
the cellular automaton maps
the simplex of shift-invariant Gibbs measures corresponding to that Hamiltonian
onto itself (Theorem~\ref{thm:conservation-invariance:1}).
The converse is true in a stronger sense: if a surjective cellular automaton
maps a (not necessarily shift-invariant) Gibbs measure for a Hamiltonian to a Gibbs measure
for the same Hamiltonian, the Hamiltonian must be conserved by the cellular automaton
(Corollary~\ref{cor:conservation-invariance:1:non-shift-invariant}).
The proof of this correspondence is an immediate consequence of the variational characterization of
shift-invariant Gibbs measures and the fact that surjective cellular automata
preserve the average entropy per site of shift-invariant probability measures
(Theorem~\ref{thm:pentropy:pre-injective}).
An elementary proof of a special case was presented earlier~\cite{KarTaa11}.

Note that if a conserved Hamiltonian has a unique Gibbs measure,
then that unique Gibbs measure will be invariant under the cellular automaton.
This is the case, for example, in one dimension,
or when the Hamiltonian does not involve the interaction of
more than one site (the Bernoulli case).
An important special case is the \emph{trivial} Hamiltonian
(all configurations on the same ``energy'' level) which is obviously conserved
by every surjective cellular automaton.
The uniform Bernoulli measure is the unique Gibbs measure
for the trivial Hamiltonian, and we recover the well-known
fact that every surjective cellular automaton preserves
the uniform Bernoulli measure on its configuration space (i.e., Corollary~\ref{cor:balance:weak}).
If, on the other hand, the simplex of shift-invariant Gibbs measures
for a conserved Hamiltonian has more than one element, the cellular automaton
does not need to preserve individual Gibbs measures in this simplex
(Example~\ref{exp:ising:ca:invariant-non-invariant}).

We do not know whether, in general, a surjective cellular automaton maps
the non-shift-invariant Gibbs measures for a conserved Hamiltonian
to Gibbs measures for the same Hamiltonian, but this is known
to be the case for a proper subclass of surjective cellular automata
including the reversible ones (Theorem~\ref{thm:gibbs:factor-map:complete}), following a result of Ruelle.

The essence of the above-mentioned connection between conservation laws
and invariant Gibbs measures comes about in a more abstract setting,
concerning the pre-injective factor maps between strongly irreducible shifts of finite type.
We show that a shift-invariant pre-image of a (shift-invariant) Gibbs measure
under such a factor map is again a Gibbs measure (Corollary~\ref{thm:equilibrium:factor-map}).
We find a simple sufficient condition under which
a pre-injective factor map transforms a shift-invariant Gibbs measure
into a measure that is not Gibbs (Proposition~\ref{prop:gibbs-non-gibbs}).
An example of a surjective cellular automaton is given that
eventually transforms every starting Gibbs measure into a non-Gibbs measure
(Example~\ref{exp:xor:non-gibbs:contd}).
As an application in the study of phase transitions in equilibrium statistical mechanics,
we demonstrate how the result of Aizenman and Higuchi regarding the structure
of the simplex of Gibbs measures for the two-dimensional Ising model
could be more transparently formulated using a pre-injective factor map
(Example~\ref{exp:ising-contour}).

The correspondence between invariant Gibbs measures and conservation laws
allows us to reduce the problem of invariance of Gibbs measures
to the problem of conservation of additive quantities.
Conservation laws in cellular automata have been studied by many
from various points of view (see e.g.~\cite{Pom84,HatTak91,BocFuk98,Piv02,
DurForRok03,ForGra03,MorBocGol04,Ber07,Gar11,ForKarTaa11}).
For example, simple algorithms have been proposed to find all additive quantities
of up to a given interaction range that are conserved by a cellular automaton.
Such an algorithm can be readily applied to find all the
full-support Markov measures that are invariant under a surjective cellular automaton
(at least in one dimension).
We postpone the study of this and similar algorithmic problems to a separate occasion.

A highlight of this article is the use of this correspondence
to obtain severe restrictions on the existence of invariant Gibbs measures
in two interesting classes of cellular automata with strong chaotic behavior.
First, we show that a strongly transitive cellular automaton
cannot have any invariant Gibbs measure other than the uniform Bernoulli measure
(Corollary~\ref{cor:ca:strongly-transitive:rigidity}).
The other result concerns the class of one-dimensional reversible cellular automata
that are obtained by swapping the role of time and space in positively expansive
cellular automata.  For such reversible cellular automata, we show that
the uniform Bernoulli measure is the unique invariant Markov measure
with full support (Corollary~\ref{cor:ca:pexp-transpose:rigidity}).

Back to the interpretation of shift-ergodic probability measures
as macroscopic states, one might interpret the latter results
as an indication of ``absence of phase transitions'' in the cellular automata in question.
Much sharper results have been obtained by others for
narrower classes of cellular automata having algebraic structures
(see the references in Example~\ref{exp:xor:ca}).

A mathematical description of approach to equilibrium (as observed in the Q2R example)
seems to be very difficult in general.
The randomization property of algebraic cellular automata
(the result of Miyamoto and Lind and its extensions; see Example~\ref{exp:xor:randomization})
however provides a partial explanation of approach to equilibrium
in such cellular automata.
Finding ``physically interesting'' cellular automata
with similar randomization property is an outstanding open problem.

The structure of the paper is as follows.
Section~\ref{sec:background} is dedicated to the development of the setting and background material.
Given the interdisciplinary nature of the subject, we try to be as self-contained as possible.
Basic results regarding the pre-injective factor maps between
shifts of finite type as well as two degressing applications appear in Section~\ref{sec:entropy-preserving}.
In Section~\ref{sec:ca}, we apply the results of the previous section
on cellular automata.
Conservation laws in cellular automata are discussed in Section~\ref{sec:ca:conservation-laws}.
Proving the absence of non-trivial conservation laws
in two classes of chaotic cellular automata in Section~\ref{sec:ca:rigidity}, we obtain
results regarding the rigidity of invariant measures for these two classes.
Section~\ref{sec:ca:randomization} contains a discussion of the problem of approach to equilibrium.

\section{Background}
\label{sec:background}

\subsection{Observables, Probabilities, and Dynamical Systems}
Let $\xSp{X}$ be a compact metric space.
By an \emph{observable} we mean a Borel measurable function
$f:\xSp{X}\to\xR$.
The set of continuous observables on $\xSp{X}$
will be denoted by $C(\xSp{X})$.
This is a Banach space with the uniform norm.
The default topology on $C(\xSp{X})$
is the topology of the uniform norm.
The set of Borel probability measures on $\xSp{X}$
will be denoted by $\xSx{P}(\xSp{X})$.
The expectation operator of a Borel probability measure $\pi\in\xSx{P}(\xSp{X})$
is a positive linear (and hence continuous) functional on $C(\xSp{X})$.
Conversely, the Riesz representation theorem states that
every normalized positive linear functional on $C(\xSp{X})$
is the expectation operator of a unique probability measure on $\xSp{X}$.
Therefore, the Borel probability measures can equivalently be identified
as normalized positive linear functionals on $C(\xSp{X})$.
We assume that $\xSx{P}(\xSp{X})$ is topologized
with the weak topology.
This is the weakest topology with respect to which,
for every observable $f\in C(\xSp{X})$,
the mapping $\pi\mapsto \pi(f)$ is continuous.
The space $\xSx{P}(\xSp{X})$ under the weak topology
is compact and metrizable.
If $\delta_x$ denotes the Dirac measure concentrated at $x\in\xSp{X}$,
the map $x\mapsto\delta_x$ is an embedding of $\xSp{X}$ into $\xSx{P}(\xSp{X})$.
The Dirac measures are precisely the extreme elements of
the convex set $\xSx{P}(\xSp{X})$,
and by the Krein-Milman theorem,
$\xSx{P}(\xSp{X})$ is the closed convex hull of the Dirac measures.

Let $\xSp{X}$ and $\xSp{Y}$ be compact metric spaces
and $\Phi:\xSp{X}\to\xSp{Y}$ a continuous mapping.
We denote the induced mapping $\xSx{P}(\xSp{X})\to\xSx{P}(\xSp{Y})$
by the same symbol $\Phi$;
hence $(\Phi\pi)(E)\IsDef\pi(\Phi^{-1}E)$.
The dual map $C(\xSp{Y})\to C(\xSp{X})$
is denoted by $\Phi^*$; that is, $(\Phi^*f)(x)\IsDef f(\Phi x)$.
The following lemma is well-known.
\begin{lemma}
\label{lem:onto-one-to-one}
	Let $\xSp{X}$ and $\xSp{Y}$ be compact metric spaces
	and $\Phi:\xSp{X}\to\xSp{Y}$ a continuous map.
	\begin{enumerate}[ \rm a)]
		\item $\Phi:\xSp{X}\to\xSp{Y}$ is one-to-one
			if and only if
			$\Phi^*:C(\xSp{Y})\to C(\xSp{X})$ is onto,
			which in turn holds
			if and only if
			$\Phi:\xSx{P}(\xSp{X})\to\xSx{P}(\xSp{Y})$ is one-to-one.
		\item $\Phi:\xSp{X}\to\xSp{Y}$ is onto
			if and only if
			$\Phi^*:C(\xSp{Y})\to C(\xSp{X})$ is one-to-one,
			which in turn holds
			if and only if
			$\Phi:\xSx{P}(\xSp{X})\to\xSx{P}(\xSp{Y})$ is onto.
	\end{enumerate}
\end{lemma}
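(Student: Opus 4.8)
The plan is to verify, in each part, a short cycle of implications among the three listed conditions, relying on the Tietze and Urysohn extension theorems on the side of continuous functions and on the extreme-point description of $\xSx{P}(\xSp{X})$ (the Dirac measures, together with the Krein--Milman theorem) on the side of probability measures. Throughout, the only computations needed are the defining identities $(\Phi\pi)(E)=\pi(\Phi^{-1}E)$ and $(\Phi^*f)(x)=f(\Phi x)$, the change-of-variables formula $(\Phi\pi)(f)=\pi(\Phi^*f)$ they yield, and the relation $\Phi\delta_x=\delta_{\Phi x}$; recall also that, by the Riesz representation theorem, a Borel probability measure on a compact metric space is determined by its values on $C(\cdot)$.

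For part (a) I would argue $\Phi$ injective $\Rightarrow$ $\Phi^*$ onto $\Rightarrow$ $\Phi\colon\xSx{P}(\xSp{X})\to\xSx{P}(\xSp{Y})$ injective $\Rightarrow$ $\Phi$ injective, which closes the loop. For the first implication: since $\xSp{X}$ is compact and $\xSp{Y}$ Hausdorff, an injective continuous $\Phi$ is a homeomorphism onto the closed subset $\Phi(\xSp{X})\subseteq\xSp{Y}$, so given $g\in C(\xSp{X})$ the function $g\circ\Phi^{-1}\in C(\Phi(\xSp{X}))$ extends by Tietze to some $f\in C(\xSp{Y})$, and then $\Phi^*f=g$. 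For the second: if $\Phi\pi_1=\Phi\pi_2$, then $\pi_1(\Phi^*f)=\pi_2(\Phi^*f)$ for every $f\in C(\xSp{Y})$, hence $\pi_1(g)=\pi_2(g)$ for every $g\in C(\xSp{X})$ because $\Phi^*$ is onto, so $\pi_1=\pi_2$. For the third: $x\mapsto\delta_x$ is an embedding and $\Phi\delta_x=\delta_{\Phi x}$, so $\Phi x=\Phi x'$ forces $\delta_x=\delta_{x'}$ and hence $x=x'$.

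For part (b) I would first establish $\Phi$ onto $\iff$ $\Phi^*$ injective: if $\Phi$ is onto, then $\Phi^*f=\Phi^*g$ means $f$ and $g$ agree on $\Phi(\xSp{X})=\xSp{Y}$; whereas if $\Phi$ is not onto, then $\Phi(\xSp{X})$ is a proper closed subset of $\xSp{Y}$ and Urysohn's lemma provides a nonzero $f\in C(\xSp{Y})$ vanishing on it, so $\Phi^*f=0=\Phi^*0$. Then, for the equivalence with surjectivity on measures: if $\Phi$ is onto, choose for each $y\in\xSp{Y}$ a preimage $x_y$, so that $\Phi\delta_{x_y}=\delta_y$; since $\Phi\colon\xSx{P}(\xSp{X})\to\xSx{P}(\xSp{Y})$ is affine and continuous in the weak topologies, its image is convex and compact, hence a closed convex set containing all extreme points of $\xSx{P}(\xSp{Y})$, so it equals $\xSx{P}(\xSp{Y})$ by Krein--Milman. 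Conversely, if $\Phi\colon\xSx{P}(\xSp{X})\to\xSx{P}(\xSp{Y})$ is onto, pick for each $y$ a $\pi$ with $\Phi\pi=\delta_y$; then $\pi(\Phi^{-1}\{y\})=1$, so $\Phi^{-1}\{y\}\neq\varnothing$ and $\Phi$ is onto.

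The routine parts are genuinely routine; the only steps that warrant care are the appeal to the Tietze extension theorem in part (a) and, in part (b), the verification that $\Phi(\xSx{P}(\xSp{X}))$ is a \emph{closed} convex set (so that Krein--Milman applies), which rests on compactness of $\xSx{P}(\xSp{X})$ and continuity of the induced map $\Phi$ with respect to the weak topologies. I expect the final write-up to occupy only a few lines.
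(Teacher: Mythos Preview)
Your proposal is correct and follows essentially the same approach as the paper: Tietze (or Urysohn) on the $C(\cdot)$ side, and the Dirac-measure/Krein--Milman description of $\xSx{P}(\cdot)$ on the measure side. The only difference is that you spell out explicitly the implications the paper dismisses as ``trivial,'' and in part~(b) you phrase the Krein--Milman step as ``closed convex set containing all extreme points'' rather than ``dense and closed''; these are cosmetic variations.
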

\begin{proof}\ \\
	\begin{enumerate}[ a)]
		\item
			Suppose that $\Phi:\xSp{X}\to\xSp{Y}$ is one-to-one.
			Let $g\in C(\xSp{X})$ be arbitrary.
			Define $\xSp{Y}_0\IsDef\Phi\xSp{X}$.
			Then $g\xO\Phi^{-1}:\xSp{Y}_0\to\xR$ is a continuous real-valued
			function on a closed subset of a compact metric space.
			Hence, by the Tietze extension theorem, it has an extension
			$f:\xSp{Y}\to\xR$.
			We have $f\xO\Phi=g\xO\Phi^{-1}\xO\Phi=g$.
			Therefore, $\Phi^*:C(\xSp{Y})\to C(\xSp{X})$ is onto.
			
			The other implications are trivial.
		\item
			Suppose that $\Phi:\xSp{X}\to\xSp{Y}$ is not onto.
			Let $\xSp{Y}_0\IsDef\Phi\xSp{X}$ and
			pick an arbitrary $y\in\xSp{Y}\setminus\xSp{Y}_0$.
			Using the Tietze extension theorem,
			we can find $f,f'\in C(\xSp{Y})$ such that
			$\xRest{f}{\xSp{Y}_0}=\xRest{f'}{\xSp{Y}_0}$
			but $f(y)\neq f'(y)$.
			Then $f\xO\Phi=f'\xO\Phi$.
			Hence, $\Phi^*:C(\xSp{Y})\to C(\xSp{Y})$ is not one-to-one.

			Next, suppose that $\Phi:\xSp{X}\to\xSp{Y}$ is onto.
			By the Krein-Milman theorem, the set $\xSx{P}(\xSp{Y})$
			is the closed convex hull of Dirac measures on $\xSp{Y}$.
			Let $\pi=\sum_i \lambda_i\delta_{y_i}\in\xSx{P}(\xSp{Y})$
			be a convex combination of Dirac measures.
			Pick $x_i\in\xSp{X}$ such that $\Phi x_i=y_i$, and
			define $\nu\IsDef\sum_i\lambda_i\delta_{x_i}\in\xSx{P}(\xSp{X})$.
			Then $\Phi\nu=\pi$.
			Therefore, $\Phi\xSx{P}(\xSp{X})$ is dense in $\xSx{P}(\xSp{Y})$.
			Since $\Phi\xSx{P}(\xSp{X})$ is also closed, we obtain that
			$\Phi\xSx{P}(\xSp{X})=\xSx{P}(\xSp{Y})$.
			That is,
			$\Phi:\xSx{P}(\xSp{X})\to\xSx{P}(\xSp{Y})$ is onto.
			
			The remaining implication is trivial.			
	\end{enumerate}
\end{proof}

By a \emph{dynamical system} we shall mean
a compact metric space $\xSp{X}$ together with
a continuous action $(i,x)\mapsto \varphi^i x$ of a discrete commutative finitely generated
group or semigroup $\xL$ on $\xSp{X}$.
In case of a cellular automaton, $\xL$ is the set of non-negative integers $\xN$
(or the set of integers $\xZ$ if the cellular automaton is reversible).
For a $d$-dimensional shift,
$\xL$ is the $d$-dimensional hyper-cubic lattice~$\xZ^d$.
Every dynamical system $(\xSp{X},\varphi)$ has at least one invariant measure, that is,
a probability measure $\pi\in\xSx{P}(\xSp{X})$ such that $\varphi^i\pi=\pi$ for every $i\in\xL$.
In fact, every non-empty, closed and convex subset of $\xSx{P}(\xSp{X})$ that
is closed under the application of $\varphi$ contains an invariant measure.
We will denote the set of invariant measures of $(\xSp{X},\varphi)$ by $\xSx{P}(\xSp{X},\varphi)$.

We also define $C(\xSp{X},\varphi)$
as the closed linear subspace of $C(\xSp{X})$
generated by the observables of the form
$g\xO \varphi^i - g$ for $g\in C(\xSp{X})$ and $i\in\xL$; that is,
\begin{align}
	C(\xSp{X},\varphi) &\IsDef
		\overline{
			\langle
				g\xO\varphi^i - g: g\in C(\xSp{X}) \text{ and } i\in\xL
			\rangle
		} \;
\end{align}
(see~\cite{Rue04}, Sections~4.7--4.8).
Then $\xSx{P}(\xSp{X},\varphi)$ and $C(\xSp{X},\varphi)$
are annihilators of each other:
\begin{lemma}[see e.g.~\cite{KatRob01}, Proposition~2.13]
\label{lem:annihilators}
	Let $(\xSp{X},\varphi)$ be a dynamical system.
	Then,
	\begin{enumerate}[ \rm a)]
		\item $\xSx{P}(\xSp{X},\varphi) = \left\{
				\pi\in\xSx{P}(\xSp{X}):
				\text{$\pi(f)=0$ for every $f\in C(\xSp{X},\varphi)$}
			\right\}$.
		\item $C(\xSp{X},\varphi) = \left\{
				f\in C(\xSp{X}):
				\text{$\pi(f)=0$ for every $\pi\in\xSx{P}(\xSp{X},\varphi)$}
			\right\}$.
	\end{enumerate}
\end{lemma}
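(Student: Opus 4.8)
The plan is to establish part~(a) by directly unwinding the definition of invariance, and part~(b) by combining the trivial inclusion coming from~(a) with an ergodic-averaging (Krylov--Bogolyubov) argument. For (a): a measure $\pi\in\xSx{P}(\xSp{X})$ is invariant precisely when $(\varphi^i\pi)(g)=\pi(g)$ for all $g\in C(\xSp{X})$ and $i\in\xL$; by the change-of-variables identity $(\varphi^i\pi)(g)=\pi(g\xO\varphi^i)$ this says $\pi(g\xO\varphi^i-g)=0$ for all such $g$ and $i$. Since $\pi$ is a bounded linear functional and the observables $g\xO\varphi^i-g$ span a dense subspace of $C(\xSp{X},\varphi)$, $\pi$ annihilates all of them iff $\pi$ annihilates $C(\xSp{X},\varphi)$; this is exactly the asserted description of $\xSx{P}(\xSp{X},\varphi)$.

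For (b), the inclusion $C(\xSp{X},\varphi)\subseteq\{f:\pi(f)=0\text{ for all }\pi\in\xSx{P}(\xSp{X},\varphi)\}$ is immediate from (a), so the substance is the reverse inclusion. I would fix a F\o{}lner sequence $(F_n)$ for $\xL$ (e.g.\ cubes when $\xL$ is one of $\xN,\xZ,\xN^d,\xZ^d$) and put $A_nf\IsDef\abs{F_n}^{-1}\sum_{i\in F_n}f\xO\varphi^i$; since each $f\xO\varphi^i-f$ is one of the generators of $C(\xSp{X},\varphi)$, we have $A_nf-f\in C(\xSp{X},\varphi)$ for every $n$. Now assume $f\in C(\xSp{X})$ satisfies $\pi(f)=0$ for all $\pi\in\xSx{P}(\xSp{X},\varphi)$; I claim that $\norm{A_nf}\to 0$. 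If not, there are $\varepsilon>0$, indices $n_k$, and points $x_k\in\xSp{X}$ with $\abs{A_{n_k}f(x_k)}\ge\varepsilon$; since $\xSx{P}(\xSp{X})$ is weakly compact and metrizable, after passing to a subsequence the empirical measures $\mu_k\IsDef\abs{F_{n_k}}^{-1}\sum_{i\in F_{n_k}}\delta_{\varphi^ix_k}$ converge weakly to some $\mu\in\xSx{P}(\xSp{X})$; the F\o{}lner property gives $\abs{(\varphi^j\mu_k-\mu_k)(h)}\le\abs{F_{n_k}}^{-1}\abs{F_{n_k}\triangle(F_{n_k}+j)}\norm{h}\to 0$ for $h\in C(\xSp{X})$ and $j\in\xL$, so $\mu$ is $\varphi$-invariant, while $\abs{\mu(f)}=\lim_k\abs{\mu_k(f)}\ge\varepsilon$, contradicting the hypothesis. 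Hence $\norm{A_nf}\to 0$; and since $f-A_nf\in C(\xSp{X},\varphi)$ with $C(\xSp{X},\varphi)$ norm-closed, letting $n\to\infty$ gives $f\in C(\xSp{X},\varphi)$.

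The only genuinely non-formal step is the last paragraph's passage from ``$A_nf$ does not go to $0$'' to the existence of an invariant \emph{probability} measure detecting $f$; this is the standard Krylov--Bogolyubov compactness argument, and the sole point where our situation differs from the classical single-homeomorphism case is that $\xL$ need not be a group and $\varphi$ need not be invertible --- which is why I route everything through a F\o{}lner sequence and weak limits of empirical measures rather than through a Jordan decomposition of an invariant signed measure. An alternative organisation runs via Hahn--Banach: if $f\notin C(\xSp{X},\varphi)$, separate it by a signed measure annihilating $C(\xSp{X},\varphi)$ but not $f$; such a measure is automatically $\varphi$-invariant in the pushforward sense, and the averaging argument above is precisely what is needed to replace it by an invariant probability measure. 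Either way, amenability of $\xL$ --- automatic for finitely generated commutative semigroups --- is what makes the argument go through, and for the $\xL$ actually used in this paper one never needs more than averaging over cubes.
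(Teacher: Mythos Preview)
Your proof is correct, but part~(b) takes a genuinely different route from the paper's. The paper argues by Hahn--Banach: given $f\notin C(\xSp{X},\varphi)$, it constructs a bounded linear functional on $\langle f,C(\xSp{X},\varphi)\rangle$ sending $f\mapsto 1$ and $C(\xSp{X},\varphi)\mapsto 0$, extends it to $C(\xSp{X})$, and represents it by a signed measure $\pi$ via Riesz. The key step you seem to have worried about --- passing from the invariant signed measure to an invariant probability measure --- is handled not by averaging but by taking the Jordan decomposition $\pi=\pi^+-\pi^-$: since $\varphi^i_*\pi=\pi$ gives $\pi=\varphi^i_*\pi^+-\varphi^i_*\pi^-$ as a difference of non-negative measures, minimality of the Jordan decomposition forces $\varphi^i_*\pi^+\geq\pi^+$, and equality of total masses then yields $\varphi^i_*\pi^+=\pi^+$. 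So the Jordan-decomposition route does work even when $\varphi$ is non-invertible, contrary to your closing remark.

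What your approach buys is constructiveness: you exhibit an explicit sequence $f-A_nf$ of coboundaries converging to $f$ in norm, whereas the paper's argument is purely existential. Your route also makes the role of amenability transparent, while the paper's proof hides it inside the Jordan-decomposition step. Conversely, the paper's proof avoids any appeal to F{\o}lner sequences or subsequence extraction and is a touch shorter. Both arguments are standard; yours is closer in spirit to the mean ergodic theorem, the paper's to classical duality.
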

\begin{proof}\ \\
	\begin{enumerate}[ a)]
		\item A probability measure $\pi$ on $\xSp{X}$
			is in $\xSx{P}(\xSp{X},\varphi)$
			if and only if
			$\pi(g\xO\varphi^i - g)=0$
			for every $g\in C(\xSp{X})$ and $i\in\xL$.
			Furthermore, for each $\pi$,
			the set $\{f\in C(\xSp{X}): \pi(f)=0\}$
			is a closed linear subspace of $C(\xSp{X})$.
			Therefore, the equality in~(a) holds.
		\item
			Let us denote the righthand side of the claimed equality in~(b) by $D$.
			The set $D$ is closed and linear,
			and contains all the elements of the form
			$g\xO\varphi^i-g$ for all $g\in C(\xSp{X})$ and $i\in\xL$.
			Therefore, $C(\xSp{X},\varphi)\subseteq D$.
			Conversely, let $f\in C(\xSp{X})\setminus C(\xSp{X},\varphi)$.
			Then every element $h\in\langle f, C(\xSp{X},\varphi)\rangle$
			has a unique representation $h=a_h f + u_h$
			where $a_h\in\xR$ and $u_h\in C(\xSp{X},\varphi)$.
			Define the linear functional
			$J:\langle f, C(\xSp{X},\varphi)\rangle\to\xR$
			by $J(h)=J(a_h f + u_h)\IsDef a_h$.
			Then $J$ is bounded
			($\norm{h}=\norm{a_h f + u_h}\geq a_h\delta=\abs{J(h)}\delta$,
			where $\delta>0$ is the distance between $f$ and $C(\xSp{X},\varphi)$),
			and hence, by the Hahn-Banach theorem, has
			a bounded linear extension $\widehat{J}$ on $C(\xSp{X})$.
			According to the Riesz representation theorem,
			there is a unique signed measure $\pi$ on $\xSp{X}$
			such that $\pi(h)=\widehat{J}(h)$ for every $h\in C(\xSp{X})$.
			Let $\pi=\pi^+-\pi^-$ be
			the Hahn decomposition of $\pi$.
			Since $\pi(f)=1$, either
			$\pi^+(f)>0$ or $\pi^-(f)>0$.
			If $\pi^+(f)>0$, define $\pi^\ast\IsDef \frac{1}{\pi^+(\xSp{X})}\pi^+$;
			otherwise $\pi^\ast\IsDef \frac{1}{\pi^-(\xSp{X})}\pi^-$.
			Then $\pi^*$ is a probability measure
			with $\pi^*(u)=0$ for every $u\in C(\xSp{X},\varphi)$,
			which according to part~(a), ensures that
			$\pi^*\in\xSx{P}(\xSp{X},\varphi)$.
			On the other hand $\pi^*(f)>0$, and hence $f\notin D$.
			We conclude that $C(\xSp{X},\varphi)=D$.
	\end{enumerate}
\end{proof}

If $K(\xSp{X})$ is a dense subspace of $C(\xSp{X})$,
the subspace $C(\xSp{X},\varphi)$ can also be expressed in terms of $K(\xSp{X})$.
Namely, if we define
\begin{align}
\label{eq:def:local-coboundaries}
	K(\xSp{X},\varphi) &\IsDef
		\langle
			h\xO\varphi^i - h: h\in K(\xSp{X}) \text{ and } i\in\xL
		\rangle \; ,
\end{align}
then $C(\xSp{X},\varphi)=\overline{K(\xSp{X},\varphi)}$.
If $D_0$ is a finite generating set for the group/semigroup $\xL$,
the subspace $K(\xSp{X},\varphi)$ may also be expressed as
\begin{align}
	\Big\{
		\sum_{i\in D_0} (h_i\xO\varphi^i - h_i) : h_i\in K(\xSp{X})
	\Big\} \;.
\end{align}
In particular, if $\xL=\xZ$ or $\xL=\xN$, then every element of $K(\xSp{X},\varphi)$
is of the form $h\xO\varphi-h$ for some $h\in K(\xSp{X})$.

A morphism between two dynamical systems $(\xSp{X},\varphi)$ and $(\xSp{Y},\psi)$
is a continuous map $\Theta:\xSp{X}\to\xSp{Y}$ such that
$\Theta\varphi=\psi\Theta$.
An epimorphism (i.e., an onto morphism) is also called a \emph{factor map}.
If $\Theta:\xSp{X}\to\xSp{Y}$ is a factor map, then $(\xSp{Y},\psi)$ is 
a \emph{factor} of $(\xSp{X},\varphi)$, and $(\xSp{X},\varphi)$ is
an \emph{extension} of $(\xSp{Y},\psi)$.
A monomorphism (i.e., a one-to-one morphism) is also known as an \emph{embedding}.
If $(\xSp{Y},\psi)$ is embedded in $(\xSp{X},\varphi)$ by the inclusion map,
$(\xSp{Y},\psi)$ is called a \emph{subsystem} of $(\xSp{X},\varphi)$.
A \emph{conjugacy} between dynamical systems is the same as an isomorphism;
two systems are said to be conjugate if they are isomorphic.

\subsection{Shifts and Cellular Automata}
\label{sec:shifts-ca}
A cellular automaton is a dynamical system on symbolic configurations on a lattice.
The configuration space itself has translational symmetry and can be considered
as a dynamical system with the shift action.
We allow constraints on the local arrangement of symbols
to include models with so-called hard interactions,
such as the hard-core model (Example~\ref{exp:hard-core}) or the contour model (Example~\ref{exp:contour}).
Such a restricted configuration space is modeled by
a (strongly irreducible) shift of finite type.

The \emph{sites} of the $d$-dimensional (hypercubic) \emph{lattice} are indexed by the elements
of the group~$\xL\IsDef\xZ^d$.
A \emph{neighborhood} is a non-empty finite set $N\subseteq\xL$ and specifies a notion of closeness
between the lattice sites.
The \emph{$N$-neighborhood} of a site $a\in\xL$ is the set
$N(a)\IsDef a+N = \{a+i: i\in N\}$.  Likewise, the $N$-neighborhood of
a set $A\subseteq\xL$ is $N(A)\IsDef A+N \IsDef \{a+i: a\in A \text{ and } i\in N\}$.
The (symmetric) \emph{$N$-boundary} of a set $A\subseteq\xL$
is $\partial N(A)\IsDef N(A)\cap N(\xL\setminus A)$.
For a set $A\subseteq\xL$, we denote by $N^{-1}(A)\IsDef A-N$
the set of all sites $b\in\xL$ that have an $N$-neighbor in $A$.

A \emph{configuration} is an assignment $x:\xL\to S$ of symbols from a finite set $S$ to the lattice sites.
The symbol $x(i)$ assigned to a site $i\in\xL$ is also called the \emph{state} of site $i$ in $x$.
For two configurations $x,y:\xL\to S$,
we denote by $\diff(x,y)\IsDef\{i\in\xL: x(i)\neq y(i)\}$
the set of sites on which $x$ and $y$ disagree.
Two configurations $x$ and $y$ are said to be \emph{asymptotic}
(or tail-equivalent)
if $\diff(x,y)$ is finite.
If $D\subseteq\xL$ is finite, an assignment $p:D\to S$ is called a \emph{pattern} on $D$.
If $p:D\to S$ and $q:E\to S$ are two patterns (or partial configurations)
that agree on $D\cap E$, we denote by $p\lor q$ the pattern (or partial configuration)
that agrees with $p$ on $D$ and with $q$ on $E$.

Let $S$ be a finite set of symbols with at least two elements.
The set $S^\xL$ of all configurations of symbols from $S$ on $\xL$
is given the product topology, which is compact and metrizable.
The convergence in this topology is equivalent to site-wise eventual agreement.
If $D\subseteq\xL$ is a finite set and $x$ a configuration (or a partial configuration whose domain includes $D$),
the set
\begin{align}
	[x]_D &\IsDef \{y \in S^\xL: \xRest{y}{D}=\xRest{x}{D}\}
\end{align}
is called a \emph{cylinder} with \emph{base} $D$.
If $p:D\to S$ is a pattern, we may write more concisely $[p]$ rather than $[p]_D$.
In one dimension (i.e., if $\xL=\xZ$), we may also use words to specify cylinder sets:
if $u=u_0u_1\cdots u_{n-1}\in S^*$ is a word over the alphabet $S$ and $k\in\xZ$,
we write $[u]_k$ for the set of configurations $x\in S^\xZ$ such that
$x_{k+i}=u_i$ for each $0\leq i<n$.
The cylinders are clopen (i.e., both open and close) and form a basis for the product topology.
The Borel $\sigma$-algebra on $S^\xL$ is denoted by $\family{F}$.
For $A\subseteq\xL$, the sub-$\sigma$-algebra of events occurring in $A$
(i.e., the $\sigma$-algebra generated by the cylinders whose base is a subset of $A$)
will be denoted by $\family{F}_A$.

Given a configuration $x:\xL\to S$ and an element $k\in\xL$,
we denote by $\sigma^k x$ the configuration obtained by \emph{shifting} (or \emph{translating})
$x$ by vector $k$; that is, $(\sigma^k x)(i)\IsDef x(k+i)$ for every $i\in\xL$.
The dynamical system defined by the action of the shift $\sigma$
on $S^\xL$ is called the \emph{full shift}.
A closed shift-invariant set $\xSp{X}\subseteq S^\xL$ is called a \emph{shift space}
and the subsystem of $(S^\xL,\sigma)$ obtained by restricting $\sigma$ to $\xSp{X}$
is called a \emph{shift} system.  We shall use the same symbol $\sigma$ for the shift action
of all shift systems.  This will not lead to confusion, as the domain
will always be clear from the context.

A shift space $\xSp{X}\subseteq S^\xL$ is uniquely determined by its \emph{forbidden} patterns,
that is, the patterns $p:D\to S$ such that $[p]_D\cap\xSp{X}=\varnothing$.
Conversely, every set $F$ of patterns defines a shift space
by forbidding the occurrence of the elements of $F$; that is,
\begin{align}
	\xSp{X}_{F} &\IsDef S^\xL \setminus
		\big(\bigcup_{i\in\xL}\bigcup_{p\in\xSp{F}} \sigma^{-i}[p]\,\big) \;.
\end{align}
The set of patterns $p:D\to S$ that are allowed in $\xSp{X}$
(i.e., $[p]\cap\xSp{X}\neq\varnothing$) is denoted by $L(\xSp{X})$.
If $D\subseteq\xL$ is finite, we denote by $L_D(\xSp{X})\IsDef L(\xSp{X})\cap S^D$
the set of patterns on $D$ that are allowed in $\xSp{X}$.
For every pattern $p\in L_D(\xSp{X})$, there is a configuration $x\in\xSp{X}$
such that $p\lor \xRest{x}{\xL\setminus D}\in\xSp{X}$.
Given a finite set $D\subseteq\xL$ and a configuration $x\in\xSp{X}$,
we write $L_D(\xSp{X}\,|\,x)$ the set of patterns $p\in L_D(\xSp{X})$
such that $p\lor \xRest{x}{\xL\setminus D}\in\xSp{X}$.

A shift $(\xSp{X},\sigma)$ (or a shift space $\xSp{X}$)
is \emph{of finite type}, if $\xSp{X}$ can be identified
by forbidding a finite set of patterns, that is, $\xSp{X}=\xSp{X}_F$ for a finite set $F$.
The shifts of finite type have the following gluing property:
for every shift of finite type $(\xSp{X},\sigma)$,
there is a neighborhood $0\in M\subseteq\xL$ such that
for every two sets $A,B\subseteq\xL$ with $M(A)\cap M(B)=\varnothing$
and every two configurations $x,y\in\xSp{X}$ that agree outside $A\cup B$,
there is another configuration $z\in\xSp{X}$ that agrees with~$x$ outside~$B$
and with~$y$ outside~$A$.
A similar gluing property is the strong irreducibility:
a shift $(\xSp{X},\sigma)$ is \emph{strongly irreducible} if
there is a neighborhood $0\in M\subseteq\xL$ such that
for every two sets $A,B\subseteq\xL$ with $M(A)\cap M(B)=\varnothing$
and every two configurations $x,y\in\xSp{X}$,
there is another configuration $z\in\xSp{X}$ that agrees with~$x$ in $A$
and with~$y$ in~$B$.
Note that strong irreducibility is a stronger version of
topological mixing.  A dynamical system $(\xSp{X},\varphi)$
is (topologically) \emph{mixing} if for every two non-empty open sets $U,V\subseteq\xSp{X}$,
$U\cap \varphi^{-t}V\neq\varnothing$ for all but finitely many $t$.
A one-dimensional shift of finite type is strongly irreducible
if and only if it is mixing.
Our primary interest in this article will be the shifts of finite type
that are strongly irreducible, for these are sufficiently broad to encompass
the configuration space of most physically interesting lattice models.

The morphisms between shift systems are the same as the sliding block maps.
A map $\Theta:\xSp{X}\to\xSp{Y}$ between two shift spaces
$\xSp{X}\subseteq S^\xL$ and $\xSp{Y}\subseteq T^\xL$ is a \emph{sliding block map}
if there is a neighborhood $0\in N\subseteq\xL$ (a \emph{neighborhood} for $\Theta$)
and a function $\theta:L_N(\xSp{X})\to T$ (a \emph{local rule} for $\Theta$)
such that
\begin{align}
	(\Theta x)(i) &\IsDef \theta\big(\!\xRest{(\sigma^i x)}{N}\!\big)
\end{align}
for every configuration $x\in\xSp{X}$ and every site $i\in\xL$.
Any sliding block map is continuous and commutes with the shift, and hence, is a morphism.
Conversely, every morphism between shift systems is a sliding block map.
Finite type property and strong irreducibility are both conjugacy invariants.
A morphism $\Theta:\xSp{X}\to\xSp{Y}$
between two shifts $(\xSp{X},\sigma)$ and $(\xSp{Y},\sigma)$
is said to be \emph{pre-injective} if
for every two distinct asymptotic configuration $x,y\in\xSp{X}$,
the configurations $\Theta x$ and $\Theta y$ are distinct.

A \emph{cellular automaton} on a shift space $\xSp{X}$
is a dynamical system identified by
an endomorphism $\Phi:\xSp{X}\to\xSp{X}$ of $(\xSp{X},\sigma)$.
The evolution of a cellular automaton starting from a configuration $x\in\xSp{X}$
is seen as synchronous updating of the state of different sites in $x$ using
the local rule of $\Phi$.
A cellular automaton $(\xSp{X},\Phi)$ is said to be \emph{surjective}
(resp., \emph{injective}, \emph{pre-injective}, \emph{bijective})
if $\Phi$ is surjective (resp., injective, pre-injective, bijective).
If $\Phi$ is bijective, the cellular automaton is further
said to be \emph{reversible}, for $(\xSp{X},\Phi^{-1})$ is also a cellular automaton.
In this article, we only work with cellular automata that are defined over
strongly irreducible shifts of finite type.
It is well-known that for cellular automata over strongly irreducible shifts of finite type,
surjectivity and pre-injectivity are equivalent
(the \emph{Garden-of-Eden} theorem; see below).
In particular, every injective cellular automaton is also surjective, and hence reversible.

Let $\xSp{X}\subseteq S^\xL$ be a shift space.
A linear combination of characteristic functions of cylinder sets is
called a \emph{local} observable.
An observable $f:\xSp{X}\to\xR$ is local if and only if
it is $\family{F}_D$-measurable for a finite set $D\subseteq\xL$.
A finite set $D$ with such property is a \emph{base} for $f$;
the value of $f$ at a configuration $x$ can be evaluated by looking
at $x$ ``through the window $D$''.
The set of all local observables on $\xSp{X}$, denoted by $K(\xSp{X})$,
is dense in $C(\xSp{X})$.
The set of all local observables on $\xSp{X}$ with base $D$
is denoted by $K_D(\xSp{X})$.

Let $D\subseteq\xL$ be a non-empty finite set.
The \emph{$D$-block presentation} of a configuration $x:\xL\to S$
is a configuration $x^{[D]}:\xL\to S^D$, where
$x^{[D]}(i)\IsDef \xRest{x}{i+D}$.
If $\xSp{X}$ is a shift space, the set of $D$-block presentations
of the elements of $\xSp{X}$ is called
the $D$-block presentation of $\xSp{X}$,
and is denoted by $\xSp{X}^{[D]}$.
The shifts $(\xSp{X},\sigma)$ and $(\xSp{X}^{[D]},\sigma)$
are conjugate via the map $x\mapsto x^{[D]}$.

More background on shifts and cellular automata (from the view point of dynamical systems) can be found
in the books~\cite{LinMar95,Kit98,Kur03b}.

\subsection{Hamiltonians and Gibbs Measures}
\label{sec:hamiltonian-gibbs}
We will use the term Hamiltonian in more or less
the same sense as in the Ising model or
other lattice models from statistical mechanics,
except that we do not require it to be interpreted
as ``energy''.  A Hamiltonian formalizes the concept of
a local and additive quantity, be it energy, momentum
or a quantity with no familiar physical interpretation.

Let $\xSp{X}$ be an arbitrary set.  A \emph{potential difference}
on $\xSp{X}$ is a partial mapping
$\Delta:\xSp{X}\times\xSp{X}\to\xR$ such that
\begin{enumerate}[ a)]
	\item $\Delta(x,x)=0$ for every $x\in\xSp{X}$,
	\item $\Delta(y,x)=-\Delta(x,y)$ whenever $\Delta(x,y)$ exists, and
	\item $\Delta(x,z)=\Delta(x,y)+\Delta(y,z)$
		whenever $\Delta(x,y)$ and $\Delta(y,z)$ both exist.
\end{enumerate}
Let $\xSp{X}\subseteq S^\xL$ be a shift space.
A potential difference $\Delta$ on $\xSp{X}$
is a (relative) \emph{Hamiltonian}
if
\begin{enumerate}[ a)]
\setcounter{enumi}{3}
	\item $\Delta(x,y)$ exists precisely when $x$ and $y$ are asymptotic,
	\item $\Delta(\sigma^a x,\sigma^a y)=\Delta(x,y)$
		whenever $\Delta(x,y)$ exists and $a\in\xL$, and
	\item 
		For every finite $D\subseteq\xL$, $\Delta$ is continuous when restricted
		to pairs $(x,y)$ with $\diff(x,y)\subseteq D$.
		\label{item:hamiltonian:continuous}
\end{enumerate}
Note that due to the compactness of $\xSp{X}$,
the latter continuity
is uniform among all pairs $(x,y)$ with $\diff(x,y)\subseteq D$.
If the condition~(\ref{item:hamiltonian:continuous}) is strengthened by the following condition,
we say that $\Delta$ is a \emph{finite-range} Hamiltonian.
\begin{enumerate}[ a$\!\>'$)]
\setcounter{enumi}{5}
	\item \label{item:hamiltonian:range}%
		There exists a neighborhood $0\in M\subseteq\xL$
		(the \emph{interaction} neighborhood of $\Delta$)
		such that $\Delta(x,y)$ depends only on
		the restriction of~$x$ and~$y$ to
		$M(\diff(x,y))$.
\end{enumerate}

Hamiltonians in statistical mechanics are usually constructed
by assigning interaction energies to different local arrangements
of site states.
Equivalently, they can be constructed using observables.
A local observable $f\in K(\xSp{X})$ defines a finite-range Hamiltonian
$\Delta_f$ on $\xSp{X}$ via
\begin{equation}
\label{eq:hamiltonian:observable}
	\Delta_f(x,y)\IsDef \sum_{i\in\xL}\left[
		f(\sigma^i y) - f(\sigma^i x)
	\right]
\end{equation}
for every two asymptotic configurations $x,y\in\xSp{X}$.
The value of $f\xO\sigma^i$ is then interpreted as the contribution of
site $i$ to the energy-like quantity formalized by $\Delta_f$.
The same construction works for non-local observables
that are ``sufficiently short-ranged'' (i.e., whose dependence on faraway sites decays rapidly).
The \emph{variation} of an observable $f:\xSp{X}\to\xR$
relative to a finite set $A\subseteq\xL$ is defined as
\begin{align}
	\xvar_A(f) &= \sup_{\substack{x,y\in\xSp{X}\\ \text{$x=y$ on $A$}}}\abs{f(y)-f(x)} \;,
\end{align}
where the supremum is taken over all pairs of configurations $x$ and $y$ in $\xSp{X}$
that agree on $A$.
A continuous observable $f$ is said to have \emph{summable variations}
if
\begin{align}
	\sum_{n=0}^\infty \abs{\partial I_n}\xvar_{I_n}(f) &< \infty \;,
\end{align}
where $I_n\IsDef[-n,n]^d$ and $\partial I_n\IsDef I_{n+1}\setminus I_n$.
Every observable $f$ that has summable variations
defines a Hamiltonian via~(\ref{eq:hamiltonian:observable}),
in which the sum is absolutely convergent.
We denote the set of observables with summable variations with $SV(\xSp{X})$.
Note that $K(\xSp{X})\subseteq SV(\xSp{X})\subseteq C(\xSp{X})$.

\begin{question}
	Is every Hamiltonian on a strongly irreducible shift of finite type
	generated by an observable with summable variations
	via~(\ref{eq:hamiltonian:observable})?
	Is every finite-range Hamiltonian on a strongly irreducible shift of finite type
	generated by a local observable?
\end{question}

\begin{proposition}
\label{prop:Hamiltonian:observable:local}
	Every finite-range Hamiltonian on a full shift is generated by a local observable.
\end{proposition}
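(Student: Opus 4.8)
The plan is to \emph{localize} $\Delta$ around a fixed constant reference configuration, using a translation-invariant total order on $\xL$ to decide which sites lie ``before'' the origin, and then to check that the resulting local observable $f$ satisfies $\Delta_f=\Delta$. Fix a symbol $s_\star\in S$ and let $\overline{s}\in S^\xL$ be the constant configuration $\overline{s}(i)=s_\star$. After enlarging it if necessary, let $M$ (with $0\in M$) be an interaction neighbourhood of $\Delta$, and let $\prec$ be the lexicographic order on $\xL=\xZ^d$; this is a translation-invariant total order, so every finite subset of $\xL$ has a $\prec$-least element. Put $W\IsDef M\cap\{i\in\xL:i\preceq 0\}$, a finite neighbourhood of $0$, and for a configuration $x$ let $u_x$ agree with $x$ on $M\cap\{i\prec 0\}$ and equal $\overline{s}$ elsewhere, and let $u'_x$ agree with $x$ on $W$ and equal $\overline{s}$ elsewhere. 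Since $\diff(u_x,u'_x)\subseteq\{0\}$, the value $\Delta(u_x,u'_x)$ is defined, and it depends only on $\xRest{x}{W}$; define
\[
	f\colon\xSp{X}\to\xR, \qquad f(x)\IsDef\Delta(u_x,u'_x),
\]
which is then a local observable with base $W$, i.e.\ $f\in K_W(\xSp{X})$. Two elementary facts will be used: $f(\overline{s})=\Delta(\overline{s},\overline{s})=0$, and $f(x)=0$ whenever $x(0)=s_\star$ (then $u_x=u'_x$). Also, being local, $f$ generates via~(\ref{eq:hamiltonian:observable}) a finite-range Hamiltonian $\Delta_f$ whose interaction neighbourhood is contained in $W-W$.

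The first step is to show $\Delta_f$ and $\Delta$ agree on pairs asymptotic to $\overline{s}$. Let $x$ be asymptotic to $\overline{s}$ and write $\diff(\overline{s},x)=\{v_1\prec\cdots\prec v_k\}$. Put $z_0\IsDef\overline{s}$ and let $z_j$ agree with $x$ on $\{v_1,\dots,v_j\}$ and equal $\overline{s}$ elsewhere, so $z_k=x$ and $\diff(z_{j-1},z_j)=\{v_j\}$. Iterating the additivity axiom~(c) gives $\Delta(\overline{s},x)=\sum_{j=1}^{k}\Delta(z_{j-1},z_j)$. Since $\{v_1,\dots,v_{j-1}\}=\diff(\overline{s},x)\cap\{i\prec v_j\}$ and $x$ equals $\overline{s}$ off $\diff(\overline{s},x)$, translating by $v_j$ and using the finite-range property shows $\xRest{(\sigma^{v_j}z_{j-1})}{M}=\xRest{u_{\sigma^{v_j}x}}{M}$ and $\xRest{(\sigma^{v_j}z_j)}{M}=\xRest{u'_{\sigma^{v_j}x}}{M}$, whence, by shift-invariance of $\Delta$, $\Delta(z_{j-1},z_j)=\Delta(u_{\sigma^{v_j}x},u'_{\sigma^{v_j}x})=f(\sigma^{v_j}x)$. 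Since $f(\sigma^v x)=0$ for $v\notin\diff(\overline{s},x)$ (then $(\sigma^v x)(0)=x(v)=s_\star$) and $f(\sigma^v\overline{s})=0$, the sum $\Delta_f(\overline{s},x)=\sum_{v\in\xL}\bigl[f(\sigma^v x)-f(\sigma^v\overline{s})\bigr]$ is finite and equals $\sum_{j=1}^{k}f(\sigma^{v_j}x)=\Delta(\overline{s},x)$. Axioms~(a)--(c) then give $\Delta_f(x,y)=-\Delta_f(\overline{s},x)+\Delta_f(\overline{s},y)=\Delta(x,y)$ for all $x,y$ asymptotic to $\overline{s}$.

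The second step removes the restriction to configurations asymptotic to $\overline{s}$. Given asymptotic $x,y$, write $\diff(x,y)=\{a_1,\dots,a_m\}$ and interpolate by $w_0=x,w_1,\dots,w_m=y$ where $w_\ell$ agrees with $y$ on $\{a_1,\dots,a_\ell\}$ and with $x$ elsewhere, so $\diff(w_{\ell-1},w_\ell)=\{a_\ell\}$; by additivity it suffices to treat $\diff(x,y)=\{a\}$, and by shift-invariance we may take $a=0$. Let $B\IsDef M-M$; this is a finite neighbourhood of $0$ containing both $M$ and $W-W$, hence the interaction neighbourhoods of $\Delta$ and $\Delta_f$ at $0$. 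Let $x',y'$ agree with $x,y$ respectively on $B$ and equal $\overline{s}$ outside $B$. Then $x',y'$ are asymptotic to $\overline{s}$ with $\diff(x',y')=\{0\}$, so $\Delta(x',y')=\Delta_f(x',y')$ by the first step, while $\Delta(x,y)=\Delta(x',y')$ and $\Delta_f(x,y)=\Delta_f(x',y')$ by the finite-range property of each. Therefore $\Delta=\Delta_f$, i.e.\ $\Delta$ is generated by the local observable $f$.

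I expect the main obstacle to be the first step: arranging the localization so that $f$ is a single, genuinely shift-covariant local rule, and so that $\sum_{v}f(\sigma^v x)$ collapses to exactly the telescoping sum computing $\Delta(\overline{s},x)$. The translation-invariant order is what turns the one-site-at-a-time construction into a well-defined observable, and the vanishing of $f$ at $\overline{s}$ and whenever $x(0)=s_\star$ is precisely what kills the spurious terms in the sum. The second step is routine but genuinely needed, since on a full shift a pair of asymptotic configurations need not be asymptotic to the reference configuration $\overline{s}$.
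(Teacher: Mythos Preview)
Your proof is correct and follows essentially the same approach as the paper: fix a uniform reference configuration, use the lexicographic order to write $\Delta(\overline{s},x)$ as a telescoping sum of one-site changes, read off the local observable $f$ from the generic term of this sum, and then pass to arbitrary asymptotic pairs by replacing $x,y$ with configurations agreeing with them on a sufficiently large interaction window around $\diff(x,y)$ and with $\overline{s}$ elsewhere. Your version is slightly more explicit (you build $u_x,u'_x$ already supported in $M$, making locality of $f$ immediate, and you reduce the second step to single-site differences), but the construction and the logic are the same as the paper's.
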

\begin{proof}
	The idea is to write the Hamiltonian as a telescopic sum
	(see e.g.~\cite{HatTak91}, or~\cite{Kar05}, Section~5).
	
	Let $\Delta$ be a finite-range Hamiltonian with interaction range $M$.
	Let $\xQuiet$ be an arbitrary uniform configuration.
	Let $\preceq$ be the lexicographic order on $\xL=\xZ^d$, and denote
	by $\successor(k)$, the successor of site $k\in\xL$ in this ordering.
	For every configuration $z$ that is asymptotic to $\xQuiet$, we can write
	\begin{align}
		\Delta(\xQuiet,z) &= \sum_{k\in\xL} \Delta(z_k,z_{\successor(k)}) \;,
	\end{align}
	where $z_k$ is the configuration that agrees with $z$ on every site $i\prec k$
	and with $\xQuiet$ on every site $i\succeq k$.
	Note that all but a finite number of terms in the above sum are~$0$.

	For every configuration $z$, we define $f(z)\IsDef \Delta(z_0,z_{\successor(0)})$
	with the same definition for $z_k$ as above.
	This is clearly a local observable with base $M$.
	If $z$ is asymptotic to $\xQuiet$, the above telescopic expansion shows that
	$\Delta(\xQuiet,z)=\Delta_f(\xQuiet,z)$.
	If $x$ and $y$ are arbitrary asymptotic configurations,
	we have $\Delta(x,y)=\Delta(\hat{x},\hat{y})=\Delta(\xQuiet,\hat{y})-\Delta(\xQuiet,\hat{x})$,
	where $\hat{x}$ and $\hat{y}$ are the configurations that agree, respectively,
	with $x$ and $y$ on $M^{-1}(M(\diff(x,y)))$ and with $\xQuiet$ everywhere else.
	Therefore, we can write $\Delta(x,y)=\Delta(\hat{x},\hat{y})=\Delta_f(\hat{x},\hat{y})=\Delta_f(x,y)$.
\end{proof}
Whether the above proposition extends to finite-range Hamiltonians on
strongly irreducible shifts of finite type is not known,
but in~\cite{ChaMey13}, examples of shifts of finite type are given
on which not every finite-range Hamiltonian is generated by a local observable.
On the other hand, the main result of~\cite{ChaHanMarMeyPav13} implies that
on a one-dimensional mixing shift of finite type,
every finite-range Hamiltonian can be generated by a local observable.

The \emph{trivial} Hamiltonian on $\xSp{X}$ (i.e., the Hamiltonian $\Delta$ for which
$\Delta(x,y)=0$ for all asymptotic $x,y\in\xSp{X}$) plays a special role
as it identifies an important notion of equivalence between observables (see Section~\ref{sec:physical-equivalence}).

Another important concept regarding Hamiltonians is that of ground configurations.
Let $\xSp{X}\subseteq S^\xL$ be a shift space and $\Delta$ a Hamiltonian on $\xSp{X}$.
A \emph{ground} configuration for $\Delta$ is a configuration $z\in\xSp{X}$
such that $\Delta(z,x)\geq 0$ for every configuration $x\in\xSp{X}$ that is asymptotic to $z$.
The existence of ground configurations is well known.
We shall use it later in the proof of Theorem~\ref{thm:strongly-transitive-CA:no-cons-law}.
\begin{proposition}
\label{prop:Hamiltonian:ground}
	Every Hamiltonian on a shift space of finite type has at least one ground configuration.
\end{proposition}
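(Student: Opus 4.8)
The plan is to extract a ground configuration as the limit of configurations that are ``locally as low as possible'' and to use the finite type (gluing) property to show that no asymptotic perturbation can lower the energy. Fix a shift of finite type $\xSp{X}\subseteq S^\xL$ with gluing neighborhood $0\in M\subseteq\xL$ as in the finite type gluing property, and fix a Hamiltonian $\Delta$ on $\xSp{X}$. Pick an arbitrary reference configuration $w\in\xSp{X}$. For each $n$, consider the finite cube $I_n=[-n,n]^d$ and let $z^{(n)}\in\xSp{X}$ be a configuration that agrees with $w$ outside $I_{n}$ (or outside $M(I_n)$, to be safe with the gluing) and that \emph{minimizes} $\Delta(w,\cdot)$ among all such configurations; the minimum exists because there are only finitely many patterns on $I_n$ allowed in $\xSp{X}$, and for each such pattern $\Delta(w,\cdot)$ takes a single value by condition~(e) (shift-invariance is not needed here, only that $\Delta(w,y)$ depends on $y$ through its restriction to a finite window once $\diff(w,y)$ is confined to $I_n$ — which is condition~(f) continuity, uniform by compactness). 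By compactness of $\xSp{X}$, pass to a subsequence so that $z^{(n)}\to z$ for some $z\in\xSp{X}$.

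Next I would verify that $z$ is a ground configuration. Let $x\in\xSp{X}$ be asymptotic to $z$, say $\diff(x,z)\subseteq D$ for a finite $D$; I want $\Delta(z,x)\geq 0$. Choose $n$ large enough that $M(D)\subseteq I_{n-1}$ and (using convergence) that $z^{(n)}$ agrees with $z$ on a large cube containing $M(D)$. Now glue: since $x$ and $z^{(n)}$ agree with $z$ on $M(D)$ — hence agree with each other there — and $z^{(n)}$ agrees with $w$ outside $M(I_n)$, the gluing property of the shift of finite type produces a configuration $x^{(n)}\in\xSp{X}$ that agrees with $x$ on $D$ and with $z^{(n)}$ outside $I_n$; in particular $x^{(n)}$ agrees with $w$ outside $M(I_n)$ and $\diff(w,x^{(n)})\subseteq M(I_n)$. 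By minimality of $z^{(n)}$ we have $\Delta(w,z^{(n)})\leq \Delta(w,x^{(n)})$, i.e.\ $\Delta(z^{(n)},x^{(n)})\geq 0$. But $z^{(n)}$ and $x^{(n)}$ differ only inside $D$ (they agree with $z$ and $x$ respectively there, and with each other outside), and on $M(D)$ their restrictions coincide with those of $z$ and $x$; so by condition~(f) (continuity of $\Delta$ on pairs differing inside the fixed finite set $D$) and the uniform continuity granted by compactness, $\Delta(z^{(n)},x^{(n)})=\Delta(z,x)$ for all large $n$. Hence $\Delta(z,x)\geq 0$.

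The main obstacle is bookkeeping: making sure the two gluings (first to build $z^{(n)}$ from $w$, then to build $x^{(n)}$ from $z^{(n)}$ and $x$) are compatible, i.e.\ that the ``agree outside $A\cup B$'' hypothesis of the gluing property is met with $M$-separated $A$ and $B$ so that the resulting configuration genuinely lies in $\xSp{X}$, and that after gluing $\diff(w,x^{(n)})$ stays inside the window on which $\Delta(w,\cdot)$ was minimized. This is why it is cleanest to take the minimization over configurations agreeing with $w$ outside $M(I_n)$ rather than outside $I_n$, and to choose $n$ with a comfortable margin ($M(M(D))\subseteq I_{n-1}$, say). Once the regions are nested with enough slack, everything reduces to the two already-quoted facts: the finite type gluing property and the finite‑window continuity of $\Delta$; no strong irreducibility is needed, only the finite type hypothesis stated in the proposition.
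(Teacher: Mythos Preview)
Your approach is essentially the same as the paper's: minimize $\Delta(w,\cdot)$ over configurations agreeing with a fixed reference outside a growing window, extract a convergent subsequence, and use the gluing property of the shift of finite type to compare the limit against any asymptotic competitor. One slip: for a general Hamiltonian you cannot conclude $\Delta(z^{(n)},x^{(n)})=\Delta(z,x)$ for all large $n$ merely from agreement on $M(D)$ --- that would require the finite-range condition~(f$'$), not condition~(f). What condition~(f) gives you is continuity on pairs with $\diff$ contained in the fixed set $D$, so from $z^{(n)}\to z$, $x^{(n)}\to x$, and $\diff(z^{(n)},x^{(n)})\subseteq D$ you get $\Delta(z^{(n)},x^{(n)})\to\Delta(z,x)$; since each term is $\geq 0$, the limit is $\geq 0$, which is all you need. (Also, the appeal to ``condition~(e)'' when arguing that the minimum exists is a red herring: the minimum exists simply because $L_{M(I_n)}(\xSp{X}\,|\,w)$ is finite.)
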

\begin{proof}
	Let $\xSp{X}$ be a shift space of finite type and $\Delta$ a Hamiltonian on $\xSp{X}$.
	Let $I_1\subseteq I_2\subseteq \cdots$ be a chain of finite subsets of $\xL$
	that is exhaustive (i.e., $\bigcup_n I_n=\xL$).
	For example, we could take $I_n=[-n,n]^d$ in $\xL=\xZ^d$.
	Let $z_0\in\xSp{X}$ be an arbitrary configuration,
	and construct a sequence of configurations $z_1,z_2,\ldots\in\xSp{X}$ as follows.
	
	For each $n$, choose $z_n\in\xSp{X}$ to be a configuration with $\diff(z_0,z_n)\subseteq I_n$
	such that $\Delta(z_0,z_n)$ is minimum
	(i.e., $\Delta(z_0,z_n)\leq\Delta(z_0,x)$ for all $x\in\xSp{X}$ with $\diff(z_0,x)\subseteq I_n$).
	The minimum exists because $L_{I_n}(\xSp{X}\,|\,z_0)$ is finite.
	By compactness, there is a subsequence $n_1<n_2<\cdots$ such that
	$z_{n_i}$ converges.
	The limit $z\IsDef \lim_{i\to\infty} z_{n_i}$ is a ground configuration.
	
	To see this, let $x\in\xSp{X}$ be asymptotic to $z$, and choose $k$ such that
	$I_k\supseteq\diff(z,x)$.  Since $\xSp{X}$ is of finite type,
	there is a $l\geq k$ such that for every two configuration $u,v\in\xSp{X}$
	that agree on $I_l\setminus I_k$, there is a configuration $w\in\xSp{X}$
	that agrees with $u$ on $I_l$ and with $v$ outside $I_k$.
	In particular, for every sufficiently large $i$,
	$x$ (and $z$) agree with $z_{n_i}$ on $I_l\setminus I_k$, and
	hence there is a configuration $x_{n_i}$ that agrees with
	$x$ on $I_l$ and with $z_{n_i}$ outside $I_k$.
	Then, $\diff(z_{n_i},x_{n_i})=\diff(z,x)$.
	Since $z_{n_i}\to z$, we also get $x_{n_i}\to x$.
	The continuity property of $\Delta$ now implies that
	$\Delta(z_{n_i},x_{n_i})\to\Delta(z,x)$.
	On the other hand,
	$\Delta(z_{n_i},x_{n_i})=\Delta(z_0,x_{n_i})-\Delta(z_0,z_{n_i})\geq 0$.
	Therefore, $\Delta(z,x)\geq 0$.
\end{proof}

\begin{example}[Ising model]
\label{exp:ising}
	The \emph{Ising model} is a simple model on the lattice designed to
	give a statistical explanation of the phenomenon of
	spontaneous magnetization in ferromagnetic material (see e.g.~\cite{Vel09,Geo88}).
	The configuration space of the $d$-dimensional
	Ising model is the full shift $\xSp{X}\IsDef\{\symb{-},\symb{+}\}^{\xL}$,
	where $\xL=\xZ^d$, and
	where having $\symb{+}$ and~$\symb{-}$ at a site $i$ is interpreted as
	an upward or downward magnetization of the tiny segment of the material approximated by site $i$.
	The state of site $i$ is called the \emph{spin} at site $i$.
	
	The interaction between spins is modeled by associating an interaction energy $-1$
	to every two adjacent spins that are aligned (i.e., both are upward or both downward)
	and energy $+1$ to every two adjacent spins that are not aligned.
	Alternatively, we can specify the energy using the energy observable $f\in K(\xSp{X})$
	defined by
	\begin{align}
		f(x) &\IsDef \begin{cases}
			\frac{1}{2}(n^{\symb{-}}(x) - n^{\symb{+}}(x)) & \text{if $x(0)=\symb{+}$,} \\
			\frac{1}{2}(n^{\symb{+}}(x) - n^{\symb{-}}(x)) & \text{if $x(0)=\symb{-}$,}
		\end{cases}
	\end{align}
	where $n^{\symb{+}}(x)$ and $n^{\symb{-}}(x)$ are, respectively, the number of upward and downward
	spins adjacent to site $0$.
	This defines a Hamiltonian $\Delta_f$.
	The two uniform configurations (all sites $\symb{+}$ and all sites $\symb{-}$)
	are ground configurations for $\Delta_f$,
	although $\Delta_f$ has many other ground configurations.
	\exampleqed
\end{example}

\begin{example}[Contour model]
\label{exp:contour}
	The \emph{contour model} was originally used to study phase transition
	in the Ising model.  Each site of two-dimensional lattice $\xL=\xZ^2$
	may take a state from the set
	\begin{align}
		T &\IsDef \{
			\,\ContourEmpty\,, \,\ContourH\,, \,\ContourV\,,
			\,\ContourLD\,, \,\ContourRD\,, \,\ContourRU\,, \,\ContourLU\,, \,\ContourX\,
		\} \;.
	\end{align}
	Not all configurations are allowed.
	The allowed configurations are those in which the state of adjacent sites match
	in the obvious fashion.  For example, $\ContourLD$ can be placed on the right side of $\ContourH$
	but not on top of it, and $\ContourEmpty$ can be placed on the left side of $\ContourRU$
	but not on its right side.  The allowed configurations depict decorations of the lattice
	formed by closed or bi-infinite paths (see Figure~\ref{fig:ising-contour}b).  These
	paths are referred to as \emph{contours}.
	
	The space of allowed configurations $\xSp{Y}\subseteq T^\xZ$ is a shift space of finite type.
	It is also easy to verify that $(\xSp{Y},\sigma)$ is strongly irreducible.
	Define the local observable $g\in K(\xSp{Y})$, where
	\begin{align}
		g(x) &\IsDef \begin{cases}
			0		&\text{if $x(0)=\ContourEmpty$,}\\
			1		&\text{if $x(0)\in \{
						\,\ContourH\,, \,\ContourV\,,
						\,\ContourLD\,, \,\ContourRD\,, \,\ContourRU\,, \,\ContourLU\,
					\}$,} \\
			2		&\text{if $x(0)=\ContourX$.}
		\end{cases}
	\end{align}
	The Hamiltonian $\Delta_g$ simply compares the length of the contours in two asymptotic configurations.
	The uniform configuration in which every site is in state $\ContourEmpty$
	is a ground configuration for $\Delta_g$.
	Any configuration with a single bi-infinite horizontal (or vertical) contour is also
	a ground configuration for $\Delta_g$.
	\exampleqed
\end{example}

\begin{example}[Hard-core gas]
\label{exp:hard-core}
	Let $0\subseteq W\subseteq\xL$ be a neighborhood,
	and define a shift space $\xSp{X}\subseteq\{\symb{0},\symb{1}\}^\xL$ consisting of all configurations $x$
	for which $W(i)\cap W(j)=\varnothing$ for every distinct $i,j\in\xL$ with $x(i)=x(j)=\symb{1}$.
	This is the configuration space of the \emph{hard-core gas} model.  A site having state $\symb{1}$
	is interpreted as containing a particle, whereas a site in state $\symb{0}$
	is thought of to be empty.  It is assumed that each particle occupies a volume $W$
	and that the volume of different particles
	cannot overlap.  The one-dimensional version of the hard-core shift
	with volume $W=\{0,1\}$ is also known as the \emph{golden mean} shift.
	
	The hard-core shift is clearly of finite type.  It is also strongly irreducible.
	In fact, $\xSp{X}$ has a stronger irreducibility property:
	for every two asymptotic configurations $x,y\in\xSp{X}$,
	there is a sequence $x=x_0,x_1,\ldots,x_n=y$ of configurations in $\xSp{X}$ such that
	$\diff(x_i,x_{i+1})$ is singleton.  In particular, Proposition~\ref{prop:Hamiltonian:observable:local}
	can be adapted to cover the Hamiltonians on~$\xSp{X}$.
	
	Let $h(x)\IsDef 1$ if $x(0)=\symb{1}$ and $h(x)\IsDef 0$ otherwise.
	The Hamiltonian $\Delta_h$ compares the number of particles on two asymptotic configurations.
	The empty configuration is the unique ground configuration for $\Delta_h$.%
	\exampleqed
\end{example}

Gibbs measures are a class of probability measures identified by Hamiltonians.
Let $\xSp{X}$ be a shift space. 
A \emph{Gibbs measure} for a finite-range Hamiltonian $\Delta$ is a probability measure
$\pi\in\xSx{P}(\xSp{X})$ satisfying
\begin{align}
\label{eq:gibbs:def:local}
	\pi([y]_E) &= \xe^{-\Delta(x,y)}\,\pi([x]_E)
\end{align}
for every two asymptotic configurations $x,y\in\xSp{X}$ and
all sufficiently large $E$.
(If $M$ is the interaction neighborhood of $\Delta$,
the above equality will hold for every $E\supseteq M(\diff(x,y))$.)
More generally, if $\Delta$ is an arbitrary Hamiltonian on $\xSp{X}$,
a probability measure $\pi\in\xSx{P}(\xSp{X})$ is said to be a Gibbs measure for $\Delta$ if
\begin{align}
\label{eq:gibbs:def}
	\lim_{E\nearrow\xL} \frac{\pi([y]_E)}{\pi([x]_E)}
		&= \xe^{-\Delta(x,y)}
\end{align}
for every configuration $x\in\xSp{X}$ that is in the support of $\pi$ and every configurations $y\in\xSp{X}$
that is asymptotic to $x$.  The limit is taken along the directed family of finite subsets of $\xL$ with inclusion.\footnote{%
	The original definition of a Gibbs measure given by Dobrushin, Lanford and Ruelle 
	is via conditional probabilities (see e.g.~\cite{Geo88}).
	The definition given here can be shown to be equivalent to the original definition
	using the martingale convergence theorem. See Appendix~\ref{apx:gibbs:def:equivalence}.
}
The above limit is in fact uniform among all pairs of configurations $x,y$
in the support of $\pi$ whose disagreements $\diff(x,y)$ are included in a finite set $D\subseteq\xL$
(see Appendix~\ref{apx:gibbs:def:equivalence}).
Note also that if $\xSp{X}$ is strongly irreducible, every Gibbs measure on $\xSp{X}$ has full support,
and therefore, the relation~(\ref{eq:gibbs:def}) must hold for every two asymptotic $x,y\in\xSp{X}$.
The set of Gibbs measures for a Hamiltonian $\Delta$,
denoted by $\xSx{G}_\Delta(\xSp{X})$,
is non-empty, closed and convex.
According to the Krein-Milman theorem,
the set $\xSx{G}_\Delta(\xSp{X})$ coincides with the closed convex hull of its extremal elements.
The extremal elements of $\xSx{G}_\Delta(\xSp{X})$ are mutually singular.
The subset $\xSx{G}_\Delta(\xSp{X},\sigma)$ of shift-invariant elements
of $\xSx{G}_\Delta(\xSp{X})$ is also non-empty
(using convexity and compactness), closed and convex,
and hence equal to the closed convex hull of its extremal elements.
The extremal elements of $\xSx{G}_\Delta(\xSp{X},\sigma)$ are precisely
its ergodic elements, and hence again mutually singular.

The Gibbs measures associated to finite-range Hamiltonians
have the Markov property.
A measure $\pi$ on a shift space $\xSp{X}\subseteq S^\xL$
is called a \emph{Markov measure} if there is
a neighborhood $0\in M\subseteq\xL$ such that
for every two finite sets $D,E\subseteq\xL$ with
$M(D)\subseteq E$ and every pattern $p:E\to S$
with $\pi([p]_{E\setminus D})>0$ it holds
\begin{equation}
	\pi\left([p]_D\,\middle|\, [p]_{E\setminus D}\right) =
	\pi\left([p]_D\,\middle|\, [p]_{M(D)\setminus D}\right) \;.
\end{equation}
The data contained in the conditional probabilities
$\pi\left([p]_D\,\middle|\, [p]_{M(D)\setminus D}\right)$
for all choices of $D$, $E$ and $p$ is called
the \emph{specification} of the Markov measure $\pi$.
The specification of a Gibbs measure associated to a finite-range Hamiltonian
is \emph{positive} (i.e., all the conditional distributions are positive) and \emph{shift-invariant}.
Conversely, every positive shift-invariant
Markovian specification is the specification of a Gibbs measure.
In fact, Equation~(\ref{eq:gibbs:def:local}) identifies
a one-to-one correspondence between
finite-range Hamiltonians and the
positive shift-invariant Markovian specifications.

The uniform Bernoulli measure on a full shift $\xSp{X}\subseteq S^\xL$
is the unique Gibbs measure for the trivial Hamiltonian on $\xSp{X}$.
More generally, the shift-invariant Gibbs measures on a strongly irreducible shift of finite type
associated to the trivial Hamiltonian are precisely the measures
that maximize the entropy (see below).

We shall call a Gibbs measure \emph{regular} if
its corresponding Hamiltonian is 
generated by an observable with summable variations.

\subsection{Entropy, Pressure, and The Variational Principle}
\label{sec:entropy-pressure}
Statistical mechanics attempts to explain the macroscopic behaviour
of a physical system by statistical analysis of its microscopic details.
In the subjective interpretation (see~\cite{Jay57}), 
the probabilities reflect the partial knowledge of an observer.
A suitable choice for a probability distribution over the possible microscopic
states of a system is therefore
one which, in light of the available partial observations, is least presumptive.

The standard approach to pick the least presumptive probability distribution
is by maximizing entropy.  The characterization of the uniform probability
distribution over a finite set
as the probability distribution that maximizes entropy is widely known.
Maximizing entropy subject to partial observations
leads to Boltzmann distribution.
The infinite systems based on lattice configurations
have a similar (though more technical) picture.
Below, we give a minimum review necessary for our discussion.
Details and more information can be found in
the original monographs and textbooks~\cite{Rue04,Isr79,Geo88,Sim93,Kel98}.
The equilibrium statistical mechanics, which can be built upon
the maximum entropy postulate, has been enormously successful
in predicting the absence or presence of phase transitions,
and in describing the qualitative features of the phases; see~\cite{Geo88}.

Let $\xSp{X}\subseteq S^\xL$ be a shift space
and $\pi\in\xSx{P}(\xSp{X})$ a 
probability measure on $\xSp{X}$.
The \emph{entropy} of a finite set $A\subseteq\xL$ of sites
under $\pi$ is
\begin{equation}
	H_\pi(A) \IsDef -\sum_{p\in L_A(\xSp{X})} \pi([p]_A)\log\pi([p]_A) \;.
\end{equation}
(By convention, $0\log 0\IsDef 0$.)
This is the same as the Shannon entropy of the random variable $\xv{x}_A$
in the probability space $(\xSp{X},\pi)$, where $\xv{x}_A$
is the projection $x\mapsto\xRest{x}{A}$.
Let us recall few basic properties of the entropy.
The entropy $H(\xv{x})$ of a random variable $\xv{x}$ is non-negative.
If $\xv{x}$ takes its values in a finite set of cardinality $n$,
then $H(\xv{x})\leq\log n$.
The entropy is sub-additive, meaning that
$H((\xv{x},\xv{y}))\leq H(\xv{x})+H(\xv{y})$ for every two random variables $\xv{x}$ and $\xv{y}$.
If $\xv{y}=f(\xv{x})$ depends deterministically on $\xv{x}$,
we have $H(f(\xv{x}))\leq H(\xv{x})$.

Let $I_n\IsDef [-n,n]^d\subseteq\xL$ be the centered
$(2n+1)\times (2n+1)\times \cdots\times (2n+1)$ box in the lattice.
If $\pi$ is shift-invariant, the sub-additivity of $A\mapsto H_\pi(A)$
ensures that the limit
\begin{equation}
	h_\pi(\xSp{X},\sigma)\IsDef
		\lim_{n\to\infty} \frac{H_\pi(I_n)}{\abs{I_n}} =
		\inf_{n\geq 0} \frac{H_\pi(I_n)}{\abs{I_n}}
\end{equation}
exists (Fekete's lemma).
The limit value $h_\pi(\xSp{X},\sigma)$ is the average entropy per site of $\pi$ over $\xSp{X}$.
It is also referred to as the \emph{(Kolmogorov-Sinai) entropy} of
the dynamical system $(\xSp{X},\sigma)$ under $\pi$ (see~\cite{Wal82}, Theorem~4.17).

The entropy functional $\pi \mapsto h_\pi(\xSp{X},\sigma)$ is non-negative
and affine.  Although it is not continuous, it is upper semi-continuous.
\begin{proposition}[Upper Semi-continuity]
	If $\lim_{i\to\infty}\pi_i=\pi$, then
	$\limsup_{i\to\infty}h_{\pi_i}(\xSp{X},\sigma) \leq h_\pi(\xSp{X},\sigma)$.
\end{proposition}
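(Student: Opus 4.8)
The plan is to exploit the representation $h_\pi(\xSp{X},\sigma) = \inf_{n\geq 0} H_\pi(I_n)/\abs{I_n}$ established above, together with the elementary fact that a pointwise infimum of a family of continuous functions is upper semi-continuous. So the proposition is not really about any delicate property of entropy; it is about working with the infimum form of $h_\pi$ rather than the limit form.

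First I would show that for each fixed $n$ the functional $\pi\mapsto H_\pi(I_n)$ is continuous on $\xSx{P}(\xSp{X})$ in the weak topology. For every pattern $p\in L_{I_n}(\xSp{X})$ the cylinder $[p]_{I_n}$ is clopen, so its indicator is a continuous observable, and hence $\pi\mapsto\pi([p]_{I_n})$ is continuous by the very definition of the weak topology. Since $L_{I_n}(\xSp{X})$ is finite and $H_\pi(I_n)=-\sum_{p\in L_{I_n}(\xSp{X})}\pi([p]_{I_n})\log\pi([p]_{I_n})$ is a finite sum of compositions of these continuous maps with $t\mapsto -t\log t$, which is continuous on $[0,1]$ (with the convention $0\log 0=0$), the functional $\pi\mapsto H_\pi(I_n)$ is continuous. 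Consequently $g_n(\pi)\IsDef H_\pi(I_n)/\abs{I_n}$ is continuous.

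Then, given $\varepsilon>0$, I would choose $n$ with $g_n(\pi)\leq h_\pi(\xSp{X},\sigma)+\varepsilon$, which is possible because $h_\pi(\xSp{X},\sigma)=\inf_m g_m(\pi)$. By continuity of $g_n$ at $\pi$, there is an index $i_0$ such that $g_n(\pi_i)\leq g_n(\pi)+\varepsilon$ for all $i\geq i_0$. For such $i$ we have $h_{\pi_i}(\xSp{X},\sigma)=\inf_m g_m(\pi_i)\leq g_n(\pi_i)\leq g_n(\pi)+\varepsilon\leq h_\pi(\xSp{X},\sigma)+2\varepsilon$. Hence $\limsup_{i\to\infty}h_{\pi_i}(\xSp{X},\sigma)\leq h_\pi(\xSp{X},\sigma)+2\varepsilon$, and letting $\varepsilon\downarrow 0$ gives the claim.

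I do not expect a genuine obstacle here. The only points requiring a moment's care are the continuity of $t\mapsto -t\log t$ at $t=0$ and the finiteness of $L_{I_n}(\xSp{X})$, both immediate; and the conceptual observation — which explains why the proposition holds even though $\pi\mapsto h_\pi(\xSp{X},\sigma)$ is not continuous — that continuity fails only because the decreasing sequence of continuous functions $g_n$ can drop at a limit, whereas upper semi-continuity survives the infimum.
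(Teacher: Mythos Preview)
Your proof is correct and takes essentially the same approach as the paper: the paper's proof consists of the single sentence ``The pointwise infimum of a family of continuous functions is upper semi-continuous,'' and you have simply unpacked this by verifying that each $\pi\mapsto H_\pi(I_n)/\abs{I_n}$ is continuous and then running the standard $\varepsilon$-argument for an infimum of continuous functions.
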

\begin{proof}
	The pointwise infimum of a family of continuous functions
	is upper semi-continuous.
\end{proof}
The entropy functional is also bounded.
Due to the compactness of $\xSx{P}(\xSp{X},\sigma)$
and the upper semi-continuity of $\pi\mapsto h_\pi(\xSp{X},\sigma)$,
the entropy $h_\pi(\xSp{X},\sigma)$ takes its maximum value
at some measures $\pi\in\xSx{P}(\xSp{X},\sigma)$.
This maximum value coincides
with the \emph{topological entropy} of the shift $(\xSp{X},\sigma)$, defined by
\begin{align}
	h(\xSp{X},\sigma)\IsDef
		\lim_{n\to\infty} \frac{\log\abs{L_{I_n}(\xSp{X})}}{\abs{I_n}} =
		\inf_{n\geq 0} \frac{\log\abs{L_{I_n}(\xSp{X})}}{\abs{I_n}} \;,
\end{align}
which is the average combinatorial entropy per site of $\xSp{X}$.

The following propositions are easy to prove, and are indeed valid
for arbitrary dynamical systems.
\begin{proposition}[Factoring]
\label{prop:entropy:factoring}
	Let $\Phi:\xSp{X}\to\xSp{Y}$ be a factor map between two shifts $(\xSp{X},\sigma)$ and $(\xSp{Y},\sigma)$
	and $\pi\in\xSx{P}(\xSp{X},\sigma)$ a probability measure on $\xSp{X}$.
	Then, $h_{\Phi\pi}(\xSp{Y},\sigma)\leq h_\pi(\xSp{X},\sigma)$.
\end{proposition}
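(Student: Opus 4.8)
The plan is to prove the inequality $h_{\Phi\pi}(\xSp{Y},\sigma)\leq h_\pi(\xSp{X},\sigma)$ directly from the definition of entropy as a limit of box entropies, using the fact that $\Phi$ is a sliding block map. The key observation is that if $N$ is a neighborhood for $\Phi$ (with local rule $\theta$), then for any finite box $A\subseteq\xL$, the restriction $(\Phi x)\!\restriction_A$ is a deterministic function of $x\!\restriction_{N(A)}$. Writing $\xv{x}_B$ for the projection $x\mapsto x\!\restriction_B$ on $\xSp{X}$ and similarly $\xv{y}_B$ on $\xSp{Y}$, the pushforward structure gives that under $\pi$, the random variable $\xv{y}_A$ (computed in the probability space $(\xSp{Y},\Phi\pi)$) has the same distribution as the deterministic image of $\xv{x}_{N(A)}$ under the map induced by $\theta$. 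Hence by the monotonicity property $H(f(\xv{x}))\leq H(\xv{x})$ recalled in the text,
\begin{equation*}
	H_{\Phi\pi}(A) \leq H_\pi(N(A)) \;.
\end{equation*}

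The second step is to apply this with $A=I_n$ and divide by $\abs{I_n}$, then let $n\to\infty$. Here $N(I_n)=I_n+N$ is contained in a slightly larger box $I_{n+r}$, where $r$ is chosen so that $N\subseteq I_r$; since $\pi$ is shift-invariant, $H_\pi$ is monotone under inclusion of finite sets, so $H_\pi(N(I_n))\leq H_\pi(I_{n+r})$. Therefore
\begin{equation*}
	\frac{H_{\Phi\pi}(I_n)}{\abs{I_n}}
		\;\leq\; \frac{H_\pi(I_{n+r})}{\abs{I_n}}
		\;=\; \frac{H_\pi(I_{n+r})}{\abs{I_{n+r}}}\cdot\frac{\abs{I_{n+r}}}{\abs{I_n}} \;.
\end{equation*}
As $n\to\infty$, the left side converges to $h_{\Phi\pi}(\xSp{Y},\sigma)$, the first factor on the right converges to $h_\pi(\xSp{X},\sigma)$, and the ratio $\abs{I_{n+r}}/\abs{I_n}=\bigl((2n+2r+1)/(2n+1)\bigr)^d\to 1$. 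Taking limits gives the claimed inequality. (One must also check $\Phi\pi$ is shift-invariant, but this is immediate since $\Phi$ is a morphism: $\sigma(\Phi\pi)=(\sigma\Phi)\pi=(\Phi\sigma)\pi=\Phi\pi$, so the limit defining $h_{\Phi\pi}(\xSp{Y},\sigma)$ exists.)

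I do not expect any serious obstacle here — as the text already remarks, the statement is easy and holds for arbitrary dynamical systems. The only point requiring a small amount of care is bookkeeping the neighborhood: going from a box $I_n$ in $\xSp{Y}$ to the box $N(I_n)$ in $\xSp{X}$ and then absorbing it into $I_{n+r}$, and verifying that the resulting correction factor in the volume ratio tends to $1$. Everything else is the two elementary entropy facts ($H$ is monotone under deterministic functions, and shift-invariant box entropy is monotone under inclusion) plus the definition of $h_\pi$ as the limit (equivalently infimum) of normalized box entropies. If one prefers a coordinate-free argument valid for general dynamical systems, the same proof works using the Kolmogorov–Sinai formulation with joins of finite partitions: the partition of $\xSp{Y}$ into $A$-cylinders pulls back under $\Phi$ to a coarsening of the partition of $\xSp{X}$ into $N(A)$-cylinders, and entropy of a refinement dominates entropy of a coarsening.
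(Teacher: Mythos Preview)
Your argument is correct. The paper does not actually supply a proof of this proposition; it only remarks that it is easy and holds for arbitrary dynamical systems, so your sliding-block computation is exactly the kind of routine verification the authors had in mind (one small quibble: monotonicity of $A\mapsto H_\pi(A)$ under inclusion follows directly from $H(f(\xv{x}))\leq H(\xv{x})$ and does not require shift-invariance of $\pi$).
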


\begin{proposition}[Embedding]
	Let $\Phi:\xSp{Y}\to\xSp{X}$ be an embedding of a shift $(\xSp{Y},\sigma)$ in a shift $(\xSp{X},\sigma)$
	and $\pi\in\xSx{P}(\xSp{Y},\sigma)$ a probability measure on $\xSp{Y}$.
	Then, $h_\pi(\xSp{Y},\sigma)= h_{\Phi\pi}(\xSp{X},\sigma)$.
\end{proposition}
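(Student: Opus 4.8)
The plan is to reduce to the case of a conjugacy and then invoke Proposition~\ref{prop:entropy:factoring} twice. First I would pass to the image subshift. Since $\Phi$ is an injective continuous map from the compact space $\xSp{Y}$ into the Hausdorff space $\xSp{X}$, it is a homeomorphism onto $\xSp{X}_0\IsDef\Phi\xSp{Y}$, and $\xSp{X}_0$ is closed and $\sigma$-invariant, hence a shift space; moreover $\Phi\sigma=\sigma\Phi$ gives $\Phi^{-1}\sigma=\sigma\Phi^{-1}$ on $\xSp{X}_0$, so $\Phi^{-1}:\xSp{X}_0\to\xSp{Y}$ is again a morphism of shift systems, and therefore a sliding block map. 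The measure $\Phi\pi$ is carried by $\xSp{X}_0$, so for every $n$ each pattern $p\in L_{I_n}(\xSp{X})\setminus L_{I_n}(\xSp{X}_0)$ has $\Phi\pi([p]_{I_n})=0$ and contributes $0\log 0=0$ to $H_{\Phi\pi}(I_n)$; hence $H_{\Phi\pi}(I_n)$ is the same whether computed inside $\xSp{X}$ or inside $\xSp{X}_0$, and consequently $h_{\Phi\pi}(\xSp{X},\sigma)=h_{\Phi\pi}(\xSp{X}_0,\sigma)$.

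With this reduction in hand, the statement follows at once: applying Proposition~\ref{prop:entropy:factoring} to the factor map $\Phi:\xSp{Y}\to\xSp{X}_0$ and the measure $\pi$ gives $h_{\Phi\pi}(\xSp{X}_0,\sigma)\le h_\pi(\xSp{Y},\sigma)$, while applying it to the factor map $\Phi^{-1}:\xSp{X}_0\to\xSp{Y}$ and the measure $\Phi\pi$ gives $h_\pi(\xSp{Y},\sigma)=h_{\Phi^{-1}\Phi\pi}(\xSp{Y},\sigma)\le h_{\Phi\pi}(\xSp{X}_0,\sigma)$, since $\Phi^{-1}\Phi\pi=\pi$. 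Combining the two inequalities with $h_{\Phi\pi}(\xSp{X},\sigma)=h_{\Phi\pi}(\xSp{X}_0,\sigma)$ yields $h_\pi(\xSp{Y},\sigma)=h_{\Phi\pi}(\xSp{X},\sigma)$.

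I expect the only delicate point to be the claim that $\Phi^{-1}:\xSp{X}_0\to\xSp{Y}$ is a sliding block map (equivalently, a morphism), which is what lets Proposition~\ref{prop:entropy:factoring} run in the second direction; this is exactly where injectivity of $\Phi$ is used, via the compactness argument above together with the fact that every morphism between shift systems is a sliding block map. Alternatively, one can avoid citing the Factoring proposition and argue directly: choosing a neighborhood $N$ for $\Phi$ and $N'$ for $\Phi^{-1}$, and $r$ with $N\cup N'\subseteq I_r$, the pattern $\xRest{(\Phi\xv{y})}{I_n}$ is a deterministic function of $\xRest{\xv{y}}{N(I_n)}$ and $\xRest{\xv{y}}{I_n}$ is a deterministic function of $\xRest{(\Phi\xv{y})}{N'(I_n)}$, so $H_{\Phi\pi}(I_n)\le H_\pi(I_{n+r})$ and $H_\pi(I_n)\le H_{\Phi\pi}(I_{n+r})$; dividing by $\abs{I_n}$, using $\abs{I_{n+r}}/\abs{I_n}\to 1$, and letting $n\to\infty$ gives both inequalities. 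Everything besides the sliding-block-inverse observation is the same bounded-propagation-of-information estimate that underlies Proposition~\ref{prop:entropy:factoring}.
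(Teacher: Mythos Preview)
Your proof is correct. The paper does not actually supply a proof of this proposition; it simply states that both Factoring and Embedding ``are easy to prove, and are indeed valid for arbitrary dynamical systems.'' Your argument --- reduce to the image subshift, observe that an embedding is a conjugacy onto its image, and apply Proposition~\ref{prop:entropy:factoring} in each direction --- is exactly the standard way to fill in this gap, and your alternative direct estimate via neighborhoods is equally valid. The only remark is that, as the paper hints, the result holds for arbitrary dynamical systems (Kolmogorov--Sinai entropy is a conjugacy invariant), so one could also cite that general fact rather than working inside the shift framework; but for the proposition as stated your argument is complete.
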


Given a continuous observable $f\in C(\xSp{X})$,
the mapping $\pi\in\xSx{P}(\xSp{X},\sigma)\mapsto \pi(f)$
is continuous and affine.  Its range is closed, bounded, and convex, that is,
a finite closed interval $[e_{\min},e_{\max}]\subseteq\xR$.
For each $e\in [e_{\min},e_{\max}]$, let us define
\begin{equation}
	s_f(e) \IsDef \sup\left\{
		h_\pi(\xSp{X},\sigma):
		\pi\in\xSx{P}(\xSp{X},\sigma) \text{ and }
		\pi(f)=e
	\right\} \;.
\end{equation}
Let $\xSx{E}_{\langle f\rangle=e}(\xSp{X},\sigma)$ denote the set of
measures $\pi\in\xSx{P}(\xSp{X},\sigma)$ with
$\pi(f)=e$ and $h_\pi(\xSp{X},\sigma)=s_f(e)$,
that is, the measures $\pi$ that maximize entropy under the constraint $\pi(f)=e$. 
By the compactness of $\xSx{P}(\xSp{X},\sigma)$ and
the upper semi-continuity of $\pi\mapsto h_\pi(\xSp{X},\sigma)$,
the set $\xSx{E}_{\langle f\rangle=e}(\xSp{X},\sigma)$ is non-empty (as long as $e\in [e_{\min},e_{\max}]$).
The mapping $s_f(\cdot)$ is concave and continuous.
The measures in $\xSx{E}_{\langle f\rangle=e}(\xSp{X},\sigma)$
(and more generally, the solutions of similar entropy maximization problems
with multiple contraints $\pi(f_1)=e_1$, $\pi(f_2)=e_2$, \ldots, $\pi(f_n)=e_n$)
could be implicitly identified after a Legendre transform.

The \emph{pressure} associated to $f\in C(\xSp{X})$ could be defined as
\begin{align}
	\label{eq:pressure:def}
	P_f(\xSp{X},\sigma) &=
		\sup_{\nu\in\xSx{P}(\xSp{X},\sigma)}[
			h_\nu(\xSp{X},\sigma) - \nu(f)
		] \;.
\end{align}
The functional $f\mapsto P_f(\xSp{X},\sigma)$ is convex and Lipschitz continuous.
It is the convex conjugate of the entropy functional $\nu\mapsto h_\nu(\xSp{X},\sigma)$
(up to a negative sign), and we also have
\begin{align}
	h_\pi(\xSp{X},\sigma) &=
		\inf_{g\in C(\xSp{X})}[
			P_g(\xSp{X},\sigma) + \pi(g)
		]
\end{align}
(see~\cite{Rue04}, Theorem~3.12).
Note that the pressure $P_0(\xSp{X},\sigma)$ associated to $0$
is the same as the topological entropy of $(\xSp{X},\sigma)$.
Again, the compactness of $\xSx{P}(\xSp{X},\sigma)$
and the upper semi-continuity of $\nu\mapsto h_\nu(\xSp{X},\sigma)$
ensure that the supremum in~(\ref{eq:pressure:def}) can be achieved.
The set of shift-invariant probability measures $\pi\in\xSx{P}(\xSp{X},\sigma)$
for which the equality in
\begin{align}
	h_\pi(\xSp{X},\sigma) - P_f(\xSp{X},\sigma) \leq \pi(f)
\end{align}
is satisfied will be denoted by $\xSx{E}_f(\xSp{X},\sigma)$.
Following the common terminology of statistical mechanics and ergodic theory,
we call the elements of $\xSx{E}_f(\xSp{X},\sigma)$
the \emph{equilibrium measures} for $f$.
Let us emphasize that this terminology lacks a dynamical justification
that we are striving for.
The Bayesian justification is further clarified below.

A celebrated theorem of Dobrushin, Lanford and Ruelle
characterizes the equilibrium measures
(for ``short-ranged'' observables over strongly irreducible shift spaces of finite type)
as the associated shift-invariant Gibbs measures.
\begin{theorem}[%
	Characterization of Equilibrium Measures;
	see~\cite{Rue04}, Theorem~4.2,
	and~\cite{Kel98}, Sections~5.2 and~5.3, and~\cite{Mey13}%
]
\label{thm:equilibrium:Gibbs}
	Let $\xSp{X}\subseteq S^\xL$ be a strongly irreducible shift space of finite type.
	Let $f\in SV(\xSp{X})$ be an observable with summable variations and
	$\Delta_f$ the Hamiltonian it generates.
	The set of equilibrium measures for $f$ coincides with
	the set of shift-invariant Gibbs measures for $\Delta_f$.
\end{theorem}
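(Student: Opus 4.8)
The plan is to prove the Dobrushin--Lanford--Ruelle characterization by the method of specific relative entropy. Fix a shift-invariant Gibbs measure $\pi\in\xSx{G}_{\Delta_f}(\xSp{X},\sigma)$ and an arbitrary $\nu\in\xSx{P}(\xSp{X},\sigma)$, and compare along the exhausting boxes $I_n=[-n,n]^d$ the quantities $H_\nu(I_n)$ and $-\sum_{p\in L_{I_n}(\xSp{X})}\nu([p]_{I_n})\log\pi([p]_{I_n})$. The first step is to record the box version of the Gibbs property: iterating the ratio identity $\pi([y]_E)/\pi([x]_E)=\xe^{-\Delta_f(x,y)}$ over all configurations that differ from a fixed reference inside $I_n$ yields
\[
	\pi([p]_{I_n}\mid \xv{x}_{\xL\setminus I_n}=\omega)=\frac{\xe^{-U_{I_n}(p\lor\omega)}}{Z_{I_n}(\omega)},
\]
where $U_{I_n}$ collects the contributions of the sites whose interaction window meets $I_n$ and $Z_{I_n}(\omega)$ is the corresponding normalisation; the discrepancy between $U_{I_n}$ and $\sum_{i\in I_n}f\xO\sigma^i$ is a boundary term controlled by a tail of $\sum_n\abs{\partial I_n}\xvar_{I_n}(f)$, which is finite because $f\in SV(\xSp{X})$.

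Integrating $-\log\pi([p]_{I_n})$ against $\nu$ and using shift-invariance, the energy term converges to $\nu(f)$; and, using the standard identification of the pressure with the exponential growth rate of boundary-free partition functions (valid on strongly irreducible shifts of finite type because the gluing property lets one replace any boundary condition by a fixed one at a per-site cost that vanishes), one gets $\abs{I_n}^{-1}\log Z_{I_n}\to P_f(\xSp{X},\sigma)$. Hence
\[
	\abs{I_n}^{-1}\sum_{p}\nu([p]_{I_n})\log\frac{\nu([p]_{I_n})}{\pi([p]_{I_n})}\longrightarrow \bigl(P_f(\xSp{X},\sigma)+\nu(f)\bigr)-h_\nu(\xSp{X},\sigma).
\]
The left-hand side is a relative entropy, hence non-negative, so this is exactly the variational inequality $h_\nu(\xSp{X},\sigma)-\nu(f)\le P_f(\xSp{X},\sigma)$; and taking $\nu=\pi$ makes the relative entropy vanish identically, giving $h_\pi(\xSp{X},\sigma)-\pi(f)=P_f(\xSp{X},\sigma)$. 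This proves that every shift-invariant Gibbs measure for $\Delta_f$ is an equilibrium measure for $f$.

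For the converse I would show that an equilibrium measure $\pi$ satisfies the DLR equations, via the tangent-functional/perturbation argument. For any local $g\in K(\xSp{X})$ and $t\in\xR$, the variational principle gives $P_{f+tg}(\xSp{X},\sigma)\ge h_\pi(\xSp{X},\sigma)-\pi(f+tg)=P_f(\xSp{X},\sigma)-t\pi(g)$, so $t\mapsto P_{f+tg}(\xSp{X},\sigma)+t\pi(g)$ attains its minimum at $t=0$; since $f\mapsto P_f(\xSp{X},\sigma)$ is convex and Lipschitz, $\pi$ is a tangent functional to the pressure at $f$. The content of the theorem in this direction is then that such tangent functionals are precisely the shift-invariant Gibbs measures for $\Delta_f$: one perturbs by local observables of the form $\mathbf{1}_{[p]_D}-\xe^{\,\Delta_f(\cdot)}\mathbf{1}_{[q]_D}$ adapted to asymptotic pairs, reads off the DLR relation from the vanishing of the directional derivative of the pressure, and uses that $\xSx{G}_{\Delta_f}(\xSp{X},\sigma)$ is governed by a positive, shift-invariant, finite-range (Markov) specification while strong irreducibility forces full support. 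Equivalently, one shows that the specific relative entropy of $\pi$ with respect to a fixed $\hat\pi\in\xSx{G}_{\Delta_f}(\xSp{X},\sigma)$ is zero and invokes the standard fact that vanishing relative-entropy density against a finite-range Gibbs specification forces $\pi$ to obey that specification (a martingale argument on the $I_n$).

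I expect the main obstacle to be exactly this last step: upgrading the \emph{global} equality $h_\pi(\xSp{X},\sigma)-\pi(f)=P_f(\xSp{X},\sigma)$ to the \emph{pointwise} DLR equations. On a full shift this is routine, but on a strongly irreducible shift of finite type one must control how boundary conditions constrain the admissible patterns inside a box — this is precisely what strong irreducibility (gluing) provides, and it is also what makes the per-site partition-function limit insensitive to the boundary condition. A secondary, purely technical point is the reduction from $f\in SV(\xSp{X})$ to $f$ local: one approximates $f$ by local $f_k$ with $\sum_n\abs{\partial I_n}\xvar_{I_n}(f-f_k)\to 0$, checks that $P_{f_k}(\xSp{X},\sigma)\to P_f(\xSp{X},\sigma)$, that $\nu(f_k)\to\nu(f)$ uniformly in $\nu$, and that the Gibbs ratios converge, so that both the class of equilibrium measures and the class of Gibbs measures pass to the limit. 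As the statement is quoted from Ruelle, Keller, and Meyerovitch, a complete proof may alternatively be taken verbatim from those sources.
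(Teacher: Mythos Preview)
The paper does not prove this theorem; it is stated as background and attributed to Ruelle, Keller, and Meyerovitch. There is therefore no ``paper's own proof'' to compare against --- the authors use this result as a black box throughout (e.g.\ in the proofs of Propositions~\ref{prop:physical-equivalence:local} and~\ref{prop:equilibrium:equivalence}, and of Theorem~\ref{thm:conservation-invariance:1}).

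Your sketch of the ``Gibbs $\Rightarrow$ equilibrium'' direction via specific relative entropy is the standard one and is essentially sound. For the converse, however, there are two issues worth flagging. First, you invoke that $\xSx{G}_{\Delta_f}(\xSp{X},\sigma)$ is governed by a \emph{finite-range (Markov)} specification; this is only true when $f$ is local, not for general $f\in SV(\xSp{X})$, so the martingale argument you allude to must be run for a non-Markovian (but quasilocal) specification --- this is precisely where summable variations enter in the cited sources, and it is not bypassed by the approximation you propose. Second, the reduction ``$SV$ to local'' at the end does not work as stated: if $\pi$ is an equilibrium measure for $f$, it is not in general an equilibrium measure for the approximants $f_k$, and the passage ``the class of equilibrium measures and the class of Gibbs measures pass to the limit'' is exactly the delicate point (neither set-valued map is obviously continuous). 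The references you cite (and the paper cites) prove the Lanford--Ruelle direction directly for $SV$ interactions, using strong irreducibility to control the effect of boundary conditions on the admissible patterns; that is the step you correctly identify as the crux, but it must be carried out for the $SV$ specification itself rather than outsourced to an approximation.
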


Consider now an observable $f\in C(\xSp{X})$,
and as before, let $[e_{\min},e_{\max}]$ be the set of possible values $\nu(f)$
for $\nu\in\xSx{P}(\xSp{X},\sigma)$.
For every $\beta\in\xR$, we have
\begin{align}
	P_{\beta f}(\xSp{X},\sigma) &=
		\sup\{s_f(e) - \beta e: e\in [e_{\min},e_{\max}]\} \;.
\end{align}
That is, $\beta\in\xR\mapsto P_{\beta f}$ is the Legendre transform of $e\mapsto s_f(e)$.
If $f$ has summable variations and the Hamiltonian $\Delta_f$ is not trivial,
it can be shown that $\beta\mapsto P_{\beta f}$ is strictly convex
(see~\cite{Rue04}, Section~4.6, or~\cite{Isr79}, Section~III.4).
It follows that $e\mapsto s_f(e)$ is continuously differentiable everywhere
except at $e_{\min}$ and $e_{\max}$, and
\begin{align}
	s_f(e) &= \inf\{P_{\beta f}(\xSp{X},\sigma)+\beta e: \beta\in\xR\}
\end{align}
for every $e\in (e_{\min},e_{\max})$.
For $e\in (e_{\min},e_{\max})$,
the above theorem identifies the elements of
$\xSx{E}_{\langle f\rangle=e}(\xSp{X},\sigma)$
as the shift-invariant Gibbs measures for $\beta_e f$,
where $\beta_e\in\xR$ is the unique value at which $\beta\mapsto -P_{\beta f}(\xSp{X},\sigma)$
has a tangent with slope~$e$.
The mapping $e\mapsto\beta_e$ is continuous and non-increasing.
The set of slopes of tangents to $\beta\mapsto -P_{\beta f}(\xSp{X},\sigma)$ at
a point $\beta\in\xR$ is a closed interval
$[e^-_\beta,e^+_\beta]\subseteq(e_{\min},e_{\max})$.
We have
\begin{equation}
	\xSx{E}_{\beta f}(\xSp{X},\sigma)=
		\bigcup_{e\in[e^-_\beta,e^+_\beta]}
			\xSx{E}_{\langle f\rangle=e}(\xSp{X},\sigma) \;.
\end{equation}

When $f$ is interpreted as the energy contribution of
a single site, $1/\beta$ is interpreted as
the temperature and $e$ as the mean energy per site.
By a Bayesian reasoning,
if $\xSx{E}_{\langle f\rangle=e}(\xSp{X},\sigma)$ is singleton,
its unique element is an appropriate choice of
the probability distribution of the system in thermal equilibrium
when the mean energy per site is $e$.
If $\xSx{E}_{\beta f}(\xSp{X},\sigma)$ is singleton,
the unique element is interpreted as a description
of the system in thermal equilibrium at temperature $1/\beta$.
The existence of more than one element in
$\xSx{E}_{\beta f}(\xSp{X},\sigma)$ (or in $\xSx{E}_{\langle f\rangle=e}(\xSp{X},\sigma)$) is interpreted
as the existence of more than one phase (e.g., liquid or gas)
at temperature $1/\beta$ (resp., with energy density $e$).  The presence of distinct tangents to
$\beta\mapsto-P_{\beta f}(\xSp{X},\sigma)$ at a given inverse temperature $\beta$
implies the existence of distinct phases at temperature $1/\beta$
having different mean energy per site.

Note that since the elements of $\xSx{E}_{\beta f}(\xSp{X},\sigma)=\xSx{G}_{\beta\Delta_f}(\xSp{X},\sigma)$
are shift-invariant, they only offer a description of the equilibrium states
that respect the translation symmetry of the model.
By extrapolating the interpretation, one could consider
the Gibbs measures $\pi\in\xSx{G}_{\beta\Delta_f}(\xSp{X})$
that are not shift-invariant as states of equilibrium in which the translation symmetry is broken.%

\subsection{Physical Equivalence of Observables}
\label{sec:physical-equivalence}

Let $\xSp{X}\subseteq S^{\xL}$ be
a strongly irreducible shift space of finite type.
Every local observable generates a finite-range Hamiltonian
via Equation~(\ref{eq:hamiltonian:observable}).
However, different local observables may generate the same Hamiltonians.
Two local observables $f,g\in K(\xSp{X})$
are \emph{physically equivalent} (see~\cite{Rue04}, Sections~4.6-4.7,
\cite{Isr79}, Sections~I.4 and~III.4, or~\cite{Geo88}, Section~2.4),
$f\sim g$ in symbols,
if they identify the same Hamiltonian, that is, if $\Delta_f=\Delta_g$.
The following proposition gives an alternate characterization
of physical equivalence, which will allow us to extend the notion of
physical equivalence to~$C(\xSp{X})$.
\begin{proposition} 
\label{prop:physical-equivalence:local}
	Let $(\xSp{X},\sigma)$ be a strongly irreducible shift of finite type.
	Two observables $f,g\in K(\xSp{X})$
	are physically equivalent, if and only if
	there is a constant $c\in\xR$ such that
	$\pi(f)=\pi(g)+c$ for every 
	probability measure $\pi\in\xSx{P}(\xSp{X},\sigma)$.
\end{proposition}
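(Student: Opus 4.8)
The plan is to reduce to a single observable and then route both implications through the pressure function. Put $h\IsDef f-g\in K(\xSp{X})$. Since $u\mapsto\Delta_u$ is linear (the sum in~(\ref{eq:hamiltonian:observable}) having finitely many non-zero terms for local observables), $f\sim g$ is the same as $\Delta_h=0$, i.e.\ $\Delta_h$ being the trivial Hamiltonian; and the condition in the statement says precisely that $\pi(h)$ equals a single constant $c$ for all $\pi\in\xSx{P}(\xSp{X},\sigma)$ (which, by Lemma~\ref{lem:annihilators}(b), is in turn the same as $h-c\in C(\xSp{X},\sigma)$). So it suffices to prove, for a local $h$, that $\Delta_h=0$ if and only if $\pi(h)$ is constant over $\xSx{P}(\xSp{X},\sigma)$. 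In both directions the bridge is the map $\beta\mapsto P_{\beta h}(\xSp{X},\sigma)$; note that $\beta h\in K(\xSp{X})\subseteq SV(\xSp{X})$ and $\Delta_{\beta h}=\beta\Delta_h$ for every $\beta\in\xR$.

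For the implication ``$\pi(h)\equiv c$ $\Rightarrow$ $\Delta_h=0$'', I would note that the variational formula~(\ref{eq:pressure:def}) collapses, since $\nu(\beta h)=\beta c$ for every $\nu\in\xSx{P}(\xSp{X},\sigma)$: it gives $P_{\beta h}(\xSp{X},\sigma)=\big(\sup_\nu h_\nu(\xSp{X},\sigma)\big)-\beta c=h(\xSp{X},\sigma)-\beta c$, so $\beta\mapsto P_{\beta h}$ is affine. Since $h$ has summable variations, the strict convexity of $\beta\mapsto P_{\beta f}$ for non-trivial $\Delta_f$ (recalled in Section~\ref{sec:entropy-pressure}, following~\cite{Rue04,Isr79}) yields, in contrapositive form, that $\Delta_h$ must be trivial. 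Hence $f\sim g$.

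For the converse, assume $\Delta_h=0$, so $\Delta_{\beta h}=0$ for all $\beta\in\xR$. Applying Theorem~\ref{thm:equilibrium:Gibbs} to $\beta h$, the set of equilibrium measures satisfies $\xSx{E}_{\beta h}(\xSp{X},\sigma)=\xSx{G}_{\Delta_{\beta h}}(\xSp{X},\sigma)=\xSx{G}_0(\xSp{X},\sigma)$, which is the non-empty set of shift-invariant measures of maximal entropy and, in particular, does not depend on $\beta$. Fix $\pi_0$ in this set and put $c\IsDef\pi_0(h)$. Being an equilibrium measure for each $\beta h$, $\pi_0$ attains the supremum in~(\ref{eq:pressure:def}), so $P_{\beta h}(\xSp{X},\sigma)=h_{\pi_0}(\xSp{X},\sigma)-\beta\pi_0(h)=h(\xSp{X},\sigma)-\beta c$ for every $\beta$. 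Substituting this into $h_\nu(\xSp{X},\sigma)-\beta\nu(h)\le P_{\beta h}(\xSp{X},\sigma)$ (valid for all $\nu\in\xSx{P}(\xSp{X},\sigma)$ by the definition of the pressure) gives $\beta\big(c-\nu(h)\big)\le h(\xSp{X},\sigma)-h_\nu(\xSp{X},\sigma)$ for all $\nu$ and all $\beta\in\xR$; since the right-hand side is a fixed non-negative number, letting $\beta\to+\infty$ and $\beta\to-\infty$ forces $\nu(h)=c$. Thus $\pi(f)=\pi(g)+c$ for every $\pi\in\xSx{P}(\xSp{X},\sigma)$.

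The main obstacle is this second implication, and within it the fact that changing the inverse temperature $\beta$ cannot move the equilibrium measures once $\Delta_h$ is trivial; this is exactly where strong irreducibility and the Dobrushin--Lanford--Ruelle theorem (Theorem~\ref{thm:equilibrium:Gibbs}) are essential, the subsequent $\beta\to\pm\infty$ squeeze being routine. A more elementary attempt via $C(\xSp{X},\sigma)=\overline{K(\xSp{X},\sigma)}$ --- a telescoping computation shows that $\Delta$ annihilates every local coboundary $\psi\circ\sigma^i-\psi$ and every constant --- stalls when one tries to extend this to a general $h\in C(\xSp{X},\sigma)+\xR$: the bases of the coboundaries approximating $h$ need not remain bounded, so the continuity of $\Delta$ along such uniform approximations is unclear. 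Going through the pressure avoids this difficulty altogether.
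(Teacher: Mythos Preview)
Your proof is correct. Both directions go through the thermodynamic formalism, and the arguments are sound: the collapse of the variational formula makes $\beta\mapsto P_{\beta h}$ affine, the strict-convexity dichotomy (cited in Section~\ref{sec:entropy-pressure}) then forces $\Delta_h=0$; conversely, Theorem~\ref{thm:equilibrium:Gibbs} pins the equilibrium set of every $\beta h$ at the measures of maximal entropy, and the $\beta\to\pm\infty$ squeeze finishes the job.

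Your route differs from the paper's in the forward implication $\Delta_h=0\Rightarrow\pi(h)\equiv c$. The paper does \emph{not} invoke Theorem~\ref{thm:equilibrium:Gibbs} there; it gives an elementary spatial-average argument. One fixes a reference configuration $\xQuiet$ for which the ergodic average $c=\lim_n\abs{I_n}^{-1}\sum_{i\in I_n}h(\sigma^i\xQuiet)$ exists, and then shows, using only $\Delta_h=0$ together with strong irreducibility (to patch an arbitrary $x$ to $\xQuiet$ outside a large box), that the same limit $c$ holds along \emph{every} configuration; dominated convergence then yields $\pi(h)=c$. So the direct attempt you described as having ``stalled'' does go through, just not via approximating $h$ by local coboundaries: one compares spatial averages of $h$ itself along different configurations, and the finite-range nature of $h$ keeps the error terms of order $\smallo(\abs{I_n})$. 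For the reverse implication the paper argues essentially as you do (shared equilibrium measures, hence shared Gibbs measures via Theorem~\ref{thm:equilibrium:Gibbs}, hence equal Hamiltonians), so your use of strict convexity is a repackaging of the same idea.

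What each buys: your argument is uniform and short, treating both directions with the same machinery, but it leans twice on the Dobrushin--Lanford--Ruelle characterization. The paper's $\Rightarrow$ is more constructive --- it exhibits $c$ as a pointwise spatial average --- and is independent of Theorem~\ref{thm:equilibrium:Gibbs}, which is conceptually cleaner since that theorem is the deeper input. One small remark: your closing paragraph mislabels which direction the telescoping observation addresses (it shows $\Delta$ kills local coboundaries, which is relevant to $\pi(h)\equiv c\Rightarrow\Delta_h=0$, not the converse), but this does not affect the proof itself.
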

\begin{proof}\ \\
	\begin{enumerate}[ $\Rightarrow$)]
	\item[ $\Rightarrow$)]
	Let $h\IsDef f-g$.
	Let us pick an arbitrary configuration $\xQuiet\in\xSp{X}$
	with the property that the spatial average
	\begin{equation}
		c\IsDef \lim_{n\to\infty}
			\frac{\sum_{i\in I_n} h(\sigma^i \xQuiet)}{\abs{I_n}}
	\end{equation}
	(where $I_n\IsDef[-n,n]^d\subseteq\xL$)
	exists.
	That such a configuration exists follows, for example,
	from the ergodic theorem.\footnote{%
		We could simply choose $\xQuiet$ to be a periodic configuration
		if we knew such a configuration existed.
		Unfortunately, it is not known whether every strongly irreducible shift of finite type
		(in more than two dimensions)
		has a periodic configuration.
	}
	We claim that indeed
	\begin{equation}
		\lim_{n\to\infty}
			\frac{\sum_{i\in I_n} h(\sigma^i x)}{\abs{I_n}} = c
	\end{equation}
	for every configuration $x\in\xSp{X}$.
	This follows from the fact that $\Delta_h=\Delta_f-\Delta_g=0$.
	More specifically,
	let $0\in M\subseteq\xL$ be a neighborhood that witnesses
	the strong irreducibility of $\xSp{X}$,
	and let $D\subseteq\xL$ be
	a finite base for $h$ (i.e., $h$ is $\family{F}_D$-measurable).
	For each configuration $x\in\xSp{X}$ and each $n\geq 0$,
	let $x_n$ be a configuration that agrees
	with $x$ on $I_n+D$ and with $\xQuiet$ off $I_n+D+M-M$.
	Then
	\begin{align}
		\sum_{i\in I_n} h(\sigma^i x) &=
			\sum_{i\in I_n} h(\sigma^i x_n) \\
		&=
			\sum_{i\in I_n} h(\sigma^i\xQuiet) +
			\Delta_h(\xQuiet,x_n) + \smallo(\abs{I_n}) \\
		&=
			\sum_{i\in I_n} h(\sigma^i\xQuiet) + \smallo(\abs{I_n}) \;,
	\end{align}
	and the claim follows.
	Now, 
	the dominated convergence theorem
	concludes that $\pi(f)-\pi(g)=\pi(h)=c$, for every $\pi\in\xSx{P}(\xSp{X},\sigma)$.
	
	\item[ $\Leftarrow$)]
	Following the definition,
	$P_f(\xSp{X},\sigma)=P_g(\xSp{X},\sigma)-c$,
	and $f$ and $g$ have the same equilibrium measures.
	Theorem~\ref{thm:equilibrium:Gibbs} then implies that
	the shift-invariant Gibbs measures of $\Delta_f$
	and $\Delta_g$ coincide, which in turn concludes that
	$\Delta_f=\Delta_g$.
	\end{enumerate}
\end{proof}

As a corollary, the physical equivalence relation is closed in $K(\xSp{X})\times K(\xSp{X})$:
\begin{corollary}
	Let $(\xSp{X},\sigma)$ be a strongly irreducible shift of finite type.
	Let $h_1,h_2,\ldots$ be local observables on $\xSp{X}$
	such that $\Delta_{h_i}=0$ for each $i$.
	If $h_i$ converge to a local observable $h$, then $\Delta_h=0$.
\end{corollary}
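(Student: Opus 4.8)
The plan is to reduce the statement to the already-proved Proposition~\ref{prop:physical-equivalence:local}, used with the second observable taken to be $0$. First I would unwind the hypothesis: for each $i$, the condition $\Delta_{h_i}=0=\Delta_0$ says precisely that $h_i\sim 0$, so by Proposition~\ref{prop:physical-equivalence:local} there is a constant $c_i\in\xR$ with $\pi(h_i)=c_i$ for every $\pi\in\xSx{P}(\xSp{X},\sigma)$.

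Next I would pass to the limit. The convergence $h_i\to h$ is in $C(\xSp{X})$, i.e.\ uniform convergence (the default topology on $C(\xSp{X})$), so for every $\pi\in\xSx{P}(\xSp{X},\sigma)$ we have $\abs{\pi(h_i)-\pi(h)}\le\norm{h_i-h}\to 0$, and in particular the convergence $\pi(h_i)\to\pi(h)$ is uniform in $\pi$. Fixing one $\pi_0\in\xSx{P}(\xSp{X},\sigma)$ — such a measure exists since every dynamical system admits an invariant measure — we get $c_i=\pi_0(h_i)\to\pi_0(h)$; call this limit $c$. Then for an arbitrary $\pi\in\xSx{P}(\xSp{X},\sigma)$ we have $\pi(h)=\lim_{i}\pi(h_i)=\lim_{i}c_i=c$, so $\pi(h)=c$ for every shift-invariant probability measure.

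Finally I would apply the $\Leftarrow$ direction of Proposition~\ref{prop:physical-equivalence:local} with $f=h$ and $g=0$: since $h$ is local (by hypothesis) and $\pi(h)=\pi(0)+c$ for every $\pi\in\xSx{P}(\xSp{X},\sigma)$, it follows that $h\sim 0$, that is, $\Delta_h=\Delta_0=0$, which is the claim.

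There is no genuine obstacle here; the only points needing a moment of care are that the relevant notion of convergence of local observables is the uniform one (so that expectations converge, uniformly over $\xSx{P}(\xSp{X},\sigma)$, and the constants $c_i$ have a limit) and that $\xSx{P}(\xSp{X},\sigma)\neq\varnothing$, so that a witnessing measure is available — both of which are already in place in the paper's framework. One could alternatively phrase the whole corollary as the closedness, in $K(\xSp{X})\times K(\xSp{X})$, of the equivalence relation $\sim$, but for the stated special case the above three-line argument suffices.
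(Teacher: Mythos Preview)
Your proof is correct and is precisely the intended argument: the paper states the corollary immediately after Proposition~\ref{prop:physical-equivalence:local} without an explicit proof, and your reduction to that proposition (forward direction to extract the constants $c_i$, passage to the limit using uniform convergence, then the backward direction applied to the local limit $h$) is exactly the fill-in the authors have in mind.
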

The continuous extension of this relation
(i.e., the closure of $\sim$ in $C(\xSp{X})\times C(\xSp{X})$)
gives a notion of physical equivalence of arbitrary continuous observables.
\begin{proposition}
\label{prop:physical-equivalence:continuous}
	Let $(\xSp{X},\sigma)$ be a strongly irreducible shift of finite type.
	Two observables $f,g\in C(\xSp{X})$
	are physically equivalent if and only if
	there is a constant $c\in\xR$ such that
	$\pi(f)=\pi(g)+c$ for every 
	probability measure $\pi\in\xSx{P}(\xSp{X},\sigma)$.
\end{proposition}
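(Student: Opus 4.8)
The plan is to reduce both implications to Proposition~\ref{prop:physical-equivalence:local}, using the annihilator description in Lemma~\ref{lem:annihilators}(b) together with the identity $C(\xSp{X},\sigma)=\overline{K(\xSp{X},\sigma)}$. Recall that, by the way $\sim$ was extended to $C(\xSp{X})$, the observables $f,g\in C(\xSp{X})$ are physically equivalent exactly when the pair $(f,g)$ is a limit of pairs $(f_n,g_n)$ of local observables with $\Delta_{f_n}=\Delta_{g_n}$ for every $n$.

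For the ``only if'' direction I would take such approximating sequences $f_n\to f$ and $g_n\to g$ with $\Delta_{f_n}=\Delta_{g_n}$. Proposition~\ref{prop:physical-equivalence:local} supplies constants $c_n\in\xR$ with $\pi(f_n)=\pi(g_n)+c_n$ for every $\pi\in\xSx{P}(\xSp{X},\sigma)$. First I would check that the $c_n$ converge: fixing any one invariant measure $\pi_0$ (the set $\xSx{P}(\xSp{X},\sigma)$ is non-empty) gives $c_n=\pi_0(f_n-g_n)$, and since $f_n-g_n\to f-g$ uniformly we obtain $c_n\to c\IsDef\pi_0(f-g)$. Then, for an arbitrary $\rho\in\xSx{P}(\xSp{X},\sigma)$, using $\abs{\rho(f_n)-\rho(f)}\le\norm{f_n-f}$ and likewise for $g$, I would pass to the limit in $\rho(f_n)-\rho(g_n)=c_n$ to get $\rho(f)=\rho(g)+c$.

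For the ``if'' direction, assume $\pi(f)=\pi(g)+c$ for every $\pi\in\xSx{P}(\xSp{X},\sigma)$. Then $f-g-c$ annihilates all shift-invariant measures, so by Lemma~\ref{lem:annihilators}(b) it lies in $C(\xSp{X},\sigma)=\overline{K(\xSp{X},\sigma)}$. I would pick local coboundaries $u_n\in K(\xSp{X},\sigma)$ with $u_n\to f-g-c$ and observe that each $u_n+c$ generates the trivial Hamiltonian: a single coboundary term $h\xO\sigma^i-h$ satisfies $\Delta_{h\xO\sigma^i}=\Delta_h$ by the shift-invariance of the defining sum in~(\ref{eq:hamiltonian:observable}), and a constant contributes $0$, so by additivity of $f\mapsto\Delta_f$ on local observables $\Delta_{u_n+c}=0$. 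Choosing any local $g_n\to g$ and setting $f_n\IsDef g_n+u_n+c$, one gets local observables with $f_n\to f$ and $\Delta_{f_n}-\Delta_{g_n}=\Delta_{u_n+c}=0$; hence $(f,g)$ is a limit of $\sim$-related pairs, i.e.\ $f$ and $g$ are physically equivalent.

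The argument is largely bookkeeping on top of Proposition~\ref{prop:physical-equivalence:local}; the two small points that need care are the convergence of the constants $c_n$ in the first implication (handled by testing against a single fixed invariant measure) and the verification that local coboundaries plus constants lie in the kernel of $f\mapsto\Delta_f$, which is precisely what links the measure-theoretic condition to the topological definition of physical equivalence via the identity $C(\xSp{X},\sigma)=\overline{K(\xSp{X},\sigma)}$.
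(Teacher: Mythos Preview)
Your proof is correct and follows essentially the same approach as the paper's: both directions reduce to Proposition~\ref{prop:physical-equivalence:local} via approximation by local observables, using Lemma~\ref{lem:annihilators}(b) and $C(\xSp{X},\sigma)=\overline{K(\xSp{X},\sigma)}$ for the backward direction. The only minor difference is that in the backward direction you verify $\Delta_{f_n}=\Delta_{g_n}$ by directly checking that local coboundaries plus constants generate the trivial Hamiltonian, whereas the paper instead invokes Lemma~\ref{lem:annihilators}(a) to get $\pi(f_n)=\pi(g_n)+c$ and then applies the $\Leftarrow$ direction of Proposition~\ref{prop:physical-equivalence:local}; both are equally valid and of comparable length.
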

\begin{proof}
	First, suppose that $f$ and $g$ are physically equivalent.
	Then, there exist sequences $f_1,f_2,\ldots$ and $g_1,g_2,\ldots$
	of local observables such that $f_i\to f$, $g_i\to g$ and $f_i\sim g_i$.
	By Proposition~\ref{prop:physical-equivalence:local},
	there are real numbers $c_i$ such that for every $\pi\in\xSx{P}(\xSp{X},\sigma)$,
	$\pi(f_i)-\pi(g_i)=c_i$.  Taking the limits as $i\to\infty$,
	we obtain $\pi(f)-\pi(g)=c$, where $c\IsDef\lim_i c_i$ is independent of $\pi$.
	
	Conversely, suppose there is a constant $c\in\xR$ such that
	$\pi(f)=\pi(g)+c$ for every probability measure $\pi\in\xSx{P}(\xSp{X},\sigma)$.
	Let $h\IsDef f-g-c$.
	Then, according to Lemma~\ref{lem:annihilators},
	$h\in C(\xSp{X},\sigma)$.
	Therefore, by the denseness of $K(\xSp{X})$ in $C(\xSp{X})$,
	there exists a sequence of \emph{local} observables $h_i\in C(\xSp{X},\sigma)$
	such that $h_i\to h$.
	Choose another sequence of local observables $g_i$ that converges to $g$,
	and set $f_i\IsDef h_i+g_i+c$.
	By Lemma~\ref{lem:annihilators}, $\pi(f_i)=\pi(g_i)+c$
	for every $\pi\in\xSx{P}(\xSp{X},\sigma)$, which along with Proposition~\ref{prop:physical-equivalence:local},
	implies that $\Delta_{f_i}=\Delta_{g_i}$.
	Taking the limit, we obtain that $f$ and $g$ are physically equivalent.
\end{proof}

Using Lemma~\ref{lem:annihilators}, we also get the following characterization.
\begin{corollary}
	Let $(\xSp{X},\sigma)$ be a strongly irreducible shift of finite type.
	Two observables $f,g\in C(\xSp{X})$ are physically equivalent if and only if
	$f-g-c\in C(\xSp{X},\sigma)$ for some $c\in\xR$.
\end{corollary}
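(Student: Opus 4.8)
The plan is to simply combine Proposition~\ref{prop:physical-equivalence:continuous} with part~(b) of Lemma~\ref{lem:annihilators}; no genuinely new work is needed. First I would recall that, by Proposition~\ref{prop:physical-equivalence:continuous}, the observables $f,g\in C(\xSp{X})$ are physically equivalent precisely when there is a constant $c\in\xR$ with $\pi(f)=\pi(g)+c$ for every $\pi\in\xSx{P}(\xSp{X},\sigma)$.

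Next I would fix such a $c$ (in either direction of the argument) and set $h\IsDef f-g-c$. Since every $\pi\in\xSx{P}(\xSp{X},\sigma)$ is a positive linear functional, the equality $\pi(f)=\pi(g)+c$ holds for all invariant $\pi$ if and only if $\pi(h)=\pi(f)-\pi(g)-c=0$ for all invariant $\pi$. By Lemma~\ref{lem:annihilators}(b), the condition ``$\pi(h)=0$ for every $\pi\in\xSx{P}(\xSp{X},\sigma)$'' is exactly the statement that $h\in C(\xSp{X},\sigma)$. Chaining these equivalences gives: $f\sim g$ iff $f-g-c\in C(\xSp{X},\sigma)$ for some $c\in\xR$, which is the claim.

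There is essentially no obstacle here — the only thing to be careful about is the logical bookkeeping of the constant $c$: in the forward direction $c$ is produced by Proposition~\ref{prop:physical-equivalence:continuous}, while in the backward direction $c$ is given and one reads the implications in reverse, using that membership of $f-g-c$ in $C(\xSp{X},\sigma)$ forces $\pi(f-g-c)=0$, hence $\pi(f)=\pi(g)+c$, for all invariant $\pi$, which is the hypothesis of Proposition~\ref{prop:physical-equivalence:continuous}. So the corollary is just a restatement of Proposition~\ref{prop:physical-equivalence:continuous} through the annihilator description of $C(\xSp{X},\sigma)$.
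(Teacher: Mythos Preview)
Your proposal is correct and matches the paper's approach exactly: the corollary is stated immediately after Proposition~\ref{prop:physical-equivalence:continuous} with the remark that it follows using Lemma~\ref{lem:annihilators}, and your argument spells out precisely that combination.
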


Physically equivalent observables define the same
set of equilibrium measures.
Moreover, the equilibrium measures of two observables
with summable variations
that are not physically equivalent are disjoint. 
(However, continuous observables that are not physically equivalent
might in general share equilibrium measures; see~\cite{Rue04}, Corollary~3.17.)

\begin{proposition} 
\label{prop:equilibrium:equivalence}
	Let $(\xSp{X},\sigma)$ be a strongly irreducible shift of finite type.
	If two observables $f,g\in C(\xSp{X})$ are physically
	equivalent, they have the same set of equilibrium measures.
	Conversely, if two observables $f,g\in SV(\xSp{X})$
	with summable variations share an equilibrium measure, they are physically equivalent.
\end{proposition}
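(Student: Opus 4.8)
The plan is to treat the two implications separately. The forward implication is a short consequence of the variational setup (essentially the ``$\Leftarrow$'' half of Proposition~\ref{prop:physical-equivalence:local}), while the converse uses the Dobrushin--Lanford--Ruelle theorem (Theorem~\ref{thm:equilibrium:Gibbs}) together with the Gibbs limit formula~(\ref{eq:gibbs:def}), followed by a spatial-averaging argument in the style of the ``$\Rightarrow$'' half of Proposition~\ref{prop:physical-equivalence:local}.

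For the forward implication, assume $f\sim g$. By Proposition~\ref{prop:physical-equivalence:continuous} there is a constant $c\in\xR$ with $\pi(f)=\pi(g)+c$ for every $\pi\in\xSx{P}(\xSp{X},\sigma)$. Taking the supremum in the definition~(\ref{eq:pressure:def}) of the pressure gives $P_f(\xSp{X},\sigma)=P_g(\xSp{X},\sigma)-c$, so after subtracting $c$ the defining inequality $h_\pi(\xSp{X},\sigma)-P_f(\xSp{X},\sigma)\leq\pi(f)$ becomes $h_\pi(\xSp{X},\sigma)-P_g(\xSp{X},\sigma)\leq\pi(g)$; hence $\xSx{E}_f(\xSp{X},\sigma)=\xSx{E}_g(\xSp{X},\sigma)$. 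This uses only continuity of $f$ and $g$, as claimed.

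For the converse, let $f,g\in SV(\xSp{X})$ and pick $\pi_0\in\xSx{E}_f(\xSp{X},\sigma)\cap\xSx{E}_g(\xSp{X},\sigma)$. By Theorem~\ref{thm:equilibrium:Gibbs}, $\pi_0$ is a shift-invariant Gibbs measure both for $\Delta_f$ and for $\Delta_g$; since $\xSp{X}$ is strongly irreducible, $\pi_0$ has full support, so~(\ref{eq:gibbs:def}) applies to every asymptotic pair $x,y\in\xSp{X}$ and yields
\begin{align}
	\xe^{-\Delta_f(x,y)}
		= \lim_{E\nearrow\xL}\frac{\pi_0([y]_E)}{\pi_0([x]_E)}
		= \xe^{-\Delta_g(x,y)} \;,
\end{align}
whence $\Delta_f=\Delta_g$ as Hamiltonians. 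To conclude $f\sim g$, put $h\IsDef f-g\in SV(\xSp{X})$, so $\Delta_h=0$, and imitate the ``$\Rightarrow$'' half of Proposition~\ref{prop:physical-equivalence:local}: fix via the ergodic theorem a configuration $\xQuiet\in\xSp{X}$ along which $\abs{I_n}^{-1}\sum_{i\in I_n}h(\sigma^i\xQuiet)\to c$ for some $c\in\xR$; for an arbitrary $x\in\xSp{X}$, use strong irreducibility to build $x_n\in\xSp{X}$ agreeing with $x$ on a large neighbourhood of $I_n$ and with $\xQuiet$ far from $I_n$; then $\sum_{i\in I_n}h(\sigma^i x)=\sum_{i\in I_n}h(\sigma^i x_n)+\smallo(\abs{I_n})$ because $h$ has summable variations, and $\sum_{i\in I_n}h(\sigma^i x_n)=\sum_{i\in I_n}h(\sigma^i\xQuiet)+\Delta_h(\xQuiet,x_n)+\smallo(\abs{I_n})=\sum_{i\in I_n}h(\sigma^i\xQuiet)+\smallo(\abs{I_n})$ since $\Delta_h=0$. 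Hence $\abs{I_n}^{-1}\sum_{i\in I_n}h(\sigma^i x)\to c$ for every $x\in\xSp{X}$, so by dominated convergence (or the ergodic theorem) $\pi(h)=c$ for all $\pi\in\xSx{P}(\xSp{X},\sigma)$, and Proposition~\ref{prop:physical-equivalence:continuous} gives $f\sim g$.

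I expect the only real obstacle to be this last step for observables merely in $SV(\xSp{X})$ rather than local: one must let the truncation radius defining $x_n$ grow with $n$ so that both the insensitivity of the block sums to the configuration outside a neighbourhood of $I_n$ and the boundary terms in the telescoping expansion of $\Delta_h(\xQuiet,x_n)$ are genuinely $\smallo(\abs{I_n})$ --- which is precisely where the summability of $\sum_n\abs{\partial I_n}\,\xvar_{I_n}(h)$ and strong irreducibility enter. Everything else (Theorem~\ref{thm:equilibrium:Gibbs}, the full support of Gibbs measures on strongly irreducible shifts, and the limit formula~(\ref{eq:gibbs:def})) is invoked off the shelf.
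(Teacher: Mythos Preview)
Your proof is correct and follows the same route as the paper: the forward implication via Proposition~\ref{prop:physical-equivalence:continuous} and the variational definition of equilibrium measures, and the converse via Theorem~\ref{thm:equilibrium:Gibbs} together with the fact that a full-support Gibbs measure determines its Hamiltonian through~(\ref{eq:gibbs:def}). The paper's proof is terse (``the converse follows from the characterization of equilibrium measures as Gibbs measures''); you have made explicit the step from $\Delta_f=\Delta_g$ to $f\sim g$ for observables in $SV(\xSp{X})$, which the paper tacitly treats as a routine extension of the $\Rightarrow$ direction of Proposition~\ref{prop:physical-equivalence:local}, and you have correctly singled out the only non-automatic point, namely managing the truncation radius so that the boundary contributions are $\smallo(\abs{I_n})$ under the summable-variations hypothesis.
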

%
\begin{proof}
	The first claim is an easy consequence of the characterization of
	physical equivalence given in Proposition~\ref{prop:physical-equivalence:continuous}.
	The converse follows from the characterization of equilibrium measures
	as Gibbs measures (Theorem~\ref{thm:equilibrium:Gibbs}).
\end{proof}

\section{Entropy-preserving Maps}
\label{sec:entropy-preserving}

\subsection{Entropy and Pre-injective Maps}

The Garden-of-Eden theorem states that
a cellular automaton over a strongly irreducible shift of finite type
is surjective if and only if it is pre-injective~\cite{Moo62,Myh63,Hed69,CecMacSca99,Fio03}.
This is one of the earliest results in the theory of cellular automata,
and gives a characterization of when a cellular automaton
has a so-called \emph{Garden-of-Eden}, that is,
a configuration with no pre-image.
The Garden-of-Eden theorem can be proved by a counting argument.
Alternatively, the argument can be phrased in terms of entropy
(see~\cite{LinMar95}, Theorem~8.1.16 and~\cite{CecCor10}, Chapter~5).

\begin{theorem}[see~\cite{LinMar95}, Theorem~8.1.16 and~\cite{MeeSte01}, Theorem~3.6]
\label{thm:entropy:pre-injective}
	Let $\Phi:\xSp{X}\to\xSp{Y}$ be a factor map
	from a strongly irreducible shift of finite type $(\xSp{X},\sigma)$
	onto a shift $(\xSp{Y},\sigma)$.
	Then, $h(\xSp{Y},\sigma)\leq h(\xSp{X},\sigma)$
	with equality if and only if $\Phi$ is pre-injective.
\end{theorem}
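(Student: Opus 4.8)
The plan is to reduce everything to counting admissible patterns on the boxes $I_n=[-n,n]^d$, exploiting the local‑rule description of $\Phi$ (fix a neighborhood $0\in N\subseteq\xL$ for it) together with the strong irreducibility of $\xSp{X}$. For the inequality $h(\xSp{Y},\sigma)\le h(\xSp{X},\sigma)$ I would observe that $(\Phi x)|_{I_n}$ depends only on $x|_{N(I_n)}$, so there is a well‑defined map $\psi\colon L_{N(I_n)}(\xSp{X})\to L_{I_n}(\xSp{Y})$, and it is onto because $\Phi$ is. Hence $\abs{L_{I_n}(\xSp{Y})}\le\abs{L_{N(I_n)}(\xSp{X})}$; taking $\tfrac1{\abs{I_n}}\log(\cdot)$ and using $\abs{N(I_n)}/\abs{I_n}\to1$ finishes this part. (One could instead lift a measure of maximal entropy on $\xSp{Y}$ to a shift‑invariant measure on $\xSp{X}$ and invoke Proposition~\ref{prop:entropy:factoring}.)

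For the ``pre‑injective $\Rightarrow$ equality'' direction I would prove the reverse inequality by a fixed‑boundary count. Fixing $x^*\in\xSp{X}$ and putting $w\IsDef x^*|_{\xL\setminus I_n}$, all configurations agreeing with $w$ outside $I_n$ are mutually asymptotic, so pre‑injectivity makes $\Phi$ injective on the family $\{x\in\xSp{X}:x|_{\xL\setminus I_n}=w\}$; since moreover the $\Phi$‑image of such an $x$ agrees with $\Phi x^*$ outside $N^{-1}(I_n)\subseteq I_{n+r}$ (a fixed $r$), already $x\mapsto(\Phi x)|_{I_{n+r}}$ is injective there. This gives $\abs{L_{I_n}(\xSp{X}\,|\,w)}\le\abs{L_{I_{n+r}}(\xSp{Y})}$, while strong irreducibility lets me glue an arbitrary admissible pattern on a slightly eroded box $I_{n-s}$ onto the exterior $w$, so $\abs{L_{I_n}(\xSp{X}\,|\,w)}\ge\abs{L_{I_{n-s}}(\xSp{X})}$. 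Passing to the limit yields $h(\xSp{X},\sigma)\le h(\xSp{Y},\sigma)$, hence equality.

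The substantive direction is ``not pre‑injective $\Rightarrow$ strict inequality'', which is the Garden‑of‑Eden counting argument. I would start from distinct asymptotic $x_0,x_0'$ with $\Phi x_0=\Phi x_0'$ and $D\IsDef\diff(x_0,x_0')$, and first establish a \emph{flip‑gadget lemma}: there is a fixed finite window $R\supseteq D$ and patterns $\mathsf a\IsDef x_0|_R$, $\mathsf b\IsDef x_0'|_R$ such that, inside any configuration of $\xSp{X}$ carrying a translate of $\mathsf a$ (resp.\ $\mathsf b$), overwriting it by the translate of $\mathsf b$ (resp.\ $\mathsf a$) stays in $\xSp{X}$ and leaves the $\Phi$‑image unchanged --- this comes from the finite type condition and the local rule of $\Phi$, and is exactly where ``surjective $\Rightarrow$ pre‑injective'' is localized. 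Then I would place $k\ge c\abs{I_n}$ pairwise disjoint translates $R+v_1,\dots,R+v_k$ well inside $I_n$, spaced so widely that suitably enlarged copies $M'(R+v_i)$ (the buffer $M'$ coming from the strong‑irreducibility neighborhood $M$) are pairwise disjoint, and for $p\in L_{N(I_n)}(\xSp{X})$ let $a(p)$ count the indices $i$ with $p|_{R+v_i}\in\{\mathsf a,\mathsf b\}$. By the lemma, $\psi$ is constant on the ``flip‑classes'' obtained by toggling $\mathsf a\leftrightarrow\mathsf b$ at subsets of those indices, and these have size $2^{a(p)}$, whence
\begin{align}
	\abs{L_{I_n}(\xSp{Y})}\;\le\;\#\{\text{flip-classes}\}\;=\;\sum_{p\in L_{N(I_n)}(\xSp{X})}2^{-a(p)}\;.
\end{align}
I would then bound this sum by conditioning, one index at a time, on the symbols outside $M'(R+v_i)$ (a tower/martingale computation under the uniform measure on $L_{N(I_n)}(\xSp{X})$): strong irreducibility provides a constant $\rho>0$ depending only on $R$, $M'$ and $\xSp{X}$ that lower‑bounds the conditional fraction of patterns with $p|_{R+v_i}\in\{\mathsf a,\mathsf b\}$, so the sum is $\le(1-\rho/2)^k\abs{L_{N(I_n)}(\xSp{X})}\le 2^{-c'\abs{I_n}}\abs{L_{N(I_n)}(\xSp{X})}$; dividing logarithms by $\abs{I_n}$ produces $h(\xSp{Y},\sigma)\le h(\xSp{X},\sigma)-c'\log2<h(\xSp{X},\sigma)$.

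The main obstacles, as I see them, are the two non‑combinatorial inputs. First, proving the flip‑gadget lemma cleanly: isolating a single finite window that can be toggled between two admissible local patterns without violating the shift‑of‑finite‑type constraints \emph{and} without perturbing the $\Phi$‑image anywhere (the global invariance of the image follows from $\Phi x_0=\Phi x_0'$ and shift‑commutation, but one must choose $R$ to simultaneously dominate the SFT reach of $D$ and the $N$‑reach of $D$). Second, extracting the uniform lower bound $\rho>0$ from strong irreducibility with exactly the right buffering $M'$, so that the tower‑property factorization over the well‑separated windows actually goes through. Everything else --- the various pattern‑counting limits and the gluing/erosion estimates --- is routine bookkeeping with box sizes.
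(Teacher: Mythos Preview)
The paper does not give its own proof of this theorem: it is quoted as a known result with references to~\cite{LinMar95} and~\cite{MeeSte01}, and then used (together with Theorem~\ref{thm:entropy:embedding}) to derive the Garden-of-Eden corollary. So there is nothing to compare against directly.

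That said, your proposal is essentially the standard counting proof and is correct. Your ``pre-injective $\Rightarrow$ equality'' step is exactly the combinatorial shadow of the paper's proof of Theorem~\ref{thm:pentropy:pre-injective} (the measure-theoretic version): the key observation there, that $\xRest{x}{A\setminus M(\xCmpl{A})}$ is determined by $\xRest{x}{\partial M(A)}$ together with $\xRest{(\Phi x)}{A}$, is precisely your fixed-boundary injectivity. For the strict-inequality direction, your flip-gadget lemma is fine once $R$ is taken to contain both $M(M(D))$ (to keep swaps inside the SFT) and $N(N^{-1}(D))$ (to keep the $\Phi$-image fixed); and your probabilistic lower bound $\rho$ is genuine: conditioning on the pattern outside a single buffered window $M'(R+v_i)$ leaves at most $\abs{S}^{\abs{M'(R)}}$ admissible fillings, while strong irreducibility guarantees at least one filling with $\mathsf a$ on $R+v_i$, so $\rho\ge\abs{S}^{-\abs{M'(R)}}$. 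The tower iteration then goes through because the $M'(R+v_i)$ are disjoint, so $X_1,\dots,X_{i-1}$ are measurable with respect to the conditioning $\sigma$-algebra at step~$i$. The ``obstacles'' you flag are thus only bookkeeping.
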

\begin{theorem}[see~\cite{CovPau74}, Theorem~3.3, and~\cite{MeeSte01}, Lemma~4.1, and~\cite{Fio03}, Lemma~4.4]
\label{thm:entropy:embedding}
	Let $(\xSp{X},\sigma)$ be a strongly irreducible shift of finite type
	and $\xSp{Y}\subseteq\xSp{X}$ a proper subsystem.
	Then, $h(\xSp{Y},\sigma)<h(\xSp{X},\sigma)$.
\end{theorem}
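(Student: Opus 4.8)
The plan is to prove the strict inequality by a direct volume count: the local pattern that separates $\xSp{Y}$ from $\xSp{X}$ must occur with positive spatial density in a uniformly random admissible pattern of $\xSp{X}$, so forbidding it costs $\xSp{Y}$ a fixed positive amount of entropy per site. I would not go through a measure of maximal entropy on $\xSp{Y}$ together with the Embedding proposition above, since that route would need the a priori fact that every measure of maximal entropy of a strongly irreducible shift of finite type has full support, which is essentially the statement to be proved.

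First I would isolate the datum distinguishing the two subshifts. Since $\xSp{Y}$ is closed, shift-invariant and properly contained in $\xSp{X}$, the set $\xSp{X}\setminus\xSp{Y}$ is non-empty and open, so there is a finite box $D\subseteq\xL$ and a pattern $p\in L_D(\xSp{X})$ whose cylinder is disjoint from $\xSp{Y}$; by shift-invariance, no configuration of $\xSp{Y}$ exhibits $p$ at any site. Let $0\in M\subseteq\xL$ be a symmetric neighbourhood that simultaneously witnesses the strong irreducibility of $\xSp{X}$ and is large enough that $\xSp{X}$ is defined by forbidding patterns supported in translates of $M$. Put $W\IsDef D+M+M$ and fix a sublattice $L\xZ^d$ coarse enough that the translates $v+W$ (for $v\in L\xZ^d$) are pairwise disjoint and pairwise separated by more than $\operatorname{diam}M$. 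Inside $I_n=[-n,n]^d$ this yields centres $a_1,\dots,a_{k_n}$ with $k_n/\abs{I_n}\to\rho\IsDef L^{-d}>0$, such that the blocks $B_j\IsDef a_j+D$ satisfy $B_j+M+M\subseteq I_n$ and the enlarged blocks $B_j+M+M$ are pairwise separated in the above sense.

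The core is a one-block estimate. Let $\mu_n$ be the uniform probability measure on $L_{I_n}(\xSp{X})$. Conditioned on the restriction of $q$ to the exterior $I_n\setminus\bigcup_j(B_j+M+M)$, the fillings of the well-separated regions $B_j+M+M$ are independent (no admissible-pattern constraint couples two of them), each being uniform over the legal fillings of that region compatible with the fixed exterior. For a single block the number of such fillings is at most $\abs{S}^{\abs{W}}$, whereas at least one of them carries $p$ on $B_j$: the exterior pattern extends to some configuration of $\xSp{X}$, a suitable placement of $p$ at $B_j$ extends to another configuration of $\xSp{X}$, and since $M\big(I_n\setminus(B_j+M+M)\big)\cap M(B_j)=\varnothing$, strong irreducibility glues the two into a configuration of $\xSp{X}$ agreeing with the first off $B_j+M+M$ and equal to $p$ on $B_j$. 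Hence $\mu_n\big(q|_{B_j}=p\bigm| q|_{\mathrm{ext}}\big)\geq c\IsDef\abs{S}^{-\abs{W}}>0$, so
\begin{align}
	\abs{L_{I_n}(\xSp{Y})} &\leq \#\{q\in L_{I_n}(\xSp{X}) : q|_{B_j}\neq p \text{ for all } j\} \leq (1-c)^{k_n}\,\abs{L_{I_n}(\xSp{X})} \;,
\end{align}
the first inequality because every pattern in $L_{I_n}(\xSp{Y})$ is the restriction of a configuration of $\xSp{Y}$, which avoids $p$ everywhere. Taking logarithms, dividing by $\abs{I_n}$ and letting $n\to\infty$ gives $h(\xSp{Y},\sigma)\leq h(\xSp{X},\sigma)+\rho\log(1-c)<h(\xSp{X},\sigma)$. (If $\xSp{X}$ is a single point the statement is vacuous, as it then has no proper non-empty subsystem, and $\xSp{Y}=\varnothing$ is excluded by convention.)

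The step I expect to require the most care is the one-block estimate, and specifically the neighbourhood bookkeeping: the buffer $W=D+M+M$ must be chosen so that (i) distinct enlarged blocks $B_j+M+M$ are far enough apart that, given a legal exterior, their fillings are genuinely independent (no forbidden pattern spans two of them), and (ii) the exterior $I_n\setminus(B_j+M+M)$ is $M$-separated from $B_j$, so that strong irreducibility can be invoked to produce a legal filling displaying $p$ on $B_j$. The remaining ingredients — counting admissible centres in $I_n$, the product form of the conditional distribution, and the passage to entropy densities — are routine.
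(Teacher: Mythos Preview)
The paper does not give a proof of this theorem; it is stated with references to \cite{CovPau74,MeeSte01,Fio03} and used as a black box. Your approach --- a direct volume count showing that forbidding a single admissible pattern costs positive entropy density --- is precisely the standard argument in those references, so there is nothing substantively different to compare.

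Your proof is essentially correct, but the conditional-independence step deserves one more word of justification than you give it. The parenthetical ``no admissible-pattern constraint couples two of them'' establishes that \emph{local} admissibility of $e\cup r_1\cup\cdots\cup r_{k_n}$ factors over the blocks; but membership in $L_{I_n}(\xSp{X})$ is \emph{global} admissibility (the pattern must extend to a point of $\xSp{X}$), and the two notions need not coincide for a general shift of finite type. To get the product structure you want, fix once and for all an extension $x_0\in\xSp{X}$ of the exterior pattern $e$, and check that the full configuration $x_0|_{\xL\setminus\bigcup_j(B_j+M+M)}\cup r_1\cup\cdots\cup r_{k_n}$ lies in $\xSp{X}$ whenever each $e\cup r_j$ does: every $M$-window touching $B_j+M+M$ then agrees with the extension $x_j$ of $e\cup r_j$, provided the blocks are placed slightly deeper in $I_n$ (say $B_j+3M\subseteq I_n$, so that such windows stay inside $I_n$ and hence inside $E\cup(B_j+M+M)$). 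This is exactly the bookkeeping you flagged as needing care.

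Alternatively, you can sidestep independence altogether. Your strong-irreducibility argument already gives the one-block estimate in the stronger form
\[
	\mu_n\big(q|_{B_j}=p_j \,\big|\, q|_{I_n\setminus(B_j+M+M)}\big)\ \geq\ c,
\]
conditioning on \emph{everything} outside the $j$th enlarged block (not just on $E$). Since the event $\{q|_{B_i}\neq p_i\ \text{for all}\ i<j\}$ is measurable with respect to $q|_{I_n\setminus(B_j+M+M)}$, the tower property gives $\mu_n\big(\bigcap_{i\leq j}\{q|_{B_i}\neq p_i\}\big)\leq(1-c)\,\mu_n\big(\bigcap_{i<j}\{q|_{B_i}\neq p_i\}\big)$, and the bound $(1-c)^{k_n}$ follows by induction. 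This route requires only your stated buffer $W=D+M+M$.
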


\begin{corollary}[Garden-of-Eden Theorem~\cite{Moo62,Myh63,CecMacSca99,Fio03}]
	Let $\Phi:\xSp{X}\to\xSp{X}$ be a cellular automaton
	on a strongly irreducible shift of finite type $(\xSp{X},\sigma)$.
	Then, $\Phi$ is surjective if and only if it is pre-injective.
\end{corollary}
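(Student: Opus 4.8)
The plan is to obtain both implications directly from Theorems~\ref{thm:entropy:pre-injective} and~\ref{thm:entropy:embedding}, the only preliminary remark being that the image $\Phi\xSp{X}$ is itself a shift space: since $\xSp{X}$ is compact and $\Phi$ continuous, $\Phi\xSp{X}$ is compact, hence closed in $\xSp{X}$; and since $\Phi$ commutes with the shift, $\Phi\xSp{X}$ is shift-invariant. Thus $\xSp{Y}\IsDef\Phi\xSp{X}$ is a shift space and $\Phi:\xSp{X}\to\xSp{Y}$ is a factor map.

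For the direction ``surjective $\Rightarrow$ pre-injective'': if $\Phi$ is surjective, then $\xSp{Y}=\xSp{X}$ and $\Phi$ is a factor map from the strongly irreducible shift of finite type $(\xSp{X},\sigma)$ onto $(\xSp{X},\sigma)$ itself. Theorem~\ref{thm:entropy:pre-injective} then applies with the codomain equal to $\xSp{X}$; since $h(\xSp{X},\sigma)=h(\xSp{X},\sigma)$, the equality case of that theorem is in force, and hence $\Phi$ is pre-injective.

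For the converse, suppose $\Phi$ is pre-injective. Applying Theorem~\ref{thm:entropy:pre-injective} to the factor map $\Phi:\xSp{X}\to\xSp{Y}$ and using pre-injectivity, we get $h(\xSp{Y},\sigma)=h(\xSp{X},\sigma)$. Now $\xSp{Y}$ is a subsystem of $(\xSp{X},\sigma)$; were it a \emph{proper} subsystem, Theorem~\ref{thm:entropy:embedding} would force the strict inequality $h(\xSp{Y},\sigma)<h(\xSp{X},\sigma)$, contradicting the previous equality. Hence $\xSp{Y}=\xSp{X}$, that is, $\Phi$ is surjective.

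There is essentially no obstacle here: the whole content of the Garden-of-Eden theorem is packed into the two entropy statements already established, and the present corollary is just a matter of invoking them with the codomain taken to be the image of $\Phi$. The only thing to check — and it is immediate from compactness of $\xSp{X}$ and shift-equivariance of $\Phi$ — is that this image is a closed, shift-invariant set, so that the words ``factor map'' and ``subsystem'' are legitimately applicable.
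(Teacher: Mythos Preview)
Your proof is correct and follows exactly the same route as the paper: factor $\xSp{X}\xrightarrow{\Phi}\Phi\xSp{X}\subseteq\xSp{X}$, then read off both implications from the chain $h(\xSp{X},\sigma)\geq h(\Phi\xSp{X},\sigma)\leq h(\xSp{X},\sigma)$ using Theorems~\ref{thm:entropy:pre-injective} and~\ref{thm:entropy:embedding}. The paper merely displays this chain as a diagram without words; you have spelled out the argument, including the (immediate) verification that $\Phi\xSp{X}$ is a shift space.
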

\begin{proof}
	\begin{align}
		\begin{array}{c@{\ }c@{\ }c@{\ }c@{\ }c}
			\xSp{X} & \xrightarrow{\ \quad\Phi\ \quad} & \Phi\xSp{X} & \xrightarrow{\ \quad\subseteq\ \quad} & \xSp{X} \;, \\[1ex]
			\mathclap{h(\xSp{X},\sigma)} & \geq &
				\mathclap{h(\Phi\xSp{X},\sigma)} & \leq &
				\mathclap{h(\xSp{X},\sigma)} \;.
		\end{array}
	\end{align}
\end{proof}

Another corollary of Theorem~\ref{thm:entropy:pre-injective}
(along with Lemma~\ref{lem:onto-one-to-one} and Proposition~\ref{prop:entropy:factoring})
is the so-called \emph{balance} property of pre-injective cellular automata.
\begin{corollary}[see~\cite{CovPau74}, Theorem~2.1, and~\cite{MeeSte01}, Theorems~3.3 and~3.6]
\label{cor:balance:weak}
	Let $\Phi:\xSp{X}\to\xSp{Y}$ be a pre-injective factor map
	from a strongly irreducible shift of finite type $(\xSp{X},\sigma)$
	onto a shift $(\xSp{Y},\sigma)$.
	Every maximum entropy measure $\nu\in\xSx{P}(\xSp{Y},\sigma)$ has a
	maximum entropy pre-image $\pi\in\xSx{P}(\xSp{X},\sigma)$.
\end{corollary}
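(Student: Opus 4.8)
The plan is to obtain the pre-image in two steps: first produce \emph{some} shift-invariant probability measure on $\xSp{X}$ that maps to $\nu$, and then observe that pre-injectivity automatically forces any such measure to have maximal entropy, so no optimization over the fiber is needed.

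First I would look at the fiber $\event{Q}\IsDef\{\mu\in\xSx{P}(\xSp{X}):\Phi\mu=\nu\}$. It is non-empty by Lemma~\ref{lem:onto-one-to-one}(b), since $\Phi$ being onto makes the induced map $\xSx{P}(\xSp{X})\to\xSx{P}(\xSp{Y})$ onto. It is closed, being the pre-image of the closed singleton $\{\nu\}$ under the continuous map $\Phi:\xSx{P}(\xSp{X})\to\xSx{P}(\xSp{Y})$, and convex since $\Phi$ acts affinely on measures. Moreover $\sigma$ carries $\event{Q}$ into itself: for $\mu\in\event{Q}$ one has $\Phi(\sigma^i\mu)=\sigma^i(\Phi\mu)=\sigma^i\nu=\nu$, using that $\nu$ is shift-invariant. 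By the fact recalled in Section~\ref{sec:background} that every non-empty closed convex subset of $\xSx{P}(\xSp{X})$ stable under the shift contains a shift-invariant measure, $\event{Q}$ contains some $\pi\in\xSx{P}(\xSp{X},\sigma)$.

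Next I would check that this $\pi$ is already a maximum entropy measure on $\xSp{X}$. On one hand $h_\pi(\xSp{X},\sigma)\leq h(\xSp{X},\sigma)$ always. On the other hand, applying Proposition~\ref{prop:entropy:factoring} to the factor map $\Phi$ gives $h_\nu(\xSp{Y},\sigma)=h_{\Phi\pi}(\xSp{Y},\sigma)\leq h_\pi(\xSp{X},\sigma)$; since $\nu$ maximizes entropy on $\xSp{Y}$ we have $h_\nu(\xSp{Y},\sigma)=h(\xSp{Y},\sigma)$; and since $\Phi$ is pre-injective, Theorem~\ref{thm:entropy:pre-injective} gives $h(\xSp{Y},\sigma)=h(\xSp{X},\sigma)$. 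Chaining these, $h(\xSp{X},\sigma)=h_\nu(\xSp{Y},\sigma)\leq h_\pi(\xSp{X},\sigma)\leq h(\xSp{X},\sigma)$, so all are equal and $\pi$ is the desired maximum entropy pre-image of $\nu$.

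I expect the only delicate point to be the first step: one must secure a pre-image of $\nu$ that is itself \emph{shift-invariant}, not merely a pre-image in $\xSx{P}(\xSp{X})$; this is exactly what the closed-convex-and-shift-stable argument delivers. After that, the entropy identity forced by pre-injectivity does everything else, and the argument in fact shows the stronger statement that every shift-invariant pre-image of a maximum entropy measure is itself a maximum entropy measure.
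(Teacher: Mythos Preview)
Your argument is correct and matches the paper's own (sketched) proof: the paper indicates that the corollary follows from Theorem~\ref{thm:entropy:pre-injective} together with Lemma~\ref{lem:onto-one-to-one} and Proposition~\ref{prop:entropy:factoring}, and you use precisely these three ingredients plus the standard fact (recalled in Section~\ref{sec:background}) that a non-empty closed convex shift-stable set of measures contains an invariant element. Your closing observation that in fact \emph{every} shift-invariant pre-image of $\nu$ has maximal entropy is a nice bonus.
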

\noindent In particular, a cellular automaton on a full shift is surjective
if and only if it preserves the uniform Bernoulli measure~\cite{Hed69,MarKim76}.
In Section~\ref{sec:ca:invariance}, we shall find a generalization of this property.

The probabilistic version of Theorem~\ref{thm:entropy:pre-injective}
states that the pre-injective factor maps preserve the entropy
of shift-invariant probability measures,
and seems to be part of the folklore (see e.g.~\cite{HelLinNor07}).
\begin{theorem}
\label{thm:pentropy:pre-injective}
	Let $\Phi:\xSp{X}\to\xSp{Y}$ be a factor map
	from a shift of finite type $(\xSp{X},\sigma)$ onto a shift $(\xSp{Y},\sigma)$.
	Let $\pi\in\xSx{P}(\xSp{X},\sigma)$ be a probability measure.
	Then, $h_{\Phi\pi}(\xSp{Y},\sigma)\leq h_\pi(\xSp{X},\sigma)$
	with equality if $\Phi$ is pre-injective.
\end{theorem}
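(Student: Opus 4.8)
The inequality $h_{\Phi\pi}(\xSp{Y},\sigma)\le h_\pi(\xSp{X},\sigma)$ is already Proposition~\ref{prop:entropy:factoring}, so the real task is the reverse inequality under the pre-injectivity hypothesis. The plan is to distil from pre-injectivity a \emph{local} injectivity property --- that a configuration of $\xSp{X}$, restricted to a box $I_n$, is determined by its $\Phi$-image on $I_n$ together with its values on a boundary annulus of bounded width --- and then to turn this into an elementary counting estimate for the entropy. Throughout, fix a window $0\in N\subseteq[-r,r]^d$ and a local rule for $\Phi$, and fix $s$ so that $\xSp{X}$ is cut out by forbidding patterns supported on translates of $[-s,s]^d$; let $w$ be a constant, depending only on $r$ and $s$, taken large enough for the splicing below.

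The key lemma I would establish is: \emph{if $x,y\in\xSp{X}$ agree on the annulus $I_n\setminus I_{n-w}$ and $\xRest{(\Phi x)}{I_n}=\xRest{(\Phi y)}{I_n}$, then $\xRest{x}{I_n}=\xRest{y}{I_n}$.} To prove it, put $m\IsDef n-\lfloor w/2\rfloor$ and let $z$ be the configuration that agrees with $x$ on $I_m$ and with $y$ off $I_m$. Since $x$ and $y$ coincide on all of $I_n\setminus I_{n-w}$, which (for $w$ large) contains every translate of $[-s,s]^d$ straddling $\partial I_m$, no forbidden pattern can appear in $z$, so $z\in\xSp{X}$ --- this is exactly where the finite-type hypothesis on $\xSp{X}$ is used. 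A similar bounded-range bookkeeping (using $0\in N$) shows that any site $j$ with $(\Phi z)(j)\ne(\Phi y)(j)$ must satisfy $j\in I_m\subseteq I_n$ and $(\Phi z)(j)=(\Phi x)(j)$, which contradicts $\xRest{(\Phi x)}{I_n}=\xRest{(\Phi y)}{I_n}$; hence $\Phi z=\Phi y$. As $z$ and $y$ differ only inside $I_m$, they are asymptotic, so pre-injectivity forces $z=y$, i.e.\ $\xRest{x}{I_m}=\xRest{y}{I_m}$; together with the agreement on the annulus this gives $\xRest{x}{I_n}=\xRest{y}{I_n}$.

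Granting the lemma, the random variable $\xv{x}_{I_n}$ on $(\xSp{X},\pi)$ is a deterministic function of the pair consisting of $\xv{x}_{I_n\setminus I_{n-w}}$ and the variable $x\mapsto\xRest{(\Phi x)}{I_n}$, the latter having $\pi$-entropy $H_{\Phi\pi}(I_n)$. Monotonicity and subadditivity of Shannon entropy, together with the crude bound $H_\pi(A)\le\abs{A}\log\abs{S}$, then give
\begin{align*}
	H_\pi(I_n) &\le H_\pi(I_n\setminus I_{n-w}) + H_{\Phi\pi}(I_n)
		\le \abs{I_n\setminus I_{n-w}}\log\abs{S} + H_{\Phi\pi}(I_n) \;.
\end{align*}
Because $\Phi$ commutes with the shift, $\Phi\pi$ is shift-invariant, so $H_{\Phi\pi}(I_n)/\abs{I_n}\to h_{\Phi\pi}(\xSp{Y},\sigma)$, while $\abs{I_n\setminus I_{n-w}}=O(n^{d-1})=\smallo(\abs{I_n})$. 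Dividing by $\abs{I_n}$ and letting $n\to\infty$ yields $h_\pi(\xSp{X},\sigma)\le h_{\Phi\pi}(\xSp{Y},\sigma)$, which combined with Proposition~\ref{prop:entropy:factoring} gives the claimed equality.

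I expect the local injectivity lemma to be the only genuine difficulty: pre-injectivity is a statement about \emph{arbitrary} finite perturbations, and it must be upgraded to a statement pinned to a fixed box with only a bounded-width safety margin. As indicated above, this is precisely where the finite-type assumption on $\xSp{X}$ does its work (it is what makes the spliced configuration $z$ admissible), and it is the reason the theorem is stated for shifts of finite type; in contrast to Theorems~\ref{thm:entropy:pre-injective} and~\ref{thm:entropy:embedding}, no strong irreducibility of $\xSp{X}$ is needed here.
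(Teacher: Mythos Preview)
Your proposal is correct and follows essentially the same route as the paper: extract from pre-injectivity and the finite-type gluing property a local determinism statement (the pattern on a box is pinned down by the $\Phi$-image on the box together with the pattern on a boundary annulus of bounded width), convert this into a Shannon-entropy inequality, divide by $\abs{I_n}$, and pass to the limit. The only cosmetic differences are that the paper phrases the splicing via the abstract gluing property rather than forbidden windows, and parametrizes the annulus as $\partial M(A)$ rather than $I_n\setminus I_{n-w}$; one small slip in your write-up is the claim ``$j\in I_m$'' in the $\Phi z=\Phi y$ step --- what actually follows is $j\in I_m-N\subseteq I_{m+r}$, but since $z=x$ on all of $I_n$ (not just $I_m$) and $I_{m+r}+N\subseteq I_n$ for $w$ large, the conclusion $(\Phi z)(j)=(\Phi x)(j)$ still goes through.
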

\begin{proof}
	For any factor map $\Phi$, the inequality $h_{\Phi\pi}(\xSp{Y},\sigma)\leq h_\pi(\xSp{X},\sigma)$
	holds by Proposition~\ref{prop:entropy:factoring}.  Suppose that $\Phi$ is pre-injective.
	It is enough to show that $h_{\Phi\pi}(\xSp{Y},\sigma)\geq h_\pi(\xSp{X},\sigma)$.

	Let $0\subseteq M\subseteq\xL$ be a neighborhood for $\Phi$
	and a witness for the finite-type gluing property of $\xSp{X}$ (see Section~\ref{sec:shifts-ca}).
	Let $A\subseteq\xL$ be a finite set.  By the pre-injectivity of $\Phi$, for every $x\in\xSp{X}$,
	the pattern $\xRest{x}{A\setminus M(\xCmpl{A})}$ is uniquely determined by
	$\xRest{x}{\partial M(A)}$ and $\xRest{(\Phi x)}{A}$.
	Indeed, suppose that $x'\in\xSp{X}$ is another configuration
	with $\xRest{x'}{\partial M(A)}=\xRest{x}{\partial M(A)}$
	and $\xRest{(\Phi x')}{A}=\xRest{(\Phi x)}{A}$.  Then, the configuration $x''$ that agrees with $x$
	on $M(\xCmpl{A})$ and with $x'$ on $M(A)$
	is in $\xSp{X}$ and asymptotic to~$x$.  Since $\xRest{(\Phi x'')}{A}=\xRest{(\Phi x')}{A}=\xRest{(\Phi x)}{A}$,
	it follows that $\Phi x''=\Phi x$.  Therefore, $x''=x$, and in particular,
	$\xRest{x'}{A\setminus M(\xCmpl{A})}=\xRest{x''}{A\setminus M(\xCmpl{A})}=\xRest{x}{A\setminus M(\xCmpl{A})}$.

	From the basic properties of the Shannon entropy, it follows that
	\begin{align}
		H_\pi(\partial M(A)) + H_{\Phi\pi}(A) &\geq H_\pi(A\setminus M(\xCmpl{A})) \;.
	\end{align}
	Now, choose the neighborhood $M$ to be $I_r\IsDef [-r,r]^d\subseteq\xL$ for a sufficiently large $r$.
	For $A\IsDef I_n=[-n,n]^d$, we obtain
	\begin{align}
		H_{\Phi\pi}(I_n) &\geq H_\pi(I_{n-r}) - H_\pi(I_{n+r}\setminus I_{n-r}) \;.
	\end{align}
	Dividing by $\abs{I_n}$ we get
	\begin{align}
		\frac{H_{\Phi\pi}(I_n)}{\abs{I_n}} &\geq
			\frac{\abs{I_{n-r}}}{\abs{I_n}}\cdot\frac{H_\pi(I_{n-r})}{\abs{I_{n-r}}} -
			\frac{\smallo(\abs{I_n})}{\abs{I_n}} \;,
	\end{align}
	which proves the theorem by letting $n\to\infty$.
\end{proof}

From Theorem~\ref{thm:pentropy:pre-injective} and Lemma~\ref{lem:onto-one-to-one},
it immediately follows that the functionals $f\mapsto P_f$
and $f\mapsto s_f(\cdot)$ are preserved under the dual of a pre-injective factor map.
\begin{corollary}
\label{cor:pressure:pre-injective}
	Let $\Phi:\xSp{X}\to\xSp{Y}$ be a factor map
	from a shift of finite type $(\xSp{X},\sigma)$ onto a shift $(\xSp{Y},\sigma)$.
	Let $f\in C(\xSp{Y})$ be an observable.
	Then, $P_{f\xO\Phi}(\xSp{X},\sigma)\geq P_f(\xSp{Y},\sigma)$
	with equality if $\Phi$ is pre-injective.
\end{corollary}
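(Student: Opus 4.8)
The plan is to work directly from the variational definition of pressure, $P_g(\xSp{X},\sigma)=\sup_{\nu\in\xSx{P}(\xSp{X},\sigma)}[h_\nu(\xSp{X},\sigma)-\nu(g)]$, and to shuttle invariant measures back and forth across $\Phi$. The elementary observation used throughout is that for every $\nu\in\xSx{P}(\xSp{X},\sigma)$ one has $\nu(f\xO\Phi)=\nu(\Phi^* f)=(\Phi\nu)(f)$, so the ``energy term'' is unchanged when passing from $\nu$ to its image $\Phi\nu$; only the entropy term needs attention.

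For the general inequality $P_{f\xO\Phi}(\xSp{X},\sigma)\geq P_f(\xSp{Y},\sigma)$, I would fix an arbitrary $\mu\in\xSx{P}(\xSp{Y},\sigma)$ and produce a shift-invariant pre-image. By Lemma~\ref{lem:onto-one-to-one}(b), the set $Q\IsDef\{\nu\in\xSx{P}(\xSp{X}):\Phi\nu=\mu\}$ is non-empty; it is plainly closed and convex, and it is invariant under $\sigma$ as a set, since $\Phi(\sigma^i\nu)=\sigma^i(\Phi\nu)=\sigma^i\mu=\mu$. Hence $Q$ contains an invariant measure $\nu\in\xSx{P}(\xSp{X},\sigma)$ with $\Phi\nu=\mu$. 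Then $h_\nu(\xSp{X},\sigma)\geq h_\mu(\xSp{Y},\sigma)$ by Proposition~\ref{prop:entropy:factoring}, and since $\nu(f\xO\Phi)=\mu(f)$ we obtain $h_\nu(\xSp{X},\sigma)-\nu(f\xO\Phi)\geq h_\mu(\xSp{Y},\sigma)-\mu(f)$. The left-hand side is bounded above by $P_{f\xO\Phi}(\xSp{X},\sigma)$, so taking the supremum over $\mu$ gives the claim.

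For the equality clause under pre-injectivity, I would go the other way: given any $\nu\in\xSx{P}(\xSp{X},\sigma)$, set $\mu\IsDef\Phi\nu\in\xSx{P}(\xSp{Y},\sigma)$. Now Theorem~\ref{thm:pentropy:pre-injective} applies in its equality form (because $\Phi$ is pre-injective), giving $h_\nu(\xSp{X},\sigma)=h_\mu(\xSp{Y},\sigma)$, while again $\nu(f\xO\Phi)=\mu(f)$; hence $h_\nu(\xSp{X},\sigma)-\nu(f\xO\Phi)=h_\mu(\xSp{Y},\sigma)-\mu(f)\leq P_f(\xSp{Y},\sigma)$. Taking the supremum over $\nu$ yields $P_{f\xO\Phi}(\xSp{X},\sigma)\leq P_f(\xSp{Y},\sigma)$, which together with the previous paragraph gives equality.

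There is essentially no genuine obstacle; the only point needing a little care is the production of a shift-invariant pre-image of $\mu$, where one must notice that $Q$ is closed under the shift and then invoke the standard fact recalled in Section~\ref{sec:background} that every non-empty, closed, convex, $\sigma$-invariant subset of $\xSx{P}(\xSp{X})$ contains a $\sigma$-invariant measure. (One could instead bypass this by feeding the identity $h_\pi(\xSp{X},\sigma)=\inf_{g\in C(\xSp{X})}[P_g(\xSp{X},\sigma)+\pi(g)]$ into the argument, but the pre-image route is cleaner and keeps the proof at two lines.)
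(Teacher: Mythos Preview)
Your argument is correct and is exactly the unpacking of what the paper means when it says the corollary follows immediately from Theorem~\ref{thm:pentropy:pre-injective} and Lemma~\ref{lem:onto-one-to-one}: lift an invariant $\mu$ to an invariant $\nu$ via the closed convex $\sigma$-invariant fibre $Q=\Phi^{-1}\{\mu\}$ (using Lemma~\ref{lem:onto-one-to-one}(b) and the fixed-point fact from Section~\ref{sec:background}) to get the inequality, and push $\nu$ forward with entropy preserved (Theorem~\ref{thm:pentropy:pre-injective}) to get the reverse. Nothing to add.
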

\begin{corollary}
\label{cor:bayesian:entropy:factor-map}
	Let $\Phi:\xSp{X}\to\xSp{Y}$ be a factor map
	from a shift of finite type $(\xSp{X},\sigma)$ onto a shift $(\xSp{Y},\sigma)$.
	Let $f\in C(\xSp{Y})$ be an observable.
	Then, $s_{f\xO\Phi}(\cdot)\geq s_f(\cdot)$
	with equality if $\Phi$ is pre-injective.
\end{corollary}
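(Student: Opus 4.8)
The plan is to mimic the proof of Corollary~\ref{cor:pressure:pre-injective}: read both $s_{f\xO\Phi}$ and $s_f$ through their defining suprema, use the identity $\pi(f\xO\Phi)=(\Phi\pi)(f)$, and invoke Proposition~\ref{prop:entropy:factoring} for the inequality and Theorem~\ref{thm:pentropy:pre-injective} for the equality in the pre-injective case.

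First I would record a preliminary observation matching up the constraint sets. For $\nu\in\xSx{P}(\xSp{Y},\sigma)$ and $\pi\in\xSx{P}(\xSp{X},\sigma)$ one has $\pi(f\xO\Phi)=(\Phi\pi)(f)$, and by Lemma~\ref{lem:onto-one-to-one}(b) the induced map $\Phi:\xSx{P}(\xSp{X})\to\xSx{P}(\xSp{Y})$ is onto. Moreover, for a fixed $\nu\in\xSx{P}(\xSp{Y},\sigma)$ the fibre $\Phi^{-1}\{\nu\}\subseteq\xSx{P}(\xSp{X})$ is non-empty, closed, convex and (since $\Phi$ commutes with the shift) shift-invariant, hence contains a shift-invariant measure; thus every shift-invariant $\nu$ has a shift-invariant preimage $\pi$ with $\Phi\pi=\nu$. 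Consequently the range of $\pi\mapsto\pi(f\xO\Phi)$ over $\xSx{P}(\xSp{X},\sigma)$ coincides with the range of $\nu\mapsto\nu(f)$ over $\xSx{P}(\xSp{Y},\sigma)$, so $s_{f\xO\Phi}$ and $s_f$ are defined on the same interval of admissible values $e$.

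For the inequality $s_{f\xO\Phi}(e)\geq s_f(e)$, fix an admissible $e$ and let $\nu\in\xSx{P}(\xSp{Y},\sigma)$ be any measure with $\nu(f)=e$. Lift $\nu$ to a shift-invariant $\pi\in\xSx{P}(\xSp{X},\sigma)$ with $\Phi\pi=\nu$ as above. Then $\pi(f\xO\Phi)=\nu(f)=e$, and Proposition~\ref{prop:entropy:factoring} gives $h_\pi(\xSp{X},\sigma)\geq h_{\Phi\pi}(\xSp{Y},\sigma)=h_\nu(\xSp{Y},\sigma)$, so $s_{f\xO\Phi}(e)\geq h_\nu(\xSp{Y},\sigma)$. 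Taking the supremum over all admissible $\nu$ yields $s_{f\xO\Phi}(e)\geq s_f(e)$. For the reverse inequality when $\Phi$ is pre-injective, fix $e$ and let $\pi\in\xSx{P}(\xSp{X},\sigma)$ satisfy $\pi(f\xO\Phi)=e$; put $\nu\IsDef\Phi\pi$, so $\nu(f)=e$. By Theorem~\ref{thm:pentropy:pre-injective}, pre-injectivity gives $h_\nu(\xSp{Y},\sigma)=h_\pi(\xSp{X},\sigma)$, hence $s_f(e)\geq h_\nu(\xSp{Y},\sigma)=h_\pi(\xSp{X},\sigma)$; taking the supremum over all such $\pi$ gives $s_f(e)\geq s_{f\xO\Phi}(e)$, and therefore equality.

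I do not expect a genuine obstacle in this argument; it is essentially a repackaging of Theorem~\ref{thm:pentropy:pre-injective}. The only point requiring a little care is the lifting of a shift-invariant measure on $\xSp{Y}$ to a shift-invariant preimage on $\xSp{X}$, needed so that the two suprema are taken over the same value of $e$; this is handled by the standard fact, already noted in the excerpt, that a non-empty closed convex shift-invariant subset of $\xSx{P}(\xSp{X})$ contains an invariant measure.
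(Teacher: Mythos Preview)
Your proposal is correct and follows essentially the same approach the paper indicates: the corollary is stated without proof in the paper, merely remarked to follow immediately from Theorem~\ref{thm:pentropy:pre-injective} and Lemma~\ref{lem:onto-one-to-one}, and your argument is precisely the natural unpacking of that remark. The one detail you spell out that the paper leaves implicit---lifting a shift-invariant measure on $\xSp{Y}$ to a shift-invariant preimage on $\xSp{X}$ via the closed convex shift-invariant fibre---is exactly the standard observation recorded earlier in the paper's background section on dynamical systems.
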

%
Central to this article is the the following correspondence
between the equilibrium (Gibbs) measures of a model and its pre-injective factors.
\begin{corollary}
\label{thm:equilibrium:factor-map}
	Let $\Phi:\xSp{X}\to\xSp{Y}$ be a pre-injective factor map
	from a shift of finite type $(\xSp{X},\sigma)$ onto a shift $(\xSp{Y},\sigma)$.
	Let $f\in C(\xSp{Y})$ be an observable and
	$\pi\in \xSx{P}(\xSp{X},\sigma)$ a probability measure.
	Then $\pi\in\xSx{E}_{f\xO\Phi}(\xSp{X},\sigma)$
	if and only if
	$\Phi\pi\in\xSx{E}_f(\xSp{Y},\sigma)$.
\end{corollary}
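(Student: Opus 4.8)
The plan is to unwind the definition of equilibrium measure into a single scalar identity and then transport each of its terms across $\Phi$ using the results already established. First I would record that, since $P_g(\xSp{Z},\sigma)=\sup_{\nu}[h_\nu(\xSp{Z},\sigma)-\nu(g)]$ for any shift $(\xSp{Z},\sigma)$, the inequality $h_\nu(\xSp{Z},\sigma)-P_g(\xSp{Z},\sigma)\le\nu(g)$ holds for every shift-invariant $\nu$, so membership in $\xSx{E}_g(\xSp{Z},\sigma)$ is precisely the statement that this inequality is an equality. Hence $\pi\in\xSx{E}_{f\xO\Phi}(\xSp{X},\sigma)$ if and only if $h_\pi(\xSp{X},\sigma)-\pi(f\xO\Phi)-P_{f\xO\Phi}(\xSp{X},\sigma)=0$, and $\Phi\pi\in\xSx{E}_f(\xSp{Y},\sigma)$ if and only if $h_{\Phi\pi}(\xSp{Y},\sigma)-(\Phi\pi)(f)-P_f(\xSp{Y},\sigma)=0$. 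Along the way I would note that $\Phi\pi\in\xSx{P}(\xSp{Y},\sigma)$ because $\Phi$ intertwines the shift actions ($\sigma^a(\Phi\pi)=\Phi(\sigma^a\pi)=\Phi\pi$), and that $f\xO\Phi=\Phi^*f\in C(\xSp{X})$ since $\Phi$ is continuous, so that $P_{f\xO\Phi}(\xSp{X},\sigma)$ is well defined.

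Next I would match the three terms. The entropy terms agree, $h_\pi(\xSp{X},\sigma)=h_{\Phi\pi}(\xSp{Y},\sigma)$, by Theorem~\ref{thm:pentropy:pre-injective}, where the pre-injectivity of $\Phi$ is used. The linear terms agree by the change-of-variables formula for the pushforward, $\pi(f\xO\Phi)=\pi(\Phi^*f)=\int f\,\xd(\Phi\pi)=(\Phi\pi)(f)$. The pressure terms agree, $P_{f\xO\Phi}(\xSp{X},\sigma)=P_f(\xSp{Y},\sigma)$, by Corollary~\ref{cor:pressure:pre-injective}, again using pre-injectivity. Subtracting the corresponding identities, the defining expression for $\pi\in\xSx{E}_{f\xO\Phi}(\xSp{X},\sigma)$ coincides with the defining expression for $\Phi\pi\in\xSx{E}_f(\xSp{Y},\sigma)$, so the two memberships are equivalent, which is the claim.

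There is essentially no obstacle here: this is a bookkeeping corollary, and all the substance — that a pre-injective factor map preserves both the Kolmogorov--Sinai entropy of shift-invariant measures and the topological pressure — has already been carried out in Theorem~\ref{thm:pentropy:pre-injective} and Corollary~\ref{cor:pressure:pre-injective}. If one preferred to avoid pressure entirely, an alternative route is to pass through Theorem~\ref{thm:equilibrium:Gibbs} together with Corollary~\ref{cor:bayesian:entropy:factor-map}, identifying equilibrium measures with entropy maximizers under a constraint $\nu(f)=e$ and matching the values of $s_f(\cdot)$; but the pressure computation above is the shortest path.
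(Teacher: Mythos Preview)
Your proof is correct and is precisely the argument the paper has in mind: the corollary is stated without proof immediately after Theorem~\ref{thm:pentropy:pre-injective} and Corollary~\ref{cor:pressure:pre-injective}, and your termwise matching of entropy, linear, and pressure terms is exactly the intended verification. The side remark about an alternative route via Theorem~\ref{thm:equilibrium:Gibbs} and $s_f(\cdot)$ is slightly off (equilibrium measures are not literally defined as constrained entropy maximizers, and Theorem~\ref{thm:equilibrium:Gibbs} requires $f\in SV$), but this does not affect the main argument.
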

\begin{corollary}
\label{thm:bayesian:solutions:factor-map}
	Let $\Phi:\xSp{X}\to\xSp{Y}$ be a pre-injective factor map
	from a shift of finite type $(\xSp{X},\sigma)$ onto a shift $(\xSp{Y},\sigma)$.
	Let $f\in C(\xSp{Y})$ be an observable,
	$\pi\in \xSx{P}(\xSp{X},\sigma)$ a probability measure,
	and $e\in\xR$.
	Then, $\pi\in\xSx{E}_{\langle f\xO\Phi\rangle=e}(\xSp{X},\sigma)$
	if and only if
	$\Phi\pi\in\xSx{E}_{\langle f\rangle=e}(\xSp{Y},\sigma)$.
\end{corollary}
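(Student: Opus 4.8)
The plan is to unwind the definition of the constrained maximum-entropy sets and reduce the claim to three facts already in hand: the change-of-variables identity $\pi(f\xO\Phi)=(\Phi\pi)(f)$, the entropy equality $h_\pi(\xSp{X},\sigma)=h_{\Phi\pi}(\xSp{Y},\sigma)$ furnished by Theorem~\ref{thm:pentropy:pre-injective}, and the equality $s_{f\xO\Phi}(\cdot)=s_f(\cdot)$ furnished by Corollary~\ref{cor:bayesian:entropy:factor-map}; the last two use that $\Phi$ is pre-injective. Recall that $\pi\in\xSx{E}_{\langle f\xO\Phi\rangle=e}(\xSp{X},\sigma)$ means $\pi(f\xO\Phi)=e$ and $h_\pi(\xSp{X},\sigma)=s_{f\xO\Phi}(e)$, whereas $\Phi\pi\in\xSx{E}_{\langle f\rangle=e}(\xSp{Y},\sigma)$ means $(\Phi\pi)(f)=e$ and $h_{\Phi\pi}(\xSp{Y},\sigma)=s_f(e)$; so it suffices to match these conditions term by term.

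First I would record that, for any $\pi\in\xSx{P}(\xSp{X},\sigma)$, one has $\pi(f\xO\Phi)=\int f\xO\Phi\,\xd\pi=\int f\,\xd(\Phi\pi)=(\Phi\pi)(f)$, directly from the definition of the push-forward measure $\Phi\pi$; hence the affine constraint ``$\pi(f\xO\Phi)=e$'' holds precisely when ``$(\Phi\pi)(f)=e$'' holds. I would also observe that the induced map $\Phi:\xSx{P}(\xSp{X},\sigma)\to\xSx{P}(\xSp{Y},\sigma)$ is onto: for $\nu\in\xSx{P}(\xSp{Y},\sigma)$, the fibre $\Phi^{-1}\{\nu\}$ inside $\xSx{P}(\xSp{X})$ is non-empty (Lemma~\ref{lem:onto-one-to-one}), and being closed, convex and $\sigma$-invariant it contains a shift-invariant measure. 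Consequently $\{\pi(f\xO\Phi):\pi\in\xSx{P}(\xSp{X},\sigma)\}=\{\nu(f):\nu\in\xSx{P}(\xSp{Y},\sigma)\}$, so the constrained entropy-maximisation problems defining $s_{f\xO\Phi}(e)$ and $s_f(e)$ range over the same admissible $e$ (and for $e$ outside this common range both sides of the claimed equivalence are empty).

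Then I would combine the pieces. If $\pi\in\xSx{E}_{\langle f\xO\Phi\rangle=e}(\xSp{X},\sigma)$, then $(\Phi\pi)(f)=\pi(f\xO\Phi)=e$ and, by Theorem~\ref{thm:pentropy:pre-injective} and Corollary~\ref{cor:bayesian:entropy:factor-map},
\[
	h_{\Phi\pi}(\xSp{Y},\sigma)=h_\pi(\xSp{X},\sigma)=s_{f\xO\Phi}(e)=s_f(e) \;,
\]
so $\Phi\pi\in\xSx{E}_{\langle f\rangle=e}(\xSp{Y},\sigma)$; the converse runs through the same identities in reverse. The only point demanding any care is that $s_{f\xO\Phi}$ and $s_f$ be compared over the same energies, which is exactly what the surjectivity observation of the second paragraph secures; beyond that the argument is pure bookkeeping, so I do not expect a genuine obstacle.
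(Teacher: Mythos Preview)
Your proof is correct and matches the paper's intended approach. The paper does not spell out a proof of this corollary, presenting it as an immediate consequence of Theorem~\ref{thm:pentropy:pre-injective}, Lemma~\ref{lem:onto-one-to-one}, and Corollary~\ref{cor:bayesian:entropy:factor-map}; your write-up is precisely the unwinding of those ingredients, and your extra paragraph on the surjectivity of $\Phi:\xSx{P}(\xSp{X},\sigma)\to\xSx{P}(\xSp{Y},\sigma)$ (to align the admissible ranges of $e$) is a welcome point of care that the paper leaves implicit.
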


\begin{example}
	Let $\xSp{X}\subseteq\{\symb{0},\symb{1},\symb{2}\}^\xZ$ be the shift obtained by forbidding $\symb{1}\symb{1}$
	and $\symb{2}\symb{2}$, and $\xSp{Y}\subseteq\{\symb{0},\symb{1},\symb{2}\}^\xZ$ the shift obtained
	by forbidding $\symb{2}\symb{2}$ and $\symb{2}\symb{1}$.
	Then, both $(\xSp{X},\sigma)$ and $(\xSp{Y},\sigma)$ are mixing shifts of finite type.
	For every configuration $x\in\xSp{X}$, let $\Phi x\in\{\symb{0},\symb{1},\symb{2}\}^\xZ$
	be the configuration in which
	\begin{align}
		(\Phi x)(i) &= \begin{cases}
			\symb{1}	\qquad	& \text{if $x(i)=\symb{2}$ and $x(i+1)=\symb{1}$,}\\
			x(i)				& \text{otherwise.}
		\end{cases}
	\end{align}
	Then, $\Phi$ is a pre-injective factor map from $\xSp{X}$ onto $\xSp{Y}$.
	
	Consider the observables $g_0,g_1,g_2:\xSp{Y}\to\xR$ defined by
	\begin{align}
		g_0(y) &\IsDef 1_{[\symb{0}]_0}(y)\;, & g_1(y) &\IsDef 1_{[\symb{1}]_0}(y)\;,
			&	g_2(y) &\IsDef 1_{[\symb{2}]_0}(y)
	\end{align}
	for $y\in\xSp{Y}$.
	The Hamiltonians $\Delta_{g_0}$, $\Delta_{g_1}$ and $\Delta_{g_2}$ count
	the number of $\symb{0}$s, $\symb{1}$s and $\symb{2}$s, respectively.
	The unique Gibbs measures for $\Delta_{g_0}$, $\Delta_{g_1}$ and $\Delta_{g_2}$
	are, 
	respectively, the distribution of the bi-infinite Markov chains with transition matrices\\
	\begin{align}
		A_0 &\approx
			{\small%
			\begin{matrix}
				\tsymb{0} \\ \tsymb{1} \\ \tsymb{2}
			\end{matrix}
			\begin{bmatrix}
				\OverLabel{0.230}{\symb{0}}
								& \OverLabel{0.626}{\symb{1}}
												& \OverLabel{0.144}{\symb{2}}	\\
				0.230			& 0.626			& 0.144	\\
				1				& 0				& 0	
			\end{bmatrix}
			} \;,
		&
		A_1 &\approx
			{\small%
			\begin{matrix}
				\tsymb{0} \\ \tsymb{1} \\ \tsymb{2}
			\end{matrix}
			\begin{bmatrix}
				\OverLabel{0.528}{\symb{0}}
								& \OverLabel{0.194}{\symb{1}}
												& \OverLabel{0.278}{\symb{2}}	\\
				0.528			& 0.194			& 0.278	\\
				1				& 0				& 0	
			\end{bmatrix}
			} \;,
		&
		A_2 &\approx
			{\small%
			\begin{matrix}
				\tsymb{0} \\ \tsymb{1} \\ \tsymb{2}
			\end{matrix}
			\begin{bmatrix}
				\OverLabel{0.461}{\symb{0}}
								& \OverLabel{0.461}{\symb{1}}
												& \OverLabel{0.078}{\symb{2}}	\\
				0.461			& 0.461			& 0.078	\\
				1				& 1				& 0	
			\end{bmatrix}
			} \;.
	\end{align}
	In general, every finite-range Gibbs measure on a one-dimensional 
	mixing shift of finite type
	is the distribution of a bi-infinite Markov chain and vice versa
	(see~\cite{Geo88}, Theorem~3.5 and~\cite{ChaHanMarMeyPav13}).
	The observables induced by $g_0$, $g_1$ and $g_2$ on $\xSp{X}$ via $\Phi$ satisfy
	\begin{align}
		(g_0\xO\Phi)(x) &= 1_{[\symb{0}]_0}(x)\;, & (g_1\xO\Phi)(x) &\IsDef
			(1_{[\symb{1}]_0}+1_{[\symb{2}\symb{1}]_0})(x)\;,
			&	(g_2\xO\Phi)(x) &\IsDef 1_{[\symb{2}\symb{0}]_0}(x)\;.
	\end{align}
	for every $x\in\xSp{X}$.
	The unique Gibbs measures for $\Delta_{g_0\xO\Phi}$,
	$\Delta_{g_1\xO\Phi}$ and $\Delta_{g_2\xO\Phi}$ are, respectively,
	the distribution of the bi-infinite Markov chains with transition matrices\\
	\begin{align}
		A_0 &\approx
			{\small%
			\begin{matrix}
				\tsymb{0} \\ \tsymb{1} \\ \tsymb{2}
			\end{matrix}
			\begin{bmatrix}
				\OverLabel{0.230}{\symb{0}}
								& \OverLabel{0.385}{\symb{1}}
												& \OverLabel{0.385}{\symb{2}}	\\
				0.374			& 0				& 0.626	\\
				0.374			& 0.626			& 0	
			\end{bmatrix}
			} \;,
		&
		A_1 &\approx
			{\small%
			\begin{matrix}
				\tsymb{0} \\ \tsymb{1} \\ \tsymb{2}
			\end{matrix}
			\begin{bmatrix}
				\OverLabel{0.528}{\symb{0}}
								& \OverLabel{0.163}{\symb{1}}
												& \OverLabel{0.310}{\symb{2}}	\\
				0.630			& 0				& 0.370	\\
				0.898			& 0.102			& 0	
			\end{bmatrix}
			} \;,
		&
		A_2 &\approx
			{\small%
			\begin{matrix}
				\tsymb{0} \\ \tsymb{1} \\ \tsymb{2}
			\end{matrix}
			\begin{bmatrix}
				\OverLabel{0.461}{\symb{0}}
								& \OverLabel{0.316}{\symb{1}}
												& \OverLabel{0.224}{\symb{2}}	\\
				0.673			& 0				& 0.327	\\
				0.350			& 0.650			& 0	
			\end{bmatrix}
			} \;.
	\end{align}
	By Corollary~\ref{thm:equilibrium:factor-map},
	we have $\Phi\pi_0=\nu_0$, $\Phi\pi_1=\nu_1$, and $\Phi\pi_2=\nu_2$.
	\exampleqed
\end{example}

\subsection{Complete Pre-injective Maps}

In this section, we discuss the extension of 
Corollary~\ref{thm:equilibrium:factor-map} to the case of
non-shift-invariant Gibbs measures
(Conjecture~\ref{conj:gibbs:factor-map}, Theorem~\ref{thm:gibbs:factor-map:complete},
and Corollary~\ref{cor:gibbs:conjugacy}).
We start with an example that deviates from the main line of this article
(i.e., understanding macroscopic equilibrium in surjective cellular automata)
but rather demonstrates an application of factor maps as a tool in the study of
phase transitions in equilibrium statistical mechanics models.
The (trivial) argument used in this example however serves as a model
for the proof of Theorem~\ref{thm:gibbs:factor-map:complete}.

\begin{example}[Ising and contour models]
\label{exp:ising-contour}
	There is a natural correspondence between the two-dimensional Ising model (Example~\ref{exp:ising})
	and the contour model (Example~\ref{exp:contour}).

	As before, let $\xSp{X}=\{\symb{+},\symb{-}\}^{\xZ^2}$ and $\xSp{Y}\IsDef T^{\xZ^2}$
	denote the configuration spaces of the Ising model and the contour model.
	Define a sliding block map $\Theta:\xSp{X}\to T^{\xZ^2}$
	with neighborhood $N\IsDef\{(0,0),(0,1),(1,0),(1,1)\}$ and local rule $\theta:\{\symb{+},\symb{-}\}^N\to T$,
	specified by 
	{\small%
	\begin{align}
		\begin{array}{%
			c@{\>\mapsto\>}c|c@{\>\mapsto\>}c|c@{\>\mapsto\>}c|c@{\>\mapsto\>}c
			|c@{\>\mapsto\>}c|c@{\>\mapsto\>}c|c@{\>\mapsto\>}c|c@{\>\mapsto\>}c
		}
			\SymbCellBlock{+}{+}{+}{+}&\ContourEmpty		&	\SymbCellBlock{+}{+}{+}{-}&\ContourRD		&
			\SymbCellBlock{+}{+}{-}{+}&\ContourLD			&	\SymbCellBlock{+}{+}{-}{-}&\ContourH		&
			\SymbCellBlock{+}{-}{+}{+}&\ContourRU			&	\SymbCellBlock{+}{-}{+}{-}&\ContourV		&
			\SymbCellBlock{+}{-}{-}{+}&\ContourX			&	\SymbCellBlock{+}{-}{-}{-}&\ContourLU		\\ \hline
			\SymbCellBlock{-}{-}{-}{-}&\ContourEmpty		&	\SymbCellBlock{-}{-}{-}{+}&\ContourRD		&
			\SymbCellBlock{-}{-}{+}{-}&\ContourLD			&	\SymbCellBlock{-}{-}{+}{+}&\ContourH		&
			\SymbCellBlock{-}{+}{-}{-}&\ContourRU			&	\SymbCellBlock{-}{+}{-}{+}&\ContourV		&
			\SymbCellBlock{-}{+}{+}{-}&\ContourX			&	\SymbCellBlock{-}{+}{+}{+}&\ContourLU		
		\end{array}
	\end{align}
	}%
	(see Figure~\ref{fig:ising-contour}).
	Then, $\Theta$ is a factor map onto $\xSp{Y}$
	and is pre-injective.
	In fact, $\Theta$ is $2$-to-$1$: every configuration $y\in\xSp{Y}$
	has exactly two pre-images $x,x'\in\xSp{X}$,
	where $x'=-x$ (i.e., $x'$ is obtained from $x$ by flipping the direction of the spin at every site).
	Moreover, if $f$ denotes the energy observable for the Ising model
	and $g$ the contour length observable for the contour model,
	we have 
	$\Delta_f=2\Delta_{g\xO\Theta}$.
	\begin{figure}
		\begin{center}
			\begin{tabular}{ccc}
				\begin{minipage}[c]{0.4\textwidth}
					\centering
					\includegraphics[width=0.95\textwidth]{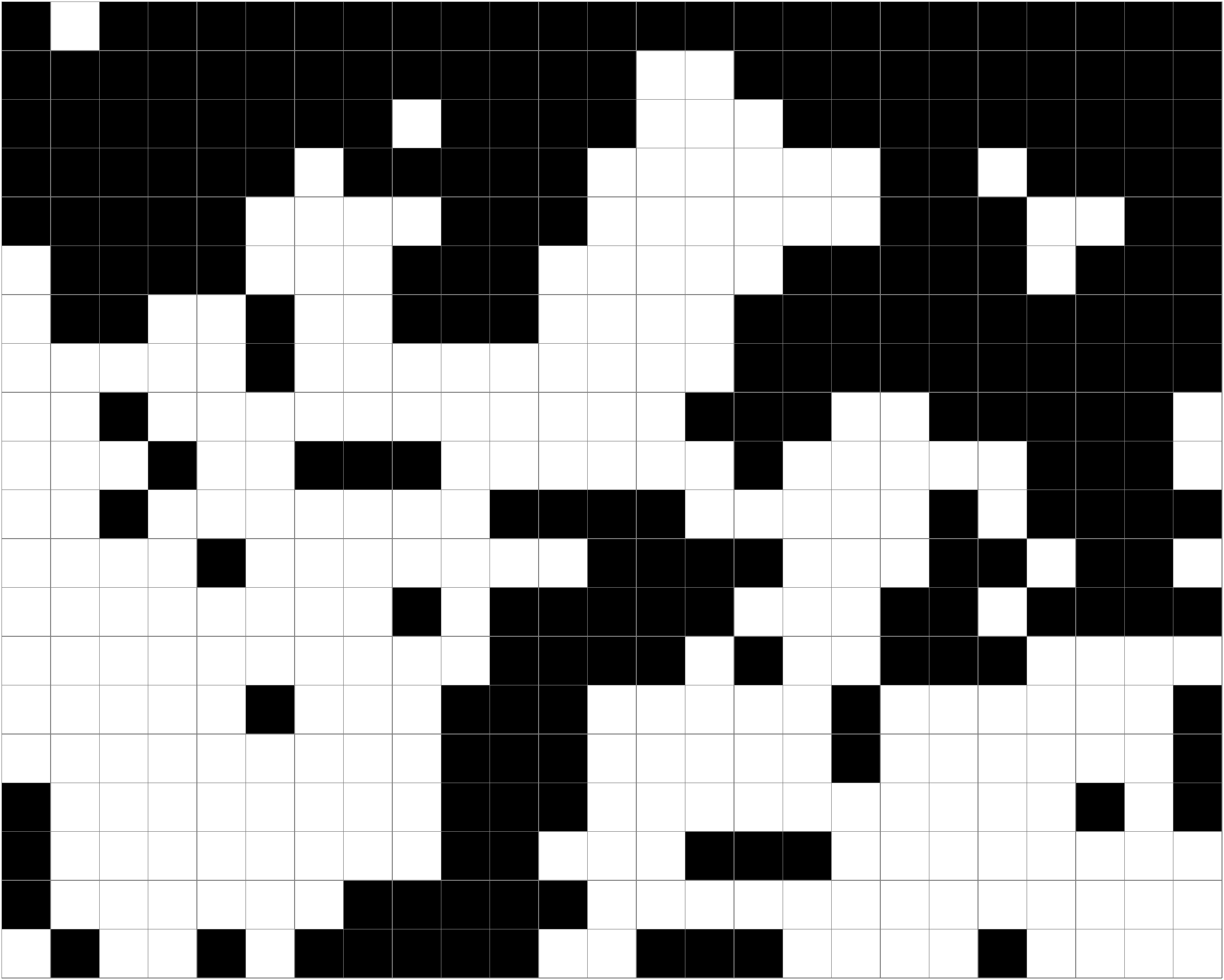}
				\end{minipage} & &
				\begin{minipage}[c]{0.4\textwidth}
					\centering
					\includegraphics[width=0.912\textwidth]{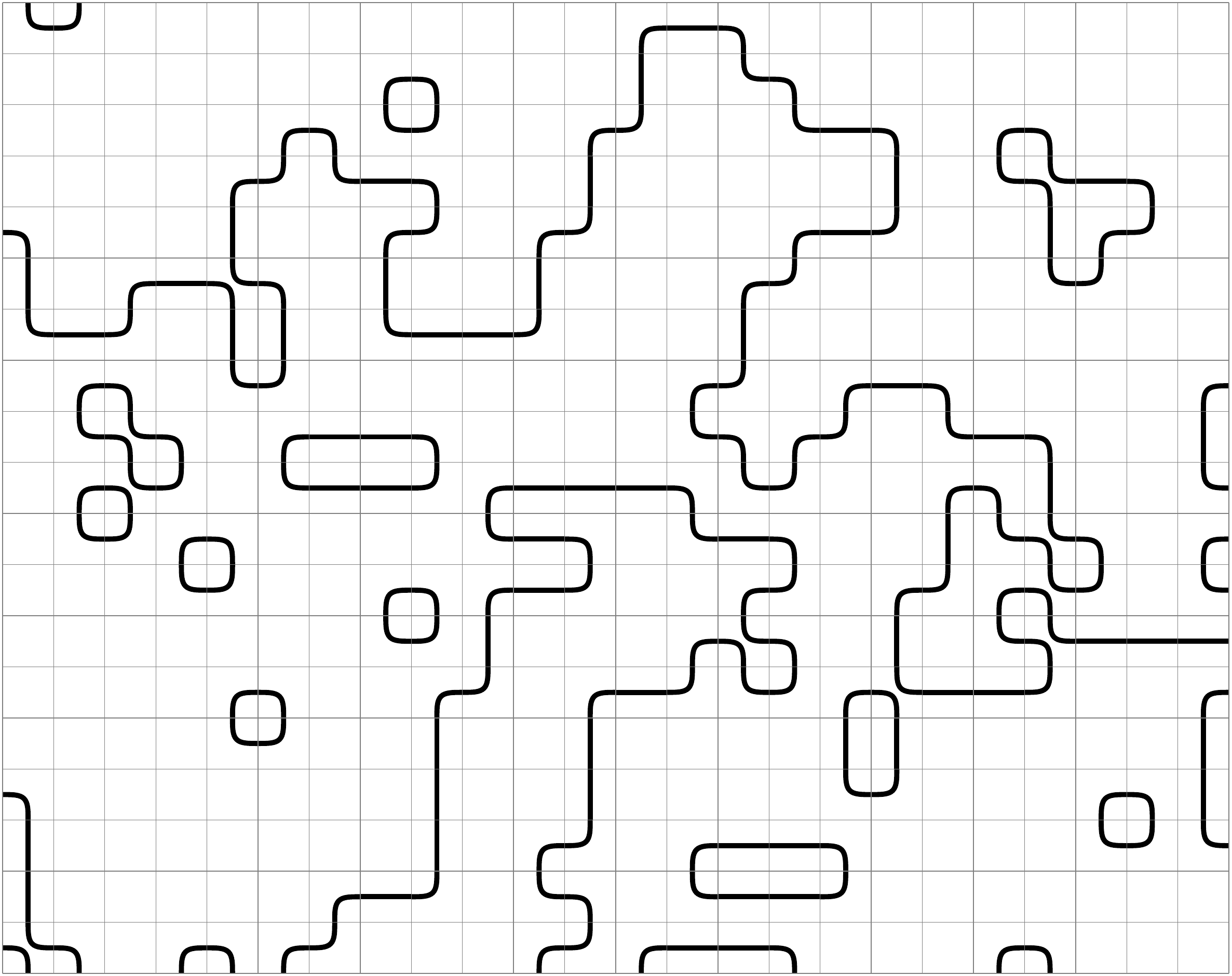}		
				\end{minipage} \medskip\\
				(a) && (b)
			\end{tabular}
		\end{center}
		\caption{%
			A configuration of the Ising model (a)
			and its corresponding contour configuration (b).
			Black represents an upward spin.
		}
		\label{fig:ising-contour}
	\end{figure}
	
	This relationship, which was first discovered by Peierls~\cite{Pei36}, is used to reduce the study of
	the Ising model to the study of the contour model (see e.g.~\cite{Gri64,Vel09}).
	The Gibbs measures for $\beta\Delta_f$ represent the states of thermal equilibrium
	for the Ising model at temperature $1/\beta$.
	According to Corollary~\ref{thm:equilibrium:factor-map} (and Theorem~\ref{thm:equilibrium:Gibbs}),
	the shift-invariant Gibbs measures $\pi\in\xSx{G}_{\beta\Delta_f}(\xSp{X},\sigma)$ are precisely
	the $\Theta$-pre-images of the shift-invariant Gibbs measures
	$\nu\in\xSx{G}_{2\beta\Delta_g}(\xSp{Y},\sigma)$
	for the contour model.

	In fact, in this case it is also easy to show that the $\Theta$-image of every Gibbs measure for $\beta\Delta_f$
	(not necessarily shift-invariant) is a Gibbs measure for $2\beta\Delta_g$.
	Indeed, suppose that $\pi\in\xSx{G}_{\beta\Delta_f}(\xSp{X})$ is a Gibbs measure for $\beta\Delta_f$
	and $\nu\IsDef\Theta\pi$ its image.  Let $y,y'\in\xSp{Y}$ be asymptotic configurations,
	and $E\supseteq\diff(y,y')$ a sufficiently large finite set of sites.
	Let $x_1,x_2\in\xSp{X}$ be the pre-images of $y$, and $x'_1,x'_2\in\xSp{X}$ the pre-images of $y'$.
	Without loss of generality, we can assume that
	$x_1$ is asymptotic to $x'_1$, and $x_2$ is asymptotic $x'_2$.
	It is easy to see that $\Theta^{-1}[y]_E=[x_1]_{N(E)}\cup[x_2]_{N(E)}$
	and $\Theta^{-1}[y']_E=[x'_1]_{N(E)}\cup[x'_2]_{N(E)}$.
	Note that the cylinders $[x_1]_{N(E)}$ and $[x_2]_{N(E)}$ are disjoint,
	and so are the cylinders $[x'_1]_{N(E)}$ and $[x'_2]_{N(E)}$.
	Therefore,
	\begin{align}
		\frac{\nu([y']_E)}{\nu([y]_E)} &= \frac{\pi(\Theta^{-1}[y']_E)}{\pi(\Theta^{-1}[y]_E)} =
			\frac{\pi([x'_1]_{N(E)}) + \pi([x'_2]_{N(E)})}{\pi([x_1]_{N(E)}) + \pi([x_2]_{N(E)})} \;.
	\end{align}
	Since $N(E)$ is large and $\pi$ is a Gibbs measure for $\beta\Delta_f$, we have
	$\pi([x'_1]_{N(E)})=\xe^{-\beta\Delta_f(x_1,x'_1)}\pi([x_1]_{N(E)})$ and
	$\pi([x'_2]_{N(E)})=\xe^{-\beta\Delta_f(x_2,x'_2)}\pi([x_2]_{N(E)})$.
	Since, $\Delta_f(x_1,x'_1)=\Delta_f(x_2,x'_2)=2\Delta_g(y,y')$,
	it follows that $\nu([y']_E)=\xe^{-2\beta\Delta_g(y,y')}\nu([y]_E)$.
	
	It has been proved that for any $0<\beta<\infty$, the contour model with Hamiltonian $2\beta\Delta_g$
	has a unique Gibbs measure~\cite{Aiz80,Hig81}; the main difficulty is to show that the infinite contours
	are ``unstable'', in the sense that, under every Gibbs measure, the probability of appearance
	of an infinite contour is zero.\footnote{%
		In fact, the theorem of Aizenman and Higuchi states that the simplex of
		Gibbs measures for the two-dimensional Ising model at any temperature has at most two extremal elements.
		However, the uniqueness of the Gibbs measure for the contour model is implicit
		in their result, and constitutes the main ingredient of the proof.
	}  Let us denote the unique Gibbs measure for $2\beta\Delta_g$
	by $\nu_\beta$.  It follows that the simplex of Gibbs measures for the Ising model
	at temperature $1/\beta$ is precisely $\Theta^{-1}\nu_\beta$.
	For, the set $\Theta^{-1}\nu_\beta$ includes $\xSx{G}_{\beta\Delta_f}(\xSp{X})$ (by the above observation)
	and is included in $\xSx{G}_{\beta\Delta_f}(\xSp{X},\sigma)$ (because $\nu_\beta$ must be shift-invariant).
	Therefore, the Gibbs measures for the Ising model at any temperature $1/\beta$ are shift-invariant
	and $\xSx{G}_{\beta\Delta_f}(\xSp{X})=\xSx{G}_{\beta\Delta_f}(\xSp{X},\sigma)=\Theta^{-1}\nu_\beta$.
	
	It is not difficult to show that if $\Phi:\xSp{X}\to\xSp{Y}$ is a continuous $k$-to-$1$ map
	between two compact metric spaces, then every probability measure on $\xSp{Y}$
	has at most $k$ mutually singular pre-images under $\Phi$.
	In particular, the simplex
	$\xSx{G}_{\beta\Delta_f}(\xSp{X})=\xSx{G}_{\beta\Delta_f}(\xSp{X},\sigma)=\Theta^{-1}\nu_\beta$
	of Gibbs measures for the Ising model at temperature $1/\beta$ has at most $2$~ergodic elements.
	
	Whether the Ising model at temperature $1/\beta$ has two ergodic Gibbs measures or one
	depends on a specific geometric feature of the typical contour configurations under the measure~$\nu_\beta$.
	Roughly speaking, the contours of a contour configuration divide the two-dimensional plane into disjoint clusters.
	A configuration with no infinite contour generates either one or no infinite cluster,
	depending on whether each site is surrounded by a finite or infinite number of contours.
	Note that since $\nu_\beta$ is ergodic, the number of infinite clusters
	in a random configuration chosen according to $\nu_\beta$ is almost surely constant.
	If $\nu_\beta$-almost every configuration has an infinite cluster, then it follows by symmetry
	that $\Theta^{-1}\nu_\beta$ contains two distinct ergodic measures,
	one in which the infinite cluster is colored with~$\symb{+}$ and one with~$\symb{-}$.
	The converse is also known to be true~\cite{Rus79}:
	if $\nu_\beta$-almost every configuration has no infinite cluster, then
	$\Theta^{-1}\nu_\beta$ has only one element.
	
	Contour representations are used to study a wide range of statistical mechanics models,
	and are particularly fruitful to prove the ``stability'' of ground configurations
	at low temperature (see e.g.~\cite{Sin82,Fer98}).
	\exampleqed
\end{example}

Let $\Phi:\xSp{X}\to\xSp{Y}$ be a pre-injective factor map
between two strongly irreducible shifts of finite type $(\xSp{X},\sigma)$ and $(\xSp{Y},\sigma)$.
Let $f\in SV(\xSp{Y})$ be an observable having summable variations
and $\Delta_f$ the Hamiltonian defined by $f$.
Then, according to Theorem~\ref{thm:equilibrium:Gibbs},
the equilibrium measures of $f$ and $f\xO\Phi$ are precisely
the shift-invariant Gibbs measures
for the Hamiltonians $\Delta_f$ and $\Delta_{f\xO\Phi}$.
A natural question is whether Corollary~\ref{thm:equilibrium:factor-map}
remains valid for arbitrary Gibbs measures (not necessarily shift-invariant).
If $\Phi:\xSp{X}\to\xSp{Y}$ is a morphism between two shifts
and $\Delta$ is a Hamiltonian on $\xSp{Y}$, let us denote by $\Phi^*\Delta$,
the Hamiltonian on $\xSp{X}$ defined by $(\Phi^*\Delta)(x,y)\IsDef \Delta(\Phi x,\Phi y)$.
\begin{conjecture}
\label{conj:gibbs:factor-map}
	Let $\Phi:\xSp{X}\to\xSp{Y}$ be a pre-injective factor map
	between two strongly irreducible shifts of finite type $(\xSp{X},\sigma)$ and $(\xSp{Y},\sigma)$.
	Let $\Delta$ be a Hamiltonian on $\xSp{Y}$, and $\pi$ a probability measure on $\xSp{X}$.
	Then, $\pi$ is a Gibbs measure for~$\Phi^*\Delta$
	if and only if $\Phi\pi$ is a Gibbs measure for~$\Delta$.
\end{conjecture}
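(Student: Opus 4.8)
The plan is to reduce the statement to the local description of Gibbs measures and then argue fibrewise over $\Phi$. First I would treat a finite-range Hamiltonian $\Delta$ with interaction neighbourhood $M$ (the general case requiring a further limiting argument that approximates $\Delta$ and invokes the uniformity of the convergence in~(\ref{eq:gibbs:def})); in that case $\pi$ is Gibbs for $\Phi^*\Delta$ exactly when $\pi([x']_E)=\xe^{-\Delta(\Phi x,\Phi x')}\,\pi([x]_E)$ for all asymptotic $x,x'\in\xSp{X}$ and all finite $E\supseteq M(\diff(x,x'))$. Fix a neighbourhood $N$ for $\Phi$. Since $(\Phi x)(i)$ depends only on $\xRest{x}{i+N}$, the preimage $\Phi^{-1}[y]_E$ of a cylinder in $\xSp{Y}$ is $\family{F}_{N(E)}$-measurable, hence a finite disjoint union $\bigsqcup_{p\in P_y}[p]_{N(E)}$ over the patterns $p$ on $N(E)$ allowed in $\xSp{X}$ whose $\Phi$-image matches $\xRest{y}{E}$. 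Thus, for asymptotic $y,y'\in\xSp{Y}$ with $\diff(y,y')\subseteq E$,
\begin{align}
	\frac{(\Phi\pi)([y']_E)}{(\Phi\pi)([y]_E)} &=
		\frac{\sum_{p'\in P_{y'}}\pi([p']_{N(E)})}{\sum_{p\in P_y}\pi([p]_{N(E)})}\;,
\end{align}
and the whole question becomes whether this ratio equals $\xe^{-\Delta(y,y')}$ (so that $\Phi\pi$ is Gibbs) and, conversely, whether knowing this for all such $y,y'$ forces back the local relations defining a Gibbs measure for $\pi$.

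The crux is to match the fibres $P_y$ and $P_{y'}$ so that the summands pair up with $\pi$-ratio exactly $\xe^{-\Delta(y,y')}$. This works when $\Phi$ is \emph{complete} --- informally, when every asymptotic modification of $\Phi x$ lifts to an asymptotic modification of $x$ --- which holds for reversible cellular automata (there $\Phi^{-1}$ is again a sliding block map, so asymptotic pairs lift) and more generally for the maps to which Theorem~\ref{thm:gibbs:factor-map:complete} applies. Under completeness, pre-injectivity makes $\Phi$ restrict to a bijection of each asymptotic class of $\xSp{X}$ onto the corresponding asymptotic class of $\xSp{Y}$; combining this with the gluing property of $\xSp{X}$ (and a compactness argument for uniformity) one obtains, for large $E$, a bijection $P_y\to P_{y'}$, $p\mapsto p'$, with $p$ and $p'$ agreeing off a fixed dilate of $\diff(y,y')$ and $\Delta(\Phi x,\Phi x')=\Delta(y,y')$ for representatives $x,x'$. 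The $\pi$-Gibbs property for $\Phi^*\Delta$ then moves the weights termwise and the ratio collapses to $\xe^{-\Delta(y,y')}$; this is the abstract form of the computation in Example~\ref{exp:ising-contour}, and both directions go through for complete maps.

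The main obstacle --- and the reason the statement is posed only as a conjecture --- is that a pre-injective factor map need not be complete. For the XOR map, altering one coordinate of $\Phi x$ while leaving the rest of $\Phi x$ fixed forces an infinite change in $x$, so the asymptotic class of a preimage of $y$ may contain no preimage of a given asymptotic $y'$ at all; the desired bijection between $P_y$ and $P_{y'}$ with bounded displacement simply does not exist, and a naive pattern matching sees a $\diff(\Phi x,\Phi x')$ of unbounded size, hence the wrong exponent. Settling the conjecture in general therefore seems to require either a substitute for the missing bijection --- a measure-theoretic correspondence between the fibres that need not be local but still respects the Gibbs cocycle after summation --- or a genuinely different route, for instance a relative variational principle upgrading Corollary~\ref{thm:equilibrium:factor-map} from the shift-invariant setting to a disintegration of $\pi$ along the tail equivalence relation of $\xSp{X}$. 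I expect this fibre-matching step to be the genuinely hard part.
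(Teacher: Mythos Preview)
Your assessment is essentially aligned with the paper's own: the statement is posed as a conjecture and is \emph{not} proved there. The paper establishes only a partial result, Theorem~\ref{thm:gibbs:factor-map:complete}, which handles the forward implication (if $\pi$ is Gibbs for $\Phi^*\Delta$ then $\Phi\pi$ is Gibbs for $\Delta$) under the extra hypothesis that $\Phi$ is complete. Your sketch of that argument --- decompose $\Phi^{-1}[y]_E$ and $\Phi^{-1}[y']_E$ into cylinders, use completeness plus pre-injectivity to set up a bijection between the fibres with uniformly bounded displacement (this is the content of Lemma~\ref{lem:pre-injective:complete}), and then move the weights termwise via the Gibbs relation for $\pi$ --- is exactly the strategy of the paper's proof. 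Your identification of the obstacle in the non-complete case (the XOR map, where asymptotic modifications of $\Phi x$ need not lift to asymptotic modifications of $x$) is also on the mark.

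One overreach: you assert that ``both directions go through for complete maps.'' The paper does not claim this, and your sketch does not support it either. The fibre bijection lets you pass from termwise ratios to the ratio of sums, proving the forward direction; but the converse --- deducing the termwise Gibbs relations for $\pi$ from the aggregated relations for $\Phi\pi$ --- does not follow from the same bookkeeping, since knowing that two sums have the correct ratio does not pin down the ratios of the individual summands. The paper obtains the biconditional only for conjugacies (Corollary~\ref{cor:gibbs:conjugacy}), where one simply applies Theorem~\ref{thm:gibbs:factor-map:complete} to $\Phi^{-1}$ as well. For a complete but non-invertible $\Phi$, the backward implication remains open just as the full conjecture does.
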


One direction of the latter conjecture is known to be true for
a subclass of pre-injective factor maps.
Let us say that a pre-injective factor map $\Phi:\xSp{X}\to\xSp{Y}$
between two shifts is \emph{complete} if for every configuration $x\in\xSp{X}$ and every
configuration $y'\in\xSp{Y}$ that is asymptotic to $y\IsDef\Phi x$,
there is a (unique) configuration $x'\in\xSp{X}$ asymptotic to $x$
such that $\Phi x'= y'$.
\begin{lemma}
\label{lem:pre-injective:complete}
	Let $\Phi:\xSp{X}\to\xSp{Y}$ be a complete pre-injective factor map
	between two shifts of finite type $(\xSp{X},\sigma)$ and $(\xSp{Y},\sigma)$.
	For every finite set $D\subseteq\xL$, there is a finite set $E\subseteq\xL$
	such that every two asymptotic configurations $x,x'\in\xSp{X}$
	with $\diff(\Phi x,\Phi x')\subseteq D$ satisfy $\diff(x,x')\subseteq E$.
\end{lemma}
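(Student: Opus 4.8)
The plan is to argue by contradiction via a compactness argument, with the local form of pre-injectivity isolated in the proof of Theorem~\ref{thm:pentropy:pre-injective} providing the structural input and completeness entering at the very end to rule out the limiting configuration. Fix once and for all a symmetric neighborhood $0\in M\subseteq\xL$ that is simultaneously a neighborhood for $\Phi$ and a witness for the finite-type gluing property of $\xSp{X}$, and recall that pre-injectivity gives the following local statement: for every finite $A\subseteq\xL$ and every $x\in\xSp{X}$, the pattern $\xRest{x}{A\setminus M(\xCmpl{A})}$ is determined by $\xRest{x}{\partial M(A)}$ together with $\xRest{(\Phi x)}{A}$. From this I would first derive a \emph{locality of the difference set}: there are constants $R,\rho$, depending only on $M$, such that whenever $x,x'\in\xSp{X}$ are asymptotic and one joins sites of $\xL$ within $\ell^\infty$-distance $R$ by an edge, every connected component of $\diff(x,x')$ in the resulting graph contains a site at distance at most $\rho$ from $\diff(\Phi x,\Phi x')$. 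Indeed, a component $C$ that contains no such site, being automatically at distance $>R$ from $\diff(x,x')\setminus C$, allows one to take $A$ to be the $M$-fattening $C+M$ of $C$: then $A$ is disjoint from $\diff(\Phi x,\Phi x')$ while $\partial M(A)$ is disjoint from $\diff(x,x')$, so the displayed local statement applied to $x$ and to $x'$ forces $x=x'$ on $A\setminus M(\xCmpl{A})\supseteq C$ --- a contradiction since $C\subseteq\diff(x,x')$.

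Now assume the claim fails for some finite $D$. Then for each $n$ there are asymptotic $x_n,x'_n\in\xSp{X}$ with $\diff(\Phi x_n,\Phi x'_n)\subseteq D$ and a site $a_n\in\diff(x_n,x'_n)$ with $a_n\notin[-n,n]^d$. By the locality above, the $R$-component of $\diff(x_n,x'_n)$ containing $a_n$ also contains a site of the fixed finite set $\widehat D\IsDef\{i\in\xL:\operatorname{dist}(i,D)\le\rho\}$; joining the two by a path in that component and deleting loops yields a \emph{simple} $R$-path in $\diff(x_n,x'_n)$ from a vertex of $\widehat D$ to $a_n$, and since $a_n$ escapes to infinity while steps stay within distance $R$, the lengths of these paths tend to infinity. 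Passing to a subsequence, I would arrange that $x_n\to x$ and $x'_n\to x'$ in $\xSp{X}$, that the starting vertex is a fixed site, and that for each $j$ the $j$-th vertex of the path is eventually constant, equal to $p_j$; the $p_j$ are pairwise distinct because the paths are simple, so $\{p_j:j\ge0\}$ is an infinite, hence unbounded, subset of $\diff(x,x')$. Coordinate-wise convergence together with $\diff(\Phi x_n,\Phi x'_n)\subseteq D$ gives $\diff(\Phi x,\Phi x')\subseteq D$, so $\Phi x$ and $\Phi x'$ are asymptotic, whereas $x$ and $x'$ are not.

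Finally, completeness furnishes a unique $x''\in\xSp{X}$ asymptotic to $x$ with $\Phi x''=\Phi x'$, so that $\diff(x,x'')$ is finite; what one wants to conclude is that $x'=x''$ --- equivalently, that the canonical lift of $x_n$ with image $\Phi x'_n$ (which, by uniqueness, is $x'_n$ itself) converges to the canonical lift $x''$ of $x$ with image $\Phi x'$ --- since this would contradict the infiniteness of $\diff(x,x')$. I expect this last step to be the main obstacle: a bare limiting argument only exhibits two configurations $x',x''$ sharing a $\Phi$-image, one asymptotic to $x$ and one not, which is precisely the harmless multi-sheeted behaviour that complete pre-injective maps are allowed to have. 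Turning the \emph{infinite} set $\diff(x,x')$ into a genuine contradiction therefore requires using completeness (and, if needed, the strong irreducibility of the shifts arising in the applications) more forcefully --- for instance, by building out of $x$ and $x'$ a configuration whose $\Phi$-image admits an asymptotic perturbation with no finite lift, in direct violation of completeness. The compactness and path-extraction steps above, by contrast, are routine.
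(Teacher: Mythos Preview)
You correctly diagnose the gap in your own argument, and it is genuine. The limit pair $x,x'$ you construct satisfies $\diff(\Phi x,\Phi x')\subseteq D$ with $\diff(x,x')$ infinite, but this is entirely compatible with $\Phi$ being complete and pre-injective: for the Ising--contour map $\Theta$ of Example~\ref{exp:ising-contour}, taking $x$ all~$\symb{+}$ and $x'=-x$ gives $\Theta x=\Theta x'$ while $\diff(x,x')=\xL$. No contradiction can therefore be extracted from $x,x'$ alone, and your proposed repairs---invoking strong irreducibility (which is not among the hypotheses) or manufacturing a configuration whose image has an unliftable asymptotic perturbation---do not obviously go through.

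The missing ingredient is precisely the continuity of the canonical lift that you name, and the paper proves it directly rather than by passing to a limit. For each fixed $x$, pre-injectivity makes $\event{A}_x\IsDef\{x':x'\text{ asymptotic to }x,\ \diff(\Phi x,\Phi x')\subseteq D\}$ finite, so all the difference sets lie in some finite $E_x$. The substance of the proof is that the same $E_x$ works throughout a small cylinder $[x]_{C_x}$: given $x_1\in[x]_{C_x}$ and $x_1'\in\event{A}_{x_1}$, one glues in $\xSp{Y}$ to obtain $y'$ agreeing with $\Phi x_1'$ near $D$ and with $\Phi x$ outside $D$; completeness lifts $y'$ to some $x'\in\event{A}_x$, hence $\diff(x,x')\subseteq E_x$; then one glues in $\xSp{X}$, placing $x'$ on $C_x$ and $x_1$ off $E_x$, to obtain $x_1''$ with $\Phi x_1''=\Phi x_1'$, whence $x_1''=x_1'$ by pre-injectivity and $\diff(x_1,x_1')\subseteq E_x$. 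Compactness of $\xSp{X}$ then yields the global $E$. Your locality-of-difference lemma is correct and pleasant, but the path extraction it feeds into is a detour; the double-gluing step is where completeness does its real work, and your route does not bypass it.
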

\begin{proof}
	See Figure~\ref{fig:lemma:complete-pre-injective} for an illustration.

	For a configuration $x\in\xSp{X}$, let $\event{A}_x$ be the set of all configurations
	$x'$ asymptotic to $x$ such that $\diff(\Phi x,\Phi x')\subseteq D$.
	The set $\event{A}_x$ is finite.  Therefore, there is a finite set $E_x$ such that
	all the elements of $\event{A}_x$ agree outside $E_x$.
	We claim that if $C_x\supseteq E_x$ is a large enough finite set of sites,
	then for every configuration $x_1\in [x]_{C_x}$,
	all the elements of $\event{A}_{x_1}$ agree outside $E_x$.
	
	To see this, suppose that $C_x$ is large, and consider a configuration $x_1\in[x]_{C_x}$.
	Let $x'_1$ be a configuration asymptotic to $x_1$ such that $\diff(\Phi x_1,\Phi x'_1)\subseteq D$.
	By the gluing property of $\xSp{Y}$, there is a configuration $y'\in\xSp{Y}$
	that agrees with $\Phi x'_1$ in a large neighborhood of $D$ and with $\Phi x$ outside~$D$.
	Since $\Phi$ is a complete pre-injective factor map, there is a unique configuration $x'$
	asymptotic to $x$ such that $\Phi x'=y'$.
	Now, by the gluing property of $\xSp{X}$, there is a configuration $x''_1$
	that agrees with $x'$ in $C_x$ and with $x_1$ outside $E_x$.
	Since $C_x$ was chosen large, it follows that $\Phi x''_1=\Phi x'_1$.
	Since $x'_1$ and $x''_1$ are asymptotic, the pre-injectivity of $\Phi$ ensures that $x''_1=x'_1$.
	Therefore, $x_1$ and $x'_1$ agree outside $E_x$.
	
	The cylinders $[x]_{C_x}$ form an open cover of $\xSp{X}$.
	Therefore, by the compactness of $\xSp{X}$, there is a finite set $\event{I}\subseteq\xSp{X}$
	such that $\bigcup_{x\in\event{I}} [x]_{C_x}\supseteq\xSp{X}$.
	The set $E\IsDef \bigcup_{x\in\event{I}} E_x$ has the desired property.	
\end{proof}
\begin{figure}
	\begin{center}
		\begin{tikzpicture}[scale=1,line cap=round, >=stealth',
			hachA/.style={
				decorate,decoration={snake,amplitude=1pt,segment length=5pt},thick
			},
			hachB/.style={
				decorate,decoration={crosses,segment length=3pt,shape height=3pt,shape width=3pt},thick
			},
			hachAA/.style={
				decorate,decoration={zigzag,amplitude=1pt,segment length=5pt},thick
			},
			hachBB/.style={
				decorate,decoration={shape backgrounds,shape=diamond,shape size=3pt,shape sep=4pt},thick
			}			
		]
			\def\vdist{0.4}		
			\def\eps{0.1}		
			\def\vgap{3}			
			\def\window{6}
			\def\dwin{0.5}		
			\def\ewin{1.7}		
			\def\cwin{5.3}		
			\def\crwin{4.8}		
			\def\xwin{5.6}		
			\def\winskip{0.35}

			\coordinate[label=left:$\scriptstyle x$]		(x) at		(-\window,0);
			\coordinate[label=left:$\scriptstyle x_1$]	(x1) at		(-\window,-\vdist);
			\coordinate[label=left:$\scriptstyle x'_1$]	(xp1) at		(-\window,-2*\vdist);
			\coordinate[label=left:$\scriptstyle x'$]		(xp) at		(-\window,-3*\vdist);
			\coordinate[label=left:$\scriptstyle x''_1$]	(xpp1) at	(-\window,-4*\vdist);
			
			\coordinate [label=left:$\scriptstyle \Phi x$]	(fx) at		(-\window,-\vgap);
			\coordinate [label=left:$\scriptstyle \Phi x_1$]	(fx1) at		(-\window,-\vgap-\vdist);
			\coordinate [label=left:$\scriptstyle \Phi x'_1$]	(fxp1) at	(-\window,-\vgap-2*\vdist);
			\coordinate [label=left:$\scriptstyle y'$]		(yp) at		(-\window,-\vgap-3*\vdist);
			
			\draw[very thick] (-\window,0) -- (\window,0);			
			
			\draw[hachB] (-\window,-\vdist) -- (-\cwin,-\vdist);	
			\draw[hachB] (\window,-\vdist) -- (\cwin,-\vdist);
			\draw[very thick] (-\cwin,-\vdist) -- (\cwin,-\vdist);
			
			\draw[hachB] (-\window,-2*\vdist) -- (-\xwin,-2*\vdist);	
			\draw[hachB] (\window,-2*\vdist) -- (\xwin,-2*\vdist);
			\draw[dashed] (-\xwin,-2*\vdist) -- (\xwin,-2*\vdist);
			
			\draw[very thick] (-\window,-3*\vdist) -- (-\ewin,-3*\vdist);	
			\draw[very thick] (\window,-3*\vdist) -- (\ewin,-3*\vdist);
			\draw[hachA] (-\ewin,-3*\vdist) -- (\ewin,-3*\vdist);
			
			\draw[hachB] (-\window,-4*\vdist) -- (-\cwin,-4*\vdist);	
			\draw[hachB] (\window,-4*\vdist) -- (\cwin,-4*\vdist);
			\draw[very thick] (-\cwin,-4*\vdist) -- (-\ewin,-4*\vdist);
			\draw[very thick] (\cwin,-4*\vdist) -- (\ewin,-4*\vdist);
			\draw[hachA] (-\ewin,-4*\vdist) -- (\ewin,-4*\vdist);
			
			\draw[very thick] (-\window,-\vgap) -- (\window,-\vgap);	
			
			\draw[hachBB] (-\window,-\vgap-\vdist) -- (-\crwin,-\vgap-\vdist);	
			\draw[hachBB] (\window,-\vgap-\vdist) -- (\crwin,-\vgap-\vdist);
			\draw[very thick] (-\crwin,-\vgap-\vdist) -- (\crwin,-\vgap-\vdist);
			
			\draw[hachBB] (-\window,-\vgap-2*\vdist) -- (-\crwin,-\vgap-2*\vdist);	
			\draw[hachBB] (\window,-\vgap-2*\vdist) -- (\crwin,-\vgap-2*\vdist);
			\draw[very thick] (-\crwin,-\vgap-2*\vdist) -- (-\dwin,-\vgap-2*\vdist);
			\draw[very thick] (\crwin,-\vgap-2*\vdist) -- (\dwin,-\vgap-2*\vdist);
			\draw[hachAA] (-\dwin,-\vgap-2*\vdist) -- (\dwin,-\vgap-2*\vdist);
			
			\draw[very thick] (-\window,-\vgap-3*\vdist) -- (-\dwin,-\vgap-3*\vdist);	
			\draw[very thick] (\window,-\vgap-3*\vdist) -- (\dwin,-\vgap-3*\vdist);
			\draw[hachAA] (-\dwin,-\vgap-3*\vdist) -- (\dwin,-\vgap-3*\vdist);
			
			\draw[help lines] (-\ewin,\winskip+\eps) -- (-\ewin,-4*\vdist-\eps);
			\draw[help lines] (\ewin,\winskip+\eps) -- (\ewin,-4*\vdist-\eps);	
			\draw[help lines, <->, shorten <=1pt, shorten >=1pt] (-\ewin,\winskip) -- (\ewin,\winskip)
				node[fill=white,pos=0.5] {$\scriptstyle E_x$};
			
			\draw[help lines] (-\cwin,2*\winskip+\eps) -- (-\cwin,-4*\vdist-\eps);
			\draw[help lines] (\cwin,2*\winskip+\eps) -- (\cwin,-4*\vdist-\eps);	
			\draw[help lines, <->, shorten <=1pt, shorten >=1pt] (-\cwin,2*\winskip) -- (\cwin,2*\winskip)
				node[fill=white,pos=0.5] {$\scriptstyle C_x$};
			
			\draw[help lines] (-\dwin,-\vgap+\winskip+\eps) -- (-\dwin,-\vgap-3*\vdist-\eps);
			\draw[help lines] (\dwin,-\vgap+\winskip+\eps) -- (\dwin,-\vgap-3*\vdist-\eps);	
			\draw[help lines, <->, shorten <=1pt, shorten >=1pt] (-\dwin,-\vgap+\winskip) -- (\dwin,-\vgap+\winskip)
				node[fill=white,pos=0.5] {$\scriptstyle D$};
				
			\draw[help lines] (-\crwin,-\vgap+\eps) -- (-\crwin,-\vgap-3*\vdist-\eps);
			\draw[help lines] (\crwin,-\vgap+\eps) -- (\crwin,-\vgap-3*\vdist-\eps);	

		\end{tikzpicture}
	\end{center}
	\caption{%
		Illustration of the proof of Lemma~\ref{lem:pre-injective:complete}.
	}
	\label{fig:lemma:complete-pre-injective}
\end{figure}
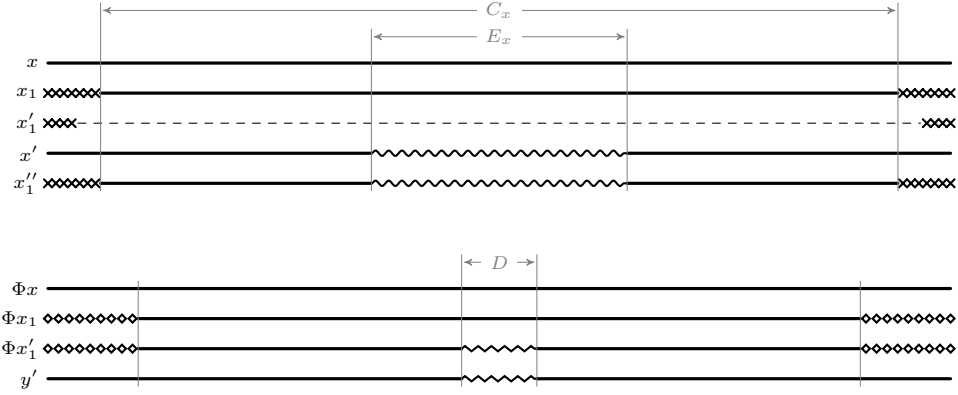

\begin{theorem}[see~\cite{Rue04}, Proposition~2.5]
\label{thm:gibbs:factor-map:complete}
	Let $\Phi:\xSp{X}\to\xSp{Y}$ be a complete pre-injective factor map
	between two strongly irreducible shifts of finite type $(\xSp{X},\sigma)$ and $(\xSp{Y},\sigma)$.
	Let $\Delta$ be a Hamiltonian on $\xSp{Y}$, and $\pi$ a probability measure on $\xSp{X}$.
	If $\pi$ is a Gibbs measure for~$\Phi^*\Delta$,
	then $\Phi\pi$ is a Gibbs measure for~$\Delta$.
\end{theorem}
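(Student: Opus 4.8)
I would write $\nu\IsDef\Phi\pi$. Since $\pi$ is Gibbs on a strongly irreducible shift of finite type it has full support, and $\Phi$ is onto, so $\nu$ has full support; it therefore suffices to prove $\lim_{E\nearrow\xL}\nu([y']_E)/\nu([y]_E)=\xe^{-\Delta(y,y')}$ for every pair of asymptotic configurations $y,y'\in\xSp{Y}$. Fix a neighborhood $0\in M\subseteq\xL$ that is simultaneously a neighborhood for $\Phi$ and a witness for the finite-type gluing of $\xSp{X}$ and of $\xSp{Y}$. For finite $E\subseteq\xL$ the pattern $\xRest{(\Phi x)}{E}$ depends only on $\xRest{x}{M(E)}$, so $\Phi^{-1}[y]_E$ is a disjoint union of cylinders $[p]_{M(E)}$ over the family $P_E(y)$ of patterns $p:M(E)\to S$ that induce $\xRest{y}{E}$ on $E$; hence $\nu([y]_E)=\sum_{p\in P_E(y)}\pi([p]_{M(E)})$, and similarly for $y'$. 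The plan is to construct, for all sufficiently large $E$, a bijection $B\colon P_E(y)\to P_E(y')$ for which $\pi([B(p)]_{M(E)})=(1+\smallo(1))\,\xe^{-\Delta(y,y')}\,\pi([p]_{M(E)})$ with error uniform in $p$, and then sum over $p$.

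To build $B$, fix a finite $D\supseteq\diff(y,y')$ and let $E_0\supseteq D$ be the finite set provided by Lemma~\ref{lem:pre-injective:complete}, so that any asymptotic $x,x'\in\xSp{X}$ with $\diff(\Phi x,\Phi x')\subseteq D$ already satisfy $\diff(x,x')\subseteq E_0$; take $E$ large enough to leave a generous $M$-margin around both $D$ and $E_0$. Given $p\in P_E(y)$, pick any $x\in[p]_{M(E)}$; as $\xRest{(\Phi x)}{E}=\xRest{y}{E}$ and $y'=y$ off $D\subseteq E$, the configurations $\Phi x$ and $y'$ agree throughout $E\setminus D$, and the finite-type gluing of $\xSp{Y}$ yields $z\in\xSp{Y}$ with $\xRest{z}{E}=\xRest{y'}{E}$ and $\diff(\Phi x,z)\subseteq D$. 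By completeness of $\Phi$ there is a unique $x'$ asymptotic to $x$ with $\Phi x'=z$, and Lemma~\ref{lem:pre-injective:complete} gives $\diff(x,x')\subseteq E_0$, so $\xRest{x'}{M(E)}\in P_E(y')$ and agrees with $p$ off $E_0$. Arguing as in the proof of Theorem~\ref{thm:pentropy:pre-injective}, for a box $A$ with $E_0\subseteq A\setminus M(\xCmpl{A})$, with $\partial M(A)\subseteq M(E)\setminus E_0$, and with $A\subseteq E$, the pattern $\xRest{x'}{E_0}$ is determined by $\xRest{x'}{\partial M(A)}=\xRest{p}{\partial M(A)}$ together with $\xRest{(\Phi x')}{A}=\xRest{y'}{A}$; hence $\xRest{x'}{M(E)}$ depends only on $p$, and I set $B(p)\IsDef\xRest{x'}{M(E)}$. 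The symmetric construction, with $y$ and $y'$ interchanged, gives a map $P_E(y')\to P_E(y)$ which is a two-sided inverse of $B$: running the two passages in succession returns a configuration asymptotic to and with the same $\Phi$-image as the original, hence, by pre-injectivity, the original. So $B$ is a bijection.

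It remains to estimate the densities. For $p\in P_E(y)$ with associated $x,x'$ as above, $(\Phi^*\Delta)(x,x')=\Delta(\Phi x,z)$ with $\diff(\Phi x,z)\subseteq D$, while $\xRest{(\Phi x)}{E}=\xRest{y}{E}$ and $\xRest{z}{E}=\xRest{y'}{E}$; since $\Delta$ is uniformly continuous on pairs of configurations differing only inside the fixed set $D$, enlarging $E$ forces $\abs{(\Phi^*\Delta)(x,x')-\Delta(y,y')}$ below any prescribed $\varepsilon$, uniformly over all $p$ and all choices of $x$. Since $\pi$ is a Gibbs measure for $\Phi^*\Delta$ and its Gibbs limit~(\ref{eq:gibbs:def}) is uniform over pairs whose disagreement is contained in the fixed set $E_0$, a further enlargement of $E$ yields $\pi([B(p)]_{M(E)})=(1+\smallo(1))\,\xe^{-\Delta(y,y')}\,\pi([p]_{M(E)})$ with error uniform in $p$; summing over $p\in P_E(y)$ and invoking $B$ gives $\nu([y']_E)=(1+\smallo(1))\,\xe^{-\Delta(y,y')}\,\nu([y]_E)$, and $E\nearrow\xL$ finishes the proof. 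The delicate points — as already visible in the $2$-to-$1$ prototype of Example~\ref{exp:ising-contour} — are the well-definedness of $B$ at the level of $M(E)$-patterns, for which Lemma~\ref{lem:pre-injective:complete} and the interior-determinacy argument of Theorem~\ref{thm:pentropy:pre-injective} are exactly what is needed, and the passage from $(\Phi^*\Delta)(x,x')$ to $\Delta(y,y')$, which here holds only in the limit because $\Delta$ need not be of finite range, so genuine global pre-images of $y$ and $y'$ cannot simply be substituted.
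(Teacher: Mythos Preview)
Your proof is correct and follows essentially the same strategy as the paper's: partition $\Phi^{-1}[y]_E$ into $\xSp{X}$-cylinders, use completeness together with Lemma~\ref{lem:pre-injective:complete} to set up a bijection with the cylinders covering $\Phi^{-1}[y']_E$, and control the ratio termwise via the Gibbs property of $\pi$ and the uniform continuity of $\Delta$. The only organizational differences are that you couple the two scales (using $E$ and $M(E)$ where the paper decouples them into $\hat{D}$ and $\hat{E}$), and that you make the well-definedness of the pattern-level bijection explicit by invoking the interior-determinacy argument from Theorem~\ref{thm:pentropy:pre-injective}, a step the paper leaves implicit in the assertion $\Phi^{-1}[y']_{\hat{D}}=\bigcup_{x\in\event{I}}[x']_{\hat{E}}$.
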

\begin{proof}
	Let $0\in N\subseteq\xL$ be a neighborhood for $\Phi$.
	Let $0\in M\subseteq\xL$ be a neighborhood that witnesses
	the finite type gluing property of both $\xSp{X}$ and $\xSp{Y}$.
	We write $\tilde{N}\IsDef N^{-1}(N)$ and $\tilde{M}\IsDef M^{-1}(M)$.
	
	Let $y$ and $y'$ be two asymptotic configurations in $\xSp{Y}$,
	and set $D\IsDef\diff(y,y')$.  For every configuration $x\in\Phi^{-1}[y]_{\tilde{M}(D)}$,
	there is a unique configuration $x'\in\Phi^{-1}[y']_{\tilde{M}(D)}$ that is asymptotic to $x$
	and such that $\diff(\Phi x,\Phi x')\subseteq D$.
	(Namely, by the gluing property of $\xSp{Y}$, the configuration $y'_x$ that agrees with
	$y'$ in $\tilde{M}(D)$ and with $\Phi x$ outside $D$ is in $\xSp{Y}$.
	Since $\Phi$ is a complete pre-injective factor map, there is a unique configuration $x'$
	that is asymptotic to $x$ and $\Phi x'=y'_x$.)
	The relation $x\mapsto x'$ is a one-to-one correspondence.
	By Lemma~\ref{lem:pre-injective:complete}, there is a large enough finite set $E\subseteq\xL$
	such that for every $x\in\Phi^{-1}[y]_{\tilde{M}(D)}$, it holds $\diff(x,x')\subseteq E$.
	
	Consider a large finite set $\hat{D}\subseteq\xL$
	and another finite set $\hat{E}\subseteq\xL$ that is much larger than $\hat{D}$.
	(More precisely, we need $\hat{D}\supseteq \tilde{M}(D)$ and $\hat{E}\supseteq N(\hat{D})\cup\tilde{N}(E)$.)
	Let $P_y$ denote the set of patterns $p\in L_{\hat{E}}(\xSp{X})$
	such that $\Phi[p]_{\hat{E}}\subseteq [y]_{\hat{D}}$.
	Then, $\Phi^{-1}[y]_{\hat{D}}=\bigcup_{p\in P_y} [p]_{\hat{E}}$ (provided $\hat{E}\supseteq N(\hat{D})$).
	Let $\event{I}\subseteq\xSp{X}$ be a finite set
	consisting of one representative 
	from each cylinder $[p]_{\hat{E}}$, for $p\in A$.
	Then, $\Phi^{-1}[y]_{\hat{D}}=\bigcup_{x\in\event{I}}[x]_{\hat{E}}$.
	Moreover, $\Phi^{-1}[y']_{\hat{D}}=\bigcup_{x\in\event{I}}[x']_{\hat{E}}$
	(provided $\hat{E}\supseteq\tilde{N}(E)$).
	Since the terms in each of the two latter unions are disjoint, we have
	\begin{align}
		(\Phi\pi)[y]_{\hat{D}} = \sum_{x\in\event{I}} \pi([x]_{\hat{E}})
		\qquad\quad\text{and}\qquad\quad
		(\Phi\pi)[y']_{\hat{D}} = \sum_{x\in\event{I}} \pi([x']_{\hat{E}}) \;.
	\end{align}
	For each $x\in \Phi^{-1}[y]_{\hat{D}}$, we have
	\begin{align}
		\frac{\pi([x']_{\hat{E}})}{\pi([x]_{\hat{E}})} - \xe^{-\Delta(y,y')} &=
			\left(\frac{\pi([x']_{\hat{E}})}{\pi([x]_{\hat{E}})} - \xe^{-(\Phi^*\Delta)(x,x')}\right) +
			\left(\xe^{-\Delta(\Phi x,\Phi x')} - \xe^{-\Delta(y,y')}\right) \;.
	\end{align}
	Let us denote the first term on the righthand side by $\delta_{\hat{E}}(x)$
	and the second term by $\gamma_{\hat{D}}(x)$.
	Note that, since $\pi$ is a Gibbs measure for $\Phi^*\Delta$
	and $\diff(x,x')\subseteq E$, $\delta_{\hat{E}}(x)\to 0$
	uniformly over $\Phi^{-1}[y]_{\hat{D}}$ as $\hat{E}\nearrow\xL$.
	Note also that, 
	by the continuity property of $\Delta$,
	$\gamma_{\hat{D}}(x)\to 0$
	uniformly over $\Phi^{-1}[y]_{\hat{D}}$ as $\hat{D}\nearrow\xL$.
	We can now write
	\begin{align}
		\frac{(\Phi\pi)[y']_{\hat{D}}}{(\Phi\pi)[y]_{\hat{D}}} =
			\frac{\displaystyle{%
				\sum_{x\in\event{I}} \pi([x']_{\hat{E}})
			}}{\displaystyle{%
				\sum_{x\in\event{I}} \pi([x]_{\hat{E}})
			}}
		&=
			\frac{\displaystyle{%
				\sum_{x\in\event{I}} \left(
					\xe^{-\Delta(y,y')} + \delta_{\hat{E}}(x) + \gamma_{\hat{D}}(x)
				\right)\;  \pi([x]_{\hat{E}})
			}}{\displaystyle{%
				\sum_{x\in\event{I}} \pi([x]_{\hat{E}})
			}} \\
		&=
			\xe^{-\Delta(y,y')} + \frac{\displaystyle{%
				\sum_{x\in\event{I}} \left(\delta_{\hat{E}}(x) + \gamma_{\hat{D}}(x)\right)\;  \pi([x]_{\hat{E}})
			}}{\displaystyle{%
				\sum_{x\in\event{I}} \pi([x]_{\hat{E}})
			}} \;.
	\end{align}
	Consider a small number $\varepsilon>0$.
	If $\hat{D}$ is sufficiently large, we have $\abs{\gamma_{\hat{D}}}<\varepsilon/2$.
	Moreover, 
	after choosing $\hat{D}$, we can choose $\hat{E}$ large enough so that
	$\abs{\delta_{\hat{E}}}<\varepsilon/2$.
	Therefore, for $\hat{D}$ sufficiently large we get
	\begin{align}
		\abs{\frac{(\Phi\pi)[y']_{\hat{D}}}{(\Phi\pi)[y]_{\hat{D}}} - \xe^{-\Delta(y,y)}} &\leq \varepsilon.
	\end{align}
	It follows that
	\begin{align}
		\abs{\frac{(\Phi\pi)[y']_{\hat{D}}}{(\Phi\pi)[y]_{\hat{D}}} - \xe^{-\Delta(y,y)}} &\to 0
	\end{align}
	as $\hat{D}\nearrow \xL$.
	Since this is valid for every two asymptotic configurations $y,y'\in\xSp{Y}$,
	we conclude that $\Phi\pi$ is a Gibbs measure for $\Delta$.
\end{proof}

\begin{corollary}
\label{cor:gibbs:conjugacy}
	Let $\Phi:\xSp{X}\to\xSp{Y}$ be a conjugacy between
	two strongly irreducible shifts of finite type $(\xSp{X},\sigma)$ and $(\xSp{Y},\sigma)$.
	Let $\Delta$ be a Hamiltonian on $\xSp{Y}$, and $\pi$ a probability measure on $\xSp{X}$.
	Then, $\pi$ is a Gibbs measure for~$\Phi^*\Delta$ if and only if
	$\Phi\pi$ is a Gibbs measure for~$\Delta$.
\end{corollary}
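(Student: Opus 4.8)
The plan is to obtain the corollary as a direct double application of Theorem~\ref{thm:gibbs:factor-map:complete}, once to $\Phi$ and once to $\Phi^{-1}$, after observing that a conjugacy between shifts of finite type is a \emph{complete} pre-injective factor map.

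First I would verify that $\Phi$ is a complete pre-injective factor map. Being a conjugacy, $\Phi$ is a bijective morphism, hence in particular onto (a factor map) and injective (so pre-injective). For completeness, recall that the inverse $\Phi^{-1}:\xSp{Y}\to\xSp{X}$ of a conjugacy between shift systems is again a morphism, hence a sliding block map with some neighborhood $0\in N'\subseteq\xL$. Given $x\in\xSp{X}$ and $y'\in\xSp{Y}$ asymptotic to $y\IsDef\Phi x$, set $x'\IsDef\Phi^{-1}y'$; then $\diff(x,x')\subseteq (N')^{-1}(\diff(y,y'))$ is finite, so $x'$ is asymptotic to $x$, and $\Phi x'=y'$, with uniqueness clear from the injectivity of $\Phi$. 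By the symmetric argument (using that $\Phi$ itself is a sliding block map), $\Phi^{-1}$ is also a complete pre-injective factor map between the two strongly irreducible shifts of finite type.

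The forward direction is then immediate: Theorem~\ref{thm:gibbs:factor-map:complete} applied to $\Phi$ says that if $\pi$ is a Gibbs measure for $\Phi^*\Delta$, then $\Phi\pi$ is a Gibbs measure for $\Delta$. For the converse I would apply Theorem~\ref{thm:gibbs:factor-map:complete} to the complete pre-injective factor map $\Phi^{-1}:\xSp{Y}\to\xSp{X}$, with the Hamiltonian $\Phi^*\Delta$ on $\xSp{X}$ and the probability measure $\Phi\pi$ on $\xSp{Y}$. A one-line computation gives $(\Phi^{-1})^*(\Phi^*\Delta)=\Delta$, since for asymptotic $y,y'\in\xSp{Y}$ one has $((\Phi^{-1})^*(\Phi^*\Delta))(y,y')=(\Phi^*\Delta)(\Phi^{-1}y,\Phi^{-1}y')=\Delta(\Phi\Phi^{-1}y,\Phi\Phi^{-1}y')=\Delta(y,y')$. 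Hence, if $\Phi\pi$ is a Gibbs measure for $\Delta=(\Phi^{-1})^*(\Phi^*\Delta)$, the theorem yields that $\Phi^{-1}(\Phi\pi)=\pi$ is a Gibbs measure for $\Phi^*\Delta$, which completes the proof.

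There is essentially no obstacle here beyond bookkeeping; the only point needing a moment's care is the verification of completeness, which reduces to the standard fact that the inverse of a conjugacy between shift systems is itself a sliding block map and therefore carries the asymptotic equivalence relation to itself in both directions. Everything else is a formal invocation of Theorem~\ref{thm:gibbs:factor-map:complete}.
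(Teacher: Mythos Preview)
Your proof is correct and is exactly the intended argument: the paper states the corollary without proof, precisely because it follows by applying Theorem~\ref{thm:gibbs:factor-map:complete} to the conjugacy in each direction once one notes that a conjugacy (and its inverse) is a complete pre-injective factor map. Your verification of completeness via the fact that $\Phi^{-1}$ is a sliding block map is the right justification.
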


\subsection{The Image of a Gibbs Measure}

Let $\Phi:\xSp{X}\to\xSp{Y}$ be a pre-injective factor map between
two strongly irreducible shifts of finite type.
According to Corollary~\ref{thm:equilibrium:factor-map}, 
a pre-image of a shift-invariant Gibbs measure under the induced map
$\xSx{P}(\xSp{X},\sigma)\to\xSx{P}(\xSp{Y},\sigma)$ is again a Gibbs measure.
The image of a Gibbs measure, however, does not need to be a Gibbs measure
as the following example demonstrates.
\begin{example}[XOR map]
\label{exp:xor:non-gibbs}
	Let $\xSp{X}=\xSp{Y}\IsDef\{\symb{0},\symb{1}\}^{\xZ}$ be the binary full shift
	and $\Phi$ the so-called \emph{XOR map}, defined by
	$(\Phi x)(i)\IsDef x(i)+x(i+1) \pmod{2}$.
	Let $\pi$ be the shift-invariant Bernoulli measure on $\xSp{X}$
	with marginals $\symb{1}\mapsto p$ and $\symb{0}\mapsto 1-p$, where $0< p< 1$.
	This is a Gibbs measure for the Hamiltonian $\Delta_f$,
	where $f:\xSp{X}\to\xR$ is the single-site observable
	defined by
	$f(x)\IsDef-\log p$ if $x(0)=\symb{1}$ and $f(x)\IsDef-\log(1-p)$ if $x(0)=\symb{0}$.
	We claim that unless $p=\frac{1}{2}$,
	$\Phi\pi$ is not a regular Gibbs measure
	(i.e., a Gibbs measure for a Hamiltonian generated by an observable with summable variations).
	
	Suppose, on the contrary, that $p\neq\frac{1}{2}$
	and $\Phi\pi$ is
	a Gibbs measure for $\Delta_g$ for some $g\in SV(\xSp{X})$.
	Then $\pi$ is also an equilibrium measure
	for $g\xO\Phi$ (Corollary~\ref{thm:equilibrium:factor-map}),
	implying that $f$ and $g\xO\Phi$ are physically equivalent
	(Proposition~\ref{prop:equilibrium:equivalence}).
	Consider the two uniform configurations
	$\underline{\symb{0}}$ and $\underline{\symb{1}}$,
	where 
	$\underline{\symb{0}}(i)\IsDef \symb{0}$ and $\underline{\symb{1}}(i)\IsDef \symb{1}$
	for every $i\in\xZ$.
	We have
	$f(\underline{\symb{0}})=-\log p\neq -\log(1-p)=f(\underline{\symb{1}})$,
	whereas
	$g\xO\Phi(\underline{\symb{0}})=g\xO\Phi(\underline{\symb{1}})$.
	If $\delta_{\underline{\symb{0}}}$ and $\delta_{\underline{\symb{1}}}$ are,
	respectively, the probability measures
	concentrated on $\underline{\symb{0}}$ and $\underline{\symb{1}}$, we get that
	$\delta_{\underline{\symb{0}}}(f)-\delta_{\underline{\symb{0}}}(g\xO\Phi)
	\neq
		\delta_{\underline{\symb{1}}}(f)-\delta_{\underline{\symb{1}}}(g\xO\Phi)$.
	This is a contradiction with the physical equivalence of
	$f$ and $g\xO\Phi$, because
	$\delta_{\underline{\symb{0}}},\delta_{\underline{\symb{1}}}\in\xSx{P}(\xSp{X},\sigma)$
	(Proposition~\ref{prop:physical-equivalence:continuous}).
	
	In fact, the same argument shows that none of the $n$-fold iterations
	$\Phi^n\pi$ are regular Gibbs measures,
	because $\Phi^n(\underline{\symb{0}})=\Phi^n(\underline{\symb{1}})$ for every $n\geq 1$.
	On the other hand, it has been shown~\cite{Miy79,Lin84}, that $\Phi^n\pi$ converges in density
	to the uniform Bernoulli measure, which is a Gibbs measure and is invariant under $\Phi$.
	The question of approach to equilibrium will be discussed in Section~\ref{sec:ca:randomization}.%
	\exampleqed
\end{example}

The latter example was first suggested by van~den~Berg (see~\cite{LorMaeVan98}, Section~3.2)
as an example of a measure that is \emph{strongly} non-Gibbsian, in the sense that
attempting to define a Hamiltonian for it via~(\ref{eq:gibbs:def})
would lead to a function $\Delta$ 
for which the continuity property fails \emph{everywhere}.
The question of when a measure is Gibbsian and the study of the symptoms of being non-Gibbsian
is an active area of research as non-Gibbsianness sets boundaries on the applicability of
the so-called renormalization group technique in statistical mechanics (see e.g.~\cite{EntFerSok93,Fer06}).

The observation in Example~\ref{exp:xor:non-gibbs} can be generalized as follows.
\begin{proposition}
\label{prop:gibbs-non-gibbs}
	Let $(\xSp{X},\sigma)$ be a strongly irreducible shift of finite type and
	$\pi\in\xSx{P}(\xSp{X},\sigma)$ a Gibbs measure for a Hamiltonian $\Delta_f$,
	where $f\in SV(\xSp{X})$.
	Suppose that $\Phi:\xSp{X}\to\xSp{Y}$ is a pre-injective factor map from $(\xSp{X},\sigma)$
	onto another shift of finite type $(\xSp{Y},\sigma)$.
	A necessary condition for $\Phi\pi$ to be 
	a regular Gibbs measure
	is that for every two measures $\mu_1,\mu_2\in\xSx{P}(\xSp{X},\sigma)$
	with $\mu_1(f)\neq\mu_2(f)$ it holds $\Phi\mu_1\neq\Phi\mu_2$.
\end{proposition}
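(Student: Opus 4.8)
The plan is to run, in full generality, the argument used in Example~\ref{exp:xor:non-gibbs}: a regular Gibbs image of $\pi$ forces a physical equivalence between $f$ and the pullback of its associated observable, and physical equivalence is exactly an identity of expectations over all shift-invariant measures. I will prove the contrapositive of the condition, namely that if $\Phi\pi$ is a regular Gibbs measure then $\Phi\mu_1=\Phi\mu_2$ implies $\mu_1(f)=\mu_2(f)$ for all $\mu_1,\mu_2\in\xSx{P}(\xSp{X},\sigma)$.

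First I would record two preliminaries. Since $\Phi\pi$ is a regular Gibbs measure, there is $g\in SV(\xSp{Y})$ with $\Phi\pi$ a Gibbs measure for $\Delta_g$; and $\xSp{Y}$, being a shift of finite type that is a factor of the strongly irreducible $\xSp{X}$, is itself strongly irreducible, so $\Phi\pi\in\xSx{G}_{\Delta_g}(\xSp{Y},\sigma)$. Second, I would check that $g\xO\Phi\in SV(\xSp{X})$: if $\Phi$ has neighborhood $N\subseteq I_r$, then agreement of $x,x'$ on $I_n$ forces agreement of $\Phi x,\Phi x'$ on $I_{n-r}$, so $\xvar_{I_n}(g\xO\Phi)\le\xvar_{I_{n-r}}(g)$, and since $\abs{\partial I_n}=\Theta(\abs{\partial I_{n-r}})$ the summability of $\sum_n\abs{\partial I_n}\xvar_{I_n}(g)$ yields summability of $\sum_n\abs{\partial I_n}\xvar_{I_n}(g\xO\Phi)$.

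Now the core chain. By Theorem~\ref{thm:equilibrium:Gibbs}, $\pi\in\xSx{E}_f(\xSp{X},\sigma)$ and $\Phi\pi\in\xSx{E}_g(\xSp{Y},\sigma)$. Applying Corollary~\ref{thm:equilibrium:factor-map} to $\pi$ (and the fact that $\Phi\pi\in\xSx{E}_g(\xSp{Y},\sigma)$) gives $\pi\in\xSx{E}_{g\xO\Phi}(\xSp{X},\sigma)$. Hence $f$ and $g\xO\Phi$ are two observables of summable variations sharing the equilibrium measure $\pi$, so Proposition~\ref{prop:equilibrium:equivalence} makes them physically equivalent, and Proposition~\ref{prop:physical-equivalence:continuous} provides a constant $c\in\xR$ with $\mu(f)=\mu(g\xO\Phi)+c$ for every $\mu\in\xSx{P}(\xSp{X},\sigma)$. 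Finally, for $\mu_1,\mu_2\in\xSx{P}(\xSp{X},\sigma)$ with $\Phi\mu_1=\Phi\mu_2$, the change-of-variables identity $\mu_i(g\xO\Phi)=(\Phi\mu_i)(g)$ gives $\mu_1(g\xO\Phi)=\mu_2(g\xO\Phi)$, whence $\mu_1(f)=\mu_2(f)$; this is precisely the contrapositive of the stated necessary condition.

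There is no genuine obstacle here — the proposition is essentially a packaging of results already established. The only point requiring a little care is verifying that $g\xO\Phi$ has summable variations, which is needed so that the ``converse'' half of Proposition~\ref{prop:equilibrium:equivalence} (the one that concludes physical equivalence from a shared equilibrium measure) is applicable; this is the routine estimate on variations under a sliding block map sketched above.
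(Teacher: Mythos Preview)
Your proof is correct and follows exactly the approach the paper intends: the paper does not write out a proof for this proposition, presenting it instead as a direct generalization of the argument in Example~\ref{exp:xor:non-gibbs}, which is precisely what you have carried out. Your additional care in verifying that $g\xO\Phi\in SV(\xSp{X})$ and that the factor $\xSp{Y}$ inherits strong irreducibility from $\xSp{X}$ fills in details the paper leaves implicit.
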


\begin{example}[XOR map; Example~\ref{exp:xor:non-gibbs} continued]
\label{exp:xor:non-gibbs:contd}
	The argument of Example~\ref{exp:xor:non-gibbs} can be stretched to
	show that the iterations of the XOR map turn every Gibbs measure other than
	the uniform Bernoulli measure eventually to a non-Gibbs measure.
	More specifically, for every observable $f\in SV(\xSp{X})$ that is not physically equivalent to~$0$
	and every shift-invariant Gibbs measure $\pi$ for $\Delta_f$,
	there is an integer $n_0\geq 1$ such that for any $n\geq n_0$,
	the measure $\Phi^n\pi$ is not a regular Gibbs measure.
	
	This is a consequence of the self-similar behaviour of the XOR map.
	Namely, the map $\Phi$ satisfies
	$(\Phi^{2^k}x)(i) = x(i) + x(i+2^k) \pmod{2}$ for every $i\in\xZ$ and every $k\geq 1$.
	If $f$ is not physically equivalent to $0$,
	two periodic configurations $x,y\in\xSp{X}$
	with common period $2^k$ can be found such that 
	$2^{-k}\sum_{i=0}^{2^k-1}f(\sigma^i x)\neq 2^{-k}\sum_{i=0}^{2^k-1}f(\sigma^i y)$.
	If $\mu_x$ and $\mu_y$ denote, respectively, the shift-invariant measures concentrated
	at the shift orbits of $x$ and $y$, we obtain that $\mu_x(f)\neq \mu_y(f)$.
	Nevertheless, $\Phi^n x=\Phi^n y=\underline{\symb{0}}$ for all $n\geq 2^k$, implying that
	$\Phi^n \mu_x = \Phi^n \mu_y = \delta_{\underline{\symb{0}}}$.
	Therefore, according to Proposition~\ref{prop:gibbs-non-gibbs},
	the measure $\Phi^n\pi$
	cannot be a regular Gibbs measure. 
	\exampleqed
\end{example}

With the interpretation of the shift-ergodic measures
as the macroscopic states (see the \hyperref[sec:intro]{Introduction}),
the above proposition 
reads as follows:
a sufficient condition for the non-Gibbsianness of $\Phi\pi$ is that
there are two macroscopic states that are distinguishable by the density of $f$
and are mapped to the same state by $\Phi$.

If the induced map $\Phi:\xSx{P}(\xSp{X},\sigma)\to\xSx{P}(\xSp{Y},\sigma)$
is not one-to-one, then there are Gibbs measures (even Markov measures)
whose images are not Gibbs.
For, suppose $\mu_1,\mu_2\in\xSx{P}(\xSp{X},\sigma)$ are distinct measures
with $\Phi\mu_1=\Phi\mu_2$.
Then, there is a local observable $f\in K(\xSp{X})$ such that $\mu_1(f)\neq\mu_2(f)$.
Every shift-invariant Gibbs measure for $\Delta_f$ is mapped by $\Phi$
to a measure that is not regular Gibbs.

\begin{question}
	Let $\Phi:\xSp{X}\to\xSp{Y}$ be a pre-injective factor map between
	two strongly irreducible shifts of finite type $(\xSp{X},\sigma)$ and $(\xSp{Y},\sigma)$,
	and suppose that the induced map $\Phi:\xSx{P}(\xSp{X},\sigma)\to\xSx{P}(\xSp{Y},\sigma)$
	is injective.  Does $\Phi$ map every (regular, shift-invariant) Gibbs measure to a Gibbs measure?
\end{question}


\section{Cellular Automata}
\label{sec:ca}

\subsection{Conservation Laws}
\label{sec:ca:conservation-laws}

Let $\Phi:\xSp{X}\to\xSp{X}$ be a cellular automaton
on a strongly irreducible shift of finite type
$(\xSp{X},\sigma)$.
We say that $\Phi$ \emph{conserves} (the energy-like quantity formalized by) a Hamiltonian $\Delta$
if $\Delta(\Phi x,\Phi y)=\Delta(x,y)$ for every two asymptotic configurations $x,y\in\xSp{X}$.
If $\Delta=\Delta_f$ is the Hamiltonian generated by a local observable $f\in K(\xSp{X})$,
then we may also say that $\Phi$ conserves $f$ (in the aggregate).
More generally, we say that
a continuous observable $f\in C(\xSp{X})$ is conserved by $\Phi$ if
$f$ and $f\xO\Phi$ are physically equivalent.
According to Proposition~\ref{prop:physical-equivalence:continuous},
this is equivalent to the existence of a constant $c\in\xR$
such that $(\Phi\pi)(f)=\pi(f)+c$ for every
shift-invariant probability measure $\pi$.
However, in this case $c$ is always $0$.
\begin{proposition}
\label{prop:conservation:char}
	Let $\Phi:\xSp{X}\to\xSp{X}$ be a cellular automaton
	over a strongly irreducible shift of finite type $(\xSp{X},\sigma)$.
	A continuous observable $f\in C(\xSp{X})$ is conserved by $\Phi$
	if and only if $(\Phi\pi)(f)=\pi(f)$
	for every probability measure $\pi\in\xSx{P}(\xSp{X},\sigma)$.
\end{proposition}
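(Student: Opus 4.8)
The plan is to read this off directly from the characterization of physical equivalence in Proposition~\ref{prop:physical-equivalence:continuous}; the only genuine content is checking that the additive constant occurring there is forced to be zero.

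First I would record the two trivial bookkeeping facts: by definition of the induced map, $(\Phi\pi)(f)=\pi(f\xO\Phi)$ for every $\pi\in\xSx{P}(\xSp{X})$, and since $\Phi$ commutes with the shift it carries $\xSx{P}(\xSp{X},\sigma)$ into itself (indeed $\sigma^i(\Phi\pi)=\Phi(\sigma^i\pi)=\Phi\pi$ for shift-invariant $\pi$). By definition, $f$ being conserved by $\Phi$ means $f$ and $f\xO\Phi$ are physically equivalent, which by Proposition~\ref{prop:physical-equivalence:continuous} is equivalent to the existence of a constant $c\in\xR$ with $(\Phi\pi)(f)=\pi(f\xO\Phi)=\pi(f)+c$ for every $\pi\in\xSx{P}(\xSp{X},\sigma)$. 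The ``if'' direction of the proposition is now immediate: if $(\Phi\pi)(f)=\pi(f)$ for all shift-invariant $\pi$, apply this with $c=0$.

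For the ``only if'' direction, assume $f$ is conserved, so $(\Phi\pi)(f)=\pi(f)+c$ for all $\pi\in\xSx{P}(\xSp{X},\sigma)$ and a fixed $c$; I want $c=0$. Fix any $\pi\in\xSx{P}(\xSp{X},\sigma)$. Since each $\Phi^{n}\pi$ is again shift-invariant, iterating the relation (applying it successively to $\pi,\Phi\pi,\Phi^{2}\pi,\dots$) gives $(\Phi^{n}\pi)(f)=\pi(f)+nc$ for every $n\geq 0$. But $f$ is continuous on the compact space $\xSp{X}$, hence bounded, so $\abs{(\Phi^{n}\pi)(f)}\leq\norm{f}$ for all $n$, and the sequence $\pi(f)+nc$ stays bounded only if $c=0$. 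Therefore $(\Phi\pi)(f)=\pi(f)$ for every $\pi\in\xSx{P}(\xSp{X},\sigma)$. (Alternatively, one can note that $\xSx{P}(\xSp{X},\sigma)$ is non-empty, compact, convex and $\Phi$-invariant, hence contains a $\Phi$-invariant measure $\pi_{0}$, and $(\Phi\pi_{0})(f)=\pi_{0}(f)$ already forces $c=0$.)

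There is no real obstacle here: the statement is essentially a repackaging of Proposition~\ref{prop:physical-equivalence:continuous} plus the elementary remark that the integral of a bounded observable cannot be shifted by a nonzero constant under every iterate of $\Phi$. The only points demanding a little care are keeping track of the direction of the physical-equivalence constant and applying Proposition~\ref{prop:physical-equivalence:continuous} only to measures that are genuinely shift-invariant, which is guaranteed because $\Phi$ commutes with $\sigma$.
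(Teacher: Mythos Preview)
Your proof is correct and follows essentially the same approach as the paper: both use Proposition~\ref{prop:physical-equivalence:continuous} to reduce to showing the constant $c$ vanishes, and both obtain $c=0$ by iterating to $(\Phi^{n}\pi)(f)=\pi(f)+nc$ and invoking boundedness of $f$ on the compact space $\xSp{X}$. Your write-up is simply more explicit about the bookkeeping (which the paper handles in the paragraph preceding the proposition), and you add the nice alternative via a $\Phi$-invariant measure, which the paper does not mention.
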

\begin{proof}
	Let $c\in\xR$ be such that $(\Phi\pi)(f)=\pi(f)+c$
	for every $\pi\in\xSx{P}(\xSp{X},\sigma)$.
	Then, for every $n>0$,
	$(\Phi^n\pi)(f)=\pi(f)+nc$.
	However, every continuous function on a compact space is bounded.
	Therefore, $c=0$.
\end{proof}

If an observable $f$ is conserved by a cellular automaton $\Phi$,
we say that $f$ is bound by a \emph{conservation law} under $\Phi$.
There is also a concept of \emph{local} conservation law.
Let $D_0$ be a finite generating set for the group $\xL=\xZ^d$.
Suppose that $f\in C(\xSp{X})$ is an observable that is conserved by a cellular automaton $\Phi:\xSp{X}\to\xSp{X}$.
By Proposition~\ref{prop:conservation:char} and Lemma~\ref{lem:annihilators},
this means that $f\xO\Phi-f\in C(\xSp{X},\sigma)$, that is
\begin{align}
	f\xO\Phi-f &= \lim_{n\to\infty} \sum_{i\in D_0} (h_i^{(n)}\xO\sigma^i - h_i^{(n)})
\end{align}
for some $h_i^{(n)}\in K(\xSp{X})$ (for $i\in D_0$ and $n=0,1,2,\ldots$).
In other words, for every configuration $x\in\xSp{X}$ it holds
\begin{align}
	f(\Phi x) &= f(x) + \lim_{n\to\infty}\left[
			\sum_{i\in D_0} h_i^{(n)}(\sigma^i x) - \sum_{i\in D_0} h_i^{(n)}(x)
		\right] \;.
\end{align}
If furthermore $f\xO\Phi-f\in K(\xSp{X},\sigma)\subseteq C(\xSp{X},\sigma)$
(where $K(\xSp{X},\sigma)$ is defined as in~(\ref{eq:def:local-coboundaries})),
then we have the more intuitive equation
\begin{align}
\label{eq:cons-law:local}
	f(\Phi x) &= f(x) + \sum_{i\in D_0} h_i(\sigma^i x) - \sum_{i\in D_0} h_i(x)
\end{align}
for some $h_i\in K(\xSp{X})$.
In this case, we say that $f$ is \emph{locally conserved} by $\Phi$
(or satisfies a \emph{local conservation law} under $\Phi$).
The value $h_i(\sigma^k x)$ is then interpreted as the \emph{flow} (of the energy-like quantity captured by $f$)
from site $k$ to site $k-i$.  The latter equation is
a \emph{continuity equation}, stating that at each site $k$, the changes in the observed quantity after one step
should balance with the incoming and the outgoing flows.
If $\xSp{X}$ is a full shift, it is known that every conserved local observable
is locally conserved.  The proof is similar to that of Proposition~\ref{prop:Hamiltonian:observable:local}.

Local conservation laws enjoy a somewhat symmetric relationship with time and space.
Namely, an observable $f\in K(\xSp{X})$ is locally conserved by $\Phi$
if and only if the observable $\alpha\IsDef f\xO\Phi-f$ is
in $K(\xSp{X},\Phi)\cap K(\xSp{X},\sigma)$.
Moreover, to every observable $\alpha\in K(\xSp{X},\Phi)\cap K(\xSp{X},\sigma)$,
there corresponds at least one observable $f\in K(\xSp{X})$
such that $\alpha=f\xO\Phi-f$ and $f$ is locally conserved by $\Phi$.
In general, there might be several observables $f$ with the latter property.
If $\alpha=f\xO\Phi-f=f'\xO\Phi-f'$ for two observables $f,f'\in K(\xSp{X})$,
then $(f-f')=(f-f')\xO\Phi$; that is, $f-f'$ is \emph{invariant} under $\Phi$.
Every constant observable is invariant under any cellular automaton.  The following is an example
of a cellular automaton with non-constant invariant local observables.
\begin{example}[Invariant observables]
\label{exp:almost-equicontinuous}
	Let $\Phi:\{\symb{0},\symb{1},\symb{2}\}^\xZ\to\{\symb{0},\symb{1},\symb{2}\}^\xZ$
	be the cellular automaton with
	\begin{align}
		(\Phi x)(i) &\IsDef \begin{cases}
			\symb{2}							& \text{if $x(i)=\symb{2}$,}\\
			x(i) + x(i+1) \pmod{2}\qquad{}	& \text{otherwise}
		\end{cases}
	\end{align}
	(see Figure~\ref{fig:almost-equicontinuous}).
	The observable $f:\{\symb{0},\symb{1},\symb{2}\}^\xZ\to\xR$ defined by
	$f(x)\IsDef 1$ if $x(i)=\symb{2}$ and $f(x)\IsDef 0$ otherwise
	is obviously invariant.
	The Hamiltonian $\Delta_f$ counts the number of occurrences of symbol~$\symb{2}$
	and is conserved by $\Phi$.
	In fact, there are infinitely many linearly independent, physically non-equivalent
	observables that are invariant under $\Phi$.
	Namely, the relative position of the occurrences of~$\symb{2}$ remain unchanged,
	and hence,
	for any finite set $D\subseteq\xZ$, the logical conjunction of $f\xO\sigma^i$ for $i\in D$
	is invariant.
	It follows that $\Phi$ has infinitely many distinct (and linearly independent)
	conservation laws.
	
	Such abundance of conservation laws is common among all cellular automata
	having non-constant invariant local observables (see Lemma~2 of~\cite{ForKarTaa11}),
	and has been suggested as the reason behind the ``non-physical'' behavior
	in these cellular automata (see e.g.~\cite{Tak87}).
	Every surjective equicontinuous cellular automaton is periodic~\cite{BlaTis00,Gam06}
	and hence has non-constant invariant local observables.
	It follows that every surjective cellular automaton that has
	a non-trivial equicontinuous cellular automaton as factor has non-constant invariant local observables
	and an infinity of linearly independent conservation laws.
	\exampleqed
\end{example}
\begin{question}
	Does every surjective cellular automaton with equicontinuous points
	have non-constant local observables?
\end{question}
\begin{figure}
	\begin{center}
		\includegraphics[width=0.6\textwidth]{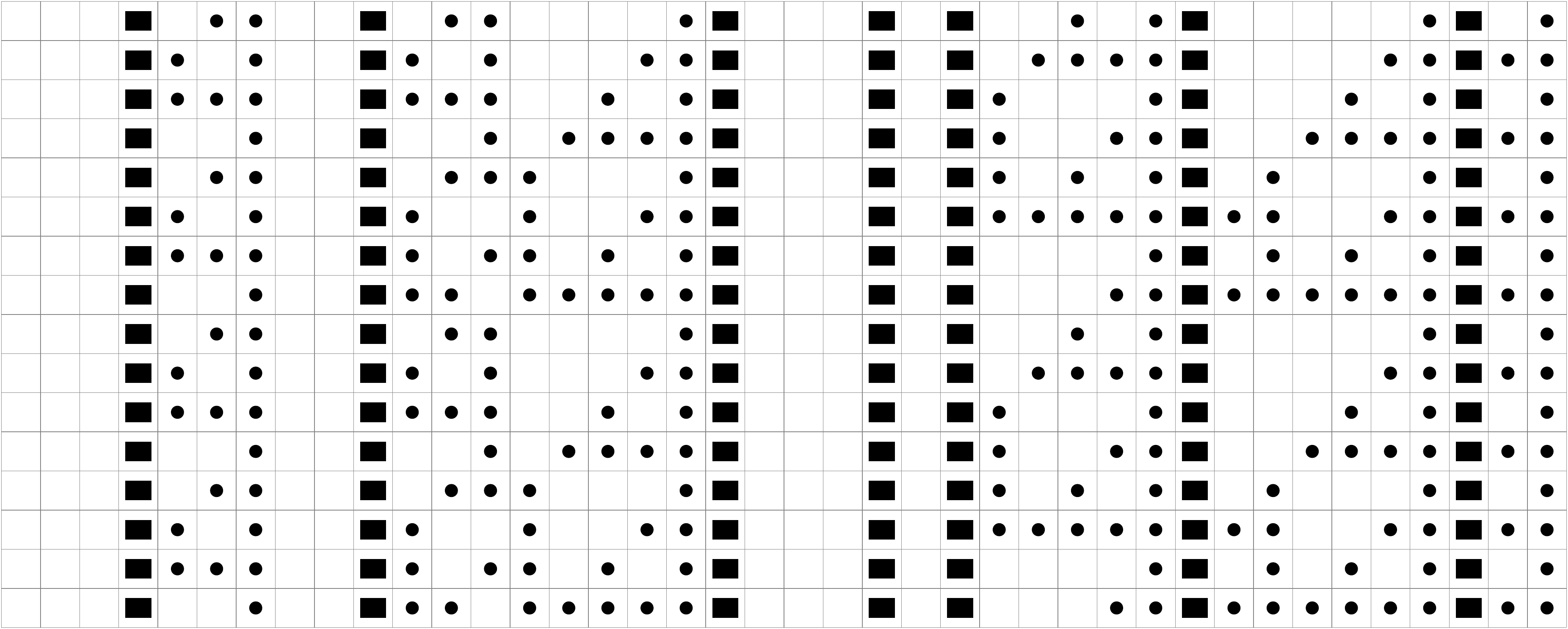}
	\end{center}
	\caption{%
		A cellular automaton with infinitely many distinct conservation laws
		(see Example~\ref{exp:almost-equicontinuous}).
		Black square represents state $\symb{2}$ and  black dot represents $\symb{1}$.
		Time goes downward.
	}
	\label{fig:almost-equicontinuous}
\end{figure}

Every cellular automaton $\Phi:\xSp{X}\to\xSp{X}$ conserves the trivial Hamiltonian $\Delta\equiv 0$ on $\xSp{X}$.
Furthermore, every observable $f\in C(\xSp{X})$ that is physically equivalent to $0$
(i.e., $f-c\in C(\xSp{X},\sigma)$ for some $c\in\xR$)
is \emph{trivially} conserved by $\Phi$.
Likewise, a local observable $f\in K(\xSp{X})$
is \emph{trivially} locally conserved by $\Phi$
if $f-c\in K(\xSp{X},\sigma)$ for some $c\in\xR$.
We shall say that two local observables $f,g\in K(\xSp{X})$
are \emph{locally physically equivalent}
if $f-g-c\in K(\xSp{X},\sigma)$ for some $c\in\xR$.
The following proposition is the analogue of Proposition~\ref{prop:conservation:char}.
\begin{proposition}
	Let $\Phi:\xSp{X}\to\xSp{X}$ be a cellular automaton
	over a strongly irreducible shift of finite type $(\xSp{X},\sigma)$.
	A local observable $f\in K(\xSp{X})$ is locally conserved by $\Phi$
	if and only if $f$ and $f\xO\Phi$ are locally physically equivalent.
\end{proposition}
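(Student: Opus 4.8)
The plan is to reduce the statement to the observation --- already used in the proof of Proposition~\ref{prop:conservation:char} --- that in a ``coboundary-plus-constant'' decomposition of $f-f\xO\Phi$ the constant is forced to vanish. The forward implication requires no work: by definition, $f$ being locally conserved by $\Phi$ means $f\xO\Phi-f\in K(\xSp{X},\sigma)$, so $f-(f\xO\Phi)-c\in K(\xSp{X},\sigma)$ with $c=0$, which is exactly the assertion that $f$ and $f\xO\Phi$ are locally physically equivalent. (One should first note that $f\xO\Phi$ is indeed a local observable, since $\Phi$ is a sliding block map, so that this notion applies.)

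For the converse I would start from the hypothesis that $f-(f\xO\Phi)-c\in K(\xSp{X},\sigma)$ for some $c\in\xR$, with the goal of showing $c=0$. The first step is to translate this into a statement about invariant measures: since $K(\xSp{X},\sigma)\subseteq C(\xSp{X},\sigma)$, Lemma~\ref{lem:annihilators} gives $\pi\bigl(f-(f\xO\Phi)-c\bigr)=0$ for every $\pi\in\xSx{P}(\xSp{X},\sigma)$, and rewriting $\pi(f\xO\Phi)=(\Phi\pi)(f)$ turns this into $(\Phi\pi)(f)=\pi(f)-c$ for all shift-invariant $\pi$. The second step is the boundedness argument of Proposition~\ref{prop:conservation:char}: because $\Phi$ commutes with $\sigma$, each iterate $\Phi^{n}\pi$ is again shift-invariant, so applying the identity repeatedly yields $(\Phi^{n}\pi)(f)=\pi(f)-nc$ for all $n\geq0$; since $f$ is continuous on the compact space $\xSp{X}$ it is bounded, and since $\xSp{X}$ carries at least one invariant measure, this forces $c=0$. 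Finally, with $c=0$ the membership reads $f-f\xO\Phi\in K(\xSp{X},\sigma)$, i.e.\ $f$ is locally conserved by $\Phi$, and the equivalence is proved.

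The only step with genuine content is the vanishing of $c$; the rest is unwinding definitions and applying Lemma~\ref{lem:annihilators}. An essentially equivalent alternative for the converse would be to observe that local physical equivalence implies physical equivalence (as $K(\xSp{X},\sigma)\subseteq C(\xSp{X},\sigma)$), hence that $f$ is conserved by $\Phi$, hence that $(\Phi\pi)(f)=\pi(f)$ for every invariant $\pi$ by Proposition~\ref{prop:conservation:char}, and then to pin $c=0$ by evaluating the coboundary relation against any single invariant measure. It is worth noting that neither strong irreducibility nor the finite-type hypothesis is actually used in this argument --- only the compactness of $\xSp{X}$ (guaranteeing both boundedness of $f$ and existence of an invariant measure) together with the inclusion $K(\xSp{X},\sigma)\subseteq C(\xSp{X},\sigma)$ and Lemma~\ref{lem:annihilators}.
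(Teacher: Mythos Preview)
Your argument is correct and is precisely the analogue the paper has in mind: the paper states this proposition without proof, merely calling it ``the analogue of Proposition~\ref{prop:conservation:char}'', and your proof unwinds that analogy exactly --- the forward implication is immediate from the definitions, and the converse reduces to the same boundedness argument ($(\Phi^n\pi)(f)=\pi(f)-nc$ forces $c=0$) used there. Your closing remark that strong irreducibility and the finite-type hypothesis are not actually needed is also accurate.
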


\subsection{Invariance of Gibbs Measures}
\label{sec:ca:invariance}

As a corollary of the results of Section~\ref{sec:entropy-preserving},
we obtain a correspondence between the conservation laws of a surjective cellular automata
and its invariant Gibbs measures.
It is well-known that every surjective cellular automaton
over a full shift preserves the uniform Bernoulli measure
(see~\cite{Hed69}, Theorem~5.4 and~\cite{MarKim76}).
The invariance of the uniform Bernoulli measure is sometimes called
the \emph{balance property} of (the local update rule of) the surjective cellular automata.
In case of surjective cellular automata over
strongly irreducible shifts of finite type, a similar property is known to hold:
every measure of maximum entropy is mapped to a measure of maximum entropy
(see~\cite{CovPau74}, Corollary~2.3 and~\cite{MeeSte01}, Theorems~3.3 and~3.6).
The following two theorems can be seen as further generalizations of
the balance property.  Indeed, choosing $f\equiv 0$ in either
of the two theorems implies that a surjective cellular automaton
maps each measure of maximum entropy
to a measure of maximum entropy.
An elementary proof of Theorem~\ref{thm:conservation-invariance:1}
in the special case of surjective cellular automata on one-dimensional full shifts
and single-site observables was earlier presented in~\cite{KarTaa11}.

\begin{theorem}
\label{thm:conservation-invariance:1}
	Let $\Phi:\xSp{X}\to\xSp{X}$ be a surjective cellular automaton
	over a strongly irreducible shift of finite type $(\xSp{X},\sigma)$,
	and let $f\in SV(\xSp{X})$ be an observable with summable variations.
	The following conditions are equivalent:
	\begin{enumerate}[ \rm a)]
		\item $\Phi$ conserves $f$.
		\item $\Phi$ maps the set $\xSx{E}_f(\xSp{X},\sigma)$
			of equilibrium measures for $f$ onto itself.
		\item There exist a measure in $\xSx{E}_f(\xSp{X},\sigma)$
			whose $\Phi$-image is also in $\xSx{E}_f(\xSp{X},\sigma)$.
	\end{enumerate}
	If $f\in C(\xSp{X})$ does not have summable variations, condition~(a)
	still implies the other two conditions.
\end{theorem}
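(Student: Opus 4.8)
The plan is to obtain all three equivalences, together with the final sentence, by assembling results already in hand: Corollary~\ref{thm:equilibrium:factor-map}, Proposition~\ref{prop:equilibrium:equivalence}, and the Garden-of-Eden theorem. The one ingredient that needs a short separate argument is that a surjective cellular automaton acts \emph{onto} on shift-invariant measures. Throughout I will use that $\Phi$, being a surjective cellular automaton over a strongly irreducible shift of finite type, is a \emph{pre-injective} factor map $\xSp{X}\to\xSp{X}$ (Garden-of-Eden), so Corollary~\ref{thm:equilibrium:factor-map} applies with codomain equal to the domain: for every $\pi\in\xSx{P}(\xSp{X},\sigma)$,
\[
	\pi\in\xSx{E}_{f\xO\Phi}(\xSp{X},\sigma)
	\iff
	\Phi\pi\in\xSx{E}_{f}(\xSp{X},\sigma).
	\tag{$\ast$}
\]

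For (a) $\Rightarrow$ (b): if $\Phi$ conserves $f$, then by definition $f$ and $f\xO\Phi$ are physically equivalent, so by the first part of Proposition~\ref{prop:equilibrium:equivalence} they have the same equilibrium measures, $\xSx{E}_{f\xO\Phi}(\xSp{X},\sigma)=\xSx{E}_{f}(\xSp{X},\sigma)$. Substituting this into $(\ast)$ turns it into ``$\pi\in\xSx{E}_f(\xSp{X},\sigma)\iff\Phi\pi\in\xSx{E}_f(\xSp{X},\sigma)$''. The forward implication gives $\Phi\bigl(\xSx{E}_f(\xSp{X},\sigma)\bigr)\subseteq\xSx{E}_f(\xSp{X},\sigma)$. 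For surjectivity of this restriction, let $\nu\in\xSx{E}_f(\xSp{X},\sigma)$; since $\Phi:\xSp{X}\to\xSp{X}$ is onto, $\Phi:\xSx{P}(\xSp{X})\to\xSx{P}(\xSp{X})$ is onto (Lemma~\ref{lem:onto-one-to-one}), so $\Phi^{-1}\{\nu\}$ is non-empty; it is closed, convex, and $\sigma$-invariant (because $\Phi\sigma=\sigma\Phi$ and $\sigma\nu=\nu$), hence contains a shift-invariant measure $\pi$. Then $\Phi\pi=\nu\in\xSx{E}_f(\xSp{X},\sigma)$, so by the displayed equivalence $\pi\in\xSx{E}_f(\xSp{X},\sigma)$, exhibiting $\nu$ in the image. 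This argument uses nothing about summable variations, so it simultaneously proves the final sentence for arbitrary $f\in C(\xSp{X})$ (and (b) $\Rightarrow$ (c), which is immediate since $\xSx{E}_f(\xSp{X},\sigma)\neq\varnothing$, then yields (a) $\Rightarrow$ (c)).

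For (c) $\Rightarrow$ (a): suppose $\pi\in\xSx{E}_f(\xSp{X},\sigma)$ and $\Phi\pi\in\xSx{E}_f(\xSp{X},\sigma)$. Then $(\ast)$ gives $\pi\in\xSx{E}_{f\xO\Phi}(\xSp{X},\sigma)$, so $\pi$ is a common equilibrium measure of $f$ and $f\xO\Phi$. Since $\Phi$ has a finite neighborhood, $f\in SV(\xSp{X})$ implies $f\xO\Phi\in SV(\xSp{X})$: the variation of $f\xO\Phi$ over $I_n$ is bounded by the variation of $f$ over a fixed (translation-independent) enlargement of $I_n$, so summability is preserved. Now the converse half of Proposition~\ref{prop:equilibrium:equivalence}, applied to the two summable-variation observables $f$ and $f\xO\Phi$ sharing the equilibrium measure $\pi$, gives that $f$ and $f\xO\Phi$ are physically equivalent, i.e.\ $\Phi$ conserves $f$.

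I do not expect a genuine obstacle here: the content is already packaged in the earlier corollaries. The only steps requiring a (routine) verification are the surjectivity of $\Phi$ on $\xSx{P}(\xSp{X},\sigma)$ in (a) $\Rightarrow$ (b) --- handled by the non-empty/closed/convex/$\sigma$-invariant argument above --- and the closure of $SV(\xSp{X})$ under precomposition with $\Phi$ in (c) $\Rightarrow$ (a). It is worth emphasizing \emph{why} summable variations enters only in (c) $\Rightarrow$ (a): the converse direction of Proposition~\ref{prop:equilibrium:equivalence} (sharing an equilibrium measure forces physical equivalence) can fail for general continuous observables, which is exactly why (a) still implies (b) and (c), but not conversely, without the summability hypothesis.
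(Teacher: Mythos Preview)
Your proof is correct and follows essentially the same route as the paper: both arguments assemble Corollary~\ref{thm:equilibrium:factor-map}, Proposition~\ref{prop:equilibrium:equivalence}, and the Garden-of-Eden theorem in the same way. Your treatment of surjectivity in (a)$\Rightarrow$(b) is in fact more explicit than the paper's---the paper simply cites Lemma~\ref{lem:onto-one-to-one} to conclude $\Phi\xSx{E}_f(\xSp{X},\sigma)=\xSx{E}_f(\xSp{X},\sigma)$, whereas you spell out the non-empty/closed/convex/$\sigma$-invariant argument needed to produce a \emph{shift-invariant} preimage---but this is a matter of detail, not a difference in approach.
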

\begin{proof}\ \\
	\begin{enumerate}[ a $\Rightarrow$ b)]
		\item[a $\Rightarrow$ b)]
			Suppose that $\Phi$ conserves $f$.
			By Proposition~\ref{prop:equilibrium:equivalence}
			and Corollary~\ref{thm:equilibrium:factor-map} we have
			$\pi\in\xSx{E}_f(\xSp{X},\sigma)$ if and only if
			$\Phi\pi\in\xSx{E}_f(\xSp{X},\sigma)$.
			Using Lemma~\ref{lem:onto-one-to-one}, we obtain
			$\Phi\xSx{E}_f(\xSp{X},\sigma)=\xSx{E}_f(\xSp{X},\sigma)$.
		
		\item[b $\Rightarrow$ c)] Trivial.
		
		\item[c $\Rightarrow$ a)]
			Let $f$ have summable variations.  Then, so does $f\xO\Phi$.
			Suppose that there exists a measure
			$\pi\in\xSx{E}_f(\xSp{X},\sigma)$
			such that $\Phi\pi\in\xSx{E}_f(\xSp{X},\sigma)$.
			By Corollary~\ref{thm:equilibrium:factor-map},
			we also have $\pi\in\xSx{E}_{f\xO\Phi}(\xSp{X},\sigma)$.
			Therefore, $\xSx{E}_f(\xSp{X},\sigma)\cap
			\xSx{E}_{f\xO\Phi}(\xSp{X},\sigma)\neq\varnothing$
			and by Proposition~\ref{prop:equilibrium:equivalence},
			$f$ and $f\xO\Phi$ are physically equivalent.
			That is, $\Phi$ conserves $f$.
	\end{enumerate}
\end{proof}

\begin{theorem}
\label{thm:conservation-invariance:2}
	Let $\Phi:\xSp{X}\to\xSp{X}$ be a surjective cellular automaton
	over a strongly irreducible shift of finite type $(\xSp{X},\sigma)$.
	Let $f\in C(\xSp{X})$ be an observable and $e\in\xR$.
	If $\Phi$ conserves $f$, then
	$\Phi$ maps $\xSx{E}_{\langle f\rangle=e}(\xSp{X},\sigma)$ onto itself.
\end{theorem}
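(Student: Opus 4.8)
The plan is to follow the template set by the proof of Theorem~\ref{thm:conservation-invariance:1}, but with the constrained entropy-maximization problem in place of the pressure functional. First I would record the standing facts. Since $\Phi$ is a surjective cellular automaton over a strongly irreducible shift of finite type, the Garden-of-Eden theorem makes it pre-injective, so $\Phi$ is a pre-injective factor map $\xSp{X}\to\xSp{X}$. Since $\Phi$ conserves $f$, the observables $f$ and $f\xO\Phi$ are physically equivalent, and Proposition~\ref{prop:conservation:char} tells us the additive constant is $0$, i.e.\ $\pi(f\xO\Phi)=\pi(f)$ for every $\pi\in\xSx{P}(\xSp{X},\sigma)$. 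Consequently the constraint ``$\pi(f\xO\Phi)=e$'' is the same constraint as ``$\pi(f)=e$'' on $\xSx{P}(\xSp{X},\sigma)$, and (using this, together with Corollary~\ref{cor:bayesian:entropy:factor-map} for the equality $s_{f\xO\Phi}(\cdot)=s_f(\cdot)$, or just directly) we get $\xSx{E}_{\langle f\xO\Phi\rangle=e}(\xSp{X},\sigma)=\xSx{E}_{\langle f\rangle=e}(\xSp{X},\sigma)$ for every $e\in\xR$.

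Next I would invoke Corollary~\ref{thm:bayesian:solutions:factor-map} with $\xSp{Y}=\xSp{X}$: for $\pi\in\xSx{P}(\xSp{X},\sigma)$ one has $\pi\in\xSx{E}_{\langle f\xO\Phi\rangle=e}(\xSp{X},\sigma)$ if and only if $\Phi\pi\in\xSx{E}_{\langle f\rangle=e}(\xSp{X},\sigma)$. In view of the previous paragraph this says simply: $\pi\in\xSx{E}_{\langle f\rangle=e}(\xSp{X},\sigma)$ iff $\Phi\pi\in\xSx{E}_{\langle f\rangle=e}(\xSp{X},\sigma)$. In particular $\Phi$ carries $\xSx{E}_{\langle f\rangle=e}(\xSp{X},\sigma)$ into itself. (One could equally well prove this forward inclusion by hand: if $\pi\in\xSx{E}_{\langle f\rangle=e}(\xSp{X},\sigma)$ then $\Phi\pi$ is shift-invariant, $(\Phi\pi)(f)=\pi(f)=e$ by conservation, and $h_{\Phi\pi}(\xSp{X},\sigma)=h_\pi(\xSp{X},\sigma)=s_f(e)$ by pre-injectivity via Theorem~\ref{thm:pentropy:pre-injective}, so $\Phi\pi$ attains the maximum.)

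It remains to obtain surjectivity of this map. Given $\nu\in\xSx{E}_{\langle f\rangle=e}(\xSp{X},\sigma)$, I would consider the fiber $\Phi^{-1}\{\nu\}\subseteq\xSx{P}(\xSp{X})$. It is non-empty by Lemma~\ref{lem:onto-one-to-one}(b) (as $\Phi:\xSp{X}\to\xSp{X}$ is onto); it is closed and convex because the induced map $\Phi$ on measures is weakly continuous and affine; and it is invariant under the shift action, since $\Phi(\sigma^k\mu)=\sigma^k(\Phi\mu)=\sigma^k\nu=\nu$ whenever $\Phi\mu=\nu$, using that $\nu$ is shift-invariant. Hence (as recalled in Section~\ref{sec:background}) it contains a shift-invariant measure $\pi$. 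Then $\Phi\pi=\nu\in\xSx{E}_{\langle f\rangle=e}(\xSp{X},\sigma)$, so by the equivalence of the previous paragraph $\pi\in\xSx{E}_{\langle f\rangle=e}(\xSp{X},\sigma)$, which proves that $\Phi$ maps $\xSx{E}_{\langle f\rangle=e}(\xSp{X},\sigma)$ onto itself.

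I do not expect a genuine obstacle. The surjectivity step is the only part that is not essentially immediate, and even it reduces to the standard fact that a non-empty closed convex shift-invariant set of probability measures contains a shift-invariant point. The two minor points to handle carefully are the vanishing of the constant in the physical equivalence of $f$ and $f\xO\Phi$ (so that the two energy constraints literally coincide, as needed to identify the $\xSx{E}_{\langle\cdot\rangle=e}$ sets), and the degenerate case $e\notin[e_{\min},e_{\max}]$, in which $\xSx{E}_{\langle f\rangle=e}(\xSp{X},\sigma)=\varnothing$ and the statement is trivial.
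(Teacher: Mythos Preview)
The proposal is correct and follows the same route the paper takes for the analogous implication (a $\Rightarrow$ b) in Theorem~\ref{thm:conservation-invariance:1}: the paper does not spell out a separate proof for Theorem~\ref{thm:conservation-invariance:2}, but the intended argument is exactly the one you give, with Corollary~\ref{thm:bayesian:solutions:factor-map} playing the role of Corollary~\ref{thm:equilibrium:factor-map} and Proposition~\ref{prop:conservation:char} ensuring the constraints $\pi(f)=e$ and $\pi(f\xO\Phi)=e$ coincide. Your surjectivity step (the fiber $\Phi^{-1}\{\nu\}$ is non-empty, closed, convex, shift-invariant, hence contains a shift-invariant measure) is actually more explicit than the paper's terse ``Using Lemma~\ref{lem:onto-one-to-one}'' in the proof of Theorem~\ref{thm:conservation-invariance:1}.
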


From Theorems~\ref{thm:conservation-invariance:1} and~\ref{thm:conservation-invariance:2}
it follows that each of the (convex and compact) sets $\xSx{E}_f(\xSp{X},\sigma)$ and
$\xSx{E}_{\langle f\rangle=e}(\xSp{X},\sigma)$
contains an invariant measure for $\Phi$, provided that $\Phi$ conserves $f$.
However, following the common reasoning of statistical mechanics
(see the \hyperref[sec:intro]{Introduction}),
such an invariant measure should not be considered as a macroscopic equilibrium state
unless it is shift-ergodic (see Example~\ref{exp:ising:ca:invariant-non-invariant} below).

In the implication (c $\Rightarrow$ a) of Theorem~\ref{thm:conservation-invariance:1},
the set $\xSx{E}_f(\xSp{X},\sigma)$ of equilibrium measures for $f$
can be replaced by the potentially larger set $\xSx{G}_{\Delta_f}(\xSp{X})$
of Gibbs measures for $\Delta_f$.
\begin{corollary}
\label{cor:conservation-invariance:1:non-shift-invariant}
	Let $\Phi:\xSp{X}\to\xSp{X}$ be a surjective cellular automaton
	over a strongly irreducible shift of finite type $(\xSp{X},\sigma)$,
	and let $f\in SV(\xSp{X})$ be an observable with summable variations.
	Suppose that there is a Gibbs measure for $\Delta_f$
	whose $\Phi$-image is also a Gibbs measure for $\Delta_f$.
	Then, $\Phi$ conserves~$\Delta_f$.
\end{corollary}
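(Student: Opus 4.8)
The plan is to reduce the statement to the implication (c~$\Rightarrow$~a) of Theorem~\ref{thm:conservation-invariance:1}, by extracting from the given pair of Gibbs measures a \emph{shift-invariant} Gibbs measure for $\Delta_f$ whose $\Phi$-image is again a shift-invariant Gibbs measure for $\Delta_f$. Since $\Phi$ is surjective, it is a factor map and commutes with the shift --- both on $\xSp{X}$ and, as the induced affine continuous map, on $\xSx{P}(\xSp{X})$. Set
\[
	\xSx{K} \IsDef \left\{
		\mu\in\xSx{G}_{\Delta_f}(\xSp{X}) :
		\Phi\mu\in\xSx{G}_{\Delta_f}(\xSp{X})
	\right\} \;,
\]
which is non-empty by hypothesis. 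The first step is to show that $\xSx{K}$ contains a shift-invariant measure.

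I would check that $\xSx{K}$ is a non-empty, closed, convex subset of $\xSx{P}(\xSp{X})$ that is invariant under the shift. Convexity is immediate since $\xSx{G}_{\Delta_f}(\xSp{X})$ is convex and $\mu\mapsto\Phi\mu$ is affine; closedness follows from the closedness of $\xSx{G}_{\Delta_f}(\xSp{X})$ (Section~\ref{sec:hamiltonian-gibbs}) together with the weak continuity of $\mu\mapsto\Phi\mu$. For shift-invariance the key point is that $\xSx{G}_{\Delta_f}(\xSp{X})$ is itself $\sigma$-invariant: because $\Delta_f(\sigma^a x,\sigma^a y)=\Delta_f(x,y)$ (part of the definition of a Hamiltonian), applying $\sigma^a$ to the defining limit~(\ref{eq:gibbs:def}) turns any Gibbs measure for $\Delta_f$ into another one; combined with $\Phi\sigma^a=\sigma^a\Phi$ on measures, this gives $\sigma^a\xSx{K}\subseteq\xSx{K}$ for every $a\in\xL$. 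Now I invoke the fact recalled in Section~\ref{sec:background} --- every non-empty, closed, convex subset of $\xSx{P}(\xSp{X})$ closed under the shift action contains a shift-invariant measure --- to pick $\bar\pi\in\xSx{K}\cap\xSx{P}(\xSp{X},\sigma)$; concretely $\bar\pi$ is a subsequential weak limit of the averages $\abs{I_n}^{-1}\sum_{i\in I_n}\sigma^i\pi$ for an arbitrary $\pi\in\xSx{K}$.

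The remainder is bookkeeping. By construction $\bar\pi\in\xSx{G}_{\Delta_f}(\xSp{X},\sigma)$, and $\Phi\bar\pi\in\xSx{G}_{\Delta_f}(\xSp{X})$ is shift-invariant because $\bar\pi$ is and $\Phi$ commutes with $\sigma$, so $\Phi\bar\pi\in\xSx{G}_{\Delta_f}(\xSp{X},\sigma)$ as well. Since $f\in SV(\xSp{X})$, Theorem~\ref{thm:equilibrium:Gibbs} identifies $\xSx{G}_{\Delta_f}(\xSp{X},\sigma)$ with the set $\xSx{E}_f(\xSp{X},\sigma)$ of equilibrium measures for $f$, so $\bar\pi,\Phi\bar\pi\in\xSx{E}_f(\xSp{X},\sigma)$. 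This is exactly condition~(c) of Theorem~\ref{thm:conservation-invariance:1}; as $f$ has summable variations, the implication (c~$\Rightarrow$~a) yields that $\Phi$ conserves $f$. Finally, $\Delta_{f\xO\Phi}=\Phi^*\Delta_f$ directly from the definitions, and for $f\in SV(\xSp{X})$ ``$\Phi$ conserves $f$'' amounts to $\Delta_f=\Delta_{f\xO\Phi}$; hence $\Delta_f=\Phi^*\Delta_f$, i.e.\ $\Phi$ conserves the Hamiltonian $\Delta_f$.

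I do not anticipate a genuine obstacle here --- which is why the statement is phrased as a corollary --- and the only two points calling for a line of justification are the $\sigma$-invariance of $\xSx{G}_{\Delta_f}(\xSp{X})$ (forced by the translation-invariance clause in the definition of a Hamiltonian, and already implicit when the text asserts that $\xSx{G}_{\Delta_f}(\xSp{X},\sigma)$ is non-empty) and the identification of ``$\Phi$ conserves $f$'' with ``$\Phi$ conserves $\Delta_f$'' for $f\in SV(\xSp{X})$. It is worth stressing that the argument never needs to know whether the originally given $\pi$ is itself a Gibbs measure for $\Phi^*\Delta_f$ --- that would be a non-shift-invariant analogue of Corollary~\ref{thm:equilibrium:factor-map}, i.e.\ Conjecture~\ref{conj:gibbs:factor-map}, which is open in general; averaging $\xSx{K}$ over the shift orbit circumvents that question entirely.
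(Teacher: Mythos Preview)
Your proof is correct and follows essentially the same route as the paper: both arguments average over the shift orbit (you use the set $\xSx{K}$, the paper uses the closed convex hull $\xSx{H}$ of $\{\sigma^k\pi:k\in\xL\}$) to extract a shift-invariant element, then invoke Theorem~\ref{thm:equilibrium:Gibbs} and the implication (c~$\Rightarrow$~a) of Theorem~\ref{thm:conservation-invariance:1}. The only cosmetic difference is that the paper works with the smaller set $\xSx{H}$ generated by a single $\pi$, whereas you work with the full set $\xSx{K}$; both are non-empty, closed, convex, and $\sigma$-invariant for the same reasons you state.
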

\begin{proof}
	Let $\pi$ be a probability measure on $\xSp{X}$ such that $\pi,\Phi\pi\in\xSx{G}_{\Delta_f}(\xSp{X})$.
	Let $\xSx{H}$ denote the closed convex hull of the measures $\sigma^k\pi$ for $k\in\xL$.
	Then, $\xSx{H}$ is a closed, convex, shift-invariant set, and therefore,
	contains a shift-invariant element $\nu$.
	Moreover, both $\xSx{H}$ and $\Phi\xSx{H}$ are subsets of $\xSx{G}_{\Delta_f}(\xSp{X})$.
	In particular, $\nu,\Phi\nu\in\xSx{G}_{\Delta_f}(\xSp{X})$.
	Hence, $\nu,\Phi\nu\in\xSx{E}_f(\xSp{X},\sigma)$,
	and the claim follows from Theorem~\ref{thm:conservation-invariance:1}.
\end{proof}
For reversible cellular automata,
Corollary~\ref{cor:gibbs:conjugacy} leads to a variant of Theorem~\ref{thm:conservation-invariance:1}
concerning all (not necessarily shift-invariant) Gibbs measures.
\begin{theorem}
\label{thm:conservation-invariance:reversible}
	Let $\Phi:\xSp{X}\to\xSp{X}$ be a reversible cellular automaton
	over a strongly irreducible shift of finite type $(\xSp{X},\sigma)$,
	and let $\Delta$ be a Hamiltonian on $\xSp{X}$.
	The following conditions are equivalent:
	\begin{enumerate}[ \rm a)]
		\item $\Phi$ conserves $\Delta$.
		\item A probability measure is in $\xSx{G}_\Delta(\xSp{X})$ if and only if
			its $\Phi$-image is in $\xSx{G}_\Delta(\xSp{X})$.
		\item There exists a measure in $\xSx{G}_\Delta(\xSp{X})$ whose $\Phi$-image
			is also in $\xSx{G}_\Delta(\xSp{X})$.
	\end{enumerate}
\end{theorem}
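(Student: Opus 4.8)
The plan is to obtain the cycle of implications by reducing everything to Corollary~\ref{cor:gibbs:conjugacy}, exploiting that a reversible cellular automaton $\Phi:\xSp{X}\to\xSp{X}$ is a conjugacy of the shift $(\xSp{X},\sigma)$ with itself, combined with one elementary observation: on a strongly irreducible shift of finite type a Gibbs measure determines its Hamiltonian. Precisely, if $\pi\in\xSx{P}(\xSp{X})$ happens to be a Gibbs measure for two Hamiltonians $\Delta_1$ and $\Delta_2$, then, since $\pi$ has full support, for every pair of asymptotic configurations $x,y\in\xSp{X}$ the limit in~(\ref{eq:gibbs:def}) exists and equals both $\xe^{-\Delta_1(x,y)}$ and $\xe^{-\Delta_2(x,y)}$; hence $\Delta_1(x,y)=\Delta_2(x,y)$ for all asymptotic $x,y$, i.e.\ $\Delta_1=\Delta_2$. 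I would record this as a one-line observation first, as it is the only ingredient beyond Corollary~\ref{cor:gibbs:conjugacy}.

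For (a)$\Rightarrow$(b): saying that $\Phi$ conserves $\Delta$ is exactly saying $\Phi^*\Delta=\Delta$. Applying Corollary~\ref{cor:gibbs:conjugacy} to the conjugacy $\Phi$ and the Hamiltonian $\Delta$, a probability measure $\pi$ is a Gibbs measure for $\Phi^*\Delta$ if and only if $\Phi\pi$ is a Gibbs measure for $\Delta$; since $\Phi^*\Delta=\Delta$, this reads $\pi\in\xSx{G}_\Delta(\xSp{X})$ iff $\Phi\pi\in\xSx{G}_\Delta(\xSp{X})$, which is~(b). The implication (b)$\Rightarrow$(c) is immediate because $\xSx{G}_\Delta(\xSp{X})$ is non-empty: pick any $\pi$ in it, and then $\Phi\pi$ lies in it too.

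For (c)$\Rightarrow$(a): suppose $\pi,\Phi\pi\in\xSx{G}_\Delta(\xSp{X})$. Corollary~\ref{cor:gibbs:conjugacy}, applied to $\Phi$ and $\Delta$, says $\pi$ is a Gibbs measure for $\Phi^*\Delta$ if and only if $\Phi\pi$ is a Gibbs measure for $\Delta$; the right-hand side holds by hypothesis, so $\pi$ is a Gibbs measure for $\Phi^*\Delta$. But $\pi$ is also a Gibbs measure for $\Delta$, so the uniqueness observation above yields $\Phi^*\Delta=\Delta$, that is, $\Phi$ conserves $\Delta$.

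There is no genuinely hard step here; the only points requiring care are that strong irreducibility is what licenses speaking of ``the'' Hamiltonian of a Gibbs measure (via full support), and that the argument uses \emph{both} directions of Corollary~\ref{cor:gibbs:conjugacy}. This is precisely why reversibility, rather than mere surjectivity, is needed: for a general surjective cellular automaton one only has the one-directional Theorem~\ref{thm:gibbs:factor-map:complete} under the completeness hypothesis, which does not suffice to run (c)$\Rightarrow$(a).
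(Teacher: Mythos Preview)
Your proof is correct and essentially identical to the paper's: both use Corollary~\ref{cor:gibbs:conjugacy} for (a)$\Rightarrow$(b), and for (c)$\Rightarrow$(a) deduce $\pi\in\xSx{G}_\Delta(\xSp{X})\cap\xSx{G}_{\Phi^*\Delta}(\xSp{X})$ via the corollary and then conclude $\Phi^*\Delta=\Delta$ from the definition of a Gibbs measure. You are merely more explicit about why a Gibbs measure on a strongly irreducible shift of finite type determines its Hamiltonian (full support) and about the non-emptiness of $\xSx{G}_\Delta(\xSp{X})$ for (b)$\Rightarrow$(c), both of which the paper leaves implicit.
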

\begin{proof}
	If $\Phi$ conserves $\Delta$, we have, by definition, $\Phi^*\Delta=\Delta$,
	and Corollary~\ref{cor:gibbs:conjugacy} (and Lemma~\ref{lem:onto-one-to-one})
	imply that $\Phi^{-1}\xSx{G}_\Delta(\xSp{X})=\xSx{G}_\Delta(\xSp{X})$.
	Conversely, suppose that $\pi$ is a probability measure such that $\pi,\Phi\pi\in\xSx{G}_\Delta(\xSp{X})$.
	Then, by Corollary~\ref{cor:gibbs:conjugacy},
	$\pi\in\xSx{G}_\Delta(\xSp{X})\cap\xSx{G}_{\Phi^*\Delta}(\xSp{X})$,
	and it follows from the definition of a Gibbs measure that $\Phi^*\Delta=\Delta$.
	That is, $\Phi$ conserves $\Delta$.
\end{proof}

\begin{example}[Q2R cellular automaton]
\label{exp:ising:ca:invariant-non-invariant}
	The Q2R model discussed in the \hyperref[sec:intro]{Introduction}
	is not, strictly speaking, a cellular automaton (with the standard definition),
	as it involves alternate application of two maps that do not commute with the shift.
	Simple tricks can however be used to turn it into a standard cellular automaton
	(see e.g.~\cite{TofMar90}, Section~5.2).
	
	Let $\xSp{X}\IsDef\{\symb{+},\symb{-}\}^{\xZ^2}$ be the space of spin configurations,
	and denote by $\Phi_{\mathsf{e}}$ the mapping $\xSp{X}\to\xSp{X}$
	that updates the even sites.  That is,
	\begin{align}
		(\Phi_{\mathsf{e}} x)(i) &\IsDef \begin{cases}
			\overline{x_i}\qquad{}	& \text{if $i$ an even site and $n_i^{\symb{+}}(x)=n_i^{\symb{-}}(x)$,} \\
			x_i						& \text{otherwise,}
		\end{cases}
	\end{align}
	where the spin-flipping operation is denoted by overline, and $n_i^{\symb{+}}(x)$ (resp., $n_i^{\symb{-}}(x)$)
	represents the number of sites $j$ among the four immediate neighbors of $i$
	such that $x(j)=\symb{+}$ (resp., $x(j)=\symb{-}$).
	Similarly, let $\Phi_{\mathsf{o}}$ denotes the mapping that updates the odd sites.
	The composition $\Phi\IsDef \Phi_{\mathsf{o}}\Phi_{\mathsf{e}}$ commutes
	with the shifts $\sigma^k$, for $k$ in the sub-lattice $(2\xZ)^2$,
	and (after a recoding) could be considered as a cellular automaton.
	
	Let $f$ denote the energy observable defined in Example~\ref{exp:ising}.
	For every $\beta>0$, the Hamiltonian $\Delta_{\beta f}$ is conserved by $\Phi$.
	Therefore, according to Theorem~\ref{thm:conservation-invariance:reversible},
	the set $\xSx{G}_{\Delta_{\beta f}}(\xSp{X})$ of Gibbs measures for $\Delta_{\beta f}$
	is invariant under $\Phi$.
	In fact, in this example, it is easy to show that $\Phi$ preserves
	every individual Gibbs measure in $\xSx{G}_{\Delta_{\beta f}}(\xSp{X})$.
	
	It is natural to ask whether the preservation of individual elements
	of $\xSx{G}_{\Delta_{\beta f}}(\xSp{X})$
	holds in general.  This is however not the case.
	When $\beta$ large enough, it is known that $\xSx{G}_{\Delta_{\beta f}}(\xSp{X})$
	contains two distinct shift-ergodic measures,
	obtained from each other by a spin flip transformation (see Example~\ref{exp:ising-contour}).
	The cellular automaton $\Phi' x\IsDef \overline{\Phi x}$,
	which flips every spin after applying $\Phi$, conserves $\Delta_{\beta f}$
	but does not preserve either of the two distinct shift-ergodic Gibbs measures
	for $\Delta_{\beta f}$.
	\exampleqed
\end{example}

\subsection{Absence of Conservation Laws}
\label{sec:ca:rigidity}
In light of the above connection,
every statement about conservation laws in surjective cellular automata
has an interpretation in terms of invariance of Gibbs measures, and vice versa.
In this section, we see an example of such reinterpretation
that leads to otherwise non-trivial results.
Namely, proving the abscence of conservation laws in
two relatively rich families of surjective and reversible cellular automata,
we obtain strong constraints on the invariant measures
of the cellular automata within each family.
Roughly speaking, strong chaotic behavior is incompatible
with the presence of conservation laws.
In contrast,
any surjective cellular automaton with a non-trivial equicontinuous factor has an infinity of
linearly independent 
conservation laws (see Example~\ref{exp:almost-equicontinuous}).

We say that a dynamical system $(\xSp{X},\Phi)$ is
\emph{strongly transitive}
if for every point $z\in\xSp{X}$,
the set $\bigcup_{i=0}^\infty \Phi^{-i}z$ 
is dense in $\xSp{X}$.
Strong transitivity is stronger than transitivity (!)\ and weaker than minimality.
A dynamical system $\Phi:\xSp{X}\to\xSp{X}$ is \emph{minimal} if it has no
non-trivial closed subsystems, and is \emph{transitive} if
for every pair of non-empty open sets $A,B\subseteq\xSp{X}$, there is an integer $n\geq 0$
such that $A\cap \Phi^{-n}B\neq\varnothing$.
In our setting (i.e., $\xSp{X}$ being compact), minimality is equivalent to the property that
the only closed sets $E\subseteq\xSp{X}$ with $E\subseteq\Phi E$ are $\varnothing$ and $\xSp{X}$,
which is easily seen to imply strong transitivity.
However, note that cellular automata over non-trivial strongly irreducible shifts of finite type
cannot be minimal.  This is because every strongly irreducible shift of finite type
has configurations that are periodic in at least one direction.
(More specifically, for each $k\in\xL\setminus\{0\}$, there is a configuration $x$
such that $\sigma^{pk}x=x$ for some $p>0$.)
Transitivity is often considered as one of the main indicators of chaos (see e.g.~\cite{Kol04,Bla09}).
Every transitive cellular automaton is known to be sensitive to initial conditions
(i.e., uniformly unstable)~\cite{CodMar96,Kur97}.\footnote{%
	In fact, transitive cellular automata are weakly mixing~\cite{Moo05},
	which, in addition to sensitivity, also implies chaos in the sense of Li and Yorke
	(see~\cite{Kol04,Bla09}).
}

\begin{example}[XOR cellular automata]
\label{exp:xor:ca}
	The $d$-dimensional \emph{XOR cellular automaton} with neighborhood $N\subseteq\xZ^d$
	is defined by the map $\Phi:\{\symb{0},\symb{1}\}^{\xZ^d}\to\{\symb{0},\symb{1}\}^{\xZ^d}$,
	where $(\Phi x)(i)\IsDef \sum_{i\in N} x(i) \pmod{2}$.
	To avoid the trivial case, we assume that the neighborhood has at least two elements.
	Examples~\ref{exp:xor:non-gibbs} and~\ref{exp:xor:non-gibbs:contd}
	were about the one-dimensional XOR cellular automaton with neighborhood $\{0,1\}$.
	Figure~\ref{fig:xor:transpose}a depicts a sample run of the one-dimensional model
	with neighborhood $\{-1,1\}$.
	
	The XOR cellular automaton is strongly transitive.
	An argument similar to that in
	Example~\ref{exp:xor:non-gibbs:contd} shows that
	the uniform Bernoulli measure is the only regular Gibbs measure that is
	invariant under an XOR cellular automaton.
	Note, however, that there are many other (non-Gibbs) invariant measures.
	For example, the Dirac measure concentrated at the uniform
	configuration with $\symb{0}$ everywhere is invariant.
	So is the (atomic) measure uniformly distributed over any
	jointly periodic orbit (i.e., a finite orbit of $(\sigma,\Phi)$).
	
	In fact, much more is known about the invariant measures
	of the XOR cellular automata,
	with a strong indication that the uniform Bernoulli measure is the only
	``state of macroscopic equilibrium''.
	For instance, the uniform Bernoulli measure on $\{0,1\}^\xZ$ is known to be
	the only shift-ergodic probability measure that is invariant 
	and of positive entropy for the XOR cellular automaton
	with neighborhood $\{0,1\}$~\cite{HosMaaMar03}.
	Another such result states that the only measures that are strongly mixing for the shift
	and invariant under the XOR cellular automaton with neighborhood $\{-1,1\}$
	are the uniform Bernoulli measure and the Dirac measure
	concentrated at the uniform configuration with $\symb{0}s$ everywhere~\cite{Miy79}.
	(Note that the one-dimensional Gibbs measures are all strongly mixing.)
	Similar results have been obtained for broad classes of cellular automata
	with algebraic structure (e.g.~\cite{Piv05,Sab07,Sob08}).
	See~\cite{Piv09} for a survey.
	\exampleqed
\end{example}

The following theorem is a slight generalization of Theorem~5 in~\cite{ForKarTaa11}.
\begin{theorem}
\label{thm:strongly-transitive-CA:no-cons-law}
	Let $\Phi:\xSp{X}\to\xSp{X}$ be a strongly transitive cellular automaton
	over a 
	shift of finite type $(\xSp{X},\sigma)$.
	Then, $\Phi$ does not conserve any non-trivial Hamiltonian.
\end{theorem}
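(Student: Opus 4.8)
The plan is to argue by contradiction: assume $\Phi$ conserves a Hamiltonian $\Delta$, and show $\Delta$ must be trivial. The driving idea is that conservation of $\Delta$ propagates the \emph{ground-configuration} property backwards along $\Phi$, and strong transitivity then forces \emph{every} configuration to be a ground configuration; since $\Delta$ is antisymmetric on asymptotic pairs, this makes $\Delta\equiv 0$.

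Concretely, let $G\subseteq\xSp{X}$ denote the set of ground configurations for $\Delta$. First I would collect three properties of $G$. (1) $G\neq\varnothing$, by Proposition~\ref{prop:Hamiltonian:ground}. (2) $G$ is shift-invariant, which is immediate from the translation-invariance of $\Delta$ (axiom~(e) of a Hamiltonian). (3) $G$ is closed: if $z_n\to z$ with $z_n\in G$, and $x$ is asymptotic to $z$ with $\diff(x,z)=F$, then for $n$ large $z_n$ agrees with $z$ on a large neighborhood of $F$, so the finite-type gluing property yields configurations $x_n\in\xSp{X}$ with $\diff(x_n,z_n)=F$ and $x_n\to x$; applying the uniform continuity clause~(\ref{item:hamiltonian:continuous}) of $\Delta$ on pairs with $\diff\subseteq F$ gives $\Delta(z,x)=\lim_n\Delta(z_n,x_n)\geq 0$, so $z\in G$.

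The heart of the argument is the inclusion $\Phi^{-1}G\subseteq G$. Suppose $\Phi u=z\in G$, and let $u'$ be asymptotic to $u$. Since $\Phi$ is a sliding block map it carries asymptotic pairs to asymptotic pairs, so $\Phi u'$ is asymptotic to $\Phi u=z$; as $z$ is a ground configuration, $\Delta(\Phi u,\Phi u')\geq 0$, and conservation of $\Delta$ gives $\Delta(u,u')=\Delta(\Phi u,\Phi u')\geq 0$. Since $u'$ was an arbitrary configuration asymptotic to $u$, this means $u\in G$.

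Finally I would pick any $z_0\in G$; induction on $i$ using $\Phi^{-1}G\subseteq G$ shows $\Phi^{-i}z_0\subseteq G$ for all $i\geq 0$, hence $\bigcup_{i\geq 0}\Phi^{-i}z_0\subseteq G$. By strong transitivity this union is dense in $\xSp{X}$, and since $G$ is closed, $G=\xSp{X}$: every configuration is a ground configuration for $\Delta$. For asymptotic $x,y$ this yields both $\Delta(x,y)\geq 0$ (as $x\in G$) and $\Delta(y,x)\geq 0$ (as $y\in G$), and since $\Delta(y,x)=-\Delta(x,y)$ we get $\Delta(x,y)=0$; thus $\Delta$ is trivial. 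I expect the only point needing genuine care is property~(3) — checking that $G$ is closed requires combining the finite-type gluing property with the uniform continuity axiom for Hamiltonians to transplant a perturbation from $z$ onto nearby $z_n$ — whereas the backward-invariance $\Phi^{-1}G\subseteq G$ and the concluding density argument are short.
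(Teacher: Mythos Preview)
Your proof is correct and rests on the same three ingredients as the paper's---existence of a ground configuration (Proposition~\ref{prop:Hamiltonian:ground}), the observation that conservation pulls the ground property back along $\Phi$, and strong transitivity to spread it everywhere---but you package them differently. The paper argues by direct contradiction: assuming $\Delta(u,v)>0$, it uses the uniform continuity of $\Delta$ to find a cylinder $[v]_D$ on which the energy gap persists, then invokes strong transitivity to land a $\Phi$-preimage $x$ of a ground configuration inside $[v]_D$, and finally glues to produce an asymptotic $y$ with $\Delta(y,x)>0$ but $\Delta(\Phi^t y,\Phi^t x)\leq 0$. You instead isolate the statement ``$G$ is closed'' as a standalone lemma and then run the clean topological argument $\overline{\bigcup_i\Phi^{-i}z_0}\subseteq G$. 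The paper's route avoids naming $G$ or proving it closed, at the cost of an ad hoc construction; your route is more conceptual and makes the role of the finite-type hypothesis (needed precisely to transplant perturbations from $z$ to nearby $z_n$, i.e., to prove $G$ closed) more transparent. One small point: your gluing only guarantees $\diff(x_n,z_n)\subseteq F$, not equality, but that is all axiom~(\ref{item:hamiltonian:continuous}) requires.
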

\begin{proof}
	Let $0\in M\subseteq\xL$ be a finite window
	that witnesses the finite type gluing property of~$\xSp{X}$.
	
	Let $\Delta$ be a non-trivial Hamiltonian on $\xSp{X}$,
	and suppose there exist two asymptotic configurations
	$u$ and $v$ such that $\varepsilon\IsDef\Delta(u,v)>0$.
	By the continuity property of $\Delta$, there is a finite set $D\supseteq M(M^{-1}(\diff(u,v)))$
	such that for every two asymptotic configurations $u'\in [u]_D$ and $v'\in [v]_D$
	with $\diff(u',v')=\diff(u,v)$,
	$\Delta(u',v')\geq\varepsilon/2>0$.
	
	Let $z$ be a ground configuration for $\Delta$ (see Proposition~\ref{prop:Hamiltonian:ground}).
	Since $\Phi$ is strongly transitive, there is a configuration $x\in[v]_D$
	and a time $t\geq 0$ such that $\Phi^t x=z$.
	Construct a configuration $y\in\xSp{X}$ that agrees with $u$ on $D$
	and with $x$ outside $\diff(u,v)$.  In particular, $y\in[u]_D$.
	Then, $\Delta(y,x)\geq\varepsilon/2$, whereas $\Delta(\Phi^t y,\Phi^t x)\leq 0$.
	Therefore, $\Delta$ is not conserved by $\Phi$.
\end{proof}

\begin{corollary}
\label{cor:ca:strongly-transitive:rigidity}
	Let $\Phi:\xSp{X}\to\xSp{X}$ be a strongly transitive cellular automaton
	over a strongly irreducible shift of finite type $(\xSp{X},\sigma)$.
	Then, $\Phi$ does not preserve any regular Gibbs measure
	other than the Gibbs measures for the trivial Hamiltonian.
\end{corollary}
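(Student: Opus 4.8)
The plan is to obtain this as an immediate consequence of the two results just established, Corollary~\ref{cor:conservation-invariance:1:non-shift-invariant} and Theorem~\ref{thm:strongly-transitive-CA:no-cons-law}. First I would note that a strongly transitive cellular automaton is in particular surjective: for each $z\in\xSp{X}$ the set $\bigcup_{i\ge 0}\Phi^{-i}z$ is dense, so (if $\xSp{X}$ is a single point the corollary is vacuous, and otherwise) there must be some $i\ge 1$ with $\Phi^{-i}z\neq\varnothing$; hence $\Phi\xSp{X}$ meets every point of $\xSp{X}$, and being closed, $\Phi\xSp{X}=\xSp{X}$. Thus the hypotheses of Corollary~\ref{cor:conservation-invariance:1:non-shift-invariant} and Theorem~\ref{thm:strongly-transitive-CA:no-cons-law} are indeed available here.

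Then I would argue as follows. Suppose $\pi$ is a regular Gibbs measure preserved by $\Phi$, say $\pi\in\xSx{G}_{\Delta_f}(\xSp{X})$ with $f\in SV(\xSp{X})$ and $\Phi\pi=\pi$. Then $\pi$ is a Gibbs measure for $\Delta_f$ whose $\Phi$-image, namely $\pi$ itself, is again a Gibbs measure for $\Delta_f$. Corollary~\ref{cor:conservation-invariance:1:non-shift-invariant} then forces $\Phi$ to conserve $\Delta_f$ --- and this step does not require $\pi$ to be shift-invariant, which is precisely why the non-shift-invariant version of the corollary is the one to invoke. Finally, Theorem~\ref{thm:strongly-transitive-CA:no-cons-law}, applied to the strongly transitive $\Phi$ over the (strongly irreducible, hence finite-type) shift $(\xSp{X},\sigma)$, tells us that the only Hamiltonian $\Phi$ can conserve is the trivial one. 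Therefore $\Delta_f$ is trivial and $\pi$ is a Gibbs measure for the trivial Hamiltonian, which is what is claimed.

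There is really no genuine obstacle: this corollary is a bookkeeping consequence of the preceding machinery, and the single point deserving attention is the one already flagged --- applying Corollary~\ref{cor:conservation-invariance:1:non-shift-invariant} to a fixed point $\pi=\Phi\pi$ of the induced action on $\xSx{P}(\xSp{X})$ that need not lie in $\xSx{P}(\xSp{X},\sigma)$, which is exactly the generality in which that corollary was proved. (If one only cared about shift-invariant $\pi$, Theorem~\ref{thm:conservation-invariance:1} together with Theorem~\ref{thm:strongly-transitive-CA:no-cons-law} would already suffice, via the chain ``$\Phi\pi=\pi\in\xSx{E}_f(\xSp{X},\sigma)$, hence $\Phi$ conserves $f$, hence $\Delta_f$ is trivial''.)
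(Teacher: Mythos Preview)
Your proposal is correct and matches the paper's intended derivation: the corollary is stated in the paper without proof, as an immediate consequence of Theorem~\ref{thm:strongly-transitive-CA:no-cons-law} together with the conservation--invariance correspondence of Section~\ref{sec:ca:invariance}, and your argument spells this out precisely (including the useful remark that Corollary~\ref{cor:conservation-invariance:1:non-shift-invariant} is the right tool when $\pi$ is not assumed shift-invariant).
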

A special case of the above corollary (for the permutive cellular automata and
Bernoulli measures) is also proved in~\cite{BanChaChe11} (Corollary 3.6).
Let us recall that the \emph{shift-invariant} Gibbs measures for the trivial Hamiltonian on $\xSp{X}$
coincide with the measures of maximum entropy for $(\xSp{X},\sigma)$ (Theorem~\ref{thm:equilibrium:Gibbs}).
Therefore, according to Corollary~\ref{cor:ca:strongly-transitive:rigidity},
if $\Phi$ is strongly transitive,
the measures of maximum entropy for $(\xSp{X},\sigma)$ are the only
candidates for Gibbs measures that are invariant under both~$\sigma$ and~$\Phi$.
Since the set of measures with maximum entropy for $(\xSp{X},\sigma)$ is closed and convex,
and is preserved under $\Phi$, it follows that at least one measure with maximum entropy
is invariant under~$\Phi$.
However, this measure does not need to be ergodic for the shift.

Next, we are going to introduce a class of one-dimensional \emph{reversible} cellular automata
with no local conservation law.
The proof will be via reduction to Theorem~\ref{thm:strongly-transitive-CA:no-cons-law}.
Note that reversible cellular automata over non-trivial strongly irreducible shifts of finite type
cannot be strongly transitive:
the inverse of a strongly transitive system is minimal, and as mentioned above, cellular automata
over non-trivial strongly irreducible shifts of finite type cannot be minimal.
\begin{example}[Transpose of XOR]
\label{exp:xor:transpose}
	Figure~\ref{fig:xor:transpose}b depicts a sample space-time diagram of
	the reversible cellular automaton $\Phi$ on
	$(\{\symb{0},\symb{1}\}\times\{\symb{0},\symb{1}\})^\xZ$
	with neighbourhood $\{0,1\}$ and local rule $((a,b),(c,d))\mapsto (b,a+d)$,
	where the addition is modulo~$2$.
	Observe that rotating a space-time diagram of $\Phi$ by $90$ degrees,
	we obtain what is essentially a space-time diagram of
	the XOR cellular automaton with neighbourhood $\{-1,1\}$
	(see Figure~\ref{fig:xor:transpose}a and Example~\ref{exp:xor:ca}).
	
	As in Example~3 of~\cite{ForKarTaa11}, it is possible to show that $\Phi$ has
	no non-trivial finite-range conservation law.
	Below, we shall present an alternative proof
	(using its connection with the XOR cellular automaton) that covers
	a large class of similar reversible cellular automata.
	\exampleqed
\end{example}
\begin{figure}
	\begin{center}
		\begin{tabular}{ccc}
			\begin{minipage}[c]{0.35\textwidth}
				\centering
				\includegraphics[width=0.95\textwidth]{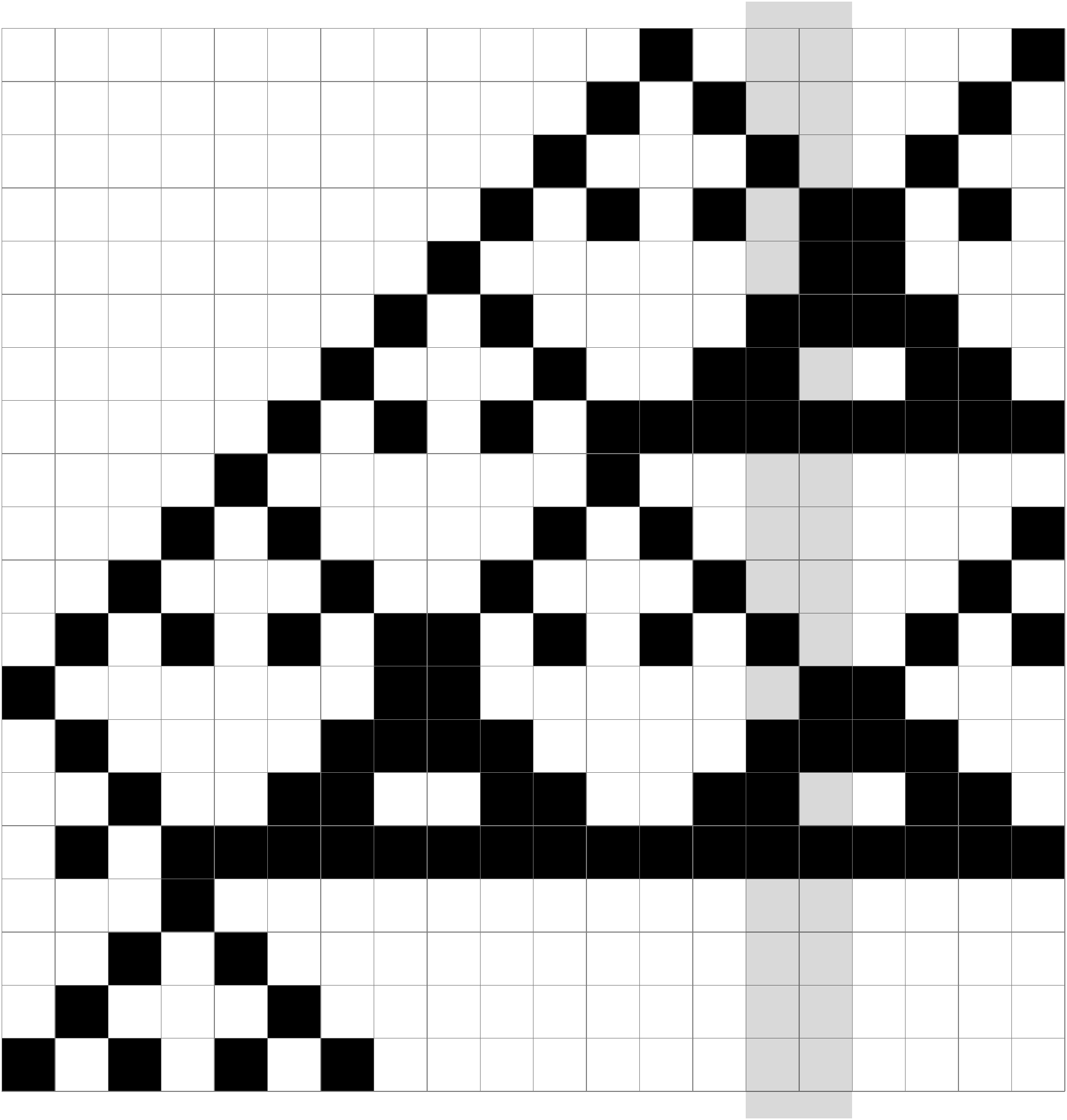}
			\end{minipage} & &
			\begin{minipage}[c]{0.35\textwidth}
				\centering
				\includegraphics[width=0.95\textwidth]{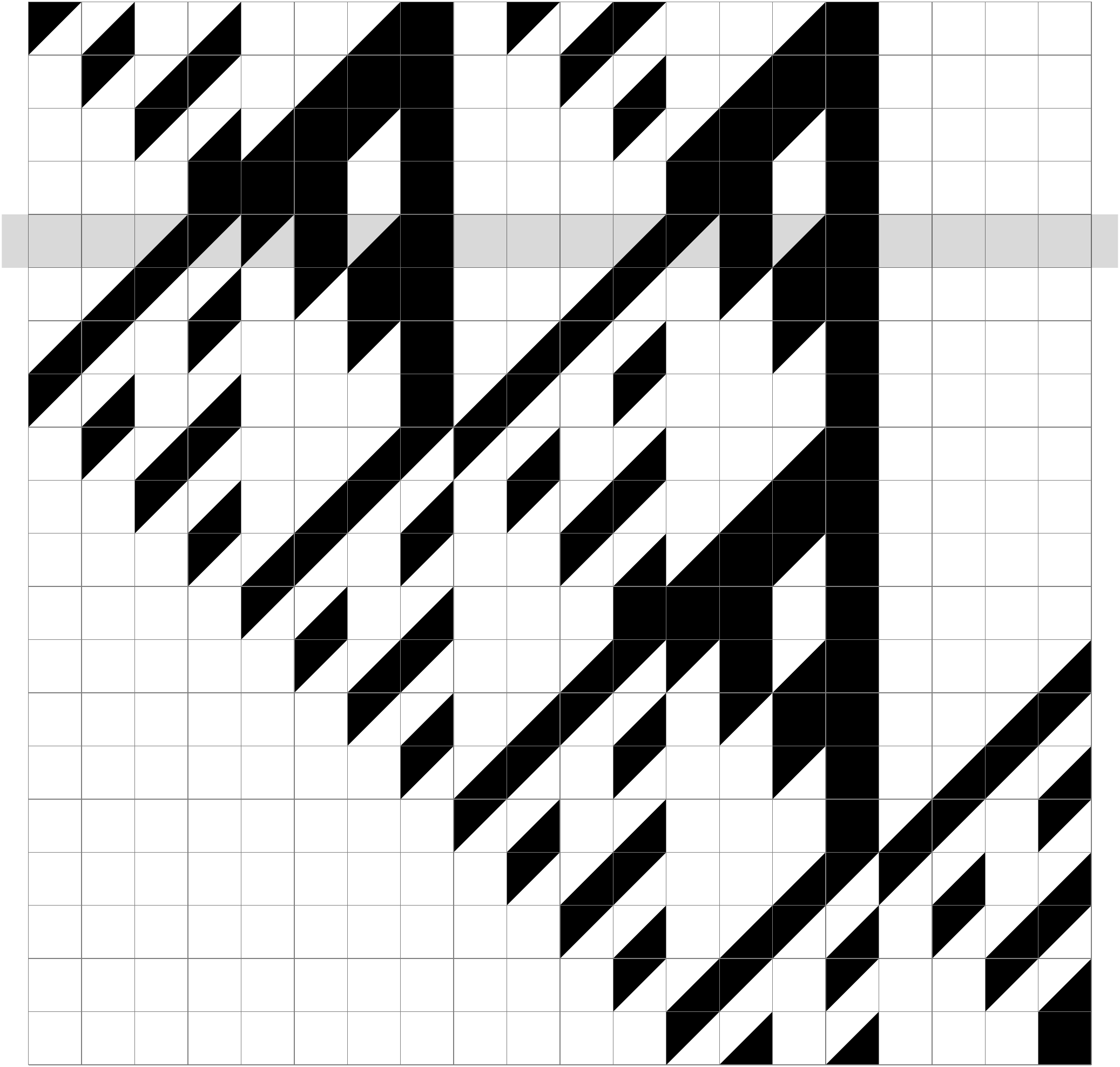}
			\end{minipage} \medskip\\
			(a) && (b)
		\end{tabular}
	\end{center}
	\caption{%
		Sample runs of (a) the one-dimensional XOR cellular automaton with
		neighborhood $\{-1,1\}$ (see Example~\ref{exp:xor:ca})
		and (b) its transpose (Example~\ref{exp:xor:transpose}).
		Time goes downward.
	}
	\label{fig:xor:transpose}
\end{figure}

We shall say that two surjective one-dimensional cellular automata are
\emph{transpose} of each other
if the bi-infinite space-time diagrams of each is obtained (up to a conjugacy)
from the bi-infinite space-time diagrams of the other by swapping the role of space and time.
To be more specific,
let $\Phi:\xSp{X}\to\xSp{X}$ be a surjective cellular automaton on a one-dimensional
mixing shift space of finite type $\xSp{X}\subseteq S^\xZ$.
Define the continuous map $\Theta:S^{\xZ\times\xZ}\to S^\xZ$
where $(\Theta z)(i)\IsDef z(i,0)$, and let $\tilde{\xSp{X}}$
be the two-dimensional shift space formed by all configurations $z\in S^{\xZ\times\xZ}$
such that
\begin{align}
	\ldots \,\xrightarrow[]{\Phi}\, \Theta\sigma^{(0,-1)}z \,\xrightarrow[]{\Phi}\,
		\Theta z \,\xrightarrow[]{\Phi}\, \Theta\sigma^{(0,1)} z \,\xrightarrow[]{\Phi}\, \ldots
\end{align}
is a bi-infinite orbit of $\Phi$, that is
$\Theta\sigma^{(0,k+1)} z = \Phi\Theta\sigma^{(0,k)} z$ for each $k\in\xZ$.
Set $\xVer\IsDef(0,1)$ and $\xHor\IsDef(1,0)$.
The dynamical system $(\tilde{\xSp{X}},\sigma^\xVer,\sigma^\xHor)$
(together with the map $\Theta$)
is the \emph{natural extension} of $(\xSp{X},\Phi,\sigma)$. 
Now, let $\Psi:\xSp{Y}\to\xSp{Y}$ be another surjective cellular automaton
on a one-dimensional mixing shift space of finite type $\xSp{Y}\subseteq T^\xZ$.
We say $\Psi$ is a \emph{transpose} of $\Phi$
if its natural extension is conjugate to $(\tilde{\xSp{X}},\sigma^\xHor,\sigma^\xVer)$.
The transpose of $\Phi$ (if it exists) is unique only up to conjugacy.
When there is no danger of confusion, we denote any representative
of the transpose conjugacy class by~$\Phi^\intercal$.

\begin{proposition}
\label{prop:transpose:mixing}
	A surjective cellular automaton on a one-dimensional mixing shift of finite type is mixing
	provided it has a transpose
	(acting on a mixing shift of finite type).
\end{proposition}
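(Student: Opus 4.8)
The plan is to pass to the two–dimensional space–time subshifts and to exploit the fact that the transpose merely exchanges the two coordinate directions. Recall that $(\xSp{X},\Phi)$ is a topological factor of its natural extension $(\tilde{\xSp{X}},\sigma^\xVer)$ via the row–reading map $\Theta$; this map is onto because $\Phi$ is surjective, so every configuration lies on a bi‑infinite $\Phi$–orbit. Since topological mixing passes to factors, it suffices to prove that $(\tilde{\xSp{X}},\sigma^\xVer)$ is topologically mixing. Let $\Psi\colon\xSp{Y}\to\xSp{Y}$ be the given transpose, a surjective cellular automaton on a one–dimensional mixing shift of finite type. By the definition of the transpose, the natural extension $(\tilde{\xSp{Y}},\sigma^\xVer,\sigma^\xHor)$ of $(\xSp{Y},\Psi,\sigma)$ is conjugate, generator by generator, to $(\tilde{\xSp{X}},\sigma^\xHor,\sigma^\xVer)$; in particular $(\tilde{\xSp{X}},\sigma^\xVer)\cong(\tilde{\xSp{Y}},\sigma^\xHor)$. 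Thus the whole statement reduces to the following claim: \emph{for every surjective cellular automaton $\Psi$ on a one–dimensional mixing shift of finite type $\xSp{Y}$, the horizontal shift $\sigma^\xHor$ on the space–time subshift $\tilde{\xSp{Y}}$ is topologically mixing}, where the rows of $\tilde{\xSp{Y}}$ are the configurations of $\xSp{Y}$ and consecutive rows are related by $\Psi$.

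To prove the claim, take two non‑empty cylinders of $\tilde{\xSp{Y}}$; after enlarging, they have the form $[P]_W$ and $[Q]_W$ for a rectangle $W\IsDef[-m,m]\times[-h,h]\subseteq\xZ^2$ and patterns $P,Q$ that extend to configurations $\hat{z}_P,\hat{z}_Q\in\tilde{\xSp{Y}}$. We must exhibit, for all large $n$, a $z\in\tilde{\xSp{Y}}$ with $z|_W=P$ and $z|_{W+(n,0)}=Q$. Let $r$ bound the neighbourhood of $\Psi$ (so $\Psi$ propagates information at speed at most $r$ per step), and let the neighbourhood $M$, of radius at most $g$, witness the strong irreducibility of the mixing shift of finite type $\xSp{Y}$. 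Let $\alpha$ be the bottom row of $\hat{z}_P$ and $\beta$ the bottom row of $\sigma^{(-n,0)}\hat{z}_Q$, both configurations in $\xSp{Y}$. For $n>2m+4hr+g$ the intervals $J_1\IsDef[-m-2hr,\,m+2hr]$ and $J_2\IsDef[n-m-2hr,\,n+m+2hr]$ satisfy $M(J_1)\cap M(J_2)=\varnothing$, so strong irreducibility gives $\rho_{-h}\in\xSp{Y}$ agreeing with $\alpha$ on $J_1$ and with $\beta$ on $J_2$. Set $\rho_k\IsDef\Psi^{k+h}\rho_{-h}$ for $k\geq-h$ and extend $(\rho_k)$ to a bi‑infinite $\Psi$–orbit for $k<-h$ using surjectivity of $\Psi$ together with compactness; let $z\in\tilde{\xSp{Y}}$ be the resulting space–time configuration. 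A light‑cone estimate then shows that agreement of $\rho_{-h}$ with $\alpha$ on $J_1$ forces $\rho_k$ to agree with the $k$‑th row of $\hat{z}_P$ on $[-m,m]$ for every $k\in[-h,h]$, whence $z|_W=P$; since $\sigma^{(-n,0)}\hat{z}_Q\in\tilde{\xSp{Y}}$, the identical argument with $J_2$ gives $z|_{W+(n,0)}=Q$. This proves the claim, and applying it to $\Psi=\Phi^\intercal$ and combining with the reduction above proves the proposition.

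The only real content is the claim; the main obstacle is to reconcile the one–dimensional gluing supplied by strong irreducibility of $\xSp{Y}$, which acts on a single row, with the rigid deterministic vertical structure imposed by $\Psi$. This is resolved by the finite propagation speed of $\Psi$: prescribing one sufficiently long bottom row pins down the whole finite window $W$ (and $W+(n,0)$) above it, and surjectivity of $\Psi$ disposes of the rows below. Everything else — reduction to the space–time picture, the factor argument for $(\xSp{X},\Phi)$, and extracting the conjugacy $(\tilde{\xSp{X}},\sigma^\xVer)\cong(\tilde{\xSp{Y}},\sigma^\xHor)$ from the transpose hypothesis — is routine bookkeeping with the definitions.
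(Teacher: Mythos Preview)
Your proof is correct and follows the same skeleton as the paper's: pass to the two–dimensional space–time shift, use the transpose conjugacy to swap generators, and read off mixing of $(\xSp{X},\Phi)$ from the horizontal shift on $\tilde{\xSp{Y}}$. The paper compresses all of this into the single sentence ``A dynamical system is mixing if and only if its natural extension is mixing,'' whereas you unpack the reduction and then supply the one nontrivial ingredient that the one–liner hides: that $(\tilde{\xSp{Y}},\sigma^\xHor)$ is mixing whenever $(\xSp{Y},\sigma)$ is a mixing shift of finite type and $\Psi$ is a surjective cellular automaton. Note that $(\tilde{\xSp{Y}},\sigma^\xHor)$ is \emph{not} the natural extension of $(\xSp{Y},\sigma)$ (the latter is already invertible), so this step does not follow from the natural–extension principle alone; your light–cone plus strong–irreducibility argument is exactly what is needed to justify it. In short: same route, but you have written out the part the paper leaves to the reader.
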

\begin{proof}
	A dynamical system is mixing if and only if its natural extension is mixing.
\end{proof}

Obviously, not every cellular automaton has a transpose.
A class of cellular automata that do have transposes
is the class of those that are positively expansive.
A dynamical system $(\xSp{X},\Phi)$ is \emph{positively expansive}
if there exists a real number $\varepsilon>0$ such that
for every two distinct points $x,y\in \xSp{X}$, there is a time $t\geq 0$
such that $\Phi^t x$ and $\Phi^t y$ have distance at least $\varepsilon$.
If $(\xSp{X},\sigma)$ is a mixing shift of finite type and $\Phi:\xSp{X}\to\xSp{X}$
is a positively expansive cellular automaton, then $\Phi$ is surjective, and it is known that a transpose of
$\Phi$ exists and is a reversible cellular automaton on a mixing shift of finite type
(see~\cite{Kur03b}, Section~5.5\footnote{%
	The proof in~\cite{Kur03b} is presented for the case that $(\xSp{X},\sigma)$ is a full shift,
	but the same proof, with slight adaptation, works for any arbitrary mixing shift of finite type.
	To prove the openness of $\Phi$, see~\cite{Nas83}, Theorems~6.3 and~6.4,
	and note that $\Phi$ is both left- and right-closing.
	
}).
If, furthermore, $(\xSp{X},\sigma)$ is a full shift, then the transpose of $\Phi$
also acts on a full shift (see~\cite{Nas95}, Theorem~3.12).

\begin{proposition}
	Every positively expansive cellular automaton on a one-dimensional
	mixing shift of finite type is strongly transitive.
\end{proposition}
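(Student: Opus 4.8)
The plan is to reduce the statement to the (easy) fact that one-sided mixing subshifts of finite type are strongly transitive, using the structure theory of positively expansive cellular automata to pass from $\Phi$ to such a one-sided shift.

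First I would recall that, since $\Phi:\xSp{X}\to\xSp{X}$ is positively expansive on a one-dimensional mixing shift of finite type, $\Phi$ is surjective (as noted in the paragraph preceding this proposition), and moreover the dynamical system $(\xSp{X},\Phi)$ is conjugate to a one-sided mixing subshift of finite type $(\Sigma,s)$, where $s$ denotes the one-sided shift. This is the structure theory of positively expansive cellular automata (see \cite{Kur03b}, Section~5.5, and \cite{Nas95}); concretely, one checks that for a suitable window $[-N,N]$ the trace map $x\mapsto\bigl(\xRest{(\Phi^t x)}{[-N,N]}\bigr)_{t\ge 0}$ is a topological conjugacy of $(\xSp{X},\Phi)$ onto a one-sided subshift, that this subshift is of finite type because $\Phi$ has bounded radius, and that it inherits mixing from $\xSp{X}$. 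This is the substantial input, and also the only place where positive expansiveness (rather than mere surjectivity or openness) is used: a generic open surjective cellular automaton, for instance the shift map itself, need not be strongly transitive (indeed, being reversible, it cannot be, by the remark preceding Example~\ref{exp:xor:transpose}).

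Next I would observe that strong transitivity is a conjugacy invariant, since a conjugacy intertwines $\Phi^{-i}$ with $s^{-i}$ and carries dense sets to dense sets; hence it suffices to prove that $(\Sigma,s)$ is strongly transitive. Fix $w\in\Sigma$ and a nonempty basic open set $[v]\cap\Sigma$, where $v=v_0 v_1\cdots v_{L-1}$ is an allowed word of $\Sigma$. Present $\Sigma$ by a primitive graph: $v$ is a finite path ending at some vertex $Q$, and $w$ is a one-sided infinite path starting at some vertex $P$. By primitivity there is $M$ such that for every $k\ge M$ there is a path $g$ of length $k$ from $Q$ to $P$. The concatenation $v g w$ is then a one-sided infinite path, i.e.\ an element of $\Sigma$ lying in $[v]$, with $s^{L+k}(v g w)=w$. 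Thus $[v]\cap\Sigma$ meets $s^{-(L+k)}(w)\subseteq\bigcup_{i\ge 0} s^{-i}(w)$. As $[v]$ was an arbitrary basic open set, $\bigcup_{i\ge 0} s^{-i}(w)$ is dense in $\Sigma$, and as $w$ was arbitrary, $(\Sigma,s)$ is strongly transitive. Transporting back through the conjugacy proves the proposition.

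The hard part is the first step: identifying $(\xSp{X},\Phi)$ with a one-sided mixing SFT. Everything after that — conjugacy invariance of strong transitivity, and the connectivity-plus-counting argument in the graph — is routine. If one prefers not to invoke the structure theorem as a black box, the self-contained route is exactly the one indicated above: verify injectivity of the trace map from positive expansiveness, the SFT property from the finite radius of $\Phi$, and the transfer of mixing from $\xSp{X}$ to $\Sigma$.
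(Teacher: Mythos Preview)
Your proof is correct and matches the paper's own approach. The paper actually gives two short arguments: the first cites a general result of Kami\'nski (that a transitive, open, positively expansive self-map of a compact metric space is strongly transitive) together with the facts that such $\Phi$ is mixing and open; the second is exactly your route via the conjugacy of $(\xSp{X},\Phi)$ with a one-sided mixing shift of finite type, with the paper simply asserting that the latter is strongly transitive while you spell out the graph argument.
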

\begin{proof}
	Any continuous map $\Phi:\xSp{X}\to\xSp{X}$ on a compact metric space
	that is transitive, open, and positively expansive is strongly transitive~\cite{Kam02}. 
	Every positively expansive cellular automaton on a mixing shift of finite type
	is itself mixing (see the above paragraph) and open (see~\cite{Kur03b}, Theorem~5.45).
	
	Alternatively, every positively expansive cellular automaton on a mixing shift of finite type
	is conjugate to a mixing \emph{one-sided} shift of finite type (see~\cite{Kur03b}, Theorem~5.49),
	and hence strongly transitive.
\end{proof}

The local conservation laws of a cellular automaton and its transpose
are in one-to-one correspondence.
\begin{theorem}
\label{thm:transpose:cons-law}
	Let $\Phi:\xSp{X}\to\xSp{X}$ and $\Phi^\intercal:\xSp{X}^\intercal\to\xSp{X}^\intercal$ be
	surjective cellular automata over one-dimensional mixing shifts of finite type
	$\xSp{X}$ and $\xSp{X}^\intercal$,
	and suppose that $\Phi$ and $\Phi^\intercal$ are transpose of each other.
	There is a one-to-one correspondence (up to local physical equivalence) between
	the observables $f\in K(\xSp{X})$
	that are locally conserved by $\Phi$ and the observables
	$f^\intercal\in K(\xSp{X}^\intercal)$ that
	are locally conserved by $\Phi^\intercal$.
	Moreover, $f$ is locally physically equivalent to $0$ if and only if $f^\intercal$ is so.
\end{theorem}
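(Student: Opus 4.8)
The plan is to exploit the symmetry between space and time encoded in the natural extension $(\tilde{\xSp{X}},\sigma^\xVer,\sigma^\xHor)$, which carries commuting actions of two copies of $\xZ$. First I would set up the dictionary between objects living on $\xSp{X}$ (with the action $\Phi$ playing the role of ``time'' and $\sigma$ the role of ``space'') and objects living on $\tilde{\xSp{X}}$. A local observable $f\in K(\xSp{X})$ pulls back to a local observable $\tilde{f}\IsDef f\xO\Theta\in K(\tilde{\xSp{X}})$ depending only on coordinates in a bounded window of the row $\xZ\times\{0\}$. The key observation is that the continuity equation~(\ref{eq:cons-law:local}) for $\Phi$, namely $f\xO\Phi-f = \sum_{i\in D_0}(h_i\xO\sigma^i - h_i)$ with $D_0=\{1\}$ (since $\xL=\xZ$), translates on $\tilde{\xSp{X}}$ into the statement that the local observable $\tilde{f}\xO\sigma^\xVer - \tilde{f}$ is a coboundary for the horizontal shift: it equals $\tilde{h}\xO\sigma^\xHor - \tilde{h}$ for a suitable local $\tilde{h}$. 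Symmetrically, a current/density pair $(f^\intercal,h^\intercal)$ for $\Phi^\intercal$ on $\xSp{X}^\intercal$ pulls back to the same kind of relation on the conjugate natural extension $(\tilde{\xSp{X}},\sigma^\xHor,\sigma^\xVer)$, i.e.\ with the roles of $\xVer$ and $\xHor$ swapped.

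The heart of the argument is therefore a purely two-dimensional statement: on the shift of finite type $\tilde{\xSp{X}}\subseteq S^{\xZ^2}$ (which is mixing, being the natural extension of a surjective CA over a mixing SFT), a local observable $\alpha$ that is \emph{simultaneously} a $\sigma^\xVer$-coboundary \emph{modulo} a $\sigma^\xHor$-coboundary is, after writing it appropriately, symmetric in the two directions. Concretely: given local $f$ locally conserved by $\Phi$, set $\alpha\IsDef f\xO\Phi - f=\tilde{\alpha}\xO\Theta^{-1}$ on the diagonal of $\tilde{\xSp{X}}$; then $\tilde\alpha = \tilde{f}\xO\sigma^\xVer-\tilde f = \tilde h\xO\sigma^\xHor - \tilde h$. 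This exhibits $\tilde\alpha$ as lying in $K(\tilde{\xSp{X}},\sigma^\xVer)\cap K(\tilde{\xSp{X}},\sigma^\xHor)$. Now I would invoke (the two-dimensional analogue of) the telescoping construction from Proposition~\ref{prop:Hamiltonian:observable:local}, applied in the $\xHor$-direction and using the mixing/gluing property of $\tilde{\xSp{X}}$, to recover from the equality $\tilde\alpha = \tilde h\xO\sigma^\xHor - \tilde h$ a \emph{new} local observable $\tilde g$, depending only on a bounded window of the column $\{0\}\times\xZ$, such that $\tilde g\xO\sigma^\xHor - \tilde g = \tilde\alpha$ as well, and such that $\tilde g$ is a coboundary in the $\xVer$-direction. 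Transporting $\tilde g$ through the conjugacy $(\tilde{\xSp{X}},\sigma^\xHor,\sigma^\xVer)\cong$ natural extension of $\Phi^\intercal$ and then restricting to the diagonal yields the observable $f^\intercal\in K(\xSp{X}^\intercal)$ with $f^\intercal\xO\Phi^\intercal - f^\intercal = \tilde g\xO\sigma^\xHor-\tilde g$ read in the transpose coordinates, i.e.\ $f^\intercal$ is locally conserved by $\Phi^\intercal$. By symmetry of the whole construction, applying it again returns (up to local physical equivalence) the original $f$, so the correspondence $f\mapsto f^\intercal$ is a bijection modulo $K(\xSp{X},\sigma)$-coboundaries.

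For the ``moreover'' clause: $f$ is locally physically equivalent to $0$ iff $\alpha = f\xO\Phi - f$ is itself a $\sigma$-coboundary (plus a constant), iff $\tilde\alpha\in K(\tilde{\xSp{X}},\sigma^\xHor)$ and also $\tilde\alpha\in K(\tilde{\xSp{X}},\sigma^\xVer)$ trivially, which in terms of the constructed $\tilde g$ says $\tilde g$ is a $\sigma^\xVer$-coboundary up to a constant, i.e.\ $f^\intercal$ is locally physically equivalent to $0$. Tracking constants carefully (the spatial average of $f$ over a period equals the temporal average, by the standard cocycle identity for $\alpha$) pins down the additive constant $c$ and closes this part.

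The main obstacle I anticipate is the bookkeeping in the telescoping step over $\tilde{\xSp{X}}$: one must verify that the representation $\tilde\alpha = \tilde h\xO\sigma^\xHor-\tilde h$ with $\tilde h$ a bounded-window observable can be re-expressed with a current $\tilde g$ whose window is bounded \emph{in the column direction only}, and that this $\tilde g$ is still a $\sigma^\xVer$-coboundary --- the two cohomological constraints (horizontal and vertical) interact, and one needs the finite-type gluing property of $\tilde{\xSp{X}}$ (equivalently, the left- and right-closing property of $\Phi$, cf.\ the footnote after Proposition~\ref{prop:transpose:mixing}) to do the surgery. I would handle this by the same ``fix a configuration asymptotic to a ground/uniform pattern and sum the one-step differences along a lexicographic sweep'' device as in Proposition~\ref{prop:Hamiltonian:observable:local}, now sweeping along one lattice direction at a time; the finite-type property guarantees all but finitely many terms vanish and that the resulting observable is local with the correct window.
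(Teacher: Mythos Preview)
Your overall strategy --- lift to the natural extension $(\tilde{\xSp{X}},\sigma^\xVer,\sigma^\xHor)$ and exploit the symmetry of the continuity equation under swapping the two shift directions --- is the same as the paper's. The gap is in your ``main obstacle'' paragraph, and it is a real gap: the telescoping/surgery you propose is both unnecessary and unjustified in this generality. Proposition~\ref{prop:Hamiltonian:observable:local} is stated only for full shifts (and the paper explicitly flags its extension to SFTs as open), and $\tilde{\xSp{X}}$ is typically not a full shift; nor is it clear that your claimed equivalence between the finite-type gluing property of $\tilde{\xSp{X}}$ and left/right-closingness of $\Phi$ holds or helps. The paper sidesteps all of this with a much simpler observation: by the very definition of the natural extension of $(\xSp{X}^\intercal,\Phi^\intercal)$, a single column of $z\in\tilde{\xSp{X}}$ (namely $\Theta^\intercal\sigma^{-k\xHor}z$) determines all columns to its right, hence any local observable on $\tilde{\xSp{X}}$ --- in particular $f\xO\Theta$ and $g\xO\Theta$ --- becomes, after composing with a horizontal shift $\sigma^{k\xHor}$, a local function of $\Theta^\intercal z$, i.e.\ equals $g^\intercal\xO\Theta^\intercal$ and $f^\intercal\xO\Theta^\intercal$ for suitable $g^\intercal,f^\intercal\in K(\xSp{X}^\intercal)$. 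No surgery, no cohomological rewriting, just a shift. You should replace your telescoping step with this.

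Your ``moreover'' argument also has a flaw. You write that $f$ is locally physically equivalent to $0$ iff $\alpha=f\xO\Phi-f$ is a $\sigma$-coboundary (plus a constant); but $\alpha$ is \emph{always} a $\sigma$-coboundary --- that is precisely the local conservation law --- so this characterization is vacuous. The paper's argument for this direction is genuinely different: it assumes $f\perp f^\intercal$ with $f^\intercal$ locally physically equivalent to $0$, writes $f^\intercal=h^\intercal\xO\sigma-h^\intercal+c$, applies the same shift trick once more (now in the vertical direction) to turn $h^\intercal\xO\Theta^\intercal$ into some $h\xO\Theta$ after composing with $\sigma^{l\xVer}$, and arrives at the identity $(f\xO\Phi^l-h\xO\sigma+h)\xO\Phi=f\xO\Phi^l-h\xO\sigma+h$. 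The point you are missing is that one now needs $(\xSp{X},\Phi)$ to be mixing (which follows from Proposition~\ref{prop:transpose:mixing}) to conclude that this $\Phi$-invariant local observable is constant, whence $f\xO\Phi^l$ --- and therefore $f$ --- is locally physically equivalent to $0$.
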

\begin{proof}
	Recall that an observable $f\in K(\xSp{X})$ is locally conserved by $\Phi$ if and only if
	it satisfies the continuity equation
	\begin{align}
		f\xO\Phi &= f + g\xO\sigma - g
	\end{align}
	for some observable $g\in K(\xSp{X})$, where the terms $g\xO\sigma$ and $g$
	are interpreted, respectively, as the flow pouring into a site from its right neighbour
	and the flow leaving that site towards its left neighbour.
	This equation may alternatively be written as
	\begin{align}
		g\xO\sigma &= g + f\xO\Phi - f \;,
	\end{align}
	which can be interpreted as the local conservation of the observable $g$
	when the role of $\Phi$ and $\sigma$ are exchanged.
	
	To specify the correspondence between local conservation laws of
	$\Phi$ and $\Phi^\intercal$ more precisely,
	let $\tilde{\xSp{X}}$ be the shift space of the space-time diagrams of $\Phi$, so that
	$(\tilde{\xSp{X}},\sigma^\xVer,\sigma^\xHor)$ is the natural extension
	of $(\xSp{X},\Phi,\sigma)$, and
	$(\tilde{\xSp{X}},\sigma^\xHor,\sigma^\xVer)$ is the natural extension
	of $(\xSp{X}^\intercal,\Phi^\intercal,\sigma)$,
	and let $\Theta:\tilde{\xSp{X}}\to\xSp{X}$ and
	$\Theta^\intercal:\tilde{\xSp{X}}\to\xSp{X}^\intercal$
	be, respectively, the corresponding factor maps, extracting (up to a conjugacy)
	the $0$th row and the $0$th column of $\tilde{\xSp{X}}$.
	
	Let us use the following notation.
	Suppose that local observables $f,g\in K(\xSp{X})$ and
	$f^\intercal,g^\intercal\in K(\xSp{X}^\intercal)$
	are such that $f\xO\Theta = g^\intercal\xO\Theta^\intercal$
	and $f^\intercal\xO\Theta^\intercal=g\xO\Theta$,
	and setting
	$\tilde{f}_\xVer\IsDef f\xO\Theta = g^\intercal\xO\Theta^\intercal$ and
	$\tilde{f}_\xHor\IsDef f^\intercal\xO\Theta^\intercal=g\xO\Theta$, it holds
	\begin{align}
		\tilde{f}_\xVer\xO\sigma^\xVer - \tilde{f}_\xVer &=
			\tilde{f}_\xHor\xO\sigma^\xHor - \tilde{f}_\xHor \;.
	\end{align}
	Then, we write $f_1\perp f^\intercal_1$ for any two local observables
	$f_1\in K(\xSp{X})$ and $f^\intercal_1\in K(\xSp{X})$
	that are locally physically equivalent to $f$ and $f^\intercal$, respectively.
	
	We verify that
	\begin{enumerate}[i)]
		\item a local observable $f\in K(\xSp{X})$ is locally conserved by $\Phi$
			if and only if $f\perp f^\intercal$ for some local observable
			$f^\intercal\in K(\xSp{X}^\intercal)$,
		\item the relation $\perp$ is linear, and
		\item $f\perp 0$ if and only if $f$ is locally physically equivalent to $0$.
	\end{enumerate}
	Note that these three statements (along with the similar statements obtained by
	swapping~$f$ and~$f^\intercal$) would imply that $\perp$
	is a one-to-one correspondence with the desired properties.
	
	To prove the first statement, suppose that $f\perp f^\intercal$.  Then,
	\begin{align}
		\tilde{f}_\xVer\xO\sigma^\xVer - \tilde{f}_\xVer &=
			\tilde{f}_\xHor\xO\sigma^\xHor - \tilde{f}_\xHor \;,
	\end{align}
	where $\tilde{f}_\xVer = f\xO\Theta$ and $\tilde{f}_\xHor=g\xO\Theta$ for some $g\in K(\xSp{X})$.
	Rewriting this equation as
	\begin{align}
		(f\xO\Phi - f)\xO\Theta &= (g\xO\sigma - g)\xO\Theta \;,
	\end{align}
	we obtain, using Lemma~\ref{lem:onto-one-to-one} and the surjectivity of $\Theta$, that
	$f$ is locally conserved by $\Phi$.
	Conversely, suppose that $\Phi$ locally conserves $f$,
	and let $g\in K(\xSp{X})$ be such that $f\xO\Phi - f=g\xO\sigma - g$.
	Therefore,
	\begin{align}
		f\xO\Theta\xO\sigma^\xVer - f\xO\Theta &= g\xO\Theta\xO\sigma^\xHor - g\xO\Theta \;.
	\end{align}
	Since $f\xO\Theta$ and $g\xO\Theta$ are local observables,
	there exists a finite region $D\subseteq\xZ\times\xZ$ such that
	$f\xO\Theta,g\xO\Theta\in K_D(\tilde{\xSp{X}})$.
	By the definition of natural extension, there is an integer $k>0$ such that
	$\xRest{z}{D}$ is uniquely and continuously determined by $\Theta^\intercal\sigma^{-k\xHor}z$
	(i.e., column $-k$ of the space-time shift of $\Phi$).
	Hence, there exist observables $f^\intercal,g^\intercal\in K(\xSp{X}^\intercal)$ such that
	$f\xO\Theta\xO\sigma^{k\xHor}=g^\intercal\xO\Theta^\intercal$ and
	$g\xO\Theta\xO\sigma^{k\xVer}=f^\intercal\xO\Theta^\intercal$.
	Now, setting $\tilde{f}_\xVer\IsDef f\xO\sigma^k\xO\Theta=g^\intercal\xO\Theta^\intercal$
	and $\tilde{f}_\xHor\IsDef g\xO\sigma^k\xO\Theta=f^\intercal\xO\Theta^\intercal$, we can write
	\begin{align}
		\tilde{f}_\xVer\xO\sigma^\xVer - \tilde{f}_\xVer &=
			\tilde{f}_\xHor\xO\sigma^\xHor - \tilde{f}_\xHor \;,
	\end{align}
	which means $f\xO\sigma^k \perp f^\intercal$.
	Finally, note that $f$ and $f\xO\sigma^k$ are locally physically equivalent.
	
	The linearity of $\perp$ and the fact that $0\perp 0$ are clear.
	It remains to show that if $f_1\in K(\xSp{X})$ is a local observable such that $f_1\perp 0$,
	then $f_1$ is locally physically equivalent to $0$.
	Suppose that $f_1\perp 0$.  Then, there is an observable $f\in K(\xSp{X})$
	locally physically equivalen to $f_1$, and an observable $f^\intercal\in K(\xSp{X}^\intercal)$
	locally physically equivalent to $0$ such that
	\begin{align}
		f\xO\Theta\xO\sigma^\xVer - f\xO\Theta &=
			f^\intercal\xO\Theta^\intercal\xO\sigma^\xHor - f^\intercal\xO\Theta^\intercal \;.
	\end{align}
	Since $f^\intercal$ is locally physically equivalent to $0$,
	it has the form $f^\intercal=h^\intercal\xO\sigma - h^\intercal + c$
	for some observable $h^\intercal\in K(\xSp{X}^\intercal)$ and some constant $c\in\xR$.
	Therefore,
	\begin{align}
	\label{eq:thm:transpose:1}
		f\xO\Theta\xO\sigma^\xVer - f\xO\Theta &=
			h^\intercal\xO\Theta^\intercal\xO\sigma^\xVer\xO\sigma^\xHor -
				h^\intercal\xO\Theta^\intercal\xO\sigma^\xVer -
				h^\intercal\xO\Theta^\intercal\xO\sigma^\xHor +
				h^\intercal\xO\Theta^\intercal \;.
	\end{align}
	Since $h^\intercal$ is a local observable, we can find, as before,
	an integer $l>0$ and a local observable $h\in K(\xSp{X})$
	such that $h^\intercal\xO\sigma^l\xO\Theta^\intercal = h\xO\Theta$.
	Therefore, composing both sides of~(\ref{eq:thm:transpose:1}) with $\sigma^{l\xVer}$ leads to
	\begin{align}
		f\xO\Phi^l\xO\Theta\xO\sigma^\xVer - f\xO\Phi^l\xO\Theta &=
			h\xO\Theta\xO\sigma^\xVer\xO\sigma^\xHor -
				h\xO\Theta\xO\sigma^\xVer -
				h\xO\Theta\xO\sigma^\xHor +
				h\xO\Theta \;,
	\end{align}
	which, together with Lemma~\ref{lem:onto-one-to-one}, gives
	\begin{align}
		f\xO\Phi^l\xO\Phi - f\xO\Phi^l &= h\xO\Phi\xO\sigma - h\xO\Phi - h\xO\sigma + h \;.
	\end{align}
	The latter equation can be rewritten as
	\begin{align}
		(f\xO\Phi^l - h\xO\sigma + h)\xO\Phi &= (f\xO\Phi^l - h\xO\sigma + h) \;,
	\end{align}
	which says that $f\xO\Phi^l - h\xO\sigma + h$ is invariant under $\Phi$.
	On the other hand, since $(\xSp{X}^\intercal,\sigma)$ is a mixing shift,
	it follows from Proposition~\ref{prop:transpose:mixing} that $(\xSp{X},\Phi)$ is also mixing.
	As a consequence, every continuous observable that is invariant under $\Phi$ is constant.
	In particular, $f\xO\Phi^l - h\xO\sigma + h=c'$ for some constant $c'\in \xR$,
	which means $f\xO\Phi^l$ is locally physically equivalent to $0$.
	Since $f$ is locally conserved by $\Phi$, the observable $f\xO\Phi^l$ is also
	locally physically equivalent to $f$, and this completes the proof.
\end{proof}

\begin{corollary}
\label{cor:ca:pexp-transpose:no-cons-law}
	Let $\Phi:\xSp{X}\to\xSp{X}$ be a reversible cellular automaton on
	a one-dimensional mixing shift of finite type $(\xSp{X},\sigma)$,
	and suppose that $\Phi$ has a positively expansive transpose.
	Then, $\Phi$ has no non-trivial local conservation law.
\end{corollary}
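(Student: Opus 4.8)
The plan is to reduce the statement to Theorem~\ref{thm:strongly-transitive-CA:no-cons-law} through the transpose correspondence of Theorem~\ref{thm:transpose:cons-law}. Write $\Phi^\intercal:\xSp{X}^\intercal\to\xSp{X}^\intercal$ for the assumed positively expansive transpose of $\Phi$; by the very definition of ``transpose'', $\xSp{X}^\intercal$ is a one-dimensional mixing shift of finite type. The relation of being a transpose is symmetric up to conjugacy (applying the role-swap of $\xVer$ and $\xHor$ twice recovers the natural extension of $\Phi$), so $\Phi$ and $\Phi^\intercal$ are transpose of each other, and since $\Phi$ is reversible (hence surjective) and $\Phi^\intercal$ is positively expansive (hence surjective), Theorem~\ref{thm:transpose:cons-law} applies: there is a one-to-one correspondence, up to local physical equivalence, between the local observables locally conserved by $\Phi$ and those locally conserved by $\Phi^\intercal$, and under this correspondence $f$ is locally physically equivalent to $0$ exactly when the matching $f^\intercal$ is. Consequently it suffices to show that $\Phi^\intercal$ has no non-trivial local conservation law, i.e. that every local observable locally conserved by $\Phi^\intercal$ is locally physically equivalent to a constant.

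For that step, I would first note that $\Phi^\intercal$, being a positively expansive cellular automaton on a one-dimensional mixing shift of finite type, is strongly transitive (the proposition preceding Theorem~\ref{thm:transpose:cons-law} in the development). Theorem~\ref{thm:strongly-transitive-CA:no-cons-law}, which applies since $\xSp{X}^\intercal$ is a shift of finite type, then gives that $\Phi^\intercal$ conserves no non-trivial Hamiltonian. Now take any $f^\intercal\in K(\xSp{X}^\intercal)$ locally conserved by $\Phi^\intercal$: the continuity equation puts $f^\intercal\xO\Phi^\intercal - f^\intercal$ in $K(\xSp{X}^\intercal,\sigma)\subseteq C(\xSp{X}^\intercal,\sigma)$, so $f^\intercal$ is conserved and hence $\Phi^\intercal$ conserves $\Delta_{f^\intercal}$; by the previous sentence $\Delta_{f^\intercal}$ must be the trivial Hamiltonian, i.e. $f^\intercal$ is physically equivalent to $0$.

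The only remaining point — and the one place the one-dimensional mixing hypothesis really does work — is to upgrade ``physically equivalent to $0$'' to ``locally physically equivalent to $0$'' for the \emph{local} observable $f^\intercal$. Here I would invoke the known structure of finite-range Hamiltonians on one-dimensional mixing shifts of finite type (the result of~\cite{ChaHanMarMeyPav13}, or, in the same spirit as Proposition~\ref{prop:Hamiltonian:observable:local}, a blockwise telescoping argument): on such a shift, a local observable with $\Delta_{f^\intercal}=0$ differs from a constant by an element of $K(\xSp{X}^\intercal,\sigma)$, so $f^\intercal$ is trivially locally conserved. Hence $\Phi^\intercal$ has no non-trivial local conservation law, and by the transpose correspondence of Theorem~\ref{thm:transpose:cons-law} neither does $\Phi$. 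I expect this last upgrade to be the main (essentially the only) technical obstacle; everything else is a direct chaining of the cited results.
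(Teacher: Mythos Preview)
Your proof is correct and follows exactly the route the paper intends: reduce via the transpose correspondence (Theorem~\ref{thm:transpose:cons-law}) to the strongly transitive side, invoke strong transitivity of positively expansive cellular automata, and apply Theorem~\ref{thm:strongly-transitive-CA:no-cons-law}. You are also right to flag, and correctly resolve via~\cite{ChaHanMarMeyPav13}, the one technical point the paper leaves implicit---that on a one-dimensional mixing shift of finite type a local observable with trivial Hamiltonian is \emph{locally} physically equivalent to a constant.
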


As mentioned in Section~\ref{sec:ca:conservation-laws},
for cellular automata on full shifts, every conserved local observable
is locally conserved.
\begin{corollary}
\label{cor:ca:pexp-transpose:rigidity}
	Let $\Phi:\xSp{X}\to\xSp{X}$ be a reversible cellular automaton on
	a one-dimensional full shift $(\xSp{X},\sigma)$,
	and suppose that $\Phi$ has a positively expansive transpose.
	The uniform Bernoulli measure is the only finite-range Gibbs measure
	($\equiv$ full-support Markov measure) that is invariant under~$\Phi$.
\end{corollary}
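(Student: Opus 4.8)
The plan is to run the conservation-law/invariant-Gibbs correspondence ``backwards'' and feed it into the rigidity statement of Corollary~\ref{cor:ca:pexp-transpose:no-cons-law}. First I would dispose of the parenthetical identification ``finite-range Gibbs measure $\equiv$ full-support Markov measure'': this is exactly the one-to-one correspondence between finite-range Hamiltonians and positive shift-invariant Markovian specifications recalled in Section~\ref{sec:hamiltonian-gibbs} (on a full shift full support forces the specification to be positive), so it is enough to show that the only $\Phi$-invariant Gibbs measure for a finite-range Hamiltonian is the uniform Bernoulli measure.

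So let $\pi$ be a Gibbs measure for a finite-range Hamiltonian $\Delta$ with $\Phi\pi=\pi$. By Proposition~\ref{prop:Hamiltonian:observable:local}, $\Delta=\Delta_f$ for some local observable $f\in K(\xSp{X})\subseteq SV(\xSp{X})$. A reversible cellular automaton is in particular surjective, and both $\pi$ and $\Phi\pi=\pi$ are Gibbs measures for $\Delta_f$; hence Corollary~\ref{cor:conservation-invariance:1:non-shift-invariant} applies and shows that $\Phi$ conserves $\Delta_f$, i.e.\ that $f$ is conserved by $\Phi$. (Since $\Phi$ is reversible, Theorem~\ref{thm:conservation-invariance:reversible} gives the same conclusion directly, for the possibly non-shift-invariant $\pi$.)

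Next I would invoke the fact recorded in Section~\ref{sec:ca:conservation-laws}: on a full shift every conserved local observable is in fact \emph{locally} conserved, the proof being a telescoping argument parallel to that of Proposition~\ref{prop:Hamiltonian:observable:local}. Thus $f$ satisfies a local conservation law under $\Phi$. But $\Phi$ is a reversible one-dimensional cellular automaton with a positively expansive transpose, so Corollary~\ref{cor:ca:pexp-transpose:no-cons-law} says it has no non-trivial local conservation law; therefore $f$ is trivially locally conserved, i.e.\ $f-c\in K(\xSp{X},\sigma)$ for some constant $c\in\xR$. Then $f$ is physically equivalent to $0$, so $\Delta_f$ is the trivial Hamiltonian, and since the uniform Bernoulli measure is the unique Gibbs measure for the trivial Hamiltonian on a full shift, $\pi$ must be the uniform Bernoulli measure. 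For the converse direction, the uniform Bernoulli measure is a (Bernoulli, hence finite-range) Gibbs measure and is preserved by every surjective cellular automaton on a full shift by Corollary~\ref{cor:balance:weak}, so it is indeed the unique finite-range Gibbs measure invariant under $\Phi$.

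Every individual step is short once its ingredient is granted; the points that deserve care are the verification that the hypotheses of Corollary~\ref{cor:conservation-invariance:1:non-shift-invariant} (or Theorem~\ref{thm:conservation-invariance:reversible}) are met --- where the invariance $\Phi\pi=\pi$ does all the work --- and the appeal to the ``full shift $\Rightarrow$ locally conserved'' fact, which is stated earlier without a full proof. The positive expansivity of the transpose enters only as a black box via Corollary~\ref{cor:ca:pexp-transpose:no-cons-law}, so no new dynamical argument is needed here.
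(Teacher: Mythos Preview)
Your proposal is correct and follows essentially the same route the paper has in mind: the corollary is stated immediately after Corollary~\ref{cor:ca:pexp-transpose:no-cons-law} and the remark that on a full shift every conserved local observable is locally conserved, and your argument simply threads these together with the conservation/invariance correspondence and Proposition~\ref{prop:Hamiltonian:observable:local}. Your choice to invoke Corollary~\ref{cor:conservation-invariance:1:non-shift-invariant} (or Theorem~\ref{thm:conservation-invariance:reversible}) rather than Theorem~\ref{thm:conservation-invariance:1} is a harmless variant that avoids having to first argue shift-invariance of~$\pi$.
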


\begin{example}[Non-additive positively expansive]
\label{exp:permutive:transpose}
	Let $\Phi:\{\symb{0},\symb{1},\symb{2}\}^\xZ\to\{\symb{0},\symb{1},\symb{2}\}^\xZ$
	be the cellular automaton defined by neighborhood $N\IsDef\{-1,0,1\}$ and
	local update rule $\varphi:\{\symb{0},\symb{1},\symb{2}\}^N\to \{\symb{0},\symb{1},\symb{2}\}$
	defined by
	\begin{align}
		\varphi(a,b,c) &\IsDef \begin{cases}
			a+c+\symb{1} \pmod{3}\qquad\ 	& \text{if $b=\symb{2}$,}\\
			a+c \pmod{3}						& \text{otherwise.}
		\end{cases}
	\end{align}
	See Figure~\ref{fig:permutive:transpose}a for a sample run.
	Note that the local rule is both left- and right-\emph{permutive}
	(i.e., $a\mapsto\varphi(a,b,c)$ and $c\mapsto\varphi(a,b,c)$ are permutations).
	It follows that $\Phi$ is positively expansive, and hence also strongly transitive.
	Therefore, according to Theorem~\ref{thm:strongly-transitive-CA:no-cons-law} and
	Corollary~\ref{cor:ca:strongly-transitive:rigidity},
	$\Phi$ has no non-trivial conservation law and
	the uniform Bernoulli measure is the only regular Gibbs measure
	on $\{\symb{0},\symb{1},\symb{2}\}^\xZ$ that is invariant under $\Phi$.
	
	The cellular automaton $\Phi$ has a transpose
	$\Phi^\intercal:(\{\symb{0},\symb{1},\symb{2}\}\times\{\symb{0},\symb{1},\symb{2}\})^\xZ \to
		(\{\symb{0},\symb{1},\symb{2}\}\times\{\symb{0},\symb{1},\symb{2}\})^\xZ$
	defined 	with neighborhood $\{0,1\}$ and local rule
	\begin{align}
		((a,b), (a',b')) &\mapsto
			\begin{cases}
				(b, b'-a-\symb{1})\qquad\		&\text{if $b=\symb{2}$,}\\
				(b, b'-a)						&\text{otherwise,}\\
			\end{cases}
	\end{align}
	where the subtractions are modulo~$3$ (see Figure~\ref{fig:permutive:transpose}b).
	This is a reversible cellular automaton.
	It follows from Corollaries~\ref{cor:ca:pexp-transpose:no-cons-law}
	and~\ref{cor:ca:pexp-transpose:rigidity}
	that $\Phi^\intercal$ has no non-trivial local conservation law
	and no invariant full-support Markov measure other than the uniform Bernoulli measure.
	\exampleqed
\end{example}
\begin{figure}
	\begin{center}
		\begin{tabular}{ccc}
			\begin{minipage}[c]{0.35\textwidth}
				\centering
				\includegraphics[width=0.95\textwidth]{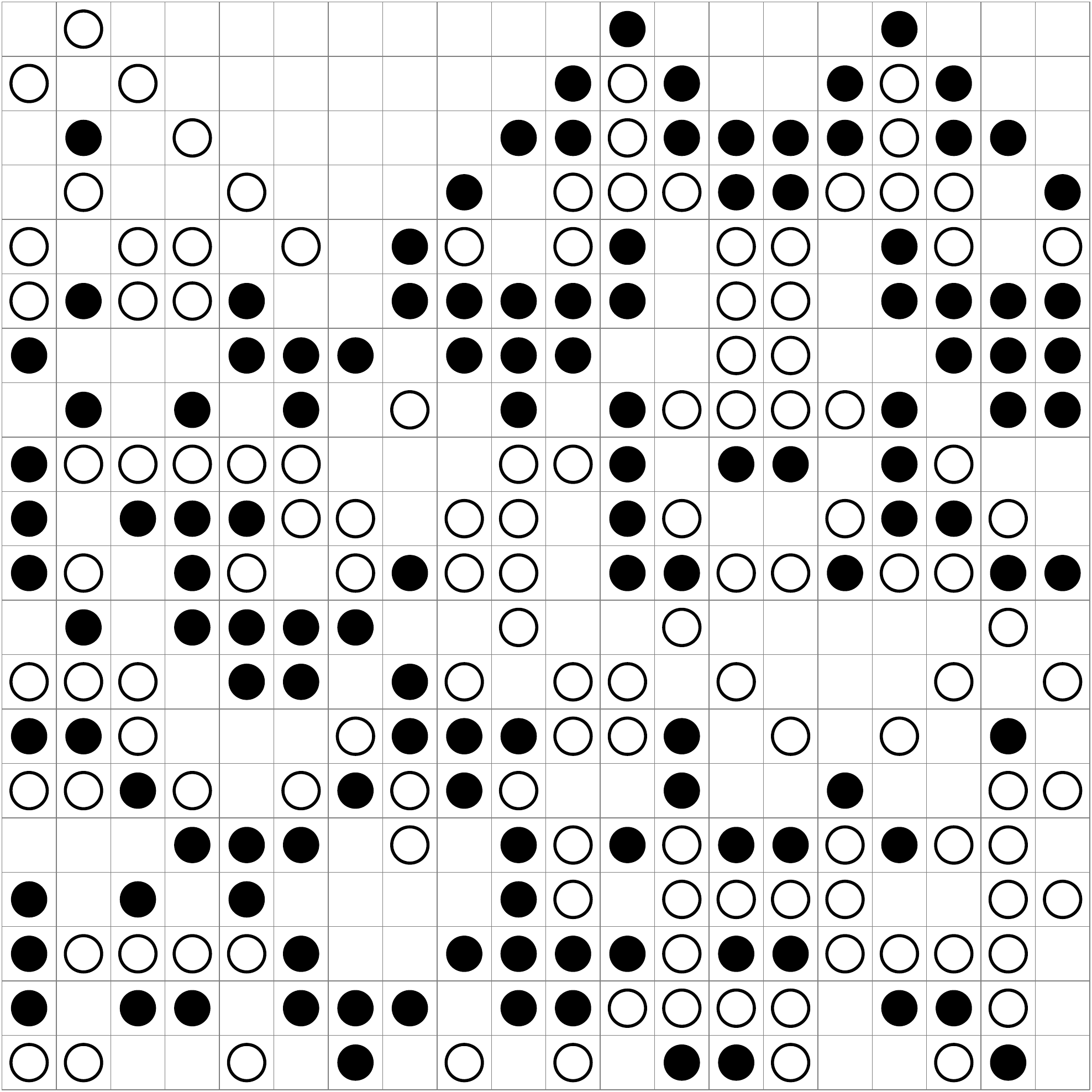}
			\end{minipage} & &
			\begin{minipage}[c]{0.35\textwidth}
				\centering
				\includegraphics[width=0.95\textwidth]{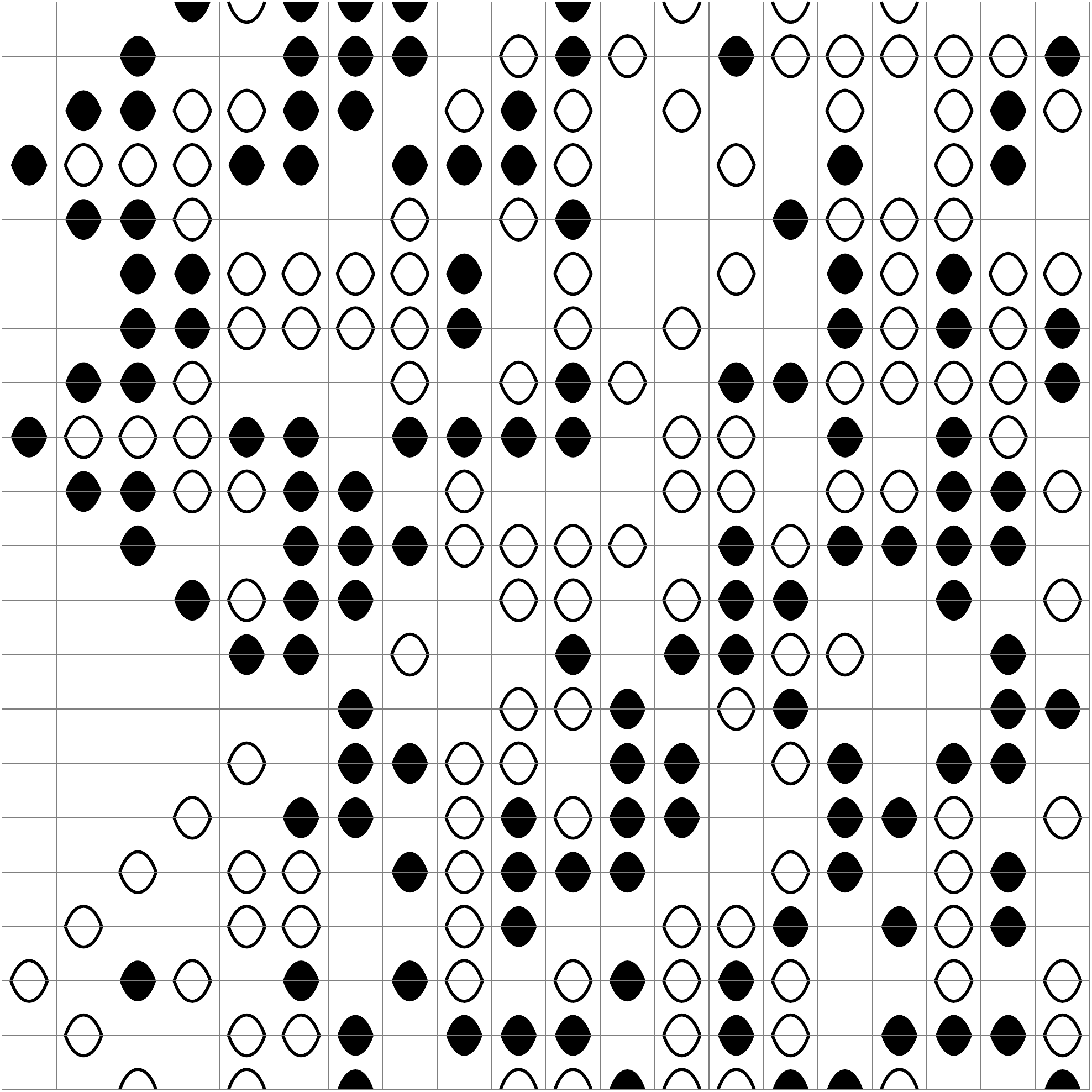}
			\end{minipage} \medskip\\
			(a) && (b)
		\end{tabular}
	\end{center}
	\caption{%
		Sample runs of (a) a one-dimensional bi-permutive cellular automaton
		and (b) its transpose.  See Example~\ref{exp:permutive:transpose}.
		Time goes downward.
		Black circle represents~$\symb{2}$ and white circle~$\symb{1}$.
	}
	\label{fig:permutive:transpose}
\end{figure}

According to Corollary~\ref{cor:ca:strongly-transitive:rigidity},
if $\Phi$ is a strongly transitive cellular automaton on a full shift $\xSp{X}$,
the uniform Bernoulli measure on $\xSp{X}$ is the only regular Gibbs measure
that is preserved by $\Phi$.
Likewise, Corollary~\ref{cor:ca:pexp-transpose:rigidity} states that
for a class of one-dimensional reversible cellular automata,
the uniform Bernoulli measure is the only invariant full-support Markov measure.
Note that even with these constraints, a cellular automaton in either of these
two classes still has a large collection of other invariant measures.
For example, for every $d$ linearly independent vectors $k_1,k_2,\ldots,k_d\in\xZ^d$,
the set of $d$-dimensional spatially periodic configurations having $k_i$ as periods
(i.e., $\{x: \sigma^{k_i} x=x \text{ for $i=1,2,\ldots,d$}\}$)
is finite and invariant under any cellular automaton,
and therefore any cellular automaton has an (atomic) invariant measure supported at such a set.
Nevertheless, if we restrict our attention to sufficiently ``smooth'' measures,
the uniform Bernoulli measure becomes the ``unique'' invariant measure for
a cellular automaton in either of the above classes.\footnote{
	The term ``smoothness'' here refers to
	the continuity of the conditional probabilities $\pi([p]_D\,|\,\family{F}_D)(z)$
	for Gibbs measures (which is a defining proeprty).
	Unfortunately, Corollary~\ref{cor:ca:strongly-transitive:rigidity}
	restricts only the invariance of \emph{regular} Gibbs measures
	(see Section~\ref{sec:hamiltonian-gibbs}).
	We do not know if every Hamiltonian is generated by an observable with
	summable variations.  However, see~\cite{Sul73} in this direction.
}
In this sense, Corollaries~\ref{cor:ca:strongly-transitive:rigidity}
and~\ref{cor:ca:pexp-transpose:rigidity}
may be interpreted as weak indications of ``absence of phase transition''
for cellular automata in the two classes in question.

\begin{question}
	Let $(\xSp{X},\sigma)$ be a strongly irreducible shift of finite type.
	Which shift-ergodic measures can be invariant under a strongly transitive cellular automaton?
	Can a shift-ergodic measure with positive but sub-maximum entropy on $(\xSp{X},\sigma)$
	be invariant under a strongly transitive cellular automaton?
\end{question}

\subsection{Randomization and Approach to Equilibrium}
\label{sec:ca:randomization}

This section contains a few remarks and open questions regarding
the problem of approach to equilibrium in surjective cellular automata.

\begin{example}[Randomization in XOR cellular automata]
\label{exp:xor:randomization}
	The XOR cellular automata
	(Examples~\ref{exp:xor:ca},~\ref{exp:xor:non-gibbs} and~\ref{exp:xor:non-gibbs:contd})
	exhibit the same kind of ``approach to equilibrium''
	as observed in the Q2R model (see the \hyperref[sec:intro]{Introduction}).
	Starting from a biased Bernoulli random configuration,
	the system quickly reaches a uniformly random state, where it remains
	(see Figure~\ref{fig:xor:randomization}).
	A mathematical explanation of this behavior was first found independently
	by Miyamoto~\cite{Miy79} and Lind~\cite{Lin84} (following Wolfram~\cite{Wol83})
	and has since been extended and strengthened by others.	
	
	Let $\xSp{X}\IsDef\{\symb{0},\symb{1}\}^\xZ$,
	and consider the XOR cellular automaton $\Phi:\xSp{X}\to\xSp{X}$
	with neighborhood $\{0,1\}$.
	If $\pi$ is a shift-invariant probability measure on $\xSp{X}$,
	the convergence of $\Phi^t\pi$ as $t\to\infty$ fails
	as long as $\pi$ is strongly mixing 
	and different from the uniform Bernoulli measure and the Dirac measures
	concentrated at one of the two uniform configurations~\cite{Miy79,Miy94}.
	However, if $\pi$ is a non-degenerate Bernoulli measure,
	the convergence holds if a negligible set of time steps are ignored.
	More precisely, there is a set $J\subseteq\xN$ of density~$1$
	such that for every non-degenerate Bernoulli measure $\pi$,
	the sequence $\{\Phi^t\pi\}_{t\in J}$ converges, as $t\to\infty$,
	to the uniform Bernoulli measure~$\mu$.
	In particular, we have the convergence of the Ces\`aro averages
	\begin{align}
		\frac{1}{n}\sum_{t=0}^{n-1}\Phi^t\pi\to\mu
	\end{align}
	as $n\to\infty$~\cite{Miy79,Lin84}.
	
	The same type of convergence holds
	as long as $\pi$ is harmonically mixing~\cite{PivYas02}.
	Similar results have been obtained for
	a wide range of algebraic cellular automata
	(see e.g.~\cite{CaiLuo93,MaaMar98,FerMaaMarNey00,PivYas02,
	HosMaaMar03,PivYas04,PivYas06,Sob08}).
	In particular, the reversible cellular automaton of Example~\ref{exp:xor:transpose}
	has been shown to have the same randomizing effect~\cite{MaaMar99}.
	See~\cite{Piv09} for a survey.
	
	It is also worth mentioning a similar result due to Johnson and Rudolph~\cite{JohRud95}
	regarding maps of the unit circle $\xT\IsDef\xR/\xZ$.
	Namely, let $\pi$ be a Borel measure on $\xT$.
	They showed that if $\pi$ is invariant, ergodic
	and of positive entropy for the map $3\times: x\mapsto 3x \pmod{1}$,
	then it is randomized by the map $2\times: x\mapsto 2x \pmod{1}$,
	in the sense that $(2\times)^t\pi$ converges to the Lebesgue measure
	along a subsequence $J\subseteq\xN$ of density~$1$.
	\exampleqed
\end{example}
\begin{figure}
	\begin{center}
		\includegraphics[width=0.6\textwidth]{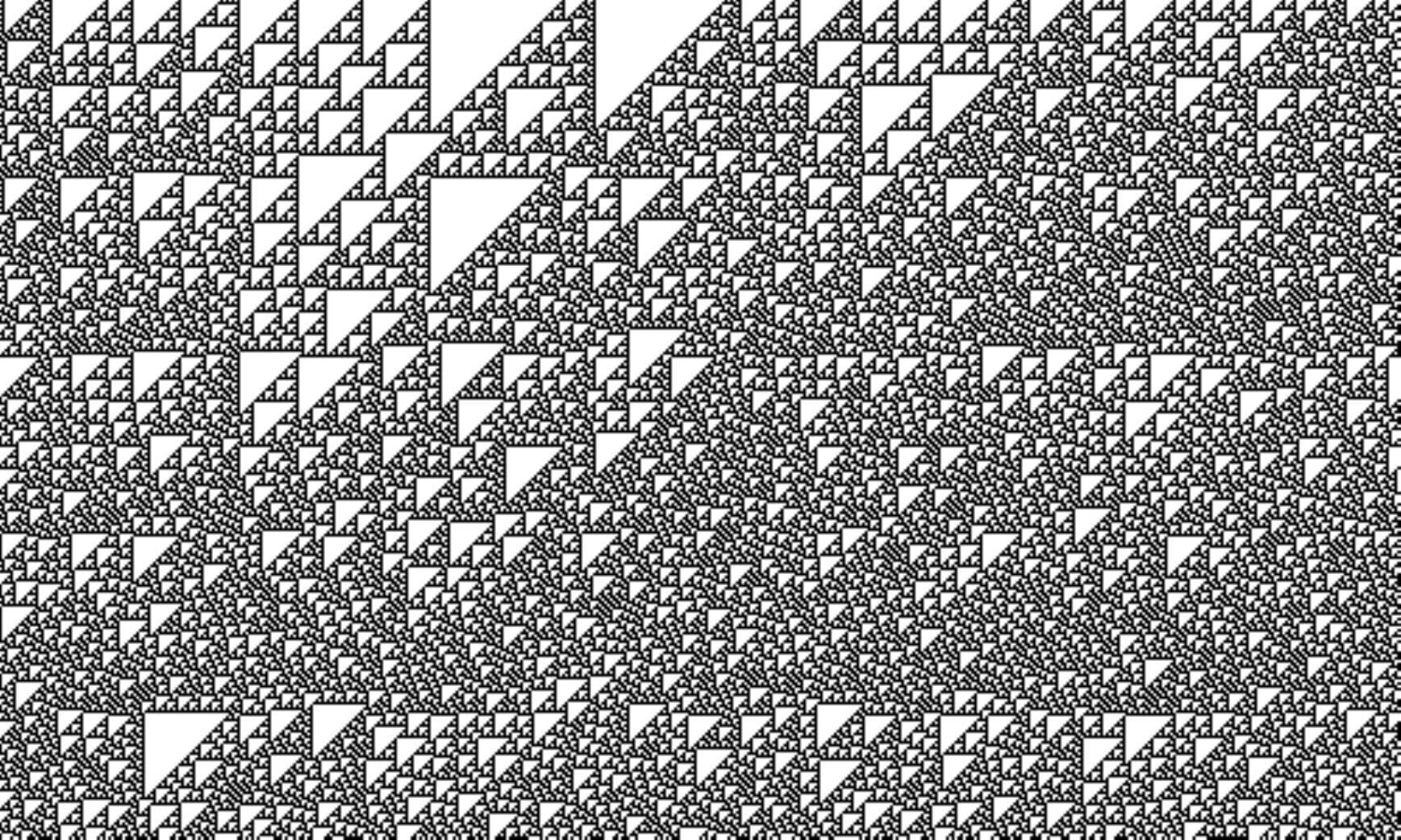}
	\end{center}
	\caption{%
		Randomization effect of the XOR cellular automaton (see Example~\ref{exp:xor:randomization}).
		Time goes downward.
		The initial configuration is chosen using coin flips with bias $1{\,:\,}9$.
		The initial density of symbol $\symb{1}$ is $0.104$.  
		The density at time $300$ is $0.504$.  
	}
	\label{fig:xor:randomization}
\end{figure}

Randomization behavior similar to that in the XOR cellular automaton
has been observed in simulations of other (non-additive) cellular automata,
but the mathematical results are so far limited to algebraic cellular automata.
The uniform Bernoulli measure is the unique measure with maximum entropy
on the full shift $(\xSp{X},\sigma)$ (i.e., the ``state of maximum randomness'').
The convergence (in density) of $\Phi^t\pi$ to the uniform Bernoulli measure
may thus be interpreted as a manifestation of 
the second law of thermodynamics~\cite{Piv09}.

We say that a cellular automaton $\Phi:\xSp{X}\to\xSp{X}$ 
(asymptotically) \emph{randomizes} a probability measure $\pi\in\xSx{P}(\xSp{X})$,
if there is a set $J\subseteq\xN$ of density~$1$ such that the weak limit
\begin{align}
	\Phi^\infty\pi &\IsDef \lim_{J\ni t\to\infty}\Phi^t\pi
\end{align}
exists and is a shift-invariant measure
with maximum entropy, that is, $h_{\Phi^\infty\pi}(\xSp{X},\sigma)=h(\xSp{X},\sigma)$.
The \emph{density} of a set $J\subseteq\xN$ is defined as
\begin{align}
	d(J) &\IsDef \lim_{n\to\infty} \frac{
		J \cap \{0,1,\ldots,n-1\}
	}{n} \;.
\end{align}
Note that the limit measure $\Phi^\infty\pi$ must be invariant under $\Phi$,
even if $(\xSp{X},\sigma)$ has multiple measures with maximum entropy.
If $\Phi$ randomizes a measure $\pi$, the Ces\`aro averages
$(\sum_{t=0}^{n-1} \Phi^t\pi)/n$ will also converge to $\Phi^\infty\pi$.
The converse is also true as long as $\pi$ is shift-invariant
and the limit measure is shift-ergodic:

\begin{lemma}[see~\cite{JohRud95}, Corollary~1.4] 
\label{lem:density-vs-cesaro}
	Let $\xSp{X}$ be a compact metric space
	and $\xSx{Q}\subseteq\xSx{P}(\xSp{X})$ a closed and convex set of
	probability measures on $\xSp{X}$.
	Let $\pi_1,\pi_2,\ldots$ be a sequence of elements in $\xSx{Q}$
	whose Ces\`aro averages $(\sum_{i=0}^{n-1} \pi_i)/n$
	converge to a measure $\mu$ as $n\to\infty$.
	If $\mu$ is extremal in $\xSx{Q}$,
	then there is a set $J\subseteq\xN$ of density~$1$
	such that $\pi_i\to\mu$ as $J\ni i\to\infty$.	
\end{lemma}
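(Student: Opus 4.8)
The plan is to reduce the statement to a form of ``statistical convergence'' --- that for every $\varepsilon>0$ the set of indices $i$ with $\rho(\pi_i,\mu)\geq\varepsilon$ has density zero, for a conveniently chosen metric $\rho$ on $\xSx{P}(\xSp{X})$ --- and then to prove this by a compactness argument that exploits the extremality of $\mu$ in $\xSx{Q}$. First I would fix the metric carefully: picking a sequence $g_1,g_2,\ldots$ in the unit ball of $C(\xSp{X})$ that separates the points of $\xSx{P}(\xSp{X})$ and setting $\rho(\nu,\nu')\IsDef\sum_{k\geq 1}2^{-k}\abs{\nu(g_k)-\nu'(g_k)}$, one gets a metric for the weak topology for which $\nu\mapsto\rho(\nu,\nu_0)$ is convex; hence the open $\rho$-balls are convex, and an average of finitely many measures lying in a $\rho$-ball of radius $r$ about a point again lies within $r$ of that point. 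I would also record the elementary combinatorial fact that a density-one set $J$ along which $\pi_i\to\mu$ can be assembled from the density-zero ``bad sets'' $B_k\IsDef\{i:\rho(\pi_i,\mu)\geq 1/k\}$: one chooses cutoffs $t_1<t_2<\cdots$ growing at least geometrically with $\abs{B_k\cap\{0,\ldots,n-1\}}\leq n/k$ for all $n\geq t_k$, and lets $J$ be the complement of $\bigcup_{k\geq 1}\bigl(B_k\cap\{t_k,\ldots,t_{k+1}-1\}\bigr)$; the geometric growth of the $t_k$ makes the lower-level contributions to $\xCmpl{J}\cap\{0,\ldots,n-1\}$ negligible compared with $n$, so $d(J)=1$, and $\rho(\pi_i,\mu)\to 0$ as $J\ni i\to\infty$ by construction.

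The substantive point is then to show that each $B_\varepsilon\IsDef\{i:\rho(\pi_i,\mu)\geq\varepsilon\}$ has density zero, and here I would argue by contradiction. If $B_\varepsilon$ had positive upper density, there would be $\delta>0$ and $n_1<n_2<\cdots$ with $\abs{B_\varepsilon\cap\{0,\ldots,n_\ell-1\}}\geq\delta n_\ell$. Since $F\IsDef\{\nu\in\xSx{Q}:\rho(\nu,\mu)\geq\varepsilon\}$ is closed in the compact set $\xSx{Q}$, it is compact, so it is covered by finitely many, say $m$, $\rho$-balls of radius $\varepsilon/4$ centred at points $z_1,\ldots,z_m\in F$. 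The key move is to sort the bad indices $i\in B_\varepsilon\cap\{0,\ldots,n_\ell-1\}$ by which of these balls contains $\pi_i$; by the pigeonhole principle one ball --- say the one about $z_0\in F$ --- captures at least $\delta n_\ell/m$ of them for infinitely many $\ell$. After passing to a subsequence I would arrange that this happens for all $\ell$, that the captured fraction converges to some $s\in[\delta/m,1]$, and that the normalized average $\bar\pi_\ell$ of the captured measures --- and, when $s<1$, also the normalized average of the remaining measures among $\{0,\ldots,n_\ell-1\}$ --- converge in $\xSx{Q}$ (this is where closedness and convexity of $\xSx{Q}$ are used). Because the captured measures all lie within $\varepsilon/4$ of $z_0$ and the $\rho$-balls are convex, $\bar\pi_\ell$ stays within $\varepsilon/4$ of $z_0$, so its limit $\nu$ satisfies $\rho(\nu,\mu)\geq\rho(z_0,\mu)-\varepsilon/4\geq 3\varepsilon/4$, hence $\nu\neq\mu$. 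On the other hand, splitting the Ces\`aro average $\tfrac{1}{n_\ell}\sum_{i=0}^{n_\ell-1}\pi_i$ into the captured part and the rest and letting $\ell\to\infty$ (using $\tfrac{1}{n_\ell}\sum_i\pi_i\to\mu$) gives $\mu=s\nu+(1-s)\nu'$ with $\nu'\in\xSx{Q}$ when $s<1$, and $\mu=\nu$ when $s=1$. Either way $\mu$ is a non-trivial convex combination of elements of $\xSx{Q}$ at least one of which differs from $\mu$, contradicting extremality.

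I expect the main obstacle to be organisational rather than conceptual: on one side the cutoff bookkeeping that turns the countable family of density-zero bad sets into a single density-one $J$, and on the other the realisation that the bad indices must be grouped into clusters of bounded $\rho$-diameter before averaging --- a plain average of all bad indices need not stay away from $\mu$, since $\mu$ may lie in the closed convex hull of a sphere around it, which is exactly why the finite covering of $F$ by small balls is needed. I would also note in passing that each hypothesis is genuinely used: convexity and closedness of $\xSx{Q}$ enter through the limits $\nu,\nu'\in\xSx{Q}$, and extremality of $\mu$ in $\xSx{Q}$ (not merely in $\xSx{P}(\xSp{X})$) is essential --- witness a sequence alternating between two distinct extreme points of $\xSx{Q}$, whose Ces\`aro averages converge to the non-extremal midpoint and which is not approached in density.
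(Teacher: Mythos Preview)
The paper does not supply its own proof of this lemma; it merely cites Johnson and Rudolph~\cite{JohRud95}, Corollary~1.4. Your argument is correct and is essentially the standard one: reduce to showing that each ``bad'' set $B_\varepsilon=\{i:\rho(\pi_i,\mu)\geq\varepsilon\}$ has density zero, then assemble a single density-one $J$ by a diagonal construction, and prove the density-zero claim by covering the compact set $\{\nu\in\xSx{Q}:\rho(\nu,\mu)\geq\varepsilon\}$ with finitely many small convex balls, pigeonholing the bad indices into one ball along a subsequence, and splitting the Ces\`aro average to exhibit $\mu$ as a non-trivial convex combination in $\xSx{Q}$, contradicting extremality. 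The choice of a metric with convex balls (so that averages of measures in a ball stay in the ball) is exactly the right device, and your treatment of the boundary case $s=1$ is fine since $(1-s_\ell)\bar\pi'_\ell\to 0$ in total variation regardless of whether $\bar\pi'_\ell$ itself converges.

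Two very minor remarks. First, in the assembly of $J$ from the $B_k$, the phrase ``growing at least geometrically'' for the cutoffs $t_k$ is a slight under-specification: to make $\sum_{j<k}|B_j\cap[t_j,t_{j+1})|$ negligible compared with $n\geq t_k$ you want to choose each $t_{k+1}$ large enough relative to \emph{everything already chosen}, not merely $t_{k+1}\geq 2t_k$; but this is routine. Second, when you pass to a subsequence so that one fixed ball around $z_0$ captures a positive fraction of bad indices for all $\ell$, you are implicitly using a further pigeonhole on the finite index set $\{1,\ldots,m\}$ of balls across the sequence of $\ell$'s; this is clear from context but worth making explicit in a written proof.
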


As mentioned in Example~\ref{exp:xor:randomization},
the stronger notion of randomization fails for the XOR cellular automaton.
We say that a cellular automaton $\Phi$ \emph{strongly randomizes} a measure
$\pi$ if $\Phi^t\pi$ converges to a measure with maximum entropy.
\begin{question}
	Are there examples of surjective or reversible cellular automata
	that strongly randomize all (say) Bernoulli measures?
	Is there a generic obstacle against strong randomization
	in surjective or reversible cellular automata?
\end{question}

If the cellular automaton $\Phi$ has non-trivial conservation laws,
the orbit of a measure $\pi$ will be entirely on the same ``energy level''.
Nevertheless, we could expect $\pi$ to be randomized within its energy level.
To evade an abundance of invariant measures, let us assume that
$\Phi$ has only finitely many linearly independent conservation laws.
More precisely, let $F=\{f_1,f_2,\ldots,f_n\}\subseteq C(\xSp{X})$
be a collection of observables conserved by $\Phi$ such that
every observable $g\in C(\xSp{X})$ conserved by $\Phi$
is physically equivalent to an element of the linear span of $F$.
The measures $\Phi^t\pi$ as well as their accumulation points are confined in the closed convex set
\begin{align}
	\{\nu\in\xSx{P}(\xSp{X}): \text{$\nu(f_i)=\pi(f_i)$ for $i=1,2,\ldots,n$} \} \;.
\end{align}
Let us say that $\Phi$ \emph{randomizes} $\pi$ \emph{modulo $F$}
if there is a set $J\subseteq\xN$ of density~$1$ such that
\begin{align}
	\Phi^\infty\pi &\IsDef \lim_{J\ni t\to\infty}\Phi^t\pi
\end{align}
exists, is shift-invariant, and has entropy $s_{f_1,f_2,\ldots,f_n}(\pi(f_1),\pi(f_2),\ldots,\pi(f_n))$,
where
\begin{align}
	s_{f_1,f_2,\ldots,f_n}(e_1,e_2,\ldots,e_n)
		&=
		\sup\{h_\nu(\xSp{X},\sigma):
			\text{$\nu\in\xSx{P}(\xSp{X},\sigma)$ and $\nu(f_i)=e_i$ for $i=1,2,\ldots,n$} \} \;.
\end{align}

\begin{question}
	What are some examples of non-algebraic cellular automata
	(with or without non-trivial conservation laws) having a randomization property?
\end{question}

Suitable candidates to inspect for the occurrence of a randomization behavior
are those that do not have any non-trivial conservation laws.

\begin{question}
	Do strongly transitive cellular automata randomize every Gibbs measure?
\end{question}

\begin{question}
	Does a one-dimensional reversible cellular automaton that has a positively expansive transpose
	randomize every Gibbs measure?
\end{question}


\section{Conclusions}

There is a wealth of open issues in connection with the statistical mechanics
of reversible and surjective cellular automata.
We have asked a few questions in this article.
From the modeling point of view,
there are at least three central problems that need to be addressed:
\begin{itemize}
	\item What is a good description of macroscopic equilibrium states?
	\item What is a satisfactory description of approach to equilibrium?
	\item How do physical phenomena such as phase transition appear
		in the dynamical setting of cellular automata?
\end{itemize}

By virtue of their symbolic nature,
various questions regarding cellular automata can be conveniently
approached using computational and algorithmic methods.
Nevertheless, many fundamental global properties of cellular automata
have turned out to be algorithmically undecidable, at least in two and higher dimensions.
For example, the question of whether a given two-dimensional cellular automaton is
reversible (or surjective) is undecidable~\cite{Kar94a}.
Similarly, \emph{all} non-trivial properties of the limit sets of cellular automata
are undecidable, even when restricted to the one-dimensional case~\cite{Kar94b,GuiRic10}
(see also~\cite{Del11}).
Whether a given cellular automaton on a full shift conserves a given
local observable can be verified using a simple algorithm~\cite{HatTak91},
but whether a (one-dimensional) cellular automaton has any non-trivial
local conservation law is undecidable~\cite{ForKarTaa11}.
It is an interesting open problem whether the latter undecidability statement
remains true when restricted to the class of reversible (or surjective) cellular automata.
We hope to address this and other algorithmic questions related to
the statistical mechanics of cellular automata in a separate study.

Problems similar to those studied here have been addressed
in different but related settings and with various motivations.
Simple necessary and sufficient conditions have been obtained
that characterize when a one-dimensional probabilistic cellular automaton
has a Bernoulli or Markov invariant measure~\cite{TooVasStaMitKurPir90,MaiMar12}.
The equivalence of parts~(b) and~(c) in Theorem~\ref{thm:conservation-invariance:1}
is also true for positive-rate probabilistic cellular automata~\cite{DaiLuiRoe02}.
For positive-rate probabilistic cellular automata, however, the existence
of an invariant Gibbs measure implies that all shift-invariant invariant Gibbs measures
are Gibbs for the same Hamiltonian!
The ergodicity problem of the probabilistic cellular automata (see e.g.~\cite{TooVasStaMitKurPir90})
has close similarity with the problem of randomization in surjective cellular automata.

\section*{Acknowledgements}
We would like to thank Aernout van Enter,  Nishant Chandgotia,
Felipe Garc\'ia-Ramos, Tom Kempton and Marcus Pivato
for helpful comments and discussions.

\addcontentsline{toc}{section}{References}
\bibliographystyle{plain}
\bibliography{files/bibliography}

\appendix

\section*{Appendix}
\addcontentsline{toc}{section}{Appendix}
\addtocounter{section}{1}

\subsection{Equivalence of the Definitions of a Gibbs Measure}
\label{apx:gibbs:def:equivalence}

\begin{proposition}
	Let $\Delta$ be a Hamiltonian on a shift space $\xSp{X}$
	and $\pi\in\xSx{P}(\xSp{X})$ a probability measure.
	The following conditions are equivalent:
	\begin{enumerate}[ \rm a)]
		\item \label{item:gibbs:def:equivalence:1}
			For every finite set $D\subseteq\xL$,
			\begin{align}
			\label{eq:gibbs:def:original}
				\pi([q]_D\,|\,\family{F}_{\xCmpl{D}})(z) &=
					\xe^{-\Delta\left(p\vee\xRest{z}{\xCmpl{D}}\;,\;q\vee\xRest{z}{\xCmpl{D}}\right)}\,
					\pi([p]_D\,|\,\family{F}_{\xCmpl{D}})(z)
			\end{align}
			for $\pi$-almost every $z\in\xSp{X}$
			and every two patterns $p,q\in L_D(\xSp{X}\,|\, z)$.
		\item \label{item:gibbs:def:equivalence:2}
			For every finite set $D\subseteq\xL$,
			\begin{align}
			\label{eq:gibbs:def:ours:uniform}
				\frac{\pi([q\vee\xRest{z}{\xCmpl{D}}]_E)}{\pi([p\vee\xRest{z}{\xCmpl{D}}]_E)}
					&\to \xe^{-\Delta\left(p\vee\xRest{z}{\xCmpl{D}}\;,\;q\vee\xRest{z}{\xCmpl{D}}\right)}
			\end{align}
			uniformly in $z\in\supp(\pi)$ and $p,q\in L_D(\xSp{X}\,|\,z)$
			as $E\nearrow\xL$ along the directed family of finite subsets of $\xL$.
		\item \label{item:gibbs:def:equivalence:3}
			For every configuration $x\in\xSp{X}$ that is in the support of $\pi$
			and every configuration $y\in\xSp{X}$
			that is asymptotic to~$x$,
			\begin{align}
			\label{eq:gibbs:def:ours}
				\frac{\pi([y]_E)}{\pi([x]_E)}
					&\to \xe^{-\Delta(x,y)} \;,
			\end{align}
			as $E\nearrow\xL$ along the directed family of finite subsets of $\xL$.
	\end{enumerate}
\end{proposition}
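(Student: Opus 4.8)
The plan is to prove the cycle of implications (a)~$\Rightarrow$~(b)~$\Rightarrow$~(c)~$\Rightarrow$~(a), with the only substantial step being the first. The implication (b)~$\Rightarrow$~(c) is an immediate specialization: given asymptotic $x,y\in\xSp{X}$ with $x\in\supp(\pi)$, apply~(b) with $D\IsDef\diff(x,y)$, $p\IsDef\xRest{x}{D}$, $q\IsDef\xRest{y}{D}$ and $z\IsDef x$, so that $p\vee\xRest{z}{\xCmpl{D}}=x$, $q\vee\xRest{z}{\xCmpl{D}}=y$, and forget the uniformity.

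For (c)~$\Rightarrow$~(a), fix a finite $D\subseteq\xL$ and an exhaustion $E_1\subseteq E_2\subseteq\cdots$ of $\xL$. Since $\family{F}_{E_n\setminus D}\nearrow\family{F}_{\xCmpl{D}}$, the (upward) martingale convergence theorem gives $\pi([p]_D\mid\family{F}_{E_n\setminus D})\to\pi([p]_D\mid\family{F}_{\xCmpl{D}})$ $\pi$-almost surely for each of the finitely many patterns $p$ on $D$, and for $\pi$-almost every $z$ one has the elementary identity $\pi([p]_D\mid\family{F}_{E_n\setminus D})(z)=\pi([p\vee\xRest{z}{\xCmpl{D}}]_{E_n})/\pi([\xRest{z}{E_n\setminus D}]_{E_n\setminus D})$. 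Dividing the $q$-version by the $p$-version, the common denominator cancels and the cylinder ratio $\pi([q\vee\xRest{z}{\xCmpl{D}}]_{E_n})/\pi([p\vee\xRest{z}{\xCmpl{D}}]_{E_n})$ appears; when $z\in\supp(\pi)$ and $\pi([p]_D\mid\family{F}_{\xCmpl{D}})(z)>0$ the configuration $p\vee\xRest{z}{\xCmpl{D}}$ lies in $\supp(\pi)$ and is asymptotic to $q\vee\xRest{z}{\xCmpl{D}}$, so~(c) forces this ratio to converge to $\xe^{-\Delta(p\vee\xRest{z}{\xCmpl{D}},\,q\vee\xRest{z}{\xCmpl{D}})}$, which is~(a) at $z$ for the pair $(p,q)$. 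The degenerate possibility $\pi([p]_D\mid\family{F}_{\xCmpl{D}})(z)=0$ is ruled out by contradiction: if some $q$ had positive conditional probability at $z$, then~(c) applied to $q\vee\xRest{z}{\xCmpl{D}}\in\supp(\pi)$ and its asymptotic partner $p\vee\xRest{z}{\xCmpl{D}}$ would force the cylinder ratio to $0$, i.e.\ $\xe^{-\Delta(\cdots)}=0$, impossible since $\Delta$ is real valued; hence either all conditional probabilities at $z$ vanish (impossible, since they sum to~$1$) or none does. Intersecting the countably many full-measure sets yields~(a).

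For (a)~$\Rightarrow$~(b), the task is to upgrade the almost-sure relation of~(a) to the uniform limit of~(b). Introduce the reference-free quantity $\gamma_D([p]\mid z)\IsDef\big(\sum_{p'\in L_D(\xSp{X}\mid z)}\xe^{-\Delta(p\vee\xRest{z}{\xCmpl{D}},\,p'\vee\xRest{z}{\xCmpl{D}})}\big)^{-1}$, which by the cocycle property of $\Delta$ satisfies $\gamma_D([q]\mid z)=\xe^{-\Delta(p\vee\xRest{z}{\xCmpl{D}},\,q\vee\xRest{z}{\xCmpl{D}})}\,\gamma_D([p]\mid z)$, depends only on $\xRest{z}{\xCmpl{D}}$, and is continuous in $z$ (using condition~(\ref{item:hamiltonian:continuous}) of a Hamiltonian, which by compactness provides a \emph{uniform} modulus of continuity of $\Delta$ on pairs differing within $D$; for shifts of finite type $L_D(\xSp{X}\mid z)$ is moreover locally constant in~$z$). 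From~(a), summing over $q$ one gets $\pi([p]_D\mid\family{F}_{\xCmpl{D}})(z)=\gamma_D([p]\mid z)$ for $\pi$-almost every $z$. Then for finite $E\supseteq D$ the tower property gives $\pi([p]_D\mid\family{F}_{E\setminus D})=\xExp_\pi[\gamma_D([p]\mid\cdot)\mid\family{F}_{E\setminus D}]$, which for $z\in\supp(\pi)$ is the $\pi$-average of $w\mapsto\gamma_D([p]\mid w)$ over the cylinder $[\xRest{z}{E\setminus D}]_{E\setminus D}$; since every such $w$ agrees with $z$ on $E\setminus D$ and $\gamma_D([p]\mid\cdot)$ is uniformly continuous and blind to the coordinates in $D$, enlarging $E$ makes $\gamma_D([p]\mid w)$ uniformly close to $\gamma_D([p]\mid z)$. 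Hence $\pi([p]_D\mid\family{F}_{E\setminus D})(z)\to\gamma_D([p]\mid z)$ uniformly over $z\in\supp(\pi)$ and over the finitely many $p$; since $\gamma_D([p]\mid z)$ is a positive ($1/\text{finite sum}$) quantity, dividing the $q$- and $p$-statements and re-inserting the elementary cylinder identity delivers exactly the uniform ratio limit of~(b).

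The \textbf{main obstacle} is the uniformity in (a)~$\Rightarrow$~(b): extracting it directly from the almost-sure convergence in~(a) via Egorov's theorem fails, because the small exceptional set cannot be controlled inside the vanishingly small cylinders $[\xRest{z}{E\setminus D}]_{E\setminus D}$ that govern the conditioning for large~$E$. The device that makes it work is to first replace the merely $\family{F}_{\xCmpl{D}}$-measurable conditional probability by the genuinely \emph{continuous} specification $\gamma_D([p]\mid\cdot)$ (this is precisely where the continuity axiom for Hamiltonians is used) and only then average; uniform continuity of $\gamma_D$ plus its independence of the $D$-coordinates turns the conditional expectation into a uniformly small perturbation. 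The only other friction is the routine bookkeeping around conditioning events of measure zero and configurations on the boundary of $\supp(\pi)$, which produce only degenerate cases and are dispatched using that $\Delta$ is finite and real valued.
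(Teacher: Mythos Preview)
Your proof is correct and follows the same cycle (a)$\Rightarrow$(b)$\Rightarrow$(c)$\Rightarrow$(a) as the paper, with the same engines driving each step: the uniform continuity of $\Delta$ for (a)$\Rightarrow$(b), trivial specialization for (b)$\Rightarrow$(c), and the martingale convergence theorem for (c)$\Rightarrow$(a). The only noticeable difference is in the packaging of (a)$\Rightarrow$(b): the paper integrates relation~(\ref{eq:gibbs:def:original}) directly over $[z]_{E\setminus D}$ and controls the error term $\delta(z,\zeta)=\xe^{-\Delta(p\vee\zeta,q\vee\zeta)}-\xe^{-\Delta(p\vee z,q\vee z)}$ using the uniform continuity of $\Delta$, whereas you first pass through the normalized specification $\gamma_D$ and then average it via the tower property. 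Both routes exploit exactly the same mechanism (uniform continuity turns the conditional-expectation average into a uniformly small perturbation), so this is a cosmetic rather than a structural difference; your version has the mild advantage of making the role of the continuous specification explicit, while the paper's avoids having to discuss the continuity of $\gamma_D$ and the index set $L_D(\xSp{X}\mid\cdot)$ separately. Your treatment of (c)$\Rightarrow$(a) is also a bit more careful than the paper's in explicitly ruling out the degenerate case $\pi([p]_D\mid\family{F}_{\xCmpl{D}})(z)=0$.
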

\begin{proof}\ \\
	\begin{enumerate}[(a)$\Rightarrow$(b)]
		\item[(\ref{item:gibbs:def:equivalence:1})$\Rightarrow$(\ref{item:gibbs:def:equivalence:2})]
			Assume that condition~(\ref{item:gibbs:def:equivalence:1}) is satisfied.
			Let $D\subseteq\xL$ be a finite set.
			Integrating~(\ref{eq:gibbs:def:original}), for any finite $E\supseteq D$ we get
			\begin{align}
				\pi([q\lor\xRest{z}{\xCmpl{D}}]_E) &= \int_{[z]_{E\setminus D}}
					\xe^{-\Delta\left(p\vee\xRest{\zeta}{\xCmpl{D}}\;,\;q\vee\xRest{\zeta}{\xCmpl{D}}\right)}
					\pi([p]_D\,|\,\family{F}_{\xCmpl{D}})(\zeta)\pi(\xd\zeta) \;.
			\end{align}
			Setting
			\begin{align}
				\delta(z,\zeta) &\IsDef
					\xe^{-\Delta\left(p\vee\xRest{\zeta}{\xCmpl{D}}\;,\;q\vee\xRest{\zeta}{\xCmpl{D}}\right)} -
					\xe^{-\Delta\left(p\vee\xRest{z}{\xCmpl{D}}\;,\;q\vee\xRest{z}{\xCmpl{D}}\right)} \;,
			\end{align}
			we can write
			\begin{align}
				\pi([q\lor\xRest{z}{\xCmpl{D}}]_E) &= \int_{[z]_{E\setminus D}}
					\left(
						\xe^{-\Delta\left(p\vee\xRest{z}{\xCmpl{D}}\;,\;q\vee\xRest{z}{\xCmpl{D}}\right)} +
						\delta(z,\zeta)
					\right)
					\pi([p]_D\,|\,\family{F}_{\xCmpl{D}})(\zeta)\pi(\xd\zeta) \\
				&= \label{eq:gibbs:def:proof:a-b}
					\begin{multlined}[t]
						\xe^{-\Delta\left(p\vee\xRest{z}{\xCmpl{D}}\;,\;q\vee\xRest{z}{\xCmpl{D}}\right)}
						\pi([p\lor\xRest{z}{\xCmpl{D}}]_E) \\
						\qquad + \int_{[z]_{E\setminus D}}
							\delta(z,\zeta)\,\pi([p]_D\,|\,\family{F}_{\xCmpl{D}})(\zeta)\pi(\xd\zeta) \;.
					\end{multlined}
			\end{align}
			Now, let $p,q\in L_D(\xSp{X})$ be fixed patterns with $\pi([p]_D),\pi([q]_D)>0$,
			and let $\varepsilon>0$.
			By the uniform continuity of $z\mapsto\Delta(p\lor\xRest{z}{\xCmpl{D}},q\lor\xRest{z}{\xCmpl{D}})$,
			there is a sufficiently large finite set $E_\varepsilon\subseteq\xL$
			such that, for every $E\supseteq E_\varepsilon$ and every $z,\zeta$ with
			$\xRest{z}{E\setminus D}=\xRest{\zeta}{E\setminus D}$, we have
			$\abs{\delta(z,\zeta)}<\varepsilon$.
			In particular, for every $E\supseteq E_\varepsilon$ and every $z\in\supp(\pi)$
			satisfying $p,q\in L_D(\xSp{X}\,|\,z)$, we get
			\begin{align}
				\abs{
					\int_{[z]_{E\setminus D}}
						\delta(z,\zeta)\,\pi([p]_D\,|\,\family{F}_{\xCmpl{D}})(\zeta)\pi(\xd\zeta)
				} &\leq
					\int_{[z]_{E\setminus D}}
						\abs{\delta(z,\zeta)}\pi([p]_D\,|\,\family{F}_{\xCmpl{D}})(\zeta)\pi(\xd\zeta) \\
				&<
					\varepsilon\, \pi([p\lor\xRest{z}{\xCmpl{D}}]_E) \;.
			\end{align}
			Substituting in~(\ref{eq:gibbs:def:proof:a-b}), we obtain, for every $z\in\supp(\pi)$
			satisfying $p,q\in L_D(\xSp{X}\,|\,z)$, that
			\begin{align}
				\abs{
					\pi([q\lor\xRest{z}{\xCmpl{D}}]_E) -
					\xe^{-\Delta\left(p\vee\xRest{z}{\xCmpl{D}}\;,\;q\vee\xRest{z}{\xCmpl{D}}\right)}
					\pi([p\lor\xRest{z}{\xCmpl{D}}]_E)
				} &<
					\varepsilon\, \pi([p\lor\xRest{z}{\xCmpl{D}}]_E)
			\end{align}
			provided $E\supseteq E_\varepsilon$.
			Dividing by $\pi([p\lor\xRest{z}{\xCmpl{D}}]_E)$
			and letting $\varepsilon\to 0$ proves the claim.
		\item[(\ref{item:gibbs:def:equivalence:2})$\Rightarrow$(\ref{item:gibbs:def:equivalence:3})]
			Trivial.
		\item[(\ref{item:gibbs:def:equivalence:3})$\Rightarrow$(\ref{item:gibbs:def:equivalence:1})]
			Suppose that $\pi$ satisfies condition~(\ref{item:gibbs:def:equivalence:3}).
			Let $I_1\subseteq I_2\subseteq\cdots$ be an arbitrary chain of finite subsets of $\xL$
			with $\bigcup_n I_n=\xL$.
			Let $z\in\xSp{X}$ be a configuration in the support of $\pi$ and $D\subseteq\xL$ a finite set.
			For every two patterns $p,q\in L_D(\xSp{X}\,|\,z)$,
			we have
			\begin{align}
				\frac{\pi([q]_D\,|\,[z]_{I_n\setminus D})}{\pi([p]_D\,|\,[z]_{I_n\setminus D})}
				= \frac{\pi([q\vee\cramped{\xRest{z}{\xCmpl{D}}}]_{I_n})}{%
					\pi([p\vee\cramped{\xRest{z}{\xCmpl{D}}}]_{I_n})
				}
				&\to
					\xe^{-\Delta\left(p\vee\xRest{z}{\xCmpl{D}}\;,\;q\vee\xRest{z}{\xCmpl{D}}\right)}
			\end{align}
			as $n\to\infty$, implying that
			\begin{align}
			\label{eq:gibbs:def:proof:c-a:1}
				\pi([q]_D\,|\,[z]_{I_n\setminus D}) - 
				\xe^{-\Delta\left(p\vee\xRest{z}{\xCmpl{D}}\;,\;q\vee\xRest{z}{\xCmpl{D}}\right)}
				\pi([p]_D\,|\,[z]_{I_n\setminus D})
					&\to 0
			\end{align}
			as $n\to\infty$.
			
			Note that the $\sigma$-algebra $\family{F}_{\xCmpl{D}}$
			is generated by the filtration
			$\family{F}_{I_1\setminus D}\subseteq \family{F}_{I_2\setminus D}\subseteq\cdots$.
			Therefore, by the martingale convergence theorem,
			for $\pi$-almost every $z$, and every $p\in L_D(\xSp{X}\,|\, z)$,
			\begin{align}
			\label{eq:gibbs:def:proof:c-a:2}
				\pi([p]_D\,|\,[z]_{I_n\setminus D}) =
				\pi([p]_D\,|\,\family{F}_{I_n\setminus D})(z) &\to \pi([p]_D\,|\,\family{F}_{\xCmpl{D}})(z)
			\end{align}
			as $n\to\infty$.
			
			Combining~(\ref{eq:gibbs:def:proof:c-a:1}) and~(\ref{eq:gibbs:def:proof:c-a:2}),
			we obtain
			\begin{align}
				\pi([q]_D\,|\,\family{F}_{\xCmpl{D}})(z) &=
				\xe^{-\Delta\left(p\vee\xRest{z}{\xCmpl{D}}\;,\;q\vee\xRest{z}{\xCmpl{D}}\right)}
				\pi([p]_D\,|\,\family{F}_{\xCmpl{D}})(z)
			\end{align}
			for $\pi$-almost every $z\in\xSp{X}$ and every $p,q\in L_D(\xSp{X}\,|\,z)$.
	\end{enumerate}
\end{proof}

\end{document}